\author{Lennart Ronge}
\date{\today}
\def\head#1{\textbf{#1}}
\def\limn{\lim_{n\rightarrow \infty}}
\def\<{\left<}
\def\>{\right>}
\def\Rp{{\mathbb{R}_{\geq0}}}
\def\Z{\mathbb{Z}}
\def\C{{\mathbb{C}}}
\def\R{{\mathbb{R}}}
\def\N{{\mathbb{N}}}
\def\mat#1#2#3#4{{\begin{pmatrix} #1&#2\\ #3&#4 \end{pmatrix}}}
\def\insum#1{\sum\limits_{#1=0}^\infty}
\def\intR{\int\limits_\R}
\def\intP{\int\limits_0^\infty}
\def\intM{\int\limits_M}
\def\O {\backslash \{0\}}
\def\F {\mathcal{F}}
\def\Tdot {\dot T^*}
\def\D{\mathcal{D}}
\def\K{\mathcal{K}}
\def\M{\mathcal{M}}
\def\casedist#1#2#3#4{
	\begin{cases}
		#1& \text{, if } #2\\
		#3& \text{, if } #4 \\
	\end{cases}}
\newtheorem{df}{Definition}[section]
\newtheorem{lemma}[df]{Lemma}
\newtheorem{thm}[df]{Theorem}
\newtheorem*{thm*}{Theorem}
\newtheorem{prop}[df]{Proposition}
\newtheorem{cor}[df]{Corollary}
\newtheorem{gn}[df]{Fixed Notation}
\newtheorem{rem}[df]{Remark}
\newtheorem{defprop}[df]{Definition/Proposition}
\newtheorem*{claim}{Claim}
\DeclareMathOperator{\Dom}{Dom}
\DeclareMathOperator{\Ran}{Ran}
\DeclareMathOperator{\spann}{span}
\DeclareMathOperator{\supp}{supp}
\DeclareMathOperator{\Exp}{Exp}
\DeclareMathOperator{\Vol}{Vol}
\DeclareMathOperator{\grad}{grad}
\DeclareMathOperator{\sign}{sign}
\DeclareMathOperator{\Res}{Res}
\DeclareMathOperator{\diag}{diag}
\DeclareMathOperator{\scal}{scal}
\DeclareMathOperator{\tr}{tr}
\DeclareMathOperator{\Op}{Op}
\DeclareMathOperator{\Div}{div}
\DeclareMathOperator{\singsupp}{singsupp}
\DeclareMathOperator{\rk}{rk}
\DeclareMathOperator{\WF}{WF}
\begin{document}
	\begin{titlepage}
		\begin{center}
			\vspace {1cm}
			\textbf{\huge{Extracting Hadamard Coefficients from Green's Operators}}
			
			\vspace{5cm}
			
			Dissertation\\
			zur\\
			Erlangung des Doktorgrades (Dr. rer. nat.)\\
			der\\
			Mathematisch-Naturwissenschaftlichen Fakultät\\
			der\\
			Rheinischen Friedrich-Wilhelms-Universität Bonn
			\vfill
			vorgelegt von\\
			{\Large{\textbf{Lennart Ronge}}}\\
			aus\\
			Flörsheim (Main)\\
			\vspace{2cm}
			Bonn, 2022
			
		\end{center}
	\end{titlepage}
		\begin{center}
			Angefertigt mit Genehmigung der Mathematisch-Naturwissenschaftlichen Fakultät\\
			der Rheinischen Friedrich-Wilhelms-Universität Bonn
		\end{center}
		\vfill
		1. Gutachter: Prof. Dr. Matthias Lesch\\
		2. Gutachter: Prof. Alexander Strohmaier\\
		\vspace{2cm}\\
		Tag der Promotion: 27.01.2023\\
		Erscheinungsjahr: 2023\\
		\vspace{1cm}
	
	\newpage
	\tableofcontents
	\newpage
	\chapter{Introduction}
Hadamard coefficients are sections in a vector bundle associated to a normally hyperbolic differential operator in Lorentzian geometry. They are the Lorentzian equivalent of the heat coefficients of a generalized Laplace operator (both coefficients are known under various other names, like Seeley-DeWitt or Schwinger-DeWitt coefficients). While both types of coefficients arise from the formally same transport equations and are given by the same formal expressions (up to a factor of $k!$ and in the conventions of \cite{BGV} a density factor), they arise in very different contexts: The heat coefficients, as the name suggests, arise in an asymptotic expansion of the heat semigroup $e^{-t\Delta}$ for small $t$, while Hadamard coefficients arise in the study of Green's operators. Hadamard coefficients occur in quantum field theory on curved spacetimes (see e.g. \cite{DeFo}) and in the Lorentzian version of the APS index theorem (\cite{BS}).

Heat coefficients play an important role in various areas of analysis, geometry and physics. They contain important information about the properties of the associated Laplace-type operator and the geometry of the underlying manifold. More information on the heat coefficients and their application to index theory can be found in \cite{BGV}, for applications in physics see \cite{Vas}. Their definition via the heat semigroup also makes them available in the setting of noncommutative geometry, where classical geometric quantities are no longer available. For example, they can be used to define the scalar curvature for a noncommutative manifold.

In the Lorentzian case, there is no well-defined heat semigroup and the Hadamard coefficients need to be defined via recursive differential equations known as transport equations. These do not make sense in a noncommutative setting, so the Hadamard coefficients cannot be used there. This thesis is an attempt to change this by developing a formula for the Hadamard coefficients (restricted to or integrated over the diagonal) that does not involve local aspects of the underlying geometry and that might thus be applicable in some Lorentzian version of noncommutative geometry.

The setting in which Hadamard coefficients originally arise is in the construction of Green's operators and in the description of their singularity structure (see \cite[Chapter 2]{BGP}). More precisely, there is an asymptotic expansion (\cite [Proposition 2.5.1]{BGP}) in differentiability orders of the form
\[\K(G^\pm)\sim\insum{k}V^kR_\pm(2k+2),\]
where $\K(G^\pm)$ is the kernel of the advanced/retarded Green's operator, the $V^k$ are the Hadamard coefficients and the $R_\pm(2k+2)$ are the so called Riesz-distributions, which can be thought of roughly as powers of the distance function. This expansion will serve as the foundation for this thesis. We want to extract from this a formula for the Hadamard coefficients on the diagonal in terms of the Green's operators. As it turns out, the expansion for a single Green's operator does not contain enough information to determine those values, so instead we need to look either at powers of the Green's operators or at the analogue of a resolvent.

\section*{Results}
Before turning to our formula for Hadamard coefficients, we first obtain some generalizations of the Hadamard expansion that may be of independent interest. First, we develop an expansion for powers of the Green's operator, then an expansion for the Green's analogue of a resolvent. Both can be combined in the following expansion (see \ref{Hadamexpz})
\[\K((G_{P-z}^\pm)^m)\sim \insum{k} \binom{m+k-1}{k}V^kR_\pm(z,2k+2m).\]
Here $\K$ refers to taking the Schwartz kernel, $G_{P-z}^\pm$ is the advanced/retarded Green's operator associated to $P-z$, where $z\in \C$ and $P$ is some normally hyperbolic operator, $V^k$ are the Hadamard coefficients associated to $P$ and $R(z,n)$ are a $z$-dependent generalization of the Riesz distributions, modeled on  fundamental solutions to $\square-z$ instead of $\square$, which are equal to the ordinary Riesz distributions in case $z=0$. The asymptotic expansion is up to functions of arbitrary differentiability.

We then proceed to show our main formula for the Hadamard coefficients. We obtain the following:
\begin{thm*}(\ref{finalgen})
	Let $P$ be a normally hyperbolic operator acting on sections of a vector bundle E over a globally hyperbolic manifold $M$.
	Let $o,K\in\N$. Let $w$ be a timelike unit speed geodesic in $M$ and $x=w(0)$. Let $f\in C_c^\infty(\R)$ be odd and assume that \[\M'(f):=\frac{\M(f)}{\Gamma(\frac{\cdot+1}{2})}\]
	(where $\M$ denotes the Mellin transform) is non-zero on $\Z$. Let $A:E\otimes E_x^*\rightarrow \C$ be a smooth fibrewise linear map. Let $V^K$ denote the $K$-th Hadamard coefficient of $P$ and $G_{P-z,x}$ denote the kernel of the causal propagator associated to $P-z$ for $z\in \C$, restricted to the point $x$ in the second variable. Then we have
	\begin{align*}
		A(V^{K}_x(x))
		=&\sum\limits_{m=0}^K C(m,K,o,d,f) w^*(AG_{P-z,x})[f(\tfrac{\cdot}{s})][[s^{2K+2m+2o-d+3}z^{m+o}]],
	\end{align*}
	where $[[\cdot]]$ denotes the operation of taking the coefficients in front of the monomial written in the brackets and
	\[C(m,K,o,d,f)=\frac{\pi^{\frac{d}{2}-1}4^{K+m+o}(m+o)!K!}{\M'(f)({2K+2m+2o-d+3})}\binom{\frac{d}{2}-1-o-K}{m}\binom{2K+o+1-\frac{d}{2}}{K-m}.\]
\end{thm*}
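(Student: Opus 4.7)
The plan is to start from the Hadamard expansion \ref{Hadamexpz} in the case $m = 1$, taking the difference of advanced and retarded kernels to obtain an expansion for the causal propagator
\[\K(G_{P-z}) \sim \sum_k V^k R(z, 2k+2), \qquad R(z, n) := R_+(z, n) - R_-(z, n).\]
Applying $A$ and restricting to $x$ in the second variable yields an asymptotic expansion of $AG_{P-z,x}$ whose remainder can be made arbitrarily smooth by truncating far enough. Pulling back by the geodesic $w$ and then testing against $f(\tfrac{\cdot}{s})$, the remainder contributes an arbitrarily high power of $s$ after rescaling $t = s u$, so only finitely many Hadamard terms contribute to any fixed $(s, z)$-monomial coefficient.

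The core computation is an explicit evaluation of $w^{*} R(z, 2k+2)[f(\tfrac{\cdot}{s})]$ in closed form. First, expand $R(z, n)$ in powers of $z$ using the defining recurrence $(\Box - z) R_\pm(z, n+2) = R_\pm(z, n)$; a direct induction gives $R(z, n) = \sum_{j \geq 0} \alpha_{n, j}\, z^j\, R(0, n+2j)$ with explicit rational $\alpha_{n, j}$. Second, the pullback $w^{*} R(0, m)$ along a timelike unit-speed geodesic is an odd homogeneous distribution of degree $m - d$ in the geodesic parameter, with a prefactor involving $\pi^{(d-2)/2}$, a power of $4$, and Gamma ratios. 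Pairing with the scaled test function via the substitution $u = t/s$ then yields $s^{m - d + 1}\,\M'(f)(m - d + 2)$ times an explicit constant, and here the hypothesis that $f$ is odd is what makes the combination $R_+ - R_-$ detected cleanly.

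Combining these with a Taylor expansion $V^k(w(t), x) = \sum_{l \geq 0} v^k_l\, t^l$ (so $v^K_0 = V^K(x,x)$), matching $(s,z)$-exponents forces the contributions to the coefficient of $s^{2K + 2m + 2o - d + 3} z^{m + o}$ to come only from index pairs with $j = m + o$ and $l = 2(K + m - k)$, i.e.\ $k \in \{0, \dots, K + m\}$. The monomial coefficient is thus a finite linear combination $\sum_{k} \rho(k, m, o, d)\, A(v^k_{2(K+m-k)})$ of Taylor coefficients of different Hadamard coefficients, and only $(k, m) = (K, 0)$ directly produces the target $A(V^K(x,x))$. The theorem then reduces to the combinatorial identity
\[\sum_{m = 0}^{K} C(m, K, o, d, f)\, \rho(k, m, o, d) = \delta_{k, K}\]
over the admissible range of $k$, for which the stated $C$ is the claimed solution.

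The main obstacle is this combinatorial identity. The factor $\M'(f)(2K + 2m + 2o - d + 3)$ in the denominator of $C$ is built precisely to cancel the corresponding Mellin factor in $\rho$, leaving a purely algebraic relation whose Vandermonde-like structure $\binom{\frac{d}{2} - 1 - o - K}{m}\binom{2K + o + 1 - \frac{d}{2}}{K - m}$ suggests a Chu-Vandermonde-type evaluation once the Gamma ratios from $\rho$ are collected. Because naive counting admits more contributing $(k, l)$ pairs than free weights, the cancellation of unwanted $v^k_l$ terms with $k \neq K$ must further exploit relations between Taylor coefficients of different $V^k$'s imposed by the transport equations defining the Hadamard coefficients; arranging this interplay, together with careful bookkeeping for the cases where the generalized binomials must be interpreted via Gamma-function limits (when $\frac{d}{2} - 1 - o - K$ is a non-positive integer), will be the most delicate part of the argument.
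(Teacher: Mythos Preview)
Your overall strategy matches the paper's: expand $AG_{P-z,x}$ via Hadamard coefficients and $z$-dependent Riesz distributions, pull back along the unit-speed geodesic $w$, pair with $f_s$, and read off monomial coefficients to set up a linear system for the diagonal values. The paper carries this out as Proposition~\ref{Expwithz} (the expansion), Lemma~\ref{Powercoeff} (extraction of monomial coefficients), and Lemmas~\ref{D}--\ref{in2} plus Theorem~\ref{Wfinal} (explicit inversion of the resulting matrix of binomial coefficients via discrete differentiation and Chu--Vandermonde).

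However, your index bookkeeping contains an error that leads you to a false difficulty. With $j=m+o$ fixed, the $s$-exponent constraint $l+2k+2j+3-d = 2K+2m+2o+3-d$ gives $l=2(K-k)$, \emph{not} $l=2(K+m-k)$. Crucially, this is independent of $m$: for every value of $m$, the unknowns appearing in the coefficient of $s^{2K+2m+2o-d+3}z^{m+o}$ are the \emph{same} list $v^k_{2(K-k)}$ for $k=0,\dots,K$ (only even Taylor orders survive because $f$ is odd). So there are exactly $K+1$ unknowns, and letting $m$ range over $o,o+1,\dots,o+K$ yields a square $(K+1)\times(K+1)$ system. The constants $C(m,K,o,d,f)$ are precisely the entries of the row of the inverse matrix that picks out $v^K_0=A(V^K_x(x))$.

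Consequently, no relations among Taylor coefficients of different $V^k$ from the transport equations are needed at this stage; the paper's proof of Theorem~\ref{finalgen} is pure linear algebra on top of the expansion, with the transport equations having been used only earlier (in deriving Proposition~\ref{Vz} and the Hadamard expansion itself). Your anticipated ``most delicate part'' does not arise once the index is corrected, and the remaining work is exactly the binomial-matrix inversion you already identified, which the paper executes in Lemmas~\ref{D}, \ref{in1}, \ref{in2}.
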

The pull back via $w$ can be thought of as integrating along a timeline. The map $A$ can be viewed as an arbitrary component function of the Vector bundle that the Hadamard coefficients are sections of. This is necessary, as vectors from different fibres cannot be integrated directly.

While this formula is still local, it can be turned into a non-local formula by multiplying with cut-off functions and integrating. This leads to the following (using the same setting as in the previous theorem):
\begin{thm*}(\ref{globalexp})
	Let $\tau$ denote some choice of translation along timelike unit speed geodesics on $M$.
	For $\chi_1,\chi_2\in C_c^\infty(M)$ (with $\mu_{\chi_i}$ denoting the corresponding multiplication operators), there is $s_0>0$, such that for $s<s_0$, the operator \[\intR\mu_{\chi_1}f(\tfrac{t}{s})\tau(t)G\mu_{\chi_2} dt\]
	extends continuously to a trace class operator in $L^2(E)$ (with respect to any hermitian structure on $E$). 
	We have for any $K,o\in \N$:
	\begin{align*}
		&\intM \chi_1(x)\chi_2(x)\tr(V^{K}_x(x))dx\\
		=&\sum\limits_{m=0}^K C(m,K,o,d,f) \tr\left( \intR\mu_{\chi_1}f(\tfrac{t}{s})\tau(t)G\mu_{\chi_2} dt\right)[[s^{2K+2m+2o-d+3}z^{m+o}]]
	\end{align*}
for $C(m,K,o,d,f)$ as above.
\end{thm*}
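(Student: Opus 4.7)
The strategy is to deduce the global identity from the pointwise identity in Theorem~\ref{finalgen} by integrating against $\chi_1(x)\chi_2(x)$ and recognising the right-hand side as an operator trace. Pick a fibrewise linear map $A:E\otimes E_x^*\rightarrow\C$ whose restriction to $\mathrm{End}(E_x)=E_x\otimes E_x^*$ is the fibre trace for every $x$; this can be arranged on a neighbourhood of $\supp(\chi_1)\cap\supp(\chi_2)$ by using parallel transport along $\tau$ to push nearby fibres back to $E_x$ before taking $\tr$. Applying Theorem~\ref{finalgen} at each $x$ in this set, multiplying by $\chi_1(x)\chi_2(x)$ and integrating over $M$ turns the left-hand side into $\intM\chi_1(x)\chi_2(x)\tr(V^K_x(x))\,dx$. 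On the right-hand side, Fubini identifies $\intM\chi_1(x)\chi_2(x)\,w^*(AG_{P-z,x})[f(\tfrac{\cdot}{s})]\,dx$ with the integral along the diagonal of the Schwartz kernel of $\intR\mu_{\chi_1}f(\tfrac{t}{s})\tau(t)G_{P-z}\mu_{\chi_2}\,dt$, which is precisely the operator trace once trace class is established.

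For the trace class statement, the plan is microlocal. The averaging operator $F_s:=\intR f(\tfrac{t}{s})\tau(t)\,dt$ acts formally as a Fourier multiplier along the timelike direction with symbol of the form $s\hat{f}(s\xi(V))$, where $V$ generates $\tau$. Since $f$ is smooth and compactly supported, $\hat{f}$ is Schwartz, so this symbol decays rapidly whenever $|\xi(V)|$ is large compared to $1/s$. By propagation of singularities for normally hyperbolic operators, $\WF(G)$ consists of pairs of non-zero null covectors. Via the metric, a non-zero null covector is dual to a null vector, which cannot be orthogonal to any timelike unit vector, so $|\xi(V)|\ge c|\xi|$ on $\WF(G)$ at each point. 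Hence on $\WF(G)$ the multiplier decays rapidly in $|\xi|$, and the composition $F_sG$ is smoothing in a neighbourhood of $\supp\chi_1\times\supp\chi_2$. Composing on both sides with $\mu_{\chi_1},\mu_{\chi_2}$ produces a smooth kernel of compact support on $M\times M$, which is trace class on $L^2(E)$ for any choice of hermitian structure. The constant $s_0$ is taken small enough that $\tau(t)\supp(\chi_2)$ stays in a fixed compact neighbourhood on which the Hadamard framework of the preceding chapters applies, for all $t$ in the support of $f(\tfrac{\cdot}{s})$ with $|s|<s_0$.

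Once trace class is secured, the trace equals the integral along the diagonal of the Schwartz kernel, which by the preceding identification matches the integrated pointwise formula term by term. Because Theorem~\ref{finalgen} holds as an asymptotic expansion with remainders of arbitrary finite differentiability, integrating those remainders against the compactly supported $\chi_1\chi_2$ yields only higher-order corrections in $s$, not affecting the finitely many coefficients $[[s^{2K+2m+2o-d+3}z^{m+o}]]$ we extract. Since trace, $M$-integration and coefficient extraction are continuous linear operations on the relevant asymptotic class, they commute, yielding the stated identity.

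The main obstacle is establishing trace class in a way that is uniform enough in $s$ (and in the perturbation parameter $z$) that the coefficient extraction is well-defined. The Fourier-multiplier argument outlined above is the key input; careful microlocal bookkeeping together with the matching choice of $s_0$ relative to the geometry of $\supp\chi_1$ and $\supp\chi_2$ (so that the Hadamard expansion remains valid throughout the averaging region) is what makes this rigorous.
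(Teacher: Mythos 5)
Your proposal follows essentially the same route as the paper: integrate the pointwise formula of Theorem \ref{finalgen} against $\chi_1\chi_2$, identify the result with the diagonal integral of the Schwartz kernel of the time-averaged operator, and get trace class from smoothness plus compact support of that kernel (Theorem \ref{Mercer}). Your microlocal mechanism for smoothness is also the paper's: the wavefront of $\K(G)$ consists of pairs of lightlike covectors, and a lightlike covector cannot annihilate the timelike flow direction (Lemma \ref{orthotime}). The paper implements this via the pushforward theorem of wavefront calculus --- writing the averaged kernel as $\pi_*F_s\chi_\Phi\Pi_\Phi\Phi^*(\K(G))$ and checking that $\pi_*\Phi^*(\WF(\K(G)))$ is empty --- whereas you phrase it as a Fourier multiplier $s\hat f(s\xi(V))$ decaying rapidly on the wavefront set; since $\tau(t)$ is a genuine geodesic flow rather than a translation, the multiplier picture is only heuristic and, once localized, reduces to the same pushforward/stationary-phase statement, so this is a difference of dress rather than of substance.

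One step you assume rather than supply: interchanging the $x$-integration with the asymptotic expansion and the coefficient extraction requires the remainder of the pointwise expansion to be $O(s^m)$ \emph{uniformly} in $x$ over $\supp(\chi_1\chi_2)$; pointwise validity of Theorem \ref{finalgen} at each $x$ does not by itself give this. This is precisely why the paper carries the explicit ``more precisely'' remainder bounds through Theorems \ref{Wexp} and \ref{intexp} and proves the smeared versions \ref{smearedexp} and \ref{smearedfinalgen} before globalizing. Your closing paragraph gestures at the need for uniformity (in $s$ and $z$), but the uniformity that actually has to be established is in the base point $x$, and it comes from covering $\supp(\chi_1\chi_2)$ by finitely many GE neighbourhoods on which the remainder constants are controlled. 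With that supplied, your argument closes.
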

This formula has two main drawbacks.
 
The first is that the right hand side is fairly complicated. From a computational point of view, it will generally be more difficult to compute the kernel of the causal propagator than to just compute the Hadamard coefficients via the transport equations that define them. For abstract calculations, the involved combinatorial factors might make it difficult to equate the right hand side to anything more useful. 

The second drawback is the reliance on a notion of time translation along geodesics, which does not generally exist in a canonical way.

In principle, the formula no longer contains "local" ingredients. Whether it is really suitable for a noncommutative setting is hard to answer at the current time, as the right framework for Lorentzian noncommutative geometry is still a subject of ongoing research (see \cite{Fra} for an overview over some approaches). 
Such a framework would probably contain some notion of causal structure, which means it would be likely that the Green's operators could be defined as inverses on suitable "function spaces". The notion of time translation is more problematic. Theoretically, something like this could be defined in the language of noncommutative geometry by choosing a timelike geodesic unit vector field. 
Vector fields make sense in noncommutative geometry as derivations and the condition of being geodesic can be defined in terms of a covariant derivative, which also exists in the noncommutative setting. The main problem is that such a vector field may not exist globally and is not canonical even if it does, even for classical spacetimes. 

Such a vector field does exist, however, locally around each point in a Lorentzian spacetime. Thus one could generally obtain a formula for the Hadamard coefficients by use of a partition of unity, but this would make the formula even more complicated and would not really be in the spirit of having a more global approach.

On the other hand, the fact that the formula works locally
makes it likely that any results obtained by using it that no longer explicitly depend on $\tau$ would also hold on general globally hyperbolic manifolds and perhaps also on general Lorentzian noncommutative spaces. 

The result is thus to be seen as a tool that might be helpful in finding further formulas for non-commutative geometry, rather than an alternative way for actually computing the Hadamard coefficients.

In case the dimension is even, the above difficulties can be avoided for the first few Hadamard coefficients ($K<\tfrac{d}{2}$). For these, we have a simpler formula that works for arbitrary timelike curves:
\begin{thm*}
	(\ref{smallkthm})
	Assume that $d$ is even and $K<\frac{d}{2}$. Assume the above setting, but now we allow $w$ to be an arbitrary timelike curve with $w(0)=x$ rather than a unit speed geodesic. Then we have for $k<\tfrac{d}{2}$:
	\[A_xV^{k}_x(x)=\frac{(4\pi)^{\frac{d}{2}-1}{(\frac{d}{2}-1-k)}!k!}{\M'(f)(1)}w_x^*(A_xG_{P-z,x})[f_s][[s^1z^{\frac{d}{2}-1-k}]].\]
\end{thm*}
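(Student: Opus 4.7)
The plan is to derive this as a specialization of Theorem \ref{finalgen} for a specific choice of the free parameter $o$, and then to upgrade from unit-speed geodesics to arbitrary timelike curves using the low-degree hypothesis $k<\tfrac{d}{2}$. I would set $o:=\tfrac{d}{2}-1-k$, which is a non-negative integer precisely because $d$ is even and $k<\tfrac{d}{2}$, hence admissible in \ref{finalgen}. With this choice, the $m$-th summand of the main formula contains the extraction $[[s^{2m+1}z^{m+\tfrac{d}{2}-1-k}]]$, and the factor $\binom{\tfrac{d}{2}-1-o-k}{m}=\binom{0}{m}$ inside $C(m,k,o,d,f)$ vanishes for every $m\geq 1$. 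So only the $m=0$ term survives, involving the extraction $[[s^1 z^{\tfrac{d}{2}-1-k}]]$. A direct evaluation gives
\[C(0,k,\tfrac{d}{2}-1-k,d,f)=\frac{\pi^{\tfrac{d}{2}-1}\cdot 4^{\tfrac{d}{2}-1}\cdot(\tfrac{d}{2}-1-k)!\,k!}{\M'(f)(1)}\cdot\binom{0}{0}\binom{k}{k}=\frac{(4\pi)^{\tfrac{d}{2}-1}(\tfrac{d}{2}-1-k)!\,k!}{\M'(f)(1)},\]
which is exactly the prefactor in the statement. This proves the formula whenever $w$ is a unit-speed timelike geodesic.

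The remaining step is to show that the right-hand side does not change when $w$ is replaced by an arbitrary timelike curve through $x$. For this, I would pull the Hadamard expansion $A_xG_{P-z,x}\sim\sum_j A_xV^j_x\,R_\pm(z,2j+2)$ back along $w$ and analyse the coefficient $[[s^1z^{\tfrac{d}{2}-1-k}]]$ of $w_x^*(A_xG_{P-z,x})[f(\tfrac{\cdot}{s})]$ term by term. Since $f$ is odd, for any smooth $F$ the integral $\int f(t/s) F(t)\,dt$ is a series in even powers of $s$ starting at $s^2$, so sufficiently smooth remainders in the Hadamard expansion contribute nothing to the $s^1$-coefficient, and only the distributional singularities of the pullbacks $w^*R_\pm(z,2j+2)$ near $t=0$ matter. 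Using the $z$-expansion of $R_\pm(z,\cdot)$ together with the scaling behaviour of the Riesz distributions, one then checks that only the $j=k$ summand survives the extraction and that its contribution reduces to $A_xV^k_x(x)$ times a universal, curve-independent constant reproducing the prefactor above.

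The main obstacle is precisely this curve-independence step: a priori $w^*R_\pm(z,2j+2)$ depends on $w$ through both its tangent vector and its local proper-time function, and one must verify that all of this curve-dependent data cancels in the extraction $[[s^1z^{\tfrac{d}{2}-1-k}]]$ precisely when $k<\tfrac{d}{2}$. A clean route is to compare a given timelike $w$ with a unit-speed geodesic $\tilde w$ sharing the same starting point and initial direction up to normalisation, and to argue that the difference of the smeared pullbacks against $f(\tfrac{\cdot}{s})$ contributes only at orders $s^2$ and higher, so that the $s^1$-coefficient is determined by data at $x$ alone and is unaffected by the change of curve. For $k\geq\tfrac{d}{2}$ this argument breaks down because higher-order jets of $w$ then enter the relevant coefficient, which is exactly why the simpler formula is restricted to the small-$k$ regime.
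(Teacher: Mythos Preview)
Your reduction via Theorem \ref{finalgen} with $o=\tfrac{d}{2}-1-k$ is correct and is exactly the computation recorded later in the paper as the third case of Theorem \ref{finalspec}. But this only handles unit-speed geodesics, and the second half of your proposal --- the curve-independence argument --- is where the gap lies. Your suggestion to compare $w$ with a geodesic $\tilde w$ and argue that the difference of smeared pullbacks contributes only at order $s^2$ does not go through as stated: the pullbacks $w^*R(\alpha,x)$ and $\tilde w^*R(\alpha,x)$ are genuinely different distributions on $\R$, and their difference is not obviously smooth near $t=0$, so you cannot simply invoke the ``odd $f$ against smooth $F$ gives $O(s^2)$'' observation. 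You have correctly identified the obstacle but not overcome it.

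The paper's proof avoids this issue entirely by arguing in the opposite order. Rather than specializing \ref{finalgen}, it works directly from the expansion of Theorem \ref{intexp} (or its integrated version \ref{smearedexp}), which is already valid for \emph{arbitrary} timelike curves and carries the curve-dependence explicitly in the factors $\nu_w^{k-\tfrac{d}{2}+1}$. The key observation is that the coefficient $a(k,n)$ contains the binomial $\binom{k+n+1-\tfrac{d}{2}}{n}$, which vanishes whenever $k+n=\tfrac{d}{2}-1$ and $n>0$. Hence the only contribution to the $s^1$-coefficient comes from $k=\tfrac{d}{2}-1$, $n=0$, and in that term the $\nu_w$ factor appears with exponent $(\tfrac{d}{2}-1)-\tfrac{d}{2}+1=0$, so it drops out. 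This is precisely why the formula is curve-independent in the small-$k$ regime, and it is a one-line observation once you look at the right expansion. Applying this to $P-z$ and unpacking $V^{\tfrac{d}{2}-1}_x(z)$ via Proposition \ref{Vz} then gives the claim.
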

The integrated version of this also holds.
\section*{Structure of proof}
In Chapter 2, we will collect some preliminary facts and establish notation.

In Chapter 3, we develop the aforementioned asymptotic expansions for powers of the Green's operators and for the Green's operators associated to the operators $P-z$ for $z\in \C$ (in terms of the Hadamard coefficients of $P$).

In Chapter 4, we investigate the function given morally by
\[L(s):=\intR f(\tfrac{t}{s})\K(G)(w(t),w(0))dt\]
for some timelike curve $w$ and odd $C_c^\infty$-function $f$. As the Schwartz kernel of the causal propagator $G=G_+-G_-$ is a distribution, the above is not a priori well defined. We thus need to use wavefront calculus to rigorously define it. We find that $L$ has an asymptotic expansion of the form (assuming for simplicity that $P$ acts on scalar functions and $w$ is a unit speed geodesic)
\[L(s)\sim \insum{k}\insum{n}a_{k,n}(V^k_{w(0)}\circ w)^{(2n)}(0)s^{2k+2n+3-d}.\]
This is not quite what we want, as the higher time-derivatives of the Hadamard coefficients introduce error terms (the terms for $n\neq 0$) that prevent us from extracting the Hadamard coefficients (at $w(0)$) from this asymptotic expansion. 

To remedy this, we need to consider the same expansion for $P-z$ instead of $P$, which we will do in Chapter 5. This will give us enough additional information to extract the Hadamard coefficients from the $z$-dependent version of $L$, which enables us to show the local version of our main theorem. For the first few Hadamard coefficients for even $d$, this works fairly easily, while for the others it will be more involved and we will need $w$ to be a geodesic.

Finally in Chapter 6, we develop the global theorem stated above, as well as another formulation in terms of evolution operators.

\newpage
\chapter{Preliminaries}
\section{Miscellaneous}
\subsection{General information}
We assume that the reader is well acquainted with manifolds and distributions and has some degree of familiarity with locally convex topological vector spaces (i.e. working with seminorms). Background information about Lorentzian geometry and wavefront calculus required for this thesis, as well as a description of the main objects we will work with (Hadamard coefficients, Riesz distributions and Green's operators) and a few other facts will be provided in this chapter. 

An introduction to topological vector spaces and distributions on $\R^d$ may be found e.g. in \cite{Tre}. Some information on Distributions in Vector bundles can be found in the beginning of \cite{BGP}, which will also be our main reference for the preliminaries on Lorentzian geometry. Our main source for wavefront calculus will be \cite{BDH}.

In order to help the reader keep track of the various definitions and notation introduced throughout this thesis (as well as some standard notation that is being used), a notation index is included at the end. There will also be some abuse of notation introduced in subsection \ref{abuseofnotation} that the reader should be aware of, though most of it is fairly canonical. Much of this is not included in the notation index, as it consists of omitting notation rather than introducing it.

The rigorous part of the thesis is occasionally broken by a remark. These are meant to help the reader understand the motivation and background thought behind what is happening, without being a part of the logical line of argumentation, and are thus not as mathematically rigorous as the rest of the thesis. While any mathematically precise statement in a comment is correct to the authors best knowledge, it may not be as rigorously checked as the statements made elsewhere.

\subsection{Motivation: the spectral action}
The contents of this subsection are not logically necessary for the subsequent considerations, but are helpful for understanding the motivation both for the results and the methods of this thesis.

The (bosonic) spectral action associated to a Dirac Operator $D$ (or an abstract generalization thereof) is defined as
\[\tr(f(sD)),\]
where $s>0$ is some cut-off parameter and $f\in C_c^\infty(\R)$ is some even function that can be chosen freely. This plays an important role, for example, in the non-commutative geometry based approach to unify the standard model of particle physics with gravity (see e.g. \cite{CoCh}). The full spectral action also contains another summand that describes fermions, but as that part is not relevant to the considerations in this thesis, we will use the term "spectral action" to refer only to the bosonic part described above. 

The spectral action of a Dirac operator $D$ has an asymptotic expansion in terms of heat coefficients (see \cite[(2.14)]{CoCh})
\[\tr(f(sD))\stackrel{s\rightarrow0}{\sim}\insum{k}c(f,k)a_k(D^2)s^{2k-d},\]
where the coefficients $c(f,k)$ are independent of the geometry of the manifold and $a_k(D^2)$ are the global heat coefficients for the operator $D^2$. The latter are related to the local heat coefficients $a_k(D^2,x,y)$ (i.e. the Riemannian analogue of the Hadamard coefficients $V^k_x(y)$, which are what we are interested in) via
\[a_k(D^2)=\int\limits _M \tr(a_k(D^2,x,x))dVol(x).\]
This asymptotic expansion is not just some curious property of the spectral action, but actually the reason why it is desirable as an action from a physical perspective. For example, $a_1(D^2)$ corresponds to the Einstein-Hilbert action.

As the spacetime we actually live in is Lorentzian rather than Riemannian, it would be desirable to have this spectral action also for the Lorentzian case. However, the formula above is not well-defined in this case. A Lorentzian spectral action for a certain class of spacetimes, where the d'Alembert-operator is self-adjoint, has been developed in \cite{DaWr}.

Our aim is to explore the case where the wave operator is no longer self-adjoint (see \cite{Kam} for an example of a geodesically complete globally hyperbolic manifold where this happens) and thus there is no functional calculus available. The best criterion for an analogue of the spectral action (which would no longer be "spectral" in the absence of spectral theory) would be the existence of an asymptotic expansion similar to the one above, with Hadamard coefficients instead of local heat coefficients. 

As the Lorentzian manifolds one generally considers are not compact, however, the integral used to define the global coefficients will generally not exist. Thus the best one can hope for is a corresponding ``action density'' that could then be integrated over compact sets. This means, to get an analogue of the spectral action, one would like a function on the manifold, with extra parameter $s$, that has an asymptotic expansion
\[L(x,s)\stackrel{s\rightarrow0}{\sim}\insum{k}c(f,k)\tr(V_x^k(x))s^{2k-\delta}\]
and is defined purely in terms of functional analytic quantities that could also be used in noncommutative geometry. 

This is both a motivation for and a possible approach toward finding a functional analytic formula for the (trace of) Hadamard coefficients on the diagonal (i.e. $V^k_x(x)$). If one has a formula for Hadamard coefficients, the postulation of such an expansion might reveal what an analogue of the spectral action should look like.
Conversely, if a function with the above asymptotic expansion could be defined, one could extract from this a formula for the Hadamard coefficients by extracting the individual expansion coefficients. We will not succeed in finding a suitable analogue for the spectral action in this thesis. However, we will construct a function with a similar expansion (with additional error terms) in chapter \ref{timeint}. This will still allow us (though in a more complicated way) to extract a formula for the Hadamard coefficients.

\subsection{Some conventions and notation}
\label{abuseofnotation}
We assume that all manifolds throughout this thesis are imbued with a canonical volume measure that has a smooth density with respect to the Lebesgue measure in any coordinate chart (this holds for manifolds with smooth metrics, which really are the only cases we care about). For these we will write $dx$ instead of $d\Vol(x)$.
As usual, we will identify a locally integrable function $f$ on a space $X$ with a canonical volume measure with the distribution $T_f$ given by
\[T_f(\phi)=\int\limits_X\phi f d\Vol.\]
In order to avoid confusion from this abuse of notation, we will write evaluation of distributions with square brackets:
\[\eta[\phi]:=\eta(\phi)\]
for some distribution $\eta$. By our identification, we thus denote
\[f[\phi]:=T_f(\phi)\]
for any locally integrable function $f$.

Complex numbers will sometimes be identified with the corresponding multiple of the identity or the constant function with that value.

Occasionally, we will use $C$ or $C'$ without further specification. These will denote arbitrary constants, that may take new values for each new equation.

We will often encounter definitions and statements that work in both directions of time symetrically, usually distinguished by an index $+$ or $-$. In order to avoid writing everything twice, we use the symbol $\pm$ to denote both cases at once. The use of $\pm$ or $\mp$ in a definition or theorem indicates that this should hold both with the upper sign chosen everywhere or with the lower sign chosen everywhere in that definition or theorem. In some cases we also need to change the wording depending on the choice of time alignment. In that case we will use a "/" to indicate that the first word is to be used with the upper sign and the second word is to be used with the lower sign.

For example, the statement "If $A$ is past/future compact, $J_\pm(A)\cap J_\mp(x)$ is compact." means "If $A$ is past compact, $J_+(A)\cap J_-(x)$ is compact and if $A$ is future compact, $J_-(A)\cap J_+(x)$ is compact.".

We introduce some identifications and abuse of notations for vector bundles:
\begin{gn}
	\label{identify}
	Throughout this thesis, for any vector spaces $V$, $W$, $U$ (mostly, these will be fibres of vector bundles), we will canonically identify:
	\begin{itemize}
		\item $\C^*$ with $\C$ (identifying $\lambda Id$ with $\lambda$)
		\item $V^{**}$ with $V$
		\item $V\otimes \C$ with $V$
		\item $(V\otimes W)^*$ with $V^*\otimes W^*$
		\item $W\otimes V^*$ with $Hom(V,W)$
		\item $(V\otimes W)\otimes U$ with $V\otimes(W\otimes U)$
		\item $V\otimes W$ with $W\otimes V$ if $V$ and $W$ are distinct and this identification is required for definitions to make sense.
	\end{itemize} 
	Moreover, we omit composition and evaluation of dual vectors and fibrewise linear maps from the notation, so that e.g. for $v\in V$, $w\in W^*$ and $A\in Hom(V,W)\cong W\otimes V^*\cong ((W^*)\otimes V)^*$, we write:
	\[wAv=w(A(v))=A(w\otimes v).\]
	We identify scalar functions on some manifold $X$ with sections of the vector bundle $X\times \C$.
\end{gn}
The main purpose of most of this abuse of notation is to be able to "multiply" sections in suitable vector bundles without having to define a zoo of canonical bilinear maps and having to do case distinctions for all of them. 

We introduce notation for Schwartz kernels (or integral kernels) of operators:
\begin{df}
	\label{SKnotation}
	Let $E$ anf $F$ be vector bundles. For a continuous linear operator $T\colon \Gamma_c(E)\rightarrow \D'(F)$, we denote by $\K(T)$ its Schwartz kernel, i.e. the unique distribution in $F\boxtimes  E^*$ such that for any test functions $\psi\in \Gamma_c(F^*)$ and $\phi\in \Gamma_c(E)$, we have
	\[\K(T)[\psi\otimes \phi]=T(\phi)[\psi].\]
\end{df}
The Schwartz kernel theorem asserts that this is well-defined.

\begin{gn}
	For some space of sections or distributions $D(E)$ in some vector bundle $E$ and a finite dimensional vector space $W$, we identify the corresponding space of sections/distributions $D(E\otimes W)$ with $D(E)\otimes W$. For a linear operator $T$ acting on $D(E)$, $\eta\in D(E)$ and $w\in W$, we define
	\[T(\eta\otimes w):=T(\eta)\otimes w,\]
	i.e. we identify $T$ with $T\otimes id$. This corresponds to applying $T$ to every "component" with respect to $W$.
\end{gn}
This will be needed because we will often have to deal with distributions in $E\otimes E^*_x$ for some vector bundle $E$ and $x\in M$, which correspond to Schwartz kernels of operators in $E$, restricted to $x$ in the second component. Application in the first component as defined here then corresponds to composition of operators.

\subsection{Asymptotic expansions}
An important notion in the following will be that of an asymptotic expansion. It is customarily written with an infinite summation sign, even though the sum as written may not converge (at least not in a usual topology - one could construct non-Hausdorff topologies, in which these asymptotic expansions actually correspond to limits). There will be two types of asymptotic expansions in this thesis: Asymptotic expansions in a parameter, where the sum gives an approximation up to arbitrary powers of some expansion parameter $s$ and asymptotic expansions in differentiability orders, where the approximation is up to arbitrarily differentiable functions. In the latter case, we will also need extra notation for the case where one argument is fixed. The basic idea in all cases is that finite partial sums of the right hand side approximate the left hand side up to arbitrary "order", which is given either by decay in a parameter or by differentiability.
\begin{df}\ 
	\label{dfasympt}
	\begin{itemize}
		\item Assume that $F$ and $f_n$ for $n\in \N$ are functions from an open interval containing $0$ to $\C$. We write
		\[F(s)\stackrel{s\rightarrow 0}{\sim}\insum{n} f_n(s)\]
		if and only if for any $m\in \N$, there is $N_0\in \N$, $C\in \R$ and $\epsilon>0$ such that for $|s|<\epsilon$ and $N\geq N_0$, we have 
		\[\Big|F(s)-\sum\limits_{n=0}^Nf_n(s)\Big|\leq Cs^m.\]
		\item Assume that $F$ and $f_n$ for $n\in \N$ are distributions on a manifold $X$. We write
		\[F\sim\insum{n} f_n\]
		if and only if for any $m\in \N$, there is $N_0\in \N$, such that we have for $N\geq N_0$
		\[F-\sum\limits_{n=0}^Nf_n\in C^m(X).\]
		\item Let $X$ and $Y$ be manifolds and assume that $F(x)$ and $f_n(x)$ for $n\in \N$ are distributions on $Y$ for any $x\in X$. We write
		\[F(x)\sim_x\insum{n} f_n(x)\]
		if and only if for any $m\in \N$, there is $N_0\in \N$ and $R\in C^k(Y\times X)$, such that we have for all $x\in X$ and $N\geq N_0$
		\[F(x)-\sum\limits_{n=0}^Nf_n(x)=R(\cdot,x).\]
	\end{itemize}
In each case, iterated sums are defined accordingly: For every $m$, sufficiently large partial sums of the right hand side should approximate the left hand side up to any differentiability order or decay order in $s$. 
\end{df}
Asymptotic expansions are robust under reordering or repackaging the summands:
\begin{prop}
	Let $\sim_*$ be any of the three asymptotic expansion types defined above.
	\begin{enumerate}
		\item Whether an asymptotic expansion holds only depends on the set of summands on the right hand side, not on the order of their summation.
		\item Assume that $A$, $A_k$ and $a_{kl}$ are elements in some space where $\sim_*$ makes sense. Assume furthermore that 
		\[A_k\sim_* \insum{l}a_{kl}.\] 
		Then we have
		\[A\sim_*\insum{k,l}a_{kl}\]
		if and only if 
		\[A\sim_*\insum{k}A_k.\]
	\end{enumerate}
\end{prop}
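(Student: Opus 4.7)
My plan is to handle the three asymptotic notions $\sim_*$ uniformly, since they differ only in the underlying notion of smallness (a power of $s$, the class $C^m$, or joint $C^m$-regularity in $Y\times X$), each of which behaves well under finite summation. The common starting point is the following direct consequence of the definition: if the partial sums $\sum_{n=0}^Nf_n$ approximate $F$ uniformly for all $N\geq N_0$, then by subtracting two such partial sums one sees that any consecutive tail $\sum_{n=N_1+1}^{N_2}f_n$ with $N_1,N_2\geq N_0$, and in particular each individual summand $f_n$ with $n>N_0$, is already of the target order.

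For Part (1), given a permutation $\sigma$ of $\N$ and a target order $m$, I would apply the hypothesis at a much higher order $m'\gg m$ to extract $N_0$, and use bijectivity of $\sigma$ to choose $N_0'$ with $\{\sigma(0),\dots,\sigma(N_0')\}\supseteq\{0,\dots,N_0\}$. For $N\geq N_0'$ and $M_N:=\max\{\sigma(0),\dots,\sigma(N)\}$, one decomposes
\[
F-\sum_{n=0}^N f_{\sigma(n)} = \Big(F-\sum_{n=0}^{M_N} f_n\Big) + \sum_{n\in\{0,\dots,M_N\}\setminus\{\sigma(0),\dots,\sigma(N)\}}f_n.
\]
The first bracket is of order $m'$ by hypothesis, and the residual sum ranges only over indices strictly greater than $N_0$; bounding these collectively using the $m'$-estimate absorbs the combinatorial multiplicity and yields order $m$ for the total.

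For Part (2), both directions reduce to combining estimates by the triangle inequality. For $(\Leftarrow)$, assume $A\sim_*\sum_kA_k$ in addition to the standing hypothesis $A_k\sim_*\sum_la_{kl}$. Given $m$, fix $K_0$ from the first hypothesis, and for each $k\leq K_0$ pick $L_0(k)$ from the second, large enough that the per-$k$ remainder is small enough that after summing $K_0+1$ contributions the total is still of order $m$; the double partial sum $\sum_{k=0}^{K_0}\sum_{l=0}^{L_0(k)}a_{kl}$ then approximates $A$ to order $m$, which is a sufficiently large partial sum of the iterated series. For $(\Rightarrow)$, given $A\sim_*\sum_{k,l}a_{kl}$, use the iterated hypothesis at sufficiently high order to find rectangular partial sums close to $A$, then invoke $A_k\sim_*\sum_la_{kl}$ at a yet higher order to replace each inner truncation by $A_k$ with negligible loss, and sum.

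The main obstacle is the combinatorial bookkeeping in Part (1), where a reordered partial sum can introduce many residual terms whose joint size is not immediate from the hypothesis applied only at the target order. The standard device is to invoke the hypothesis at an order sufficiently larger than the target so that the multiplicity of residual terms is absorbed; once this is set up, both parts come down to careful but essentially routine applications of the triangle inequality.
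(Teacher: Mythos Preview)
Your plan mirrors the paper's sketch: both rest on the observation that, from the definition, all but finitely many summands are already ``of order $m$'' (that is, $C^m$ or $O(s^m)$), so the problem should reduce to finitely many terms. For the two differentiability-based notions this is unproblematic, since $C^m$ is closed under arbitrary finite sums; your explicit decomposition in Part~(1) then works directly, and passing to a higher order $m'$ is not even needed.

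For the parameter notion $\stackrel{s\to 0}{\sim}$, however, your Part~(1) strategy has a real gap. The residual sum in your decomposition ranges over $M_N-N$ indices, and this number is \emph{unbounded in $N$}. Invoking the hypothesis at a fixed higher order $m'$ yields a per-term bound $2C's^{m'}$, but the total $(M_N-N)\cdot 2C's^{m'}$ is not uniformly $O(s^m)$: no fixed extra power $s^{m'-m}$ can absorb an $N$-dependent prefactor. In fact the claim appears to fail in this generality. Take $F=0$ and, for each $k\ge 1$, a block of $2\cdot k!$ consecutive summands alternating $+s^k,-s^k$. The original partial sums are always $0$ or $s^{k}$, so the expansion holds with $C=1$. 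Reordering each block so that the $+s^k$ terms come first produces partial sums as large as $k!\,s^k$; for any $\epsilon>0$ one has $k!\,(\epsilon/2)^{k-m}\to\infty$, so no uniform constant exists for any $m\ge 1$. The paper's argument is explicitly only a sketch and has the same lacuna; in the paper's actual applications only finitely many summands contribute to each power of $s$, and under that additional hypothesis the reduction to finite sums is legitimate.
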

\begin{proof}[Proof sketch]
	Let $m\in \N$ be arbitrary. If any asymptotic expansion holds in one of the above cases, all but finally many summands involved must be $C^m$ or $O(s^m)$ (depending on the type of asymptotic expansion), since otherwise we could not have equality up to $C^m$ resp. $O(s^m)$ for arbitrarily large partial sums. Thus, to check that the expansion holds up to degree $m$, it suffices to consider only finitely many summands. As the statements above hold for finite sums, they also hold for asymptotic expansions.
\end{proof}

\subsection{The Mellin transform}
\label{Mellin}
\begin{df}
	For a function $f\colon (0,\infty)\rightarrow \C$, define its Mellin transform via
	\[\M(f)(\alpha):=\intP f(x)x^{\alpha-1}dx\]
	for any $\alpha$ such that the integral exists. If $f$ is defined on a larger domain, its Mellin transform is defined as that of its restriction to $(0,\infty)$.
\end{df}
This is the analogue of the Fourier transform when identifying $\R$ with $(0,\infty)$ via the exponential map. For $f$ compactly supported in $(0,\infty)$, the integral always exists for any $\alpha$ and is holomorphic as a function of $\alpha$. For $f\in C_c^\infty(\R)$, the integral is still defined and holomorphic if $\Re(\alpha)>0$. We define it for $\Re(\alpha)<0$ by meromorphic continuation:
\begin{defprop}
	For $f\in C_c^\infty(M)$, the Mellin transform of $f$ admits a meromorphic extension to $\C$ with simple poles at most at negative integers (including zero), whose residues are given by
	\[\Res\limits_{\alpha=-k}\M(f)(\alpha)=\frac{(-1)^k}{k!}f^{(k)}(0)\]
	and no other poles. We also denote this extension by $\M(f)$.
\end{defprop}
\begin{rem}
	There is a more general connection between asymptotic expansions of a function at zero or infinity and the pole structure of its Mellin transform (see e.g. Proposition 2.1.2 and the preceding paragraph in \cite{Les} or \cite[Theorem 3]{FGD}).
\end{rem}
\begin{proof}
	For $\Re(\alpha)>0$, partial integration yields
	\[\M(f')(\alpha+1)=-\alpha \M(f)(\alpha).\]
	Using the left hand side of this equality as a definition for the right hand side, one can successively extend $\M(f)$ to the left meromorphically, with poles only at negative integers arising from the case $\alpha=0$. Taking the limit as $\alpha$ aproaches zero of the above equality and using the fundamental theorem of calculus, we obtain
	\[\Res_{\alpha=0}\M(f)(\alpha)=-\M(f')(1)=-\intP f'(x)dx=f(0).\]
	Applying the previous equality iteratively (by holomorphic continuation, it holds for arbitrary values of $\alpha$), we obtain
	\[\Res_{\alpha=-k}\M(f)(\alpha)=\frac{(-1)^k}{k!}\Res_{\alpha=0}\M(f^{(k)})(\alpha)=\frac{(-1)^k}{k!}f^{(k)}(0).\qedhere\]
\end{proof}
\subsection{Binomial coefficients}
We will need to use binomial coefficients for arbitrary complex arguments. Those are defined in terms of the Gamma function:
\[\binom{\alpha}{\beta}:=\frac{\Gamma(\alpha+1)}{\Gamma(\beta+1)\Gamma(\alpha-\beta+1)}\]
for any $\beta\in \C$ and $\alpha\in\C\backslash -\N$. In case both $\alpha$ is a negative integer and $\beta$ is also an integer, we define the binomial coefficient via continuous extension in $\alpha$, leaving $\beta$ fixed:
\[\binom{\alpha}{\beta}:=\lim\limits_{\alpha'\rightarrow\alpha}\binom{\alpha'}{\beta}.\]
This is holomorphic in $\alpha$ for fixed $\beta$. A consequence of this is that $\binom{\alpha}{\beta}$ vanishes whenever $\beta$ is a negative integer.

\section{Lorentzian geometry}
In this section we collect the facts about Lorentzian geometry that will be required for this thesis. More information on Lorentzian geometry can be found e.g. in \cite{One}.
\subsection{Lorentzian vector spaces}
We will first consider basic notions in Lorentzian vector spaces, before extending them to general Lorentzian manifolds.
\begin{df}
	A Lorentzian bilinear form on a d-dimensional vector space is a symmetric bilinear form $\eta$ with signature $(d-1,1)$. A Lorentzian vector space is a vector space with a Lorentzian bilinear form.
\end{df}
The basic example of a Lorentzian vector space is Minkowski space: $\R^d$ with the Lorentzian bilinear form 
\[\eta(x,y)=-x_0y_0+\sum\limits_{i=1}^{d-1}x_iy_i\]
(the indexing convention is from relativistic physics, where time gets the index $0$). Every Lorentzian vector space is isometric to Minkowski space (more or less directly by the definition of the signature), so most theorems only need to be shown for Minkowski space. Eventually, they will usually be applied to tangent spaces of a Lorentzian manifold.

In the following, let $V$ be a Lorentzian vector space with bilinear form $\eta$. The bilinear form singles out a ``general direction'' that has a different sign than the others (a negative sign with the above definition, but there are different sign conventions). This direction is regarded as ``time'', the others as ``space'', as made precise in the following definition:
\begin{wrapfigure}{r}{0.3\textwidth}
	\includegraphics{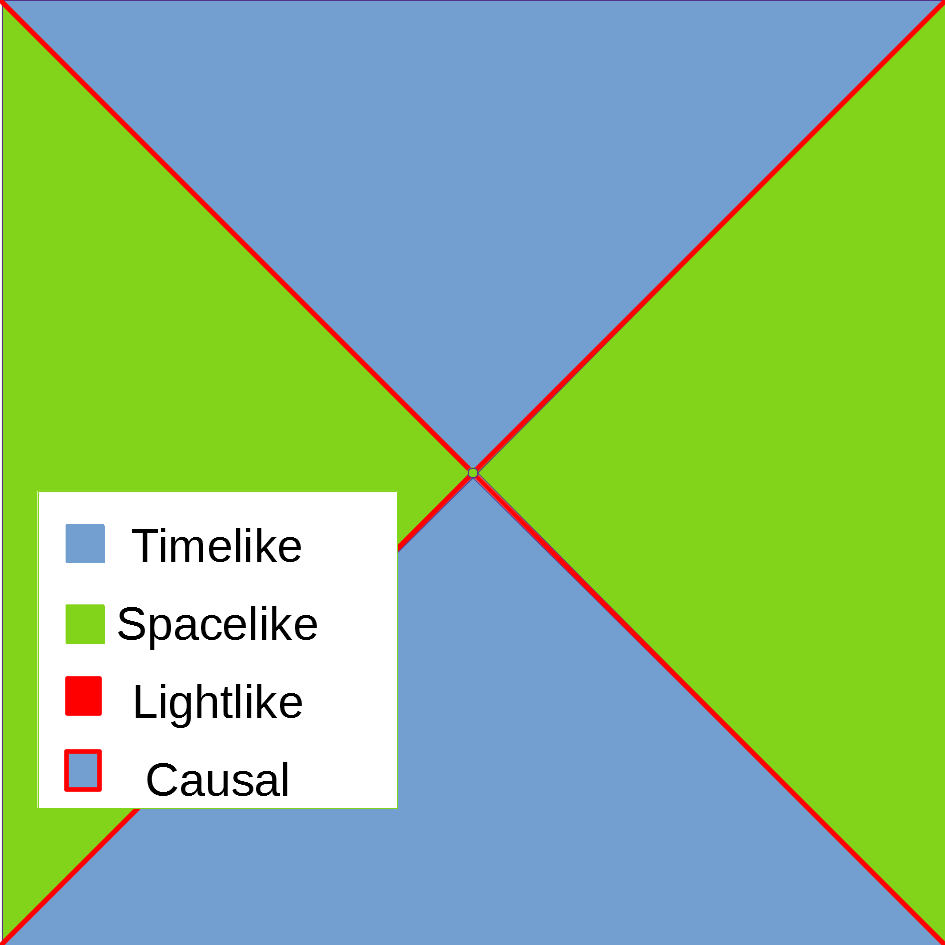}
	
\end{wrapfigure}
\begin{df}
	A vector $v\in V\O$ is called
	\begin{itemize}
		\item timelike, if $\eta(v,v)<0$
		\item spacelike, if $\eta(v,v)>0$
		\item lightlike, if $\eta(v,v)=0$
		\item causal, if $\eta(v,v)\leq 0.$
	\end{itemize}
$0$ is considered as spacelike.
\end{df}
The Minkowski-orthogonal complement of a timelike vector is spacelike:
\begin{lemma}
	\label{orthotime}
	If $x,y\in V$ satisfy $\eta(x,y)=0$ and $x$ is timelike, then $y$ is spacelike. If instead $y\in V^*$ and $y(x)=0$, then $y$ is spacelike with respect to the induced metric on $V^*$.
\end{lemma}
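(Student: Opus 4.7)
The plan is to reduce the statement to a standard fact about orthogonal decompositions of non-degenerate bilinear forms. The key observation is that since $\eta(x,x)<0$, the span of $x$ is a non-degenerate one-dimensional subspace with signature $(0,1)$, so we get an $\eta$-orthogonal decomposition $V=\mathbb{R}x\oplus x^\perp$, where $x^\perp=\{v\in V:\eta(x,v)=0\}$. By Sylvester's law of inertia, the signature of $\eta|_{x^\perp}$ must be $(d-1,0)$, i.e.\ $\eta$ restricts to a positive definite form on $x^\perp$. This is really the whole content of the first assertion.

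Concretely, I would first verify that $\mathbb{R}x\cap x^\perp=\{0\}$ (immediate from $\eta(x,x)\neq 0$) and that every $v\in V$ decomposes as $v=\tfrac{\eta(x,v)}{\eta(x,x)}x+(v-\tfrac{\eta(x,v)}{\eta(x,x)}x)$, with the second summand in $x^\perp$. Then the signature count above forces $\eta|_{x^\perp}$ to be positive definite. Any nonzero $y$ satisfying $\eta(x,y)=0$ lies in $x^\perp$, so $\eta(y,y)>0$ and $y$ is spacelike; the case $y=0$ is covered by the convention that $0$ is spacelike.

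For the dual statement, I would invoke the musical isomorphism $\flat\colon V\to V^*$, $v\mapsto \eta(v,\cdot)$, under which the induced bilinear form $\eta^*$ on $V^*$ is, by definition, the pushforward of $\eta$, so $\flat$ is an isometry onto $(V^*,\eta^*)$. Given $y\in V^*$ with $y(x)=0$, let $\tilde y\in V$ be its preimage. Then $\eta(\tilde y,x)=y(x)=0$, so by the first part $\tilde y$ is spacelike (or zero), hence $\eta^*(y,y)=\eta(\tilde y,\tilde y)\geq 0$ with equality only when $y=0$, giving the claim.

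There is no real obstacle here; the only mildly delicate point is making sure the convention "$0$ is spacelike" is folded in so that the statement is literally true for all $y$ (not just nonzero ones), and being explicit that the induced metric $\eta^*$ on $V^*$ is the one transported through the musical isomorphism, which is the standard choice and is consistent with signature $(d-1,1)$.
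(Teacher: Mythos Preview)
Your argument is correct. The first part via the orthogonal splitting $V=\mathbb{R}x\oplus x^\perp$ and Sylvester's law is a clean, coordinate-free route; the paper instead passes to Minkowski coordinates, writes $x=(x_0,x_r)$, $y=(y_0,y_r)$, and uses the elementary estimate
\[
|\eta(x,y)|\geq |x_0||y_0|-\|x_r\|\|y_r\|\geq |x_0|\big(|y_0|-\|y_r\|\big),
\]
together with $|x_0|>\|x_r\|$, to force $|y_0|<\|y_r\|$ (or $y=0$) when $\eta(x,y)=0$. Your approach is more structural and generalizes immediately to arbitrary signatures, while the paper's computation is entirely explicit and self-contained without invoking Sylvester. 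For the dual statement both you and the paper do exactly the same thing: pull $y$ back through the musical isomorphism and apply the first part.
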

\begin{proof}
	Without loss of generality, assume that $V$ is Minkowski space. Let $x=(x_0,x_r)$ and $y=(y_0,y_r)$. $x$ being timelike is equivalent to $|x_0|>\|x_r\|$.
	We have
	\[|\eta(x,y)|=|-x_0y_0+\<x_r,y_r\>|\geq|x_0||y_0|-\|x_r\|\|y_r\|\geq|x_0|(|y_0|-\|y_r\|),\]
	with equality only if $y_r=0$. Thus if the left hand side is zero, we either have $y=0$ or $|y_0|-\|y_r\|<0$, which means that $y$ is spacelike. This concludes the proof of the first statement.
	
	If $y$ is an element of the dual space, it is induced by some $y^\sharp\in V$ via
	\[y(w)=\eta(y^\sharp,w).\]
	The induced form on the dual is defined so that
	\[\eta(y,y):=\eta(y^\sharp,y^\sharp),\]
	so $y$ is spacelike if and only if $y^\sharp$ is.
	If
	\[\eta(y^\sharp,x)=y(x)=0,\]
	this is the case by the previous part.
\end{proof}
The causal vectors form two cones, known as the (solid) light cones.
\begin{prop}
	The set of causal vectors in $V$ has two connected components, each of which is convex. If $V$ is Minkowski space, these consist of the vectors with positive/negative $0$-component.
	A timelike vector $x$ and a causal vector $y$ are in the same connected component if and only if $\eta(x,y)<0$.
\end{prop}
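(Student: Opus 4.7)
The plan is to reduce to Minkowski space (since the statement is invariant under isometry) and then do everything with the explicit inner product. Write $x = (x_0, x_r)$ with $x_r \in \R^{d-1}$. The causal condition $\eta(x,x) \le 0$ together with $x \ne 0$ becomes $|x_0| \ge \|x_r\|$ with $x \ne 0$, and this immediately forces $x_0 \ne 0$ (otherwise also $x_r = 0$). Thus the set of causal vectors is partitioned by the sign of $x_0$ into two disjoint sets $C_+ = \{x_0 > 0\}$ and $C_- = \{x_0 < 0\}$; these are separated by the hyperplane $\{x_0 = 0\}$, so they are distinct connected components once we know each of them is connected.

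The key calculation will be to show that $\eta(x,y) \le 0$ whenever $x$ and $y$ are both causal with $x_0, y_0 > 0$. This follows from Cauchy-Schwarz:
\[
\eta(x,y) = -x_0 y_0 + \langle x_r, y_r\rangle \le -x_0 y_0 + \|x_r\|\|y_r\| \le -x_0 y_0 + x_0 y_0 = 0.
\]
Using this, I check convexity of $C_+$: for $x,y \in C_+$ and $t \in [0,1]$, the $0$-component of $tx + (1-t)y$ is strictly positive (so nonzero), and
\[
\eta(tx+(1-t)y, tx+(1-t)y) = t^2\eta(x,x) + 2t(1-t)\eta(x,y) + (1-t)^2\eta(y,y) \le 0,
\]
so $tx+(1-t)y \in C_+$. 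Convexity of $C_-$ is symmetric, and since convex sets are connected, the first two statements of the proposition follow.

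For the characterization via $\eta(x,y) < 0$, let $x$ be timelike and $y$ causal. After possibly replacing both by their negatives, assume $x_0 > 0$. If $y \in C_+$ (same component), the previous Cauchy-Schwarz estimate tightens to a strict inequality because $x$ being timelike means $|x_0| > \|x_r\|$: concretely, $x_0 y_0 > \|x_r\| y_0 \ge \|x_r\|\|y_r\| \ge \langle x_r, y_r\rangle$ (handling $\|x_r\| = 0$ separately, where $\eta(x,y) = -x_0 y_0 < 0$ is immediate). If instead $y \in C_-$, apply the same argument to $-y \in C_+$ to get $\eta(x,y) > 0$. The case $\eta(x,y) = 0$ cannot occur, since Lemma \ref{orthotime} would then force $y$ to be spacelike, contradicting causality.

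The whole argument is essentially a bookkeeping of strict versus non-strict Cauchy-Schwarz, so there is no real obstacle; the only slightly delicate point is not to lose strictness in the last step, which is why the timelike hypothesis on $x$ is used precisely to upgrade $|x_0| \ge \|x_r\|$ to $|x_0| > \|x_r\|$ and to exclude the borderline case $\eta(x,y) = 0$ via the earlier lemma.
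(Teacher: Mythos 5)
Your proof is correct. For the first two claims (two convex connected components, separated by the sign of the $0$-component) you follow essentially the same route as the paper: reduce to Minkowski space, establish $\eta(x,y)\le 0$ for two causal vectors in the same half-space via Cauchy--Schwarz, and deduce convexity of each half. For the final claim the paper takes a different, more topological route: since $x$ is timelike, Lemma \ref{orthotime} shows $\eta(x,\cdot)$ is nowhere zero on the causal vectors, so the preimages of $(0,\infty)$ and $(-\infty,0)$ are clopen, nonempty (they contain $-x$ and $x$), and must therefore be exactly the two components already identified. You instead upgrade the Cauchy--Schwarz chain to a strict inequality using $|x_0|>\|x_r\|$, which is a perfectly valid and slightly more hands-on computation; your appeal to Lemma \ref{orthotime} to rule out $\eta(x,y)=0$ is then redundant, since the strict chain already excludes it, but it does no harm. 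The paper's argument buys a cleaner treatment of the sign dichotomy with no further case analysis, while yours is self-contained at that step and makes the quantitative source of strictness explicit.
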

\begin{proof}	
	Without loss of generality, assume that $V$ is Minkowski space. Let $x=(x_0,x_r)$ and $y=(y_0,y_r)$ be causal vectors with $x_0,y_0>0$.
	We have
	\[\eta(x,y)=-x_0y_0+\<x_r,y_r\>\leq -x_0y_0+\|x_r\|\|y_r\|\leq -x_0y_0+x_0y_0=0\]	
	and thus for $a,b\in (0,1)$
	\[\eta(ax+by,ax+by)=a^2\eta(x,x)+b^2\eta(y,y)+2ab\eta(x,y)<0.\]
	Thus the set of causal vectors with positive $0$-component is convex and hence connected. The same holds for those with negative $0$ component, because $-x$ is causal if and only if $x$ is. As there are no causal vectors with $x_0=0$, those two sets are disconnected from each other. This proves the first two statements.
	
	If $x$ is timelike, Lemma \ref{orthotime} implies the function $\eta(x,\cdot)$ does not take the value 0 on the set of causal vectors. As it is also continuous, the preimage of $(0,\infty)$ and $(-\infty,0)$ must be both open and closed. As neither is empty, since they contain $-x$ resp. $x$, they must be the two connected components decribed above. Thus the connected component of $x$ consists of all those $y$ with $\eta(x,y)<0$.	
\end{proof}
In many cases, it is desirable to be able to have a time direction, i.e. to be able to talk about future and past. This is called a time-orientation. Rather than choosing a specific vector as the future direction, one only specifies which light cone points into the future and which one points into the past.
\begin{df}
	A time-orientation on a Lorentzian vector space $V$ is a choice of one connected component of the set of causal vectors. Vectors in this component will be called future oriented, vectors in the other component will be called past oriented. On Minkowski space, one canonically takes those vectors with positive $0$-component to be future-directed. In the following, we will assume that all Lorentzian vector spaces are time-oriented.
\end{df}
\subsection{Lorentzian manifolds}
We now get to Lorentzian manifolds. These are manifolds where each tangent space is a Lorentzian vector space.
\begin{df}
	A Lorentzian manifold is a smooth manifold $M$ (of dimension $d$) with a smooth $(0,2)$-tensor field $g$ that is a Lorentzian bilinear form on each fibre $T_x M$. g is called a Lorentzian metric.
\end{df}

In order to talk about future and past, one has to choose a time-orientation at each point. A timelike vector field naturally defines such a time-orientation in a continuous way. This does not exist on arbitrary Lorentzian maniflolds, so we have to assume its existence. Note that this vector field is far from unique or canonical, but only used to ensure the choice of orientations is continuous.
\begin{df}
	A continuous timelike vector field $\tau$ on $M$ induces a time-orientation on each tangent space: A causal vector $v\in T_x M$ is called future oriented (with respect to $\tau$), if $g(v,\tau(x))<0$ and past oriented otherwise. A time-orientation on $M$ is a collection of time-orientations on each $T_x M$ that arises in this way. $M$ is called time-orientable, if a time-orientation exists, and time-oriented, if one has been chosen.
\end{df}
\begin{gn}
	\label{gnM}
	Throughout this paper, $M$ will be a time-oriented Lorentzian manifold of dimension $d\geq2$ with metric $g$.
\end{gn}
We will later additionally assume that $M$ is globally hyperbolic (once we have defined what that means).

The definitions made above for vectors can be extended to curves by looking at their tangent vectors:
\begin{df}
	A piecewise smooth curve $\gamma$ is called future/past oriented, if the same is true for every tangent vector $\gamma'(t)$. A future or past oriented curve is called  timelike/lightlike/causal if the same is true for every tangent vector.
\end{df}
Using this, one can also talk about the future and past of a point $p\in M$, i.e. all points that can be reached from $p$ via a future/past oriented curve:
\begin{df}
	\label{futurepast}
	The causal future/past of $A\subset M$, denoted by $J_\pm(A)$, is the set of all points $y\in M$. such that there is a future/past oriented causal curve $\gamma\colon [s,t]\rightarrow M$ (for $s,t\in\R$) with $\gamma(s)\in A$ and $\gamma(t)=y$.
	The causal future/past of a point $x\in M$ is that of the corresponding one-point set
	\[J_\pm(x):=J_\pm(\{x\}).\]
	Set $J(A):=J_+(A)\cup J_-(A)$. Here we allow trivial curves, i.e. $x\in J_\pm(x)$.
\end{df}
This causal structure is central to Lorentzian geometry and will be used frequently throughout this thesis. Whenever it is unclear in which manifolds this is done, the manifold will be indicated as a superscript.
We observe that $J_\pm(J_\pm(A))=J_\pm(A)$, as we can concatenate future/past directed curves.

An important condition we have to impose on our manifold is that of global hyperbolicity. This means that there is a surface, called a Cauchy hypersurface, that can be thought of as all of space at one instant of time.

\begin{df}
	A Cauchy hypersurface in $M$ is a hypersurface that is intersected exactly once by any inextensible timelike curve. 
	
	A connected timeoriented Lorentzian manifold is called globally hyperbolic, if it contains a Cauchy hypersurface.
\end{df}
One example of a globally hyperbolic spacetime is Minkowski space.
 There are several equivalent ways of characterizing global hyperbolicity (see \cite[Theorem 1.3.10]{BGP})
\begin{thm}
	\label{Ghyper}
	The following are equivalent:
	\begin{enumerate}
		\item For any $x,y\in M$, $J_+(x)\cap J_-(y)$ is compact and every point in $M$ has a neighborhood basis of open sets $V$ such that every causal curve with endpoints in $V$ lies entirely in $V$ (the second part is known as strong causality).
		\item $M$ contains a Cauchy hypersurface.
		\item There is a $d-1$ dimensional manifold $\Sigma$, a family of Riemannian metrics $(g_t)_{t\in \R}$ on $\Sigma$ and a smooth positive function $\beta$ on $\Sigma\times \R$ such that $M$ is isometric to $\Sigma \times \R$ with metric $g_t\oplus -\beta dt^2$. This can be chosen so that each $\Sigma\times \{t\}$ is a smooth spacelike Cauchy hypersurface.
	\end{enumerate}
\end{thm}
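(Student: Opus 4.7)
The plan is to establish the equivalences in a cyclic fashion: first I would show $(3)\Rightarrow(2)\Rightarrow(1)\Rightarrow(3)$, since the first two implications are relatively elementary and the difficult content sits in the final implication (the Bernal--S\'anchez smooth splitting theorem). Throughout, the key technical tool is the behaviour of causal curves relative to Cauchy hypersurfaces.

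The implication $(3)\Rightarrow(2)$ is immediate from the construction: any slice $\Sigma\times\{t_0\}$ in the product structure is a smooth spacelike Cauchy hypersurface, since the $t$-coordinate along any inextensible timelike curve is strictly monotonic (by the timelike nature of $\partial_t$) and exhausts $\R$ (by inextensibility, using the completeness argument in the product metric). For $(2)\Rightarrow(1)$, I would follow Geroch's classical argument: given a Cauchy hypersurface $\Sigma$, every inextensible causal curve meets $\Sigma$ in exactly one point, which lets one push compactness of $J_+(x)\cap J_-(y)$ back to a limit-curve argument on $\Sigma$ (any sequence of causal curves from $x$ to $y$ meets $\Sigma$ in a bounded set, and a limit curve extraction yields a causal limit curve, showing the causal diamond is compact). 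Strong causality follows because any closed or almost-closed causal curve would violate the ``exactly once'' intersection property with $\Sigma$ after suitable local adjustment.

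For $(1)\Rightarrow(3)$, the strategy is to construct a \emph{smooth} Cauchy temporal function $t\colon M\to\R$ whose gradient is timelike. First one builds Geroch's continuous Cauchy time function by integrating the volume of causal pasts against a finite measure and taking $t(x)=\log(\Vol(J^-(x))/\Vol(J^+(x)))$; the compactness of causal diamonds from (1), together with strong causality, makes this function continuous and strictly increasing along future causal curves, with level sets that are topological Cauchy hypersurfaces. The hard part is to then smooth this time function while preserving both the Cauchy property and the timelike-gradient condition; this is the Bernal--S\'anchez theorem, and it requires a delicate averaging and approximation procedure on a locally finite cover by globally hyperbolic open sets. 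Once such a smooth temporal function $t$ exists, one sets $\Sigma:=t^{-1}(0)$, uses the flow of $-\grad t/g(\grad t,\grad t)$ to produce a diffeomorphism $M\cong \Sigma\times\R$, and reads off the metric in the resulting coordinates to obtain the form $g_t\oplus -\beta\,dt^2$ (orthogonality of $\Sigma\times\{t\}$ to $\partial_t$ follows from $\partial_t$ being proportional to $\grad t$).

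The main obstacle is unambiguously the construction of the smooth Cauchy temporal function in $(1)\Rightarrow(3)$: Geroch's original time function is only continuous, and producing a smooth approximation that is \emph{still} Cauchy and \emph{still} has everywhere timelike gradient is a nontrivial result due to Bernal and S\'anchez. For the purposes of this thesis, however, the theorem is invoked as a black box from \cite{BGP}, so in practice I would simply cite the result rather than reprove it.
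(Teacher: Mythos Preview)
Your proposal is correct and, in fact, goes further than the paper: the paper does not prove this theorem at all but simply cites \cite[Theorem 1.3.10]{BGP} as a preliminary fact. Your outline of the standard Geroch/Bernal--S\'anchez argument is accurate, and your final remark that the result is invoked as a black box is exactly what the paper does.
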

\begin{gn}
	From now on, we will always assume that $M$ is globally hyperbolic.
\end{gn}
One consequence is that no point is in the interior of its own future or past:
\begin{prop}
	\label{xinbound}
	Each $x\in M$ is in the topological boundary of $J(x)$.
\end{prop}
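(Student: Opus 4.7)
My plan is to exploit the product structure provided by Theorem \ref{Ghyper}(3), which characterizes global hyperbolicity: write $M$ isometrically as $\Sigma \times \R$ with metric $g_t \oplus -\beta\,dt^2$, where each slice $\Sigma \times \{t\}$ is a smooth spacelike Cauchy hypersurface. The topological boundary is $\partial J(x)=\overline{J(x)} \cap \overline{M\setminus J(x)}$, so since $x \in J_\pm(x) \subseteq J(x) \subseteq \overline{J(x)}$ is automatic from Definition \ref{futurepast} (we allow the trivial curve), I only need to exhibit a sequence in $M\setminus J(x)$ converging to $x$.

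The key computation is that in these coordinates, a tangent vector $(v,\dot t)\in T_{(y,t)}M$ is causal iff $g_t(v,v) \leq \beta(y,t)\dot t^2$. Since $\beta>0$ and $g_t$ is Riemannian on $\Sigma$, this forces $\dot t\neq 0$ whenever $v \neq 0$. Therefore, along any non-constant causal curve $\gamma(s)=(\gamma_\Sigma(s),t(s))$, the time coordinate $t(s)$ is strictly monotone (future-directed means strictly increasing). Consequently, if $x=(x_\Sigma,t_0)$ and $y=(y_\Sigma,t_0)$ with $y_\Sigma \neq x_\Sigma$, then $y\notin J_+(x)\cup J_-(x)$: any future-directed causal curve from $x$ would have to end at $t>t_0$, and similarly for the past, while the trivial curve ends at $x$ itself. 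Hence $y \in M \setminus J(x)$.

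Taking a sequence $y_n = (y_{\Sigma,n},t_0)$ with $y_{\Sigma,n} \to x_\Sigma$ (which exists since $\Sigma$ is a manifold of positive dimension), we obtain $y_n \to x$ in $M$, so $x \in \overline{M\setminus J(x)}$, completing the proof. The only subtle point—the would-be main obstacle—is handling the case where a causal curve could in principle be instantaneously tangent to $\Sigma\times\{t_0\}$; the computation above shows this forces $v=0$, so the curve is locally constant there. This can be dealt with by noting that non-trivial causal curves have monotone $t$-coordinate on each of their parameter intervals, which is all that is needed to rule out $y \in J(x)$.
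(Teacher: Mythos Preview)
Your proof is correct and follows essentially the same approach as the paper's: both pass a spacelike Cauchy hypersurface through $x$ (via Theorem~\ref{Ghyper}) and observe that the other points of this hypersurface lie outside $J(x)$, providing a sequence in $M\setminus J(x)$ converging to $x$. The only difference is cosmetic: you verify the key fact by a direct computation with the product metric, whereas the paper simply invokes the defining property of a Cauchy hypersurface.
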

\begin{proof}
	$x$ is always contained in $J(x)$. By Theorem \ref{Ghyper}, $x$ is contained in some Cauchy hypersurface $\Sigma$. As this has dimension $d-1\geq 1$, there is a sequence in $\Sigma\backslash\{x\}$ that converges to $x$. As every causal curve through $x$ cannot intersect $\Sigma$ in any other point (by definition of a Cauchy hypersurface), that sequence is in the complement of $J(x)$, so we can conclude that $x$ is in its boundary.
\end{proof}
We will often need to talk about sets that are compact in the past/future:
\begin{df}
	A set $A\subseteq M$ is called past/future compact, if for any $x\in M$, the intersection $A\cap J_\mp(x)$ is compact. It is called strictly past/future compact, if it is closed and contained in the future/past of a compact set.
\end{df}

Strictly past/future compact sets are indeed, as the name suggests, past/future compact. Moreover, one may replace the point in the definition with an arbitrary compact set.
\begin{prop}
	If $K$ and $K'$ are compact, then $J_+(K)\cap J_-(K')$ is compact. In particular, every strictly past/future compact set is past/future compact.
\end{prop}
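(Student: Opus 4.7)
The plan is to reduce the compact-set statement to the point-case of Theorem \ref{Ghyper}(1) via a finite-cover argument, and then to verify closedness. For each $x \in K$ I pick a point $x^{-}$ in the timelike past of $x$ (a short past-directed timelike geodesic through $x$ yields one). The set $U(x^{-})$ of points reachable from $x^{-}$ by a future-directed timelike curve is open (small perturbations of a timelike curve remain timelike) and contains $x$, while $U(x^{-}) \subseteq J_{+}(x^{-})$. By compactness of $K$, finitely many $U(x_{1}^{-}), \dots, U(x_{n}^{-})$ cover $K$; so for any $p \in J_{+}(K)$, writing $p \in J_{+}(x)$ with $x \in K$ and picking $i$ with $x \in U(x_{i}^{-})$, monotonicity of $J_{+}$ gives $p \in J_{+}(x_{i}^{-})$. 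Thus $J_{+}(K) \subseteq \bigcup_{i} J_{+}(x_{i}^{-})$, and symmetrically $J_{-}(K') \subseteq \bigcup_{j} J_{-}(y_{j}^{+})$ for finitely many $y_{j}^{+}$ in the timelike future of chosen $y_{j} \in K'$. Intersecting yields
\[
J_{+}(K) \cap J_{-}(K') \subseteq \bigcup_{i,j} \bigl( J_{+}(x_{i}^{-}) \cap J_{-}(y_{j}^{+}) \bigr),
\]
a finite union of sets compact by Theorem \ref{Ghyper}(1), hence a set with compact closure.

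To upgrade precompactness to compactness I would show $J_{+}(K)$ and $J_{-}(K')$ are closed. In a globally hyperbolic manifold the causal relation $\leq$ (defined by $x \leq y$ iff $y \in J_{+}(x)$) is closed; this is a standard consequence of the Bernal--S\'{a}nchez splitting $M \cong \Sigma \times \R$ of Theorem \ref{Ghyper}(3), since causal curves are monotone in the time coordinate $t$, and limits of such curves (controlled on the precompact region above by Arzel\`{a}--Ascoli after reparametrisation by $t$) remain causal. Granting this, if $p_{n} \to p$ with $p_{n} \in J_{+}(K)$, one picks $x_{n} \in K$ with $x_{n} \leq p_{n}$, passes to a subsequence with $x_{n} \to x \in K$ by compactness of $K$, and concludes $x \leq p$, so $p \in J_{+}(x) \subseteq J_{+}(K)$. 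Hence $J_{+}(K)$, and likewise $J_{-}(K')$, is closed, so their intersection is a closed subset of the precompact set above --- hence compact.

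For the ``in particular'' assertion, suppose $A$ is strictly past compact, so $A$ is closed and $A \subseteq J_{+}(K_{0})$ for some compact $K_{0}$. Then for any $x \in M$,
\[
A \cap J_{-}(x) \subseteq J_{+}(K_{0}) \cap J_{-}(\{x\}),
\]
which is compact by the main statement. As $A$ is closed, $A \cap J_{-}(x)$ is a closed subset of this compact set and therefore itself compact, so $A$ is past compact; the future-compact case is symmetric. The main obstacle in the whole argument is really just the closedness of the causal relation, which is not proved in the excerpt but is a well-known feature of global hyperbolicity; should one wish to avoid invoking it as a black box, one can deduce it directly on the precompact region above from the $\Sigma \times \R$ coordinates of Theorem \ref{Ghyper}(3) via the Arzel\`{a}--Ascoli argument indicated.
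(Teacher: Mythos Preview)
The paper does not actually prove this proposition; it simply cites Lemma A.5.7 of \cite{BGP} for the main statement and remarks that the ``in particular'' clause follows by taking $K'$ (resp.\ $K$) to be a single point. Your proposal, by contrast, supplies a self-contained argument: the finite-cover reduction to the point case of Theorem~\ref{Ghyper}(1) via chronological futures/pasts is correct and standard, and your derivation of the ``in particular'' clause matches the paper's one-line remark exactly. The only external input you need beyond what is stated in the paper is the closedness of the causal relation on globally hyperbolic spacetimes, which you rightly flag and which is indeed a standard consequence of global hyperbolicity (and is in fact what the reference \cite{BGP} uses as well). So your argument is correct and more detailed than what the paper provides; it is essentially the proof one would find upon following the citation.
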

(see Lemma A.5.7 in \cite{BGP}, with the second statement following by inserting a point for $K'$ resp. $K$)
\begin{prop}
	If $A$ is past/future compact and $K$ is compact, then
	\[A\cap J_{\mp}(K)\]
	is compact.
\end{prop}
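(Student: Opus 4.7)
The plan is to reduce the statement with a compact set $K$ to the pointwise statement that is immediate from the definition of past/future compactness. I will treat the upper-sign (past compact) case, the lower-sign case being entirely symmetric by reversal of time orientation.

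The key step is to cover $K$ by finitely many open chronological pasts. For each $x\in K$, I pick a point $y_x$ in the chronological future of $x$; such a point exists because the tangent space $T_xM$ contains future-directed timelike vectors, and the exponential map produces such a point in any sufficiently small geodesic ball. Writing $I^-(y_x)$ for the (open) set of points reachable from $y_x$ by a past-directed timelike curve, one has $x\in I^-(y_x)$, so the sets $\{I^-(y_x)\}_{x\in K}$ form an open cover of $K$. Compactness of $K$ yields a finite subcover $I^-(y_1),\ldots,I^-(y_n)$. Since $J_-$ is transitive and $I^-\subseteq J_-$, this gives
\[J_-(K)\;\subseteq\;J_-\!\left(\bigcup_{i=1}^n J_-(y_i)\right)\;=\;\bigcup_{i=1}^n J_-(y_i).\]

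Intersecting with $A$ yields $A\cap J_-(K)\subseteq\bigcup_{i=1}^n(A\cap J_-(y_i))$, and each summand on the right is compact by past compactness of $A$. Hence $A\cap J_-(K)$ sits inside a compact set, and it suffices to verify that it is closed. The set $J_-(K)$ is closed for compact $K$ in a globally hyperbolic spacetime (a standard consequence of the closedness of the causal relation $J$, related to the characterization in Theorem \ref{Ghyper}). The set $A$ is itself closed: given $y\in\overline{A}$, take $z$ chronologically future of $y$ and observe that an approaching sequence eventually lies in the open neighborhood $I^-(z)\subseteq J_-(z)$, hence in the compact (thus closed) set $A\cap J_-(z)$, forcing $y\in A$. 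Combining these, $A\cap J_-(K)$ is closed and contained in a compact set, hence compact.

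The main obstacle, or at least the only non-formal step, is passing from the a priori uncountable union $J_-(K)=\bigcup_{x\in K}J_-(x)$ to a finite one, since past compactness of $A$ only supplies compactness of $A$ intersected with one past cone at a time. The trick that makes this work is replacing the closed cones $J_-(x)$ (which need not form open covers of $K$, as $x$ lies on the boundary of $J(x)$ by Proposition \ref{xinbound}) by the open chronological pasts $I^-(y_x)$ of slightly later points, exploiting compactness of $K$ together with the transitivity of $J_-$.
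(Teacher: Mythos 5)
Your argument is correct. Note that the paper itself does not prove this proposition at all: it simply cites Theorem 3.1 of S\'anchez's paper, so any honest proof is "a different route". Your covering trick is the standard one and it works: replacing the closed cones $J_-(x)$, which cannot cover $K$ openly since $x\in\partial J(x)$, by the open chronological pasts $I^-(y_x)$ of slightly later points is exactly the right move, and transitivity of $J_-$ then reduces $J_-(K)$ to a finite union $\bigcup_i J_-(y_i)$, whence $A\cap J_-(K)$ lies in the compact set $\bigcup_i (A\cap J_-(y_i))$. Your observation that a past compact set is automatically closed (trap a limit point of $A$ inside some compact $A\cap J_-(z)$ via the open neighborhood $I^-(z)$) is a nice self-contained step. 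The only ingredients you import without proof are the openness of chronological pasts $I^\pm$ and the closedness of $J_\mp(K)$ for compact $K$ in a globally hyperbolic spacetime; both are standard facts (the latter genuinely uses global hyperbolicity, which the paper has assumed by this point, and is where the real content of the external citation lives), so flagging them as such is acceptable, but be aware that the closedness of the causal relation is the one nontrivial input your proof does not reprove.
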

(see Theorem 3.1 in \cite{Sa})

One further definition that we will need is that of causal compatibility, which essentially states that the causal structure of a subset is compatible with that of the manifold:
\begin{df}
	An open subset $U$ of $M$ is called causally compatible, if for any $A\subseteq U$
	\[J_\pm^U(A)=J_\pm^M(A)\cap U,\]
	where superscripts on $J_\pm$ indicate in which manifold the future/past is taken. 
\end{df} 
In other words, any two points in $U$ that can be connected by a causal curve in $M$ can also be connected by a causal curve in $U$.

As in the Riemannian case, one can define a Levi-Civita connection in the Lorentzian case and use this to define geodesics and exponential maps.
The exponential map is particularly useful on subsets where they are diffeomorphisms
\begin{df}
	An open subset $U\subseteq M$ is called geodesically convex (or convex, for short), if the exponential map defines a diffeomorphism from an open neighborhood of the zero section of $TU$ to $U\times U$.
\end{df}
Every point in $M$ has a neighborhood basis of convex subsets.
For such $U$, the exponential map defines a diffeomorphism
from an open subset of $TU$ to $U\times U$. 
The exponential map also preserves causality in the following sense (see \cite[Proposition 4.5.1]{HaEl}\footnote{The book works with $d=4$, as it is motivated by physical applications, but this is not used anywhere in the proof.}):
\begin{prop}
	\label{exJ}
	Let $U\subseteq M$ be convex. Any $x,y\in U$ that can be joined by a future/past oriented causal (resp. timelike) curve can also be joined by a future/past oriented causal (resp. timelike) geodesic in $U$. This means that
	\[J_\pm(x)=exp_x(J_\pm(0)).\]
\end{prop}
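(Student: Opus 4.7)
The plan is to work in normal coordinates centered at $x$: by convexity of $U$, the exponential map $\exp_x$ is a diffeomorphism from an open star-shaped neighborhood $V \subseteq T_x M$ of the origin onto $U$, sending $0$ to $x$. The tangent space $T_x M$ with its form $g_x$ and the inherited time-orientation is itself a Lorentzian vector space, with its own solid light cones $J_\pm(0) \subseteq T_x M$, and the claim amounts to showing that under the identification via $\exp_x$ one has $J_\pm^U(x) = \exp_x(J_\pm(0) \cap V)$. I would prove the two inclusions separately.

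The inclusion $\exp_x(J_\pm(0) \cap V) \subseteq J_\pm^U(x)$ is the easy direction. For $v \in J_+(0)$ with $tv \in V$ for all $t \in [0,1]$, the radial curve $\gamma_v(t) := \exp_x(tv)$ is a geodesic in $U$ joining $x$ to $\exp_x(v)$. Since $g(\gamma_v', \gamma_v')$ is constant along any geodesic and evaluates to $g_x(v,v) \leq 0$ at $t=0$, the curve is causal throughout; its tangent never vanishes, so the time-orientation of $\gamma_v'(t)$ propagates by continuity, and $\gamma_v'(0) = v$ being future-directed forces the whole curve to be future-directed. The timelike case uses the same argument with a strict inequality.

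The reverse inclusion $J_\pm^U(x) \subseteq \exp_x(J_\pm(0))$ is the main obstacle. Given a future-directed causal curve $\gamma : [0,1] \to U$ with $\gamma(0) = x$, I would set $A := \{\, t \in [0,1] : \exp_x^{-1}(\gamma(t)) \in J_+(0)\,\}$ and aim to show $A = [0,1]$ via a connectedness argument. Closedness of $A$ is immediate from continuity of $\exp_x^{-1}$ and closedness of the solid Minkowski cone $J_+(0)$ in $T_x M$. Openness of $A$, together with the assertion that $A$ contains an initial interval $[0,\varepsilon)$, requires a propagation-of-causality step in which the true metric $g$ on $U$ is compared with the flat form $g_x$ on $T_x M$ via the normal-coordinate identification. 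The main tool here is the pseudo-Riemannian Gauss lemma, which states that radial geodesics from $x$ are $g$-orthogonal to the level spheres of the $g_x$-norm in normal coordinates, and from which one extracts enough control on the radial quadratic function $\rho(y) := g_x(\exp_x^{-1}(y), \exp_x^{-1}(y))$ along $\gamma$ to conclude that $\exp_x^{-1}(\gamma(t))$ cannot escape $J_+(0)$. This Gauss-lemma comparison and the accompanying initial-time estimate form the bulk of the technical work and are where I expect the real difficulty to lie; both are standard and carried out in detail in \cite{HaEl}, whose treatment I would follow. The timelike case is identical with open cones and strict inequalities throughout.
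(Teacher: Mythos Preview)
Your proposal is correct and matches the paper's approach: the paper simply cites \cite[Proposition 4.5.1]{HaEl} for this result, and you outline precisely the normal-coordinates/Gauss-lemma argument carried out there. Your sketch in fact gives more detail than the paper itself, which defers entirely to the reference.
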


\section{Function spaces and distributions in vector bundles}
We now define spaces of functions and distributions that will be used throughout the paper. Readers will probably be familiar with at least some of them, but we also want to fix notation. Let $E$ be a vector bundle over a manifold $X$. Let $\kappa$ be a locally finite collection of charts covering $X$ and $\tau$ a locally finite collection of local trivializations.
\begin{df}
	For a compact set $K\subseteq X$,  and $n\in \N$ , we define $C^n$-seminorms on $n$ times continuously differentiable sections by
	\[\|f\|_{C^n(K)}:=\sup\{\|\Psi\circ f|_K\circ\phi^{-1}\|_{C^n}\mid\phi\in \kappa, \Psi\in\tau\}.\]
	Here we use the convention that all functions in a composition automatically have their domain restricted to those elements on which the composition is defined.
\end{df}
In general, this depends on the choice of $\tau$ and $\kappa$, but the seminorms for different choices are equivalent. As we do not care about numerical values in this thesis, we can thus assume that we have chosen a locally finite collection of charts and local trivializations for each vector bundle that will be considered in this thesis and interpret all $C^k$-norms as being taken with respect to those.

The following spaces are fairly standard:
\begin{df}
	\label{dfsections}
	Let $E$ be a vector bundle over a manifold $X$.
	For each $k\in\N\cup\{\infty\}$, let $\Gamma^k(E)$ be the space of all $k$ times continuously differentiable sections with topology induced by the seminorms $\|\cdot\|_{C^n(K)}$ for all compact subsets $K\subseteq M$ and $n= k$ in case $k\in \N$ or all $n\in \N$ in case $k=\infty$. 
	
	For $A\subseteq X$, let $\Gamma^k_A(E)$ be those sections whose support is contained in $A$, with the subspace topology.
	
	Let $\Gamma^k_c(E)$ be the space of all compactly supported $C^k$-sections, with the limit topology obtained from the spaces $\Gamma^k_K(E)$ for $K\subseteq X$ compact.
	
	In case $k=\infty$, we may omit the superscript and just write $\Gamma$ instead of $\Gamma^\infty$.
\end{df}
Somewhat less well-known variants in Lorentzian geometry are the following spaces:
\begin{df}
	Let $E$ be a vector bundle over $M$.
	For $k\in\N\cup\{\infty\}$, let $\Gamma^k_\pm(E)$ be the space of past/future compactly supported $C^k$-sections, with the limit topology obtained from all spaces $\Gamma^k_A(E)$ for $A\subseteq M$ past/future compact.
	
	Let $\Gamma^k_{s\pm}(E)$ be the space of strictly past/future compactly supported $C^k$-sections, with the limit topology obtained from all spaces $\Gamma^k_A(E)$ for $A\subseteq M$ strictly past/future compact.
	
	Again, we may omit $k$ if it is $\infty$.
\end{df}
As a special case of the above definitions, when $E=M\times \C$, we obtain spaces of complex valued functions, in which case we will write $C^k_\cdot(M)$ instead of $\Gamma^k_\cdot(E)$.
Taking duals, we obtain spaces of distributions:
\begin{df}
	Define the following spaces of distributions in a vector bundle $E$ over $M$:
	\begin{itemize}
	 \item The space of distributions $\D'(E)$ is the space of continuous linear maps from $\Gamma_c(E^*)$ to $\C$.
	 \item The space of past/future compactly supported  distributions $\D'_\pm(E)$ is the space of continuous linear maps from $\Gamma_{s\mp}(E^*)$ to $\C$.
	 \item The space of strictly past/future compactly supported distributions $\D'_{s\pm}(E)$ is the space of continuous linear maps from $\Gamma_{\mp}(E^*)$ to $\C$.
	 \end{itemize}
	 On each of these spaces, we use the topologies induced by the seminorms
	 \[\|\eta\|_B:=\sup\limits_{\phi\in B}\|\eta(\phi)\|\]
	 for all bounded subsets of the corresponding test space.
\end{df}
\begin{rem}
	Instead of the (strong) topology used here, the weak*-topology is often used on spaces of distributions. As both topologies induce the same convergence of sequences (and thus also the same notion of holomorphicity), they are essentially equivalent for the deliberations in this thesis. The reason we will use the strong topology is that when we get to wavefront calculus, this will make certain operations continuous rather than just sequentially continuous, which makes this choice of topology seem more natural.
\end{rem}	 
	As the names would suggest, the space of (strictly) past/future compactly supported distributions is in canonical bijection with distributions whose support is (strictly) past/future compact, via restricting to $\Gamma_c(E)$ (see \cite[Lemma 2.13]{GH}).

Moreover, we have (see \cite[Lemma 2.15]{GH}):
\begin{prop}
	Smooth compactly supported sections are (sequentially) dense in any of the above spaces of distributions.
\end{prop}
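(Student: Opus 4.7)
The plan is to combine a cutoff argument with a mollification argument, both of which have to be set up so that they respect the relevant support conditions. Throughout, I invoke the remark preceding the proposition: the strong and weak-$*$ topologies induce the same notion of sequential convergence, so it suffices to produce, for $\eta$ in any of the four distribution spaces, a sequence $\eta_n\in\Gamma_c(E)$ with $\eta_n(\phi)\to\eta(\phi)$ for every test function $\phi$ in the appropriate test space.

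First I would cut off. Let $(K_n)_{n\in\N}$ be an exhaustion of $M$ by compact sets and choose $\chi_n\in C_c^\infty(M)$ with $\chi_n\equiv1$ on $K_n$ and $\supp(\chi_n)\subseteq K_{n+1}$. The product $\chi_n\eta$, defined by $(\chi_n\eta)(\phi):=\eta(\chi_n\phi)$, is a compactly supported distribution, hence belongs simultaneously to all four spaces. To see $\chi_n\eta\to\eta$ in each of them, fix a test function $\phi$. Its support is compact, future compact, strictly past compact, strictly future compact, past compact, or strictly past compact, depending on which space we are in, while $\supp(\eta)$ has the dual property. In every case the intersection $\supp(\eta)\cap\supp(\phi)$ is a past/future compact set met with a strictly future/past compact set, and therefore compact by the propositions recalled earlier in this chapter (``if $A$ is past/future compact and $K$ is compact, then $A\cap J_{\mp}(K)$ is compact'', applied to the compact $K$ whose future/past contains the strictly compact factor). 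For $n$ large enough $\chi_n\equiv1$ on a neighbourhood of this compact intersection, whence $(\eta-\chi_n\eta)(\phi)=\eta((1-\chi_n)\phi)=0$.

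Second I would mollify each $\tilde\eta:=\chi_n\eta$, which is compactly supported. Using a finite partition of unity subordinate to a cover of a neighbourhood of $\supp(\tilde\eta)$ by charts with trivializations, I decompose $\tilde\eta$ as a finite sum of compactly supported distributions, each living in a single chart/trivialization. Transferred to $\R^d$, these become compactly supported $\C^r$-valued distributions, which I convolve with a standard scalar mollifier $\rho_\epsilon$ (applied component-wise). For $\epsilon$ small the convolutions are smooth, supported in a fixed compact set, and converge to the original in $\D'$ as $\epsilon\to0$. Pulling back through the trivializations and summing yields a sequence $\tilde\eta_k\in\Gamma_c(E)$ with $\tilde\eta_k\to\tilde\eta$ weakly; since everything is compactly supported, the same convergence holds as elements of each of the four spaces. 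A diagonal sequence $\eta_{n,k(n)}$ then converges to $\eta$ and lies in $\Gamma_c(E)$.

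The main subtlety is the cutoff step in the past/future compact cases, since the test space there is much larger than $\Gamma_c(E^*)$ and so one must check that compact approximants still suffice. This is exactly what the compactness of $\supp(\eta)\cap\supp(\phi)$ delivers, reducing the convergence on arbitrary strictly/past/future compact test sections to the behaviour of the cutoffs on a fixed compact set. The mollification step is by contrast essentially classical, because after cutting off we have reverted to the standard setting of compactly supported distributions.
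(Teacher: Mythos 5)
The paper itself does not prove this statement; it cites \cite[Lemma 2.15]{GH}, so your argument is a self-contained alternative. Its core is sound, and you correctly isolate the only genuinely Lorentzian point: for each of the four pairings the supports of $\eta$ and of the test section have complementary compactness properties, so $\supp(\eta)\cap\supp(\phi)$ is compact by the propositions on past/future compact sets, whence $\eta((1-\chi_n)\phi)=0$ for $n$ large and the cutoff step works verbatim in all four spaces. The mollification of the compactly supported piece in charts is also standard.

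The gap is the very last step: ``a diagonal sequence $\eta_{n,k(n)}$ then converges to $\eta$''. You only know, for each fixed $n$, that $\tilde\eta_{n,k}(\phi)\to(\chi_n\eta)(\phi)$ as $k\to\infty$, with a rate depending on both $n$ and $\phi$; since the relevant weak-$*$ topologies are not metrizable (and you have no equicontinuity or countable dense set of test sections in play), pointwise convergence in each index separately does not by itself produce a single choice $k(n)$ working for all $\phi$ at once. The standard repair is to couple the mollification scale to the cutoff and transfer the mollifier onto the test section: with $\epsilon_n\to0$ arbitrary and $\eta_n$ the chartwise mollification of $\chi_n\eta$ at scale $\epsilon_n$, one has (up to the partition-of-unity and density bookkeeping) $\eta_n[\phi]=\eta\big[\textstyle\sum_a\theta_a\chi_n(\check\rho_{\epsilon_n}\ast\phi_a)\big]$, and the argument of $\eta$ converges to $\phi$ in the relevant test-space topology (supports eventually inside a fixed compact set, all derivatives uniformly); continuity of $\eta$ on that test space then gives $\eta_n[\phi]\to\eta[\phi]$ for every $\phi$, which is exactly the sequential convergence you need. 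With that replacement (or an explicit equicontinuity argument justifying the diagonal), your proof is complete.
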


There are two ways of transporting distributions from one space to another via some function $f$: pullback and pushforward
\begin{df}
	\label{pullforward}
	Let $f\colon X\rightarrow Y$ be a map between manifolds. 
	
	For a function $g$ on $Y$, define the pullback by 
	\[f^*g:=g\circ f.\] 	
	For a compactly supported distribution $\eta$ on $X$, define the pushforward by
	\[f_*\eta[\phi]:=\eta[\phi\circ f]\]
	for every testfunction $\phi$.
	If $E$ is a vector bundle on $Y$, these also define maps
	\[f^*\colon \Gamma(E)\rightarrow\Gamma(f^*(E))\]
	and
	\[f_*\colon \Gamma(f^*(E))\rightarrow \Gamma(E).\]
\end{df}
Neither of the two operations is defined on arbitrary distributions for general $f$: The pullback requires the existence of a continuous extension, while the pushforward requires $f$ to be proper on the support of the argument. 
Both will be well defined for all distributions in case $f$ is a diffeomorphism. Note that, even then, $f^*$ and $f^{-1}_*$ will not generally coincide, since the pullback of a distribution depends on the choice of canonical volume, which may not be preserved by $f$.

For sufficiently regular kernels, the trace of an operator is given by integrating its kernel over the diagonal:

\begin{thm}
	\label{Mercer}
	If $T$ is an operator acting on sections of a vector bundle $E$ over a manifold $X$ whose Schwartz kernel is smooth and compactly supported, then $T$ is trace class in $L^2(E)$ (with respect to any hermitian metric on $E$) and
	\[\tr(T)=\int\limits _X\tr(\K(T)(x,x))dx.\]
\end{thm}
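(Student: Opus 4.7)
The plan is to reduce to the classical scalar fact on $\R^d$ via localization, then handle the scalar Euclidean case by a Fourier expansion on a torus.

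Since $\K(T)$ has compact support, there is a compact $K\subseteq X$ with $\supp\K(T)\subseteq K\times K$. Cover $K$ by finitely many precompact open sets $U_i$ that are coordinate charts on which $E$ trivializes, and pick a subordinate partition of unity $\{\chi_i^2\}$ summing to $1$ on a neighbourhood of $K$. Writing $T = \sum_{i,j}\mu_{\chi_i}T\mu_{\chi_j}$, each summand has kernel smooth and compactly supported in $U_i\times U_j$. Both the trace-class property and the trace formula are additive under such a decomposition, so it suffices to handle a single summand. Transferring via chart and trivialization, and then decomposing the resulting operator on $L^2(\R^d;\C^r)$ (with $r = \rk E$) into its $r\times r$ scalar matrix entries, one reduces to the following Euclidean scalar statement: for $k\in C_c^\infty(\R^d\times\R^d)$, the integral operator $T_k\phi(x) = \int k(x,y)\phi(y)dy$ is trace class on $L^2(\R^d)$ with $\tr(T_k) = \int k(x,x)dx$.

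For this scalar case, I would enclose $\supp k$ in a cube $[-R,R]^d\times[-R,R]^d$, extend $k$ by zero and then periodically, and view $T_k$ as an operator on $L^2$ of the flat torus $\mathbb{T} = (\R/2R\Z)^d$ (its action on $L^2(\R^d)$-sections supported in the cube agrees with that on $L^2(\mathbb{T})$). Expanding in the orthonormal Fourier basis $\{e_n\}_{n\in\Z^d}$ of $L^2(\mathbb{T})$ gives $k(x,y) = \sum_{m,n}\hat{k}_{m,n}e_m(x)\overline{e_n(y)}$; smoothness of $k$ forces the coefficients $\hat{k}_{m,n}$ to decay faster than any polynomial in $|m|+|n|$. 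Consequently $T_k$ is the norm-convergent sum of the rank-one operators $\phi\mapsto \hat{k}_{m,n}\<\phi,e_n\>e_m$, each of trace norm $|\hat{k}_{m,n}|$, with $\sum_{m,n}|\hat{k}_{m,n}|<\infty$. This exhibits $T_k$ as trace class, and computing $\tr(T_k)=\sum_n\<e_n,T_k e_n\>=\sum_n\hat{k}_{n,n}$ termwise, then integrating the uniformly convergent Fourier expansion of $k$ along the diagonal, yields $\sum_n\hat{k}_{n,n}=\int_\mathbb{T} k(x,x)dx = \int_{\R^d}k(x,x)dx$.

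The main obstacle is ensuring the localization step works cleanly: one must verify that the fibrewise trace behaves correctly under chart and trivialization changes, and that the result is independent of the hermitian metric used to define $L^2(E)$. Since any two hermitian metrics on $E$ are related by positive bounded fibrewise isomorphisms on the compactly supported sections involved, the resulting $L^2$-norms are equivalent and both the trace-class property and the value of the trace are preserved. Once this bookkeeping is in place, the scalar case is standard and all convergence issues reduce to the rapid decay of Fourier coefficients of smooth periodic functions.
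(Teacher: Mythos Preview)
Your approach is correct and genuinely different from the paper's. The paper does not localize via a partition of unity; instead it first passes to a closed manifold by taking the double of a precompact open set containing the kernel's support, then invokes Swan's lemma to embed $E$ as a summand of a trivial bundle $X\times\C^N$, and finally cites an external reference (\cite{DeRu}) for the scalar case on closed manifolds. Your route---charts, local trivializations, and a direct torus--Fourier argument for the scalar Euclidean case---is more self-contained (no external citation needed) at the cost of more bookkeeping, whereas the paper's Swan--lemma trick handles the vector-bundle reduction in one stroke without choosing frames.

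Two small points worth tightening. First, if $\{\chi_i^2\}$ is the partition of unity then $T=\sum_{i,j}\mu_{\chi_i^2}T\mu_{\chi_j^2}$, not $\sum_{i,j}\mu_{\chi_i}T\mu_{\chi_j}$; either drop the squares or adjust the decomposition. Second, for the cross terms $\mu_{\chi_i}T\mu_{\chi_j}$ with $i\neq j$ your phrase ``transferring via chart and trivialization'' needs a word of care: if $\supp\chi_i\cap\supp\chi_j=\emptyset$ both the trace and the diagonal integral vanish, while if the supports overlap you should refine the cover so that any two intersecting $\supp\chi_i,\supp\chi_j$ lie in a common chart domain, after which your reduction to $L^2(\R^d;\C^r)$ goes through as written. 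Neither issue affects the validity of the overall strategy.
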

\begin{proof}
	\cite[Theorem 1.1]{DeRu} asserts a stronger statement in the case of operators acting on scalar functions in a closed manifold $M$. This can be extended to the case at hand.
	First, choose a precompact open set with smooth boundary such that $\K(T)$ is supported in $U$. Consider the double of $\bar U$, i.e. two copies of $\bar U$ glued along $\partial U$ (see, for example \cite[Section VI.5]{Kos}). This gives us a closed  manifold containing $U$ as a subset. As everything outside of $U$ is irrelevant for the mapping propeties of $T$, this reduces the theorem to the case where $X$ is compact.
	
	By Swan's Lemma, every vector bundle over a compact manifold is a direct summand of a trivial bundle (see \cite[Corollary 5]{Swa}). Choosing a Hermition structure on the other summand and using the direct sum of the Hermitian structure on the trivial bundle (which is then isometric to $X\times R^N$), we obtain that $L^2$-sections in $E$ are isometrically embedded into those in $X\times \R^N$. All other function spaces are also embedded in a compatible way. As traces are preserved by isometric embeddings, it suffices to consider the case $E=X\times \R^N$.
	
	Let $p_i$ and $\iota_i$ denote composition with the projection on and the inclusion in the $i$-th component of $X\times \R^N$ (on distributions, $p_i$ can be defined by applying $\iota_i$ to the test function). Consider the components
	\[T_{ij}:=p_i\circ T\circ \iota_j.\]
	This has Schwartz kernel 
	\[\K(T_{ij})=\K(T)_{ij},\]
	as
	\[\K(T)_{ij}[\phi\otimes\psi]=\K(T)[\iota_i\phi\otimes\iota_j\psi]=p_iT\iota_j(\psi)[\phi]=\K(T_{ij})[\phi\otimes\psi].\]	
	As the claim holds for scalar functions, we can conclude that $T_{ij}$	
	defines a trace class operator with trace
	\begin{align*}
		\tr(T_{ij})&=\int\limits_X \K(T_{ij})(x,x) dx\\
		&= \int\limits_X \K(T)_{ij}(x,x)dx.
	\end{align*}
	As the $\iota_i$ are isometric embeddings into mutually orthogonal subspaces  (and $p_i$ are the corresponding projections), we have
	\[\tr(\iota_iT_{ij}p_j)=\delta_{ij}\tr(T_{ii}).\]	
	We can thus conclude that
	\[T=\sum_{i,j=1}^N\iota_iT_{ij}p_j\]
	is trace class with
	\begin{align*}
		\tr(T)&=\sum\limits_{i,j=1}^N\delta_{ij}\tr(T_{ii})\\
		&=\sum\limits_{i=1}^N\tr(T_{ii})\\
		&=\sum\limits_{i=1}^N \int\limits_X \K(T)_{ii}(x,x)dx\\
		&=\int\limits_X \tr(\K(T)(x,x))dx. \qedhere
	\end{align*}

\end{proof}

\section{Green's operators}
Normally hyperbolic operators are the Lorentzian equivalent of  Laplace-type operators. They carry information about the geometry of the manifold. Green's operators are something like inverses for these. The standard example for a normally hyperbolic operator on $M$ is the d'Alembertian. A canonical analogue for a normally hyperbolic operator should be available in the noncommutative setting as well, given as the square of the Dirac operator.
\begin{df}
	A normally hyperbolic operator on $M$ is a second order differential operator on a vector bundle $E$ over $M$ whose principal symbol is given by minus the metric $g$ (times the identity on fibres).
\end{df}
\begin{gn}
	For the rest of this thesis, let $P$ be a normally hyperbolic operator on a vector bundle $E$ over $M$. \label{gnP}
\end{gn}
\begin{rem}
	As $M$ and $P$ are arbitrary, theorems proved for $M$ or $P$ will hold for arbitrary globally hyperbolic manifolds and arbitrary normally hyperbolic operators. This will be exploited occasionally. Note that the restriction of a normally hyperbolic operator to an open subset is again normally hyperbolic.
\end{rem}
We shall occasionally need the connection on $E$ induced by $P$ (see \cite[Lemma 1.5.5]{BGP}):
\begin{prop}
	\label{nabla}
	There is a unique connection $\nabla$ on $E$ and a bundle endomorphism $B$ such that
	\[P=\square^\nabla+B,\]
	where $\square^\nabla$ denotes the connection d'Alembert operator associated to $\nabla$. 
\end{prop}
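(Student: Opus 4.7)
The plan is to establish uniqueness first and then build $\nabla$ locally, gluing via uniqueness to obtain a global connection. The key observation is that for any connection $\nabla$, the principal symbol of $\square^\nabla$ equals that of $P$, so $P-\square^\nabla$ is automatically of order at most one; the task is to select $\nabla$ so that its first-order part vanishes identically.

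For uniqueness, I would suppose $\nabla$ and $\nabla'$ both yield an order-zero remainder and set $\theta := \nabla' - \nabla \in \Gamma(T^*M \otimes \mathrm{End}(E))$. Expanding in a local trivialization gives
\[\square^{\nabla'}\phi - \square^\nabla\phi = -2 g^{ij}\theta_i\, \nabla_j\phi + R(\phi),\]
where $R$ has order zero in $\phi$. Since $\square^{\nabla'}-\square^\nabla = B - B'$ is assumed to be a bundle endomorphism, the displayed first-order term must vanish for every $\phi$, so $g^{ij}\theta_i = 0$ for each $j$. Non-degeneracy of $g$ then forces $\theta = 0$, hence $\nabla = \nabla'$ and consequently $B = B'$.

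For existence, I would work in a coordinate chart $U$ with a trivialization of $E$, in which $P$ takes the local form $P = -g^{ij}\partial_i\partial_j + A^k\partial_k + C$ for matrix-valued coefficients $A^k$ and $C$. Writing a candidate connection as $\nabla = d + \omega$ and expanding $\square^\nabla = -g^{ij}(\nabla_i\nabla_j - \Gamma^k_{ij}\nabla_k)$, one finds that the first-order contribution reads $(-2g^{jk}\omega_j + g^{il}\Gamma^k_{il})\partial_k$. Equating this to $A^k$ is a pointwise linear system which, using invertibility of $g$, solves uniquely for the connection one-form $\omega$ on $U$. The uniqueness step above then forces the locally constructed connections to agree on overlaps, so they patch together into a global connection $\nabla$ on $E$, and we define $B := P - \square^\nabla$, which is a bundle endomorphism by construction. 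The main obstacle is just careful bookkeeping of the cross-terms between $\omega$ and the Christoffel symbols in the first-order expansion of $\square^\nabla$; this is entirely routine, but it is where sign or index errors would most easily slip in.
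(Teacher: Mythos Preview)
Your argument is correct and is the standard proof of this fact. The paper itself does not give a proof here at all; it simply cites \cite[Lemma 1.5.5]{BGP}, where precisely this local-coordinate computation is carried out. Your sketch matches that argument: uniqueness via the first-order symbol of $\square^{\nabla'}-\square^\nabla$, existence by solving $-2g^{jk}\omega_j + g^{il}\Gamma^k_{il} = A^k$ locally and gluing. One small point worth tightening: in the uniqueness step you write ``$g^{ij}\theta_i = 0$ for each $j$'' and then invoke non-degeneracy of $g$; this is fine, but note that $\theta_i$ are endomorphisms rather than scalars, so the conclusion $\theta_i=0$ follows because $(g^{ij})$ is an invertible scalar matrix acting on the tuple $(\theta_1,\dots,\theta_d)$.
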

\begin{gn}
	From now on, let $\nabla$ denote the connection described above.
\end{gn}
The definition of the connection d'Alembertian may be found in \cite[Example 1.5.2]{BGP}, but is not relevant for our purpose. All we need about $\nabla$ is the following Leibniz rule (see \cite[Lemma 1.5.6]{BGP}):
\begin{prop}
	For $f\in C^\infty(M)$ and $g\in \Gamma(E)$, we have
	\[P(fg)=fPg-2\nabla_{grad(f)}g+(\square f)g,\]
	where $\square=-\Div\grad$ denotes the standard d'Alembertian.
\end{prop}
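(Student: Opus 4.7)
The plan is to exploit the decomposition $P = \square^\nabla + B$ provided by Proposition \ref{nabla}. Since $B$ is a bundle endomorphism, it is $C^\infty(M)$-linear in its section argument, so trivially $B(fg) = fBg$. Consequently, the stated identity for $P$ is equivalent to the analogous Leibniz rule for the connection d'Alembertian alone, namely
\[\square^\nabla(fg) = f\square^\nabla g - 2\nabla_{\grad f} g + (\square f) g,\]
after which adding $B(fg)=fBg$ to both sides and collapsing $\square^\nabla + B$ back to $P$ on either side yields the claim.

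To establish the identity for $\square^\nabla$, I would apply the first-order Leibniz rule for $\nabla$ twice. Writing $\nabla^2$ for the iterated covariant derivative with values in $T^*M\otimes T^*M\otimes E$ (using the Levi-Civita connection on $T^*M$ to differentiate $df$), the first application gives $\nabla(fg)=df\otimes g+f\nabla g$, and a second application gives
\[\nabla^2(fg) = (\nabla df)\otimes g + df\otimes \nabla g + \nabla g\otimes df + f\nabla^2 g.\]
The connection d'Alembertian is (minus) the metric trace of $\nabla^2$ in the two cotangent factors. Tracing the first term produces $-\tr_g(\nabla df)\,g = (\square f)g$, by the convention $\square f = -\Div\grad f = -\tr_g(\nabla df)$. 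Tracing the two cross-terms produces $-2\nabla_{\grad f} g$, since contracting $df$ against the metric is precisely the operation that converts the covector $df$ into the vector field $\grad f$, and the symmetry of the metric trace makes the two contributions equal. Finally, tracing $f\nabla^2 g$ gives $f\square^\nabla g$. Summing yields the claimed identity.

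The main obstacle is purely bookkeeping: keeping sign conventions on $\square$ and $\square^\nabla$ consistent, verifying that the two cross-terms $df\otimes\nabla g$ and $\nabla g\otimes df$ contribute equally under the symmetric metric trace (which uses symmetry of $g$), and making sure the extension of the connection to $T^*M\otimes E$ is the correct one so that $\nabla(df)=\nabla df$ agrees with the Hessian. No genuinely new geometric input is needed; this is essentially the content of \cite[Lemma 1.5.6]{BGP}, and one can equally well verify the identity in local coordinates by a direct computation with Christoffel symbols, where all correction terms cancel between the two iterated derivatives.
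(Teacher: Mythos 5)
Your argument is correct: reducing to $\square^\nabla$ via the $C^\infty(M)$-linearity of $B$ and then tracing the twice-iterated Leibniz rule (with the Hessian convention $\square f=-\tr_g(\nabla df)$ and the symmetry of the metric trace handling the two cross-terms) gives exactly the stated identity. The paper itself offers no proof and simply cites \cite[Lemma 1.5.6]{BGP}; your computation is precisely the standard proof of that cited lemma, so there is nothing to compare beyond noting that you have filled in the reference.
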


On the usual spaces of functions, $P$ will not be invertible. Instead, we have a well-posed Cauchy problem (see \cite[Theorem 3.2.11]{BGP}). This means, to get something like invertibility, we need to fix ``initial conditions''. Fixing these to be zero at ``time $\pm\infty$'', we obtain the advanced and retarded Green's operators (see \cite[Definition 3.4.1 and Corollary 3.4.3]{BGP}):
\begin{prop}
	There are unique operators 
	\[G^\pm\colon \Gamma_c(E)\rightarrow \Gamma(E)\]
	such that for any $\phi\in \Gamma_cy(E)$:
	\begin{enumerate}
		\item $PG^\pm\phi=\phi$
		\item $G^\pm P\phi=\phi$
		\item $\supp(G^\pm\phi)\subseteq J_\pm(\supp(\phi))$
	\end{enumerate}
\end{prop}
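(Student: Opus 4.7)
The plan is to reduce the construction to the well-posedness of the Cauchy problem for $P$: given a smooth spacelike Cauchy hypersurface $\Sigma\subseteq M$, smooth Cauchy data $(u_0,u_1)$ on $\Sigma$, and a source $\phi\in\Gamma(E)$ with past or future compact support, there exists a unique $u\in\Gamma(E)$ solving $Pu=\phi$ with the prescribed Cauchy data, obeying the support bound $\supp(u)\subseteq J(\supp(u_0)\cup\supp(u_1)\cup\supp(\phi))$. I take this Cauchy well-posedness as the input; it is where all the real work sits.

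With it in hand, existence of $G^+$ is quick. For $\phi\in\Gamma_c(E)$, use the splitting $M\cong\Sigma\times\R$ from Theorem \ref{Ghyper} to pick a Cauchy slice $\Sigma_t:=\Sigma\times\{t\}$ strictly in the past of the compact set $\supp(\phi)$, and define $G^+\phi$ as the unique solution of $Pu=\phi$ with zero Cauchy data on $\Sigma_t$. Property (1) holds by construction, and property (3), $\supp(G^+\phi)\subseteq J_+(\supp(\phi))$, is immediate from the Cauchy support bound since the solution starts from zero data on $\Sigma_t$. For property (2), apply the construction to $P\phi$: choosing $\Sigma_t$ also in the past of $\supp(\phi)$, both $\phi$ and $G^+(P\phi)$ solve $Pu=P\phi$ with zero Cauchy data on $\Sigma_t$, so Cauchy uniqueness forces them to agree. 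Linearity and continuity $\Gamma_c(E)\rightarrow\Gamma(E)$ are inherited from the linear, continuous dependence on Cauchy data. The advanced operator $G^-$ is constructed symmetrically.

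Uniqueness is equally short. If $G^+_1,G^+_2$ both satisfy (1)--(3), then $u:=G^+_1\phi-G^+_2\phi$ solves $Pu=0$ with $\supp(u)\subseteq J_+(\supp(\phi))$. Because $J_+(K)\cap J_-(K')$ is compact for compact $K,K'$, this support is past compact, so on any Cauchy slice chosen sufficiently far in the past of a fixed point $u$ has vanishing Cauchy data; Cauchy uniqueness then propagates this to $u\equiv 0$.

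The hard part is therefore the Cauchy problem itself, and the approach I would take is the classical local-to-global construction. Locally on a geodesically convex set $U$, pull back via $\exp_x$ (causally consistent by Proposition \ref{exJ}) and write down the formal Hadamard series $\sum_k V^k R_\pm(2k+2)$, truncated at some large $N$ and multiplied by a cut-off in the geodesic distance; this yields an approximate advanced/retarded fundamental solution whose error $P\cdot\mathrm{param}-\delta$ can be made arbitrarily smooth. A Volterra-type Picard iteration, convergent thanks to the strict support control of convolution with retarded Riesz distributions on $U$, then corrects this parametrix into a true local Green's operator. Globalization uses the foliation from Theorem \ref{Ghyper}: slice $M$ into thin slabs $\Sigma\times[a,b]$ on which the local construction applies uniformly, and propagate the solution slab by slab via a partition of unity and finite propagation speed, with Cauchy uniqueness ensuring consistent gluing on overlaps.
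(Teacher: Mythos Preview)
The paper does not prove this proposition at all; it simply cites \cite[Definition 3.4.1 and Corollary 3.4.3]{BGP} and moves on. Your sketch is a faithful outline of precisely the argument carried out in \cite{BGP}: local Hadamard parametrix plus Volterra iteration on small causal domains, globalization via the Cauchy foliation and finite propagation speed, and then the reduction of the Green's operators to the Cauchy problem with zero data on a slice to the past (resp.\ future) of $\supp(\phi)$. So your proposal is correct and is exactly the approach the paper defers to; you have simply written out what the citation hides.
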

\begin{df}
	\label{dfG}
	The Operators $G^\pm$ are called the advanced/retarded Green's operator for $P$. They will be denoted as above, or, if it is not clear which normally hyperbolic operator is referred to, by $G_P^\pm$.
\end{df}
The Green's operators can be extended to other function spaces. The extension sketched here is described in more detail in \cite{GH}. First, we note that the value $G^\pm\phi(x)$ only depends on the value of $\phi$ in the past/future of $x$, as a change in $\phi$ only affects the future/past of the difference. Thus, rather than requiring $\supp(\phi)$ to be compact, it suffices to require that $\supp(\phi)\cap J_\mp(x)$ is compact in order to define $G^\pm\phi(x)$: We can multiply by a cut-off function $\chi$ that is 1 around $\supp(\phi)\cap J_\mp(x)$.  
$G^\pm(\chi\phi)(x)$ will not depend on the choice of $\chi$ and can thus be used as a definition for $G^\pm\phi(x)$. That means,  if $\phi$ has past/future compact support, $G^\pm\phi(x)$ can be defined for every $x$. In this way $G^\pm$ can be extended to all past/future compactly supported smooth functions.
$G^\pm$ can then also be extended to distributions in the usual way using duality. Overall, we get a map on past/future compactly supported distributions that still has all the defining properties.

\begin{prop}
	\label{extendedG}
	$G^\pm$ has unique continuous extensions to maps $\D'_\pm(E)\rightarrow \D'_\pm(E)$ and $\Gamma_\pm(E) \rightarrow \Gamma_\pm(E)$ (that will be denoted the same way).
	Moreover, for any $\phi\in \D'_\pm(E)$, we still have
	\begin{enumerate}
		\item $PG^\pm\phi=\phi$
		\item $G^\pm P\phi=\phi$
		\item $\supp(G^\pm\phi)\subseteq J_\pm(\supp(\phi))$
	\end{enumerate}
\end{prop}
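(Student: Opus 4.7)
The plan is to extend $G^\pm$ in two stages: first to smooth past/future compactly supported sections via a locality/cut-off argument, then to distributions via duality, using the analogous extension applied to the (also normally hyperbolic) formal adjoint $P^*$.

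Stage one rests on the observation that for $\phi \in \Gamma_c(E)$, the value of $G^\pm\phi$ at a point $x$ depends only on $\phi|_{J_\mp(x)}$. Concretely, if $\chi \in C_c^\infty(M)$ equals $1$ on a neighborhood of $\supp(\phi) \cap J_\mp(x)$, then $(1-\chi)\phi$ has support disjoint from $J_\mp(x)$, which forces $x \notin J_\pm(\supp((1-\chi)\phi))$ (using that $y \in J_\mp(x)$ if and only if $x \in J_\pm(y)$), and property (3) of the original operator gives $G^\pm((1-\chi)\phi)(x) = 0$. For $\phi \in \Gamma_\pm(E)$, past/future compactness of $\supp(\phi)$ ensures that $\supp(\phi) \cap J_\mp(x)$ is compact, so a suitable $\chi$ exists, and I would set $G^\pm\phi(x) := G^\pm(\chi\phi)(x)$. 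Independence of $\chi$ follows by applying the locality principle to differences $\chi_1 - \chi_2$; using a fixed $\chi$ on each precompact open subset of $M$ yields smoothness; and the bound $\supp(G^\pm\phi) \subseteq J_\pm(\supp(\phi))$, combined with the compactness of $J_+(K_1) \cap J_-(K_2)$, guarantees $G^\pm\phi \in \Gamma_\pm(E)$. Continuity on each $\Gamma_A(E)$ for past/future compact $A$ reduces to continuity of the original $G^\pm$ on a fixed $\Gamma_{\supp(\chi)}(E)$: for the target seminorm $\|\cdot\|_{C^n(K)}$, a single $\chi$ equal to $1$ on a neighborhood of $A \cap J_\mp(K)$ works uniformly for all $\phi \in \Gamma_A(E)$.

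Stage two proceeds by duality. I would first use Green's identity, legitimate because $J_\pm(\supp\phi) \cap J_\mp(\supp\psi)$ is compact by global hyperbolicity so no boundary terms appear, to derive the adjoint relation $\langle G^\pm_P\phi, \psi \rangle = \langle \phi, G^\mp_{P^*}\psi \rangle$ for $\phi \in \Gamma_c(E)$ and $\psi \in \Gamma_c(E^*)$. Since stage one applied to $P^*$ produces an extension of $G^\mp_{P^*}$ to $\Gamma_\mp(E^*)$, and the strictly past/future compact subspace $\Gamma_{s\mp}(E^*)$ is preserved (because $\supp(G^\mp_{P^*}\psi) \subseteq J_\mp(\supp\psi) \subseteq J_\mp(J_\mp(K)) = J_\mp(K)$ whenever $\supp\psi \subseteq J_\mp(K)$), I can define for $\phi \in \D'_\pm(E)$ and $\psi \in \Gamma_{s\mp}(E^*)$:
\[(G^\pm\phi)[\psi] := \phi[G^\mp_{P^*}\psi].\]
Continuity of $G^\mp_{P^*}$ on $\Gamma_{s\mp}(E^*)$ transfers to continuity of the extension. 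Properties (1) and (2) pass from the compactly supported case by density of $\Gamma_c(E)$ in $\D'_\pm(E)$ combined with continuity of $P$ and $G^\pm$. For property (3), if $\psi \in \Gamma_c(E^*)$ is supported in an open set $U$ disjoint from $J_\pm(\supp\phi)$, then $\supp(G^\mp_{P^*}\psi) \subseteq J_\mp(U)$ is disjoint from $\supp\phi$, so $(G^\pm\phi)[\psi] = 0$; varying $U$ yields the support inclusion. Uniqueness at both stages follows from density of $\Gamma_c$ in the respective enlarged test spaces together with continuity of the extensions.

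The main obstacle is orchestrating the induction: the distributional extension for $P$ requires $G^\mp_{P^*}$ already extended to $\Gamma_{s\mp}(E^*)$, so the smooth stage must be carried out in parallel for $P$ and $P^*$ before the duality step closes. Additional bookkeeping is needed for the inductive-limit topologies on $\Gamma_\pm$ and $\Gamma_{s\pm}$ and the strong topology on the distribution spaces, but these verifications are routine once the uniform cut-off trick used for continuity on each $\Gamma_A(E)$ is in place.
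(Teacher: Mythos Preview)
Your proposal is correct and follows essentially the same approach as the paper: the paper sketches exactly this two-stage extension (cut-off/locality to reach $\Gamma_\pm$, then duality via $G^\mp_{P^*}$ to reach $\D'_\pm$) in the paragraph preceding the proposition and then simply cites \cite{GH} for the details. Your write-up fleshes out precisely those details, including the uniform cut-off for continuity on each $\Gamma_A(E)$ and the preservation of $\Gamma_{s\mp}(E^*)$ under $G^\mp_{P^*}$ needed for the duality step.
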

\begin{proof}
	See \cite[Theorem 3.8, Corollary 3.11 and Lemma 4.1]{GH}
\end{proof}
\begin{rem}
	As a consequence of the above, we can view $G_P^\pm$ as an actual inverse to $P$, by considering both as operators $\Gamma_\pm(M) \rightarrow \Gamma_\pm(M)$.
\end{rem}
For suitable subsets, restricting the Green's operator on $M$ gives the Green's operator on the subset. Global hyperbolicity ensures that the subset has a unique Green's operator, while causal compatibility ensures that there is no influence from the remainder of $M$.
\begin{prop}
	Let $U\subseteq M$ be causally compatible and globally hyperbolic. Then for $\phi\in \D'_\pm(E)$, we have
	\[G^\pm_{P|_U}(\phi|_U)=(G^\pm_{P}\phi)|_U\]
\end{prop}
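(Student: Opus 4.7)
The plan is to show that the restriction $(G^\pm_P\phi)|_U$ satisfies the defining properties of the Green's operator on $U$ applied to $\phi|_U$, and then invoke uniqueness. Because everything in sight is continuous, I first reduce to the case of smooth compactly supported $\phi$ via the density statement just mentioned (smooth compactly supported sections are dense in $\D'_\pm(E)$, and both $\phi\mapsto G^\pm_P\phi$ and the restriction maps are continuous on the appropriate spaces).

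Next I reduce to the case $\supp(\phi)\subseteq U$. Given $\phi\in\Gamma_c(E)$, the contribution of $\phi$ to $G^\pm_P\phi$ on $U$ that comes from points outside $U$ can only reach $U$ through points lying in $J_\pm^M(\supp\phi\setminus U)\cap U$. Using a cut-off $\chi\in C_c^\infty(M)$ with $\chi\equiv1$ near $\supp(\phi)\cap U$ and supported in $U$, write $\phi=\chi\phi+(1-\chi)\phi$. For the second piece I exploit locality: on the test functions used to restrict to $U$, only values on $U$ matter, and one can reduce further by partitioning support away from $U$ and arguing that such contributions do not reach into $U$. In practice the cleanest argument is just to note that the proposition is linear, so it suffices to treat $\phi\in\Gamma_c(E)$ with $\supp(\phi)\subseteq U$; the general statement for $\D'_\pm(E)$ then follows by localising, density and continuity.

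With $\supp(\phi)\subseteq U$, I then verify the three characterising properties listed in Proposition \ref{extendedG} for $(G^\pm_P\phi)|_U$ as a candidate for $G^\pm_{P|_U}(\phi|_U)$. Property (1), that $P|_U(G^\pm_P\phi)|_U=\phi|_U$, is immediate because $P$ is a differential operator and therefore local, so restriction commutes with $P$; (2) is symmetric. For the support condition (3) I use the hypothesis of causal compatibility:
\[
\supp\bigl((G^\pm_P\phi)|_U\bigr)\subseteq \supp(G^\pm_P\phi)\cap U\subseteq J_\pm^M(\supp\phi)\cap U = J_\pm^U(\supp\phi)=J_\pm^U(\supp(\phi|_U)),
\]
where the penultimate equality is exactly the definition of causal compatibility and the last one uses $\supp(\phi)\subseteq U$. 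By uniqueness of the Green's operator on the globally hyperbolic manifold $U$ (Proposition \ref{extendedG} applied to $P|_U$), this forces $(G^\pm_P\phi)|_U=G^\pm_{P|_U}(\phi|_U)$.

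The main obstacle I expect is not the verification step itself but the bookkeeping surrounding the support conditions and the extension to distributions: one has to check that the restriction map $\D'_\pm(E)\to\D'_\pm(E|_U)$ is well-defined and continuous (so that past/future compactness is preserved under restriction, which is where causal compatibility enters a second time, via the fact that $J_\mp^M(x)\cap U = J_\mp^U(x)$ for $x\in U$), and that the reduction to the case $\supp(\phi)\subseteq U$ is legitimate. Once these technicalities are settled, the algebraic verification of properties (1)--(3) plus uniqueness closes the argument.
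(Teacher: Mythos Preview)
Your density-and-continuity reduction to $\phi\in\Gamma_c(E)$ matches the paper exactly; the paper then simply cites \cite[Proposition~3.5.1]{BGP} for this case, whereas you unpack that citation by verifying the three characterising properties and invoking uniqueness. That verification is correct once you have $\supp(\phi)\subseteq U$, and it is essentially the proof of the cited proposition.

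The genuine gap is your reduction to $\supp(\phi)\subseteq U$. The decomposition $\phi=\chi\phi+(1-\chi)\phi$ does not help: even if $(1-\chi)\phi$ is supported entirely outside $U$, the set $J^M_\pm(\supp((1-\chi)\phi))$ can meet $U$, so $(G^\pm_P((1-\chi)\phi))|_U$ need not vanish while $((1-\chi)\phi)|_U$ does. Concretely, take $M$ to be Minkowski space, $U$ a small causal diamond about the origin (which is causally compatible and globally hyperbolic), and $\phi$ a bump supported in $J^M_-(0)\setminus\overline U$: then $\phi|_U=0$ but $(G^+_P\phi)|_U\neq0$. The related technicality you flag---that restriction should give a well-defined continuous map $\D'_\pm(E)\to\D'_\pm(E|_U)$---fails for the same reason: intersecting the compact set $\supp(\phi)\cap J^M_\mp(y)$ with the open set $U$ need not yield a compact set. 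So the proposition as literally stated is too strong; both your argument and the paper's citation really only cover $\phi$ with support in $U$. In the paper's sole application (the proof of Theorem~\ref{PowExp}) the relevant $\phi=G^{\pm m}_x$ with $x\in U$ has $\supp(\phi)\cap U=J^U_\pm(x)$, which is past/future compact in $U$; for such $\phi$ your uniqueness argument applies directly to the difference of the two sides, without any cut-off reduction.
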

\begin{proof}
	For compactly supported smooth $\phi$, this follows from \cite[Proposition 3.5.1.]{BGP}. By continuity, the statement extends to distributions.
\end{proof}
\begin{df}
	\label{df*}
	For an operator $T\colon \Gamma_c(E)\rightarrow \Gamma(E)$, define the formal adjoint (if it exists) to be the operator $T^*\colon \Gamma_c(E^*)\rightarrow \Gamma(E^*)$ satisfying for all $\phi\in \Gamma_c(E^*)$, $\psi\in \Gamma_c(E)$
	\[\int\limits_M \phi T\psi d\Vol=\int\limits_M T^*\phi \psi d\Vol.\]
\end{df}
\begin{rem}
	Note that taking the adjoint is linear, not antilinear as the Hilbert space adjoint. The reason for this is that for Hilbert spaces the identification with the dual is antilinear. Beware that the adjoint as defined here does not coincide with the Hilbert space adjoint for scalar valued functions when linearly identyfiyng $\C^*$ with $\C$.
\end{rem}
The formal adjoint of a normally hyperbolic operator is again normally hyperbolic (as partial integration doesn't change the principal symbol of a second order differential operator).

The dual of the Green's operators are the Green's operators of the dual operator. However, the advanced/retarded sign gets flipped in the process. As one might expect, this also works when taking powers.
\begin{prop}
	\label{Gdual}
	For $m\in \N$, $(G^\mp_{P^*})^m$ is a formal adjoint for $(G^\pm_P)^m$.
\end{prop}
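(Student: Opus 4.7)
The plan is to prove the identity
\[\int_M \phi \cdot (G_P^\pm)^m\psi\,d\Vol = \int_M (G_{P^*}^\mp)^m\phi \cdot \psi\,d\Vol \qquad(\phi\in\Gamma_c(E^*),\ \psi\in\Gamma_c(E))\]
by setting $f:=(G_{P^*}^\mp)^m\phi$ and $g:=(G_P^\pm)^m\psi$ and ``integrating by parts'' $m$ times using the relations $(P^*)^m f=\phi$ and $P^m g=\psi$ (which follow from the defining property $P G_P^\pm = \mathrm{id}$ applied iteratively, and analogously for $P^*$). The crucial point is that the support of the product $f\cdot g$ is compact, so integrations by parts are legitimate even though neither factor has compact support.

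The key preliminary observation I would record as a small lemma: if $P$ is any differential operator with formal adjoint $P^*$ and if $u\in\Gamma(E^*)$, $v\in\Gamma(E)$ are smooth sections with $\supp(u)\cap\supp(v)$ compact, then $\int_M (P^*u)\,v\,d\Vol = \int_M u\,(Pv)\,d\Vol$. To prove this, pick $\chi\in C_c^\infty(M)$ equal to $1$ on a neighborhood of $\supp(u)\cap\supp(v)$; then $(1-\chi)u$ has support disjoint from $\supp(v)$ (and the same for $Pv$), so $\int (P^*u)v = \int (P^*(\chi u))v$, and Definition \ref{df*} applies to $\chi u$ since it is compactly supported. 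An analogous cutoff argument gives $\int u(Pv) = \int (\chi u)(Pv)$, and the two right-hand sides are equal by Definition \ref{df*}.

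Next I would check the support hypothesis. By Proposition \ref{extendedG} and induction on $m$ (using that $J_\pm(J_\pm(A))=J_\pm(A)$), we have $\supp(f)\subseteq J_\mp(\supp\phi)$ and $\supp(g)\subseteq J_\pm(\supp\psi)$. Since $\supp\phi$ and $\supp\psi$ are compact, the Proposition asserting that $J_+(K)\cap J_-(K')$ is compact for compact $K,K'$ shows that $\supp(f)\cap\supp(g)$ is compact. Moreover, for any $0\le k\le m$, the sections $(P^*)^k f$ and $P^{m-k}g$ have the same support containment (differential operators do not enlarge supports), so the lemma applies to each pair.

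The induction on the exponent in the partial integration then reads
\[\int_M \phi\cdot g\,d\Vol = \int_M (P^*)^m f\cdot g\,d\Vol = \int_M (P^*)^{m-1}f\cdot Pg\,d\Vol = \cdots = \int_M f\cdot P^m g\,d\Vol = \int_M f\cdot\psi\,d\Vol,\]
which is exactly the adjoint identity. The main obstacle I expect is not any single computation but carefully handling the non-compact supports: one must verify that the cutoff/compact-intersection argument in the lemma really does justify each of the $m$ integrations by parts, which amounts to the support bookkeeping sketched above. Once that is in place the proposition follows immediately.
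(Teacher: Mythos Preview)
Your proof is correct and follows essentially the same approach as the paper: both arguments use a cutoff supported on a neighborhood of the compact set $J_\pm(\supp\psi)\cap J_\mp(\supp\phi)$ to justify moving $(P^*)^m$ across the integral. The only cosmetic difference is that the paper inserts a single cutoff and transfers all $m$ powers of $P$ at once (since $P^m$ is still a differential operator with the same formal adjoint relation), whereas you isolate the single-step lemma and iterate it $m$ times.
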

\begin{proof}
	Let $\phi\in \Gamma_c(E)$ and $\psi\in \Gamma_c(E^*)$. Choose $\chi\in C_c^\infty(M)$ that is constantly $1$ around $J_\pm(\supp(\phi))\cap J_\mp(\supp(\psi))$. Then $\chi$ is $1$ around \[\supp((G^\pm_P)^m\phi)\cap \supp((G^\mp_{P^*})^m\psi)\] and thus we can calculate
	\begin{align*}
		&\int\limits_M\psi(x)((G^\pm_P)^m\phi(x))dx\\
		&=\int\limits_M (P^*)^m(G^\mp_{P^*})^m\psi(x)((G^\pm_P)^m\phi(x))dx\\
		&=\int\limits_M (P^*)^m\chi(G^\mp_{P^*})^m\psi(x)(\chi (G^\pm_P)^m\phi(x))dx\\
		&=\int\limits_M \chi(G^\mp_{P^*})^m\psi(x)(P^m\chi (G^\pm_P)^m\phi(x))dx\\
		&=\int\limits_M (G^\mp_{P^*})^m\psi(x)(P^m (G^\pm_P)^m\phi(x))dx\\
		&=\int\limits_M (G^\mp_{P^*})^m\psi(x)(\phi(x))dx. \qedhere
	\end{align*}
\end{proof}

\section{Riesz distributions and Hadamard coefficients}
\label{Rieszhadamard}
In this section, we introduce Riesz distributions and Hadamard coefficients, which will play a central role in this thesis. We will follow \cite{BGP} here, they are also described in \cite {Gun} and \cite {Fri}.
As all constructions rely heavily on the exponential map, the objects will only be defined on convex subsets of $M$. Note that their values will be dependent on the choice of convex subset.

\begin{gn}
	For this section fix a geodesically convex open subset $U$ of $M$.
\end{gn}

The Riesz distributions on $U$ are roughly speaking a Lorentzian analogue to "powers of the distance functions". They are described in detail in \cite[sections 1.2 and 1.4]{BGP}. In this thesis, they  will play a role comparable to that of powers of $x$ in a Taylor series. Before we can define Riesz distributions on subsets of $M$, we will first define them on Lorentzian vector spaces (in particular each tangent space of $M$), where they are roughly speaking "powers of the norm".
\begin{df}
	\label{dfgamma}
	Let $V$ be a Lorentzian vector space with Lorentzian bilinear form $\eta$.
	For $x\in V$ define 
	\[\gamma^ V(x):=-\eta(x,x).\]
	Usually, there will be no ambiguity with respect to what $V$ is and we will drop it from the notation.
\end{df}
Note that $\gamma(x)$ strictly positive/negative if and only if $x$ is timelike/spacelike.
\begin{defprop}
	\label{dfRiesz}
	Let $V$ be a d-dimensional Lorentzian vector space.
	For $\alpha\in \C$ with $\Re(\alpha)\geq 0$, define Riesz distributions $R^{V}_\pm(\alpha)$ as the distribution on $V$ given by the function
	\[R^V_\pm(\alpha)(x)=\casedist{c_\alpha\gamma(x)^{\frac{\alpha-d}{2}}}{x\in J_\pm(0)}{0}{x\notin J_\pm(0)}\]
	with
	\[c_\alpha:=\frac{2^{1-\alpha}\pi^\frac{2-d}{2}}{\Gamma(\frac{a}{2})\Gamma(\frac{\alpha-d+2}{2})}.\]
	The map $\alpha\mapsto R^{V}_\pm(\alpha)$ is holomorphic as a map into $\D'(V)$ and extends uniquely to a holomorphic map on all of $\C$. For arbitrary $\alpha\in\C$, define $R^{V}_\pm(\alpha)$ to be the value of this holomorphic extension. (The prefactor $c_\alpha$ is chosen such that $\square R^{V}_\pm(\alpha+2)=R^{V}_\pm(\alpha)$.) In case it is clear what $V$ is, we omit it from the notation.
\end{defprop}
\begin{rem}
	Another way of thinking about the Riesz distribution $R^\pm(2k)$ is as (advanced or retarded) fundamental solutions to $\square^k$ (c.f Proposition \ref{RFS}). In this sense, $R^\pm(2)$ can be interpreted as Lorentzian versions of the Newtonian potential. 
\end{rem}
\begin{proof}
	See \cite[Lemma 1.2.2]{BGP} and note that everything remains valid under isometries, so we may replace Minkowski space with an arbitrary Lorentzian vector space.
\end{proof}
We transport this from tangent spaces to our convex subset $U$ using the exponential map:
\begin{df}
	\label{dfGamma}
	For $x,y\in U$ and $\alpha\in \C$,
	define the "squared geodesic distance"
	\[\Gamma^U_x(y):=\gamma\left((exp_x^U)^{-1}(y)\right)\]
	and define the Riesz distributions
	\[R^U_\pm(\alpha,x):=(exp^U_x)^{-1*}(R^{T_xM}_\pm(\alpha)|_{Dom(exp^U_x)})\]
	on $U$. We may drop the choice of neighborhood $U$ from the notation if it is clear from the context (in this section it will always be the one we fixed).
\end{df}
Note that $\Gamma^U_x$ is positive precisely on $J^U(x)$ and $R^U_\pm(\alpha,x)$ is supported in $J^U_\pm(x)$ by Proposition \ref{exJ}.
\begin{rem}
	When identifying a Lorentzian vector space $V$ with its tangent space at $0$ in the canonical way, the exponential map at $0$ becomes the identity and thus $\Gamma_0^V=\gamma^V$ and $R^V_\pm(\alpha,0)=R^V_\pm(\alpha)$.
\end{rem}
The Riesz distributions have the following properties (see \cite[Proposition 1.4.2]{BGP}\footnote{for \ref{RMSq}, use that both sides are continuous to also obtain the statement for $\alpha=0$})
\begin{thm}For $x\in U$ and $\alpha\in \C$, we have the following
	\begin{enumerate}
		\item $\alpha\mapsto R_\pm(\alpha,x)$ is holomorphic as a distribution-valued map.
		\item $\supp(R_\pm(\alpha,x))\subseteq J^{U}_\pm(x)$
		\item For $\Re(\alpha)\geq d$, $R_\pm(\alpha,x)$ is given by \[R_\pm(\alpha,x)(y)=\casedist{c_\alpha\Gamma_x(y)^{\frac{\alpha-d}{2}}}{y\in J^{U}_\pm(x)}{0}{y\notin J^{U}_\pm(x)}.\]
		In particular, for any $k\in \N$, $R_\pm(\alpha,x)$ is $C^k$ for any $\Re(\alpha)\geq d+2k$.
		\item\label{RMSq}$2\alpha\square R_\pm(\alpha+2,x)=\left(\square\Gamma_x-2d+2\alpha\right)R_\pm(\alpha,x)$
		\item$2\alpha\grad{R_\pm(\alpha+2,x)}=(\grad\Gamma_x)R_\pm(\alpha,x)$
		\item$R_\pm(0,x)=\delta_x$
	\end{enumerate}
\end{thm}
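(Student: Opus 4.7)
The plan is to derive all six statements by transferring the corresponding properties of the flat Riesz distributions $R^{T_xM}_\pm(\alpha)$ on the Lorentzian vector space $T_xM$ (provided by Definition/Proposition \ref{dfRiesz}) via the exponential map $\exp^U_x$, which by convexity of $U$ is a diffeomorphism from an open subset of $T_xM$ onto $U$. Properties (1), (2), (6) follow almost formally. The pullback $(\exp^U_x)^{-1*}$ is a continuous linear operation on distributions, so the holomorphicity of $\alpha \mapsto R^{T_xM}_\pm(\alpha)$ yields (1). For (2), we combine the support statement $\supp R^{T_xM}_\pm(\alpha) \subseteq J_\pm(0)$ from \ref{dfRiesz} with Proposition \ref{exJ}, which identifies $\exp^U_x(J_\pm(0) \cap \Dom(\exp^U_x))$ with $J^U_\pm(x)$. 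For (6), $R^{T_xM}_\pm(0) = \delta_0$ holds on the vector space, and since $\exp^U_x$ sends $0$ to $x$, the pullback gives $\delta_x$.

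Statement (3) is obtained by pulling back the explicit formula $R^{T_xM}_\pm(\alpha)(v) = c_\alpha \gamma(v)^{(\alpha-d)/2} \mathbbm{1}_{J_\pm(0)}(v)$, valid for $\Re(\alpha) \geq d$, through the definitional identity $\Gamma_x = \gamma \circ (\exp^U_x)^{-1}$. The additional $C^k$-claim follows because for $\Re(\alpha) \geq d + 2k$ the exponent $\tfrac{\alpha - d}{2}$ satisfies $\Re(\tfrac{\alpha-d}{2})\geq k$, so $\Gamma_x^{(\alpha-d)/2}$ extends $C^k$-smoothly across the boundary $\partial J^U_\pm(x)$ where $\Gamma_x$ vanishes.

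The substantive work lies in (4) and (5). My strategy is to verify these identities on the regime $\Re(\alpha) \geq d$ where both sides are continuous functions, and then extend to all $\alpha \in \C$ by holomorphicity, appealing to (1) and uniqueness of holomorphic continuation of distribution-valued maps. For $\Re(\alpha)\geq d$ one computes $\grad R_\pm(\alpha+2,x)$ and $\square R_\pm(\alpha+2,x)$ directly from the explicit formula, using the chain rule and the product rule $\Div(fX) = f\Div X + g(\grad f, X)$. The key ingredient is a Lorentzian Gauss lemma asserting $g(\grad \Gamma_x, \grad \Gamma_x) = -4\Gamma_x$: on the vector space one has directly $\eta(\grad \gamma, \grad \gamma) = -4\gamma$ since $\grad^\eta \gamma(v) = -2v$, and since $\grad \gamma$ is radial and $\exp_x$ is radially isometric, this identity transports to $U$. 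Plugging this in, both (4) and (5) reduce to the combinatorial identity $c_{\alpha+2}\,\alpha\,(\alpha-d+2) = c_\alpha$, which follows immediately from the functional equation $\Gamma(z+1) = z\Gamma(z)$ applied to the explicit formula for $c_\alpha$.

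The main obstacle is the radial isometry step and the careful treatment of the boundary behavior near the light cone $\{\Gamma_x = 0\}$, where the exponent $\tfrac{\alpha-d}{2}$ may be small or negative and the derivatives in (4), (5) have to be interpreted distributionally. By restricting the direct pointwise verification to the range $\Re(\alpha)\geq d$, all quantities are honest continuous functions and this difficulty is postponed; the holomorphic extension then handles all remaining values of $\alpha$ uniformly.
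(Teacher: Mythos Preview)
The paper does not prove this theorem itself but simply cites \cite[Proposition 1.4.2]{BGP}; your outline is a correct reconstruction of that argument (pullback of the flat properties via $\exp^U_x$, combined with the identity $g(\grad\Gamma_x,\grad\Gamma_x)=-4\Gamma_x$, direct computation for large $\Re(\alpha)$, and holomorphic extension). Two small points worth tightening: for (4) you need $R_\pm(\alpha+2,x)\in C^2$, so take $\Re(\alpha)\geq d+2$ rather than $d$; and for (6) the pullback of $\delta_0$ gives $\delta_x$ not merely because $\exp^U_x(0)=x$ but because $d\exp^U_x|_0=\mathrm{id}$, so the Jacobian factor is $1$.
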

We now turn to the second class of objects we want to define in this section: the Hadamard coefficients.
The motivation for defining the Hadamard coefficients is that we want to have the formal equality
\[P\insum{k}V^{k}(x)R_\pm(2k+2,x)=\delta_x\]
for some smooth sections $V^{k}(x)$. This requires us to investigate the application of $P$ to a product involving Riesz distributions. To keep the formulas shorter, we introduce the following differential operator that we'll use to describe the remainder term arising in the following.
\begin{df}
	\label{dfrho}
	Define for $V\in \Gamma(E\otimes E_x)$:
	\[\rho_x^U V:= \nabla_{\grad\Gamma_x^U}V-\left(\frac{1}{2}\square\Gamma^U_x-d\right)V.\]

\end{df}
The reader may immediately forget the definition of $\rho_x^U$ and only remember the following property:
\begin{prop}
	\label{Prho}
	We have for $\alpha\in C$, $x\in U$ and $V\in \Gamma(E|_U\otimes E^*_x)$
	\[\alpha P(VR_\pm(\alpha+2,x))=\alpha(PV)R_\pm(\alpha+2,x)-((\rho_x^U-\alpha)V)R_\pm(\alpha,x)\]
\end{prop}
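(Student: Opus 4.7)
The plan is to apply the Leibniz rule for $P$ to the product of the section $V$ and the (distribution-valued) scalar $R_\pm(\alpha+2,x)$, and then substitute the two Riesz identities \[2\alpha\square R_\pm(\alpha+2,x)=(\square\Gamma_x-2d+2\alpha)R_\pm(\alpha,x)\qquad\text{and}\qquad 2\alpha\grad R_\pm(\alpha+2,x)=(\grad\Gamma_x)R_\pm(\alpha,x)\] to convert all occurrences of $R_\pm(\alpha+2,x)$ (apart from the one multiplying $PV$) into $R_\pm(\alpha,x)$. Using the Leibniz rule from the paragraph above (with $f=R_\pm(\alpha+2,x)$ and $g=V$), I would write
\[P\bigl(VR_\pm(\alpha+2,x)\bigr)=(PV)R_\pm(\alpha+2,x)-2\nabla_{\grad R_\pm(\alpha+2,x)}V+\bigl(\square R_\pm(\alpha+2,x)\bigr)V.\]

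Next, I would multiply everything by $\alpha$ and use the two Riesz identities to rewrite the last two terms. The gradient term becomes $-\nabla_{\grad\Gamma_x}V\cdot R_\pm(\alpha,x)$, and the Laplacian term becomes $\tfrac12(\square\Gamma_x-2d+2\alpha)V\cdot R_\pm(\alpha,x)$. Collecting the two contributions multiplying $R_\pm(\alpha,x)$, the bracketed factor is precisely
\[\nabla_{\grad\Gamma_x}V-\Bigl(\tfrac12\square\Gamma_x-d\Bigr)V-\alpha V=(\rho_x^U-\alpha)V,\]
which matches the definition in \ref{dfrho} and yields the claimed identity.

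The one issue to address is that the Leibniz rule from the previous proposition is stated for smooth scalar functions, while $R_\pm(\alpha+2,x)$ is only a distribution in general. I would handle this by first carrying out the calculation for $\Re(\alpha)$ large enough that $R_\pm(\alpha+2,x)$ is of class $C^2$ (which is possible by the third item of the theorem on Riesz distributions), where the Leibniz rule applies literally and both sides are continuous sections. Both sides of the asserted identity depend holomorphically on $\alpha$ as distributions on $U$, since $\alpha\mapsto R_\pm(\alpha,x)$ is holomorphic and $P$, multiplication by the smooth section $PV$, and the differential operator $\rho_x^U-\alpha$ (which depends polynomially on $\alpha$) all preserve holomorphicity. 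By the identity theorem for holomorphic distribution-valued maps, the equality then extends to all $\alpha\in\C$. The main obstacle is thus purely bookkeeping: making sure all the scalar/section products and the $\alpha$-dependence are tracked correctly so that the terms combine into $\rho_x^U-\alpha$.
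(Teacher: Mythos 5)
Your proposal is correct and follows essentially the same route as the paper: apply the Leibniz rule $P(fg)=fPg-2\nabla_{\grad f}g+(\square f)g$ with $f=R_\pm(\alpha+2,x)$, substitute the two Riesz identities to trade $\square R_\pm(\alpha+2,x)$ and $\grad R_\pm(\alpha+2,x)$ for $R_\pm(\alpha,x)$, collect terms into $(\rho_x^U-\alpha)V$, and extend from large $\Re(\alpha)$ to all of $\C$ by holomorphicity. The paper carries out exactly this computation for $\Re(\alpha)>d+4$ and concludes by analytic continuation, so there is nothing to add.
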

\begin{proof}
	We calculate for $\Re(\alpha)>d+4$ (so $R_\pm(\alpha,x)$ is $C^2$ and we don't have to worry about distributions)
	\begin{align*}
	\alpha P (VR_\pm(\alpha+2,x))&=\alpha\left((P V)R_\pm(\alpha+2,x)-2\nabla_{\grad R_\pm(\alpha+2,x)} V+V\square R_\pm(\alpha+2,x)\right)\\
	&=\alpha(P V)R_\pm(\alpha+2,x)-(\nabla_{\grad\Gamma} V) R_\pm(\alpha,x)+\frac{1}{2} V\left(\square\Gamma_x-2d+2\alpha\right)R_\pm(\alpha,x)\\
	&=\alpha(P V)R_\pm(\alpha+2,x)+\left((\frac{1}{2}\square\Gamma-d+\alpha)V-\nabla_{\grad\Gamma} V\right)R_\pm(\alpha,x)\\
	&=\alpha(P V)R_\pm(\alpha+2,x)-((\rho_x^U-\alpha)V)R_\pm(\alpha,x).
	\end{align*}
	As both sides are holomorphic, the equation holds for arbitrary $\alpha$.
\end{proof}
	We are now ready to define the Hadamard coefficients (see \cite[Definition 2.2.1 and Proposition 2.3.1]{BGP}):
	\begin{defprop}
		\label{Vdef}
		For every $x\in U$ there is a unique family of sections $V^{k,U}_x\in \Gamma(E\otimes E^*_x|_U)$ (indexed by $k\in \N$) that satisfies the transport equations 
		\[(\rho_x^U-2k) V^{k,U}_x=2kP V^{k-1,U}_x\]
		(for $k=0$, the right hand side is set to $0$) subject to the initial condition
		\[V^{0,U}_x(x)=1.\]
		These sections are called the Hadamard coefficients (for $P$). The map $(y,x)\mapsto V^{k,U}_x(y)$ is a smooth section in $E\boxtimes E^*|_{U\times U}$. As before, we will drop the superscript where it is not needed and write $V^k_x$ for $V^{k,U}_x$.
	\end{defprop}
These are defined so that we have the equality:	\[P\sum\limits_{k=0}^{N}V^k_xR_\pm(2k+2,x)=\delta_x+(PV^{N}_x)R_\pm(2N+2,x).\]
Morally we want to replace $N$ by $\infty$. In that case, we would (formally) obtain a solution to the equation
\[PF=\delta,\]
which (together with support conditions) determines the Schwartz kernels of the Green's operators.

However, in general the remainder $(PV^{N}_x)R_\pm(2N+2,x)$ may not vanish for $N\rightarrow \infty$. All we get is that the remainder becomes arbitrarily differentiable for $N$ large enough, i.e. we get an asymptotic expansion in differentiability order.
This is the motivation for the definition of the Hadamard coefficients and one of the key results that will be used in this thesis: We have an asymptotic expansion
\[G^\pm\delta_x|_U\sim_x\insum{k}V^{k,U}_xR^{U}_\pm(2k+2,x)\]
in case $U$ satisfies some further assumptions.
This expansion is shown in \cite [Proposition 2.5.1]{BGP} on some suitable small neighborhoods and we will show a version for slightly different neighborhoods later on.
\begin{rem}
	Note that both $G^\pm \delta_x$ and $R^U_\pm(2k+2,x)$ are supported inside $J_\pm(x)$ hence the same is true for the remainder term in the expansion. Thus all derivatives that the remainder term has must be vanishing at the boundary of $J_\pm(x)$, in particular at $x$. This means that differentiability of the remainder leads to decay near $x$ of the corresponding order (see \cite[Theorem 2.5.2]{BGP} for a detailed statement).
	
	There is another propagator associated to $P$, known as the Feynman propagator, that also has a similar asymptotic expansion involving the Hadamard coeficients. However, in the Feynman case nothing needs to vanish anywhere, so differentiability will not transform into decay estimates. That is the reason why we need the causal propagator in the approach presented here.
\end{rem}

In general, as the transport equations depend on $\Gamma^U$, the value $V^{k,U}_x(y)$ will depend on the choice of neighborhood $U$. This is due to the fact that, in different convex neighborhoods, the unique geodesic joining $x$ and $y$ might be different. If, however, one of the two sets is contained within the other, their coefficients coincide:
\begin{prop}
	Let $U$ and $U'$ be open convex subsets of $M$ with $U'\subseteq U$. Then for any $x\in U'$,
	\[V^{k,U}_x|_{U'}=V^{k,U'}_x\]
	and
	\[R^U_\pm(\alpha,x)|_{U'}=R^{U'}_\pm(\alpha,x).\]
\end{prop}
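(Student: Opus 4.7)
The plan is to reduce both identities to the single geometric observation that $\exp_x^{U'}$ agrees with the restriction of $\exp_x^U$ to $(\exp_x^U)^{-1}(U')$. First I would fix $y \in U'$: by convexity of $U'$ there exists a geodesic $c \colon [0,1] \to U'$ joining $x$ to $y$, which (because $U' \subseteq U$) is also a geodesic in $U$ connecting $x$ to $y$. Since $U$ is likewise convex, such a connecting geodesic in $U$ is unique, so the two geodesics coincide and therefore so do their initial velocities at $x$. This yields $(\exp_x^{U'})^{-1}(y) = (\exp_x^U)^{-1}(y)$ whenever $y \in U'$, and in particular $\Gamma_x^U|_{U'} = \Gamma_x^{U'}$.

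The Riesz distribution identity then follows directly from the defining formula: writing out
\[R_\pm^U(\alpha,x) = (\exp_x^U)^{-1*}\bigl(R_\pm^{T_xM}(\alpha)|_{\Dom(\exp_x^U)}\bigr),\]
restricting to $U'$ pulls $R_\pm^{T_xM}(\alpha)$ back along exactly the map used in defining $R_\pm^{U'}(\alpha,x)$, so the two distributions coincide on $U'$ for any $\alpha$ in the initial half-plane of definition. For $\alpha$ outside this half-plane both sides are values of holomorphic $\D'(U')$-valued families agreeing on a right half-plane, and equality persists by uniqueness of holomorphic continuation.

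For the Hadamard coefficients I would proceed by induction on $k$. Since $\Gamma_x^U|_{U'} = \Gamma_x^{U'}$, the differential operator $\rho_x^U$ acts on sections over $U'$ exactly as $\rho_x^{U'}$ does. For $k=0$, both $V^{0,U}_x|_{U'}$ and $V^{0,U'}_x$ satisfy $\rho_x^{U'} V = 0$ on $U'$ with $V(x) = 1$, and must coincide by the uniqueness clause of Definition/Proposition \ref{Vdef} applied inside $U'$. Assuming the identity holds through order $k-1$, the restriction of the transport equation for $V^{k,U}_x$ to $U'$ becomes
\[(\rho_x^{U'} - 2k)\,V^{k,U}_x|_{U'} = 2k\,P\,V^{k-1,U'}_x,\]
which is exactly the transport equation characterizing $V^{k,U'}_x$; one more application of uniqueness then finishes the induction step.

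The one step meriting care is the geodesic uniqueness in the first paragraph: a priori the smaller convex neighborhood $U'$ could conceivably realize an alternative connecting geodesic. The argument relies crucially on the fact that $U$ is \emph{also} convex, so the unique connecting geodesic in $U$ must coincide with any geodesic in $U' \subseteq U$ between the same endpoints. Beyond this, both halves of the proposition are essentially bookkeeping with the definitions and invoking uniqueness — of holomorphic continuations for the Riesz part and of the transport-equation solutions for the Hadamard part.
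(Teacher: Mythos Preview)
Your proposal is correct and follows essentially the same route as the paper's proof: establish that the exponential maps (hence $\Gamma_x$) coincide on $U'$, deduce that the transport equations agree so the restricted $V^{k,U}_x$ solve the $U'$-transport equations and coincide with $V^{k,U'}_x$ by uniqueness, and handle the Riesz distributions via agreement for large $\Re(\alpha)$ plus holomorphic continuation. Your version is simply more explicit (spelling out the induction on $k$ and the geodesic-uniqueness step), but there is no substantive difference in strategy.
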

\begin{proof}
	Any geodesic in $U'$ is also in $U$, so for $x$ and $y$ in $U'$ the unique geodesics joining them in $U$ and $U'$ are the same. This means that $exp_x^{-1}(y)$ and hence $\Gamma_x(y)$ coincide for $U$ and $U'$.  Thus their transport equations are the same, so $V^{k,U}_x|_{U'}$ also solves the transport equations for $U'$.
	
	Moreover, as $\Gamma$ coincides for both sets, the same is true for $R_\pm(\alpha,x)$ if $\Re(\alpha)$ is large enough. By uniqueness of holomorphic continuation, the Riesz distributions also coincide for arbitrary $\alpha$.
\end{proof}
\begin{cor}
	For any two convex open neighborhoods $U_1$ and $U_2$ of $p\in M$, there is a neighborhood $U_0\subseteq U_1\cap U_2$ of $p$ such that for any $k\in \N$
	\[V^{k,U_1}_x|_{U_0}=V^{k,U_2}_x|_{U_0}.\]
\end{cor}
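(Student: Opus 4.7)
The corollary is a direct consequence of the preceding proposition, once we extract a common convex neighborhood. My plan is as follows.

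First I would invoke the fact, mentioned right before Definition 2.2.1 in the preliminaries on Lorentzian geometry (specifically the sentence ``Every point in $M$ has a neighborhood basis of convex subsets'' in the subsection on Lorentzian manifolds), that the convex open neighborhoods of $p$ form a basis of its neighborhood filter. Since $U_1 \cap U_2$ is itself an open neighborhood of $p$, we may choose a convex open neighborhood $U_0$ of $p$ with $U_0 \subseteq U_1 \cap U_2$.

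Then I would apply the preceding proposition twice. Since $U_0 \subseteq U_1$ are both convex open, for every $x \in U_0$ we have
\[V^{k,U_1}_x|_{U_0} = V^{k,U_0}_x,\]
and since $U_0 \subseteq U_2$ are both convex open, we also have
\[V^{k,U_2}_x|_{U_0} = V^{k,U_0}_x.\]
Equating the right-hand sides gives the desired identity $V^{k,U_1}_x|_{U_0} = V^{k,U_2}_x|_{U_0}$ for every $k \in \N$. Note that the choice of $U_0$ does not depend on $k$, so the single neighborhood works uniformly across all Hadamard coefficients, as required.

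There is no real obstacle here: the only subtle point is to make sure the intermediate neighborhood $U_0$ is itself convex, because the preceding proposition is stated only for pairs of convex sets with one contained in the other. That is taken care of by using the convex basis property of $M$. Once that is observed, the corollary follows immediately by transitivity of restriction through the common convex subneighborhood.
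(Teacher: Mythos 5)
Your proof is correct and is essentially identical to the paper's: the paper also simply takes $U_0$ to be any convex open neighborhood of $p$ contained in $U_1\cap U_2$ and observes that both restrictions equal $V^{k,U_0}_x$ by the preceding proposition. Nothing further is needed.
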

\begin{proof}
	Take $U_0$ to be any open convex neighborhood contained in $U_1\cap U_2$. Then both sides of the equation are equal to $V^{k,U_0}_x$. 
\end{proof}
Thus the germ of the Hadamard coefficients at the diagonal (i.e. at $y=x$) is independent of the choice of neighborhood, allowing us to define:
\begin{df}
	For any local operator $L$, write $LV^k_x(x)$ for the value of $LV^{k,U}_x(x)$ for any (and hence all) convex open neighborhood(s) $U$ of $x$.
\end{df}
	
\section{Evolution operators}
\label{evopchap}
For this section, we will assume that $M$ is foliated into smooth spacelike Cauchy hypersurfaces $(\Sigma_t)_{t\in \R}$.
 By Theorem \ref{Ghyper}, such a foliation always exists for globally hyperbolic manifolds. Let $n$ denote the future directed unit normal vector field to the hypersurfaces $\Sigma_t$.

 Given initial data on any $\Sigma_t$, there is a unique solution to $Pu=0$ with these initial data (see \cite[Theorem 3.2.11]{BGP}):
\begin{thm}[Well-posedness of the Cauchy problem]
	For any $t\in \R$ and $f,g\in \Gamma_c(E|_{\Sigma_t})$, there is a unique $u\in \Gamma(E)$ such that
	\[u|_{\Sigma_t}=f,\]
	\[\nabla_n u|_{\Sigma_t}=g\]
	and
	\[Pu=0.\]
	This $u$ is supported in $J(\supp(f)\cup\supp(g))$.
\end{thm}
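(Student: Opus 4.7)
I would follow the classical approach for hyperbolic PDEs on globally hyperbolic manifolds: derive an energy estimate that simultaneously yields uniqueness and the support property, construct local solutions by reducing to a first-order symmetric hyperbolic system, then glue them using the foliation structure guaranteed by Theorem \ref{Ghyper}.

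For uniqueness and the support statement, I would fix an auxiliary complete Riemannian metric on $M$ and a Hermitian fibre metric on $E$, and define for each spacelike hypersurface $\Sigma$ an energy
\[ \mathcal{E}_\Sigma(u) := \int_\Sigma \left( |\nabla u|^2 + |u|^2 \right) d\sigma \]
computed with respect to this auxiliary data. For a solution of $Pu = 0$ on a truncated causal diamond $D$ bounded below by a piece of $\Sigma_t$, above by a piece of some $\Sigma_r$, and with causal lateral boundary, applying the divergence theorem to a suitable stress-energy current and using the normal hyperbolicity of $P$ yields
\[ \mathcal{E}_{\Sigma_r \cap D}(u) \leq C \int_t^r \mathcal{E}_{\Sigma_\sigma \cap D}(u) \, d\sigma + \mathcal{E}_{\Sigma_t \cap D}(u). \]
Grönwall's inequality then forces $u$ to vanish on $D$ whenever its Cauchy data vanish on $\Sigma_t \cap D$; applied to the causal diamond based at a point $p \notin J(\supp(f)\cup\supp(g))$ this gives the support statement, and taking $f = g = 0$ throughout gives uniqueness.

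For local existence near a point $p \in \Sigma_t$, I would choose Gaussian normal coordinates adapted to $\Sigma_t$, so the metric takes the form $-d\tau^2 + h_\tau$ for a smooth family $h_\tau$ of Riemannian metrics on a neighborhood in $\Sigma_t$. In these coordinates the principal part of $P$ is $\partial_\tau^2 - \Delta_{h_\tau}$, and taking $v = (u, \partial_\tau u, \nabla^{\Sigma} u)$ reduces $Pu = 0$ to a first-order symmetric hyperbolic system with smooth coefficients. Standard existence theory (Friedrichs' method, or a Galerkin construction paired with the energy estimate above) produces a smooth solution on a short time strip above some relatively compact neighbourhood of $p$ in $\Sigma_t$, matching the prescribed initial data. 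Covering the compact set $K := \supp(f) \cup \supp(g)$ by finitely many such strips and using local uniqueness to glue, one obtains a solution on $\bigcup_{|s-t| < \epsilon} \Sigma_s$ for some $\epsilon > 0$. Iterating along the foliation parameter, and using that $J_\pm(K) \cap \Sigma_s$ is compact for every $s$ (which follows from the compactness statements for past/future compact sets already in this chapter), propagates the solution to all of $J(K)$. Extending by zero outside $J(K)$, consistent with the support statement, produces the required global smooth solution.

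The main obstacle is the local existence step: one must cleanly reduce the second-order operator $P$ to a symmetric hyperbolic system while keeping the connection structure on $E$ under control, so that the energy estimate above transfers directly to the reduced system and to $u$ itself. Once that is handled, the globalization is essentially bookkeeping built on top of the foliation structure and the compactness results already available.
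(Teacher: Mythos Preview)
The paper does not prove this theorem; it is stated as a known result with a reference to \cite[Theorem 3.2.11]{BGP} immediately before the statement, and no argument is supplied. So there is nothing in the paper to compare your proposal against line by line.

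That said, your sketch is a correct outline of one of the standard routes to this result. It is worth noting that the cited reference \cite{BGP} actually proceeds differently: there the authors first construct local fundamental solutions (via the Riesz distributions and Hadamard series that this paper also uses), obtain the advanced and retarded Green's operators from them, and then deduce well-posedness of the Cauchy problem from the existence and properties of $G^\pm$. Your approach---energy estimates plus reduction to a first-order symmetric hyperbolic system---is the more ``PDE-classical'' route and is logically independent of the Green's-operator machinery; it has the advantage of giving uniqueness and finite propagation speed directly, without needing the Hadamard construction first. Either route is acceptable here, but since the paper treats the Cauchy problem as input rather than as something to be proved, a one-line citation to \cite{BGP} (or to a symmetric-hyperbolic reference such as Taylor or Ringstr\"om) would be the appropriate level of detail.
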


This allows us to define the following family of evolution operators:
\begin{df}
	\label{dfQ}
	For $s,t\in \R$, define 
	\[Q(t,s)\colon \Gamma_c(E|_{\Sigma_s})\rightarrow \Gamma_c(E|_{\Sigma_t})\]
	by setting for $\psi\in \Gamma_c(E|_{\Sigma_s})$
	\[Q(t,s)\psi(x):=u(x,t)\]
	for the unique $u\in \Gamma(E)$ with
	\[u|_{\Sigma_s}=0\]
	\[\nabla_n u|_{\Sigma_s}=\psi\]
	and
	\[Pu=0.\]
\end{df}
\begin{rem}
	The full Cauchy evolution operator would be the operator 
	\begin{align*}
		\Gamma_c(E|\sigma_s)\oplus\Gamma_c(E|\sigma_s)&\rightarrow \Gamma_c(E|\sigma_t)\oplus \Gamma_c(E|\sigma_t)\\
		(u|_{\Sigma_s},\nabla_n u|_{\Sigma_s})&\mapsto(u|_{\Sigma_t},\nabla_n u|_{\Sigma_t})\text{ for }Pu=0.
	\end{align*}
The operator we consider here is the top right entry of the full operator viewed as a $2\times2$-matrix.
\end{rem}
We want to relate this system of evolution operators to the Green's operators of $P$. For that we need a slightly changed version of \cite[Lemma 3.2.2]{BGP}. The original version in \cite{BGP} phrased in terms of fundamental solutions rather than Green's operators and only for suitable small neighborhoods, as \cite{BGP} has only shown existence of fundamental solutions (which are basically kernels of Green's operators) on these neighborhoods at that point. However, as we have already taken the global existence of Green's operators for granted, we may state the lemma for all of $M$.
\begin{lemma}
	\label{322}
	For $\psi\in \Gamma(E|_{\Sigma_s})$, let $u$ be a solution to
	\[u|_{\Sigma_s}=0\]
	\[\nabla_n u|_{\Sigma_s}=\psi\]
	and
	\[Pu=0.\]
	Then we have for every $\phi\in \Gamma_c(E^*)$:
	\[\intM \phi(x) u(x)dx=-\int\limits_{\Sigma_s}G_{P^*}\phi(x) \psi(x)dx.\]
\end{lemma}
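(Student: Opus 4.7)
The plan is to express $u$ as the causal propagator $G_P = G^+_P - G^-_P$ applied to the surface distribution $\psi\,\delta_{\Sigma_s}$ built from the Cauchy data, and then use the formal adjoint relation between $G^\pm_P$ and $G^\mp_{P^*}$ (Proposition \ref{Gdual}) to rewrite the bulk integral as one over $\Sigma_s$.

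First, I would set $\tilde u^\pm := u\,\mathbbm{1}_{M^\pm}$ where $M^\pm := J_\pm(\Sigma_s)$. Since $u|_{\Sigma_s} = 0$, both $\tilde u^\pm$ are continuous and $u = \tilde u^+ + \tilde u^-$ as distributions on $M$. The key computation is that
\[P\tilde u^\pm = \pm\, \psi\,\delta_{\Sigma_s} \quad \text{in } \D'(E),\]
where $\psi\,\delta_{\Sigma_s}$ denotes the distribution $\xi\mapsto \int_{\Sigma_s}\xi\psi\,dx$. I would verify this in Gaussian normal coordinates around $\Sigma_s$: with the convention $\square=-\Div\grad$ used in the preliminaries, the principal part of $\square^\nabla$ in the normal direction is $+\partial_t^2$, and the claim reduces to the one-dimensional identity $\partial_t^2(u\,\mathbbm{1}_{t>0}) = (\partial_t^2 u)\mathbbm{1}_{t>0} + \psi\,\delta_0$ valid for smooth $u$ with $u(0)=0$ and $\partial_t u(0)=\psi$. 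Neither the spatial derivatives in $\square^\nabla$, the bundle Christoffel terms (which are first order in $\partial_t$ and are multiplied by $u|_{t=0}=0$), nor the endomorphism $B$ produce further singular contributions, and $Pu=0$ eliminates the regular pieces.

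Next, the support properties guarantee $\tilde u^\pm\in\D'_\pm(E)$: one writes $J_\mp(x)\cap J_\pm(\Sigma_s) = J_\mp(x)\cap J_\pm(\Sigma_s\cap J_\mp(x))$ and uses compactness of $\Sigma_s\cap J_\mp(x)$ (Cauchy hypersurface) together with the global hyperbolicity fact that $J_+(K)\cap J_-(K')$ is compact for compact $K,K'$, so that $M^\pm$ is past/future compact and hence so is any closed subset. The same reasoning applies to $\psi\,\delta_{\Sigma_s}$. Hence by Proposition \ref{extendedG} we may invert $P$ and deduce $\tilde u^\pm = G^\pm_P(P\tilde u^\pm)= \pm G^\pm_P(\psi\,\delta_{\Sigma_s})$, and summing yields $u = G_P(\psi\,\delta_{\Sigma_s})$.

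Finally, extending Proposition \ref{Gdual} (with $m=1$) from $\Gamma_c(E)$ to the past/future compactly supported distribution $\psi\,\delta_{\Sigma_s}$ by density, I compute
\[\intM \phi\,u\,dx = \intM \phi\,G_P(\psi\,\delta_{\Sigma_s})\,dx = -\intM G_{P^*}\phi\cdot(\psi\,\delta_{\Sigma_s})\,dx = -\int\limits_{\Sigma_s}G_{P^*}\phi\cdot\psi\,dx,\]
which is exactly the claim. The main obstacle will be the distributional identity $P\tilde u^\pm = \pm\psi\,\delta_{\Sigma_s}$: although the scalar model is elementary, one must carefully check that the connection and lower order terms of $P$ contribute no additional singular piece and that the sign works out given the $\square=-\Div\grad$ convention; the extension of the adjoint identity to $\psi\,\delta_{\Sigma_s}$ is then a routine density argument.
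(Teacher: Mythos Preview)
Your argument is correct and complete. The distributional identity $P\tilde u^\pm = \pm\psi\,\delta_{\Sigma_s}$, the support check, the inversion via Proposition~\ref{extendedG}, and the adjoint step all go through as you describe; the sign is right given the $\square=-\Div\grad$ convention.

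However, your route is genuinely different from the paper's. The paper does not prove the lemma from scratch: it simply points to the proof of \cite[Lemma~3.2.2]{BGP} and indicates the substitutions needed to globalize it. That BGP proof is the classical Green's-formula argument: one splits $\int_M\phi u$ into integrals over $J_+(\Sigma_s)$ and $J_-(\Sigma_s)$, writes $\phi=P^*(G^\pm_{P^*}\phi)$ on each piece, and integrates by parts twice; since $Pu=0$ and $u|_{\Sigma_s}=0$, only the normal-derivative boundary term survives and yields $-\int_{\Sigma_s}G_{P^*}\phi\cdot\psi$. Your approach instead packages the same integration-by-parts content into the single distributional equation $P(u\mathbbm{1}_{M^\pm})=\pm\psi\,\delta_{\Sigma_s}$ and then invokes the extended Green's operators. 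This buys you the explicit representation $u=G_P(\iota_{s*}\psi)$ as a byproduct, which is exactly the content of Proposition~\ref{GtoQ} that the paper proves separately from Lemma~\ref{322}; so your argument effectively establishes both results at once and is more in the spirit of the distributional framework the paper sets up in Proposition~\ref{extendedG}.
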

\begin{proof}
	In the proof of \cite[Lemma3.2.2]{BGP}, set $S:=\Sigma_s$, $u_1:=\psi$ and $u_0:=0$, replace $\Omega$ with $M$ and redefine $\Psi$ as $G^+_{P^*}(\phi)$ and $\Psi'$ as $G^-_{P^*}(\phi)$. The proof then shows the claimed result.
\end{proof}
With this we can show the following:
\begin{prop}
	\label{GtoQ}
	For any $\psi\in \Gamma(E|_{\Sigma_s})$, we have
	\[Q(t,s)\psi=G(\iota_{s*}\psi)|_{\Sigma_t},\]
	where $\iota_s$ denotes the inclusion of $\Sigma_s$ into $M$.
\end{prop}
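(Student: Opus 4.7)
The strategy is to show that as a distribution on $M$, $G(\iota_{s*}\psi)$ coincides with the unique solution $u$ of the Cauchy problem with $u|_{\Sigma_s}=0$ and $\nabla_n u|_{\Sigma_s}=\psi$ used to define $Q(t,s)\psi$. Since $u$ is smooth, restricting to $\Sigma_t$ then immediately gives the identity $Q(t,s)\psi = G(\iota_{s*}\psi)|_{\Sigma_t}$.

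First I would apply Lemma \ref{322} to this $u$: for every test section $\phi\in\Gamma_c(E^*)$,
\[\int_M \phi(x) u(x)\,dx = -\int_{\Sigma_s} G_{P^*}\phi(x)\,\psi(x)\,dx.\]
Next I would compute the distributional pairing $G(\iota_{s*}\psi)[\phi]$. By the definition of the pushforward, $(\iota_{s*}\psi)[\eta] = \int_{\Sigma_s} \eta|_{\Sigma_s}(x)\,\psi(x)\,dx$ for any smooth $\eta \in \Gamma(E^*)$, and moving $G$ to the other side via its formal adjoint gives $G(\iota_{s*}\psi)[\phi] = (\iota_{s*}\psi)[G^*\phi]$. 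Here $G^*$ is computed from Proposition \ref{Gdual} with $m=1$: since $(G_P^{\pm})^* = G_{P^*}^{\mp}$, the formal adjoint of the causal propagator $G = G_P^+ - G_P^-$ is $G^* = G_{P^*}^- - G_{P^*}^+ = -G_{P^*}$. Combining these yields
\[G(\iota_{s*}\psi)[\phi] = -\int_{\Sigma_s} G_{P^*}\phi(x)\,\psi(x)\,dx,\]
which matches the right-hand side obtained from Lemma \ref{322}. Hence $u$ and $G(\iota_{s*}\psi)$ agree as distributions on $M$, and so do their restrictions to $\Sigma_t$.

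The main subtlety lies in ensuring that all objects live in the correct function spaces for these manipulations. For $\psi\in\Gamma_c(E|_{\Sigma_s})$ (as in the original definition of $Q(t,s)$), the pushforward $\iota_{s*}\psi$ is a compactly supported distribution on $M$, hence in $\D'_\pm(E)$, so $G$ is defined on it by Proposition \ref{extendedG}; the pairing with $G_{P^*}\phi$ over $\Sigma_s$ is finite because $\supp(\psi)$ is compact. The statement as written allows $\psi\in\Gamma(E|_{\Sigma_s})$ (not necessarily compactly supported), and this extension should follow from a cut-off argument: for any compact $K\subseteq \Sigma_t$, the value $Q(t,s)\psi$ on $K$ depends only on $\psi$ restricted to the compact set $J(K)\cap\Sigma_s$, and the support property $\supp(G^\pm\eta)\subseteq J_\pm(\supp\eta)$ gives the analogous localization on the $G$-side, so one may replace $\psi$ by $\chi\psi$ with $\chi\in C_c^\infty(\Sigma_s)$ equal to $1$ on $J(K)\cap\Sigma_s$ and reduce to the compactly supported case.
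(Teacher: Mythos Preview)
Your proof is correct and follows essentially the same route as the paper: apply Lemma \ref{322} to the Cauchy solution $u$, compute $G(\iota_{s*}\psi)[\phi]=(\iota_{s*}\psi)[G^*\phi]$ via the pushforward and the adjoint identity $G^*=-G_{P^*}$ from Proposition \ref{Gdual}, and match the two expressions to conclude $u=G(\iota_{s*}\psi)$. The only difference is that you add an explicit cut-off discussion to handle $\psi\in\Gamma(E|_{\Sigma_s})$ rather than $\Gamma_c(E|_{\Sigma_s})$, which the paper does not spell out.
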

\begin{proof}
	Let $u$ be as above. Then by definition
	\[Q(t,s)\psi=u|_{\Sigma_t}.\]
	Recall that the Green's operators of $P^*$ are adjoints to that of $P$ with flipped $\pm$ (see Proposition \ref{Gdual}), so $G^*=-G_{P^*}$. Thus, using Lemma \ref{322}, we get for any $\phi\in \Gamma_c(E^*)$:
	\[u[\phi]=-\int\limits_{\Sigma_s}G_{P^*}\phi(x) \psi(x)dx=\iota_{s*}\psi[-G_{P^*}\phi]=G(\iota_{s*}\psi)[\phi].\]
	Thus 
	\[u=G(\iota_{s*}\psi)\]
	and hence
	\[Q(t,s)\psi=u|_{\Sigma_t}=G(\iota_{s*}\psi)|_{\Sigma_t}.\qedhere\]
\end{proof}
Morally, this means that
\[\K(Q(t,s))(y,x)=\K(G)(y,x)\]
for any $x\in \Sigma_s$ and $y\in \Sigma_t$.

\section{Wavefront calculus}
\label{WFcalc}
\subsection{Wavefront calculus on $\R^n$}
\label{wfrn}
The wavefront set of a distribution $\eta$ is a refinement of its singular support. It captures not only at which points a distribution is not smooth, but also in which directions. It will be a subset of the cotangent bundle of the manifold without the zero section. The notion of wavefront sets goes back to Lars Hörmander (\cite {HorII}).
\begin{df}
	For a manifold $X$, let $\Tdot(X)$ denote the set of all non-zero vectors in the cotangent bundle $T^*(X)$. 
	In this part, we will assume that $X$ and $Y$ are open subset of $\R^n$ and $\R^m$, for $m,n\in \N$. In this case, we identify $\Tdot(X)$ with $X\times(\R^n\O)$. Compactly supported functions on $X$ are sometimes implicitly extended by $0$ to functions on all of $\R^n$. All definitions made in terms of $X$ and $Y$ should be understood as definitions for arbitrary manifolds, if this is possible.
\end{df}

We define the wavefront of a distribution by characterizing its complement:
\begin{df}
	\label{dfwfrn}
	Let $\eta$ be a distribution on $\R^n$.
	A pair $(x,\xi)\in \Tdot(X)$ is \textbf{not} in the wavefront set $\WF(\eta)\subseteq\Tdot(X)$, if and only if the following is satisfied:
	
	There is a conic neighborhood $O$ of $\xi$ and $\chi\in C_c^\infty(X)$ such that
	\begin{itemize}
		\item $x\in \supp(\chi)$
		\item the Fourier transform of $\chi\eta$ decays rapidly on $O$, i.e. we have for all $k\in \N$:
		\[\sup\limits_{\xi'\in O}(1+|\xi'|)^k\|\F(\chi\eta)(\xi')\|<\infty.\]
	\end{itemize}
Here conic means that $\lambda\xi'\in O$ for $\xi'\in O$ and $\lambda\in (0,\infty)$.
\end{df}
As multiplication with $\xi$ in Fourier space corresponds to differentiation in the corresponding direction, the wavefront set can be interpreted as the set of directions at each point in which the function is not smooth.

The space of all distributions with a given wavefront set can then be topologized by taking something like the best constants in the above condition as additional seminorms:
\begin{df}
	\label{wfnorm}
	For a distribution $\eta$ on $X$, $\chi\in C_c^\infty(X)$, $V\subseteq\R^n\O$ closed and conic and $k\in \N$, define
	\[\|\eta\|_{k,V,\chi}:=\sup\limits_{\xi\in V}(|\xi|+1)^k|\F(\chi\eta)(\xi)|.\]
	For $\Lambda\subseteq\Tdot(\R^n)$ a closed conic subset (in the sense that $(x,\mu\xi)\in \Lambda$ for $(x,\xi)\in \Lambda$ and $\mu\in (0,\infty)$), let $\D'_\Lambda(X)$ be the space of all distributions on $X$ with wavefront contained in $\Lambda$. This is topologized by the seminorms for the (strong) topology on $\D'(X)$ and additional seminorms
	\[\|\cdot\|_{k,V,\chi}\]
	for any $k,V,\chi$ as above such that
	\[\supp(\chi)\times V\cap\Lambda=\emptyset.\]
\end{df}
The use of the strong topology on $\D'$ is required to make some operations continuous on these spaces. Moreover, with this topology, $D'_\Lambda$ is complete (see \cite[Corollary 25]{DaBr}). However, for convergence of sequences it is equivalent to the weak*-topology. As sequential continuity would be sufficient for the purpose of this paper, we could also work with the weak*-topology instead.

As for the usual space of distributions, $C_c^\infty(X)$ is dense in $\D'_\Lambda(X)$ for any $\Lambda$ (see \cite[8.23]{HorI}).
Obviously, for $\Theta\subseteq \Lambda$, $\D'_\Theta(X)$ is continuously embedded in $\D'_\Lambda(X)$, as any seminorm of the latter is also a seminorm of the former.

The wavefront set is a refinement of the singular support, in the sense that $\singsupp (\eta)$ is the projection of $\WF(\eta)$ onto the first component.
Moreover, $\D'_\emptyset(X)$ and $C^\infty(X)$ coincide as topological vector spaces (see \cite[Lemma 7.2]{BDH}).

To discuss continuity properties of certain bilinear operations on wavefront spaces, we shall need the concept of hypocontinuity:
\begin{df}
	A bilinear map $b\colon V_1\times V_2\rightarrow V_3$, for $V_i$ topological vector spaces, is called hypocontinuous, if for every neighborhood $W$ of 0 in $V_3$ and any bounded sets $A_1\subset V_1$ and $A_2\subset V_2$ there are neighborhoods of zero $U_1\subseteq V_1$  and $U_2\subseteq V_2$ such that
	$b^{-1}(W)$ contains $U_1\times A_2$ and $A_1\times U_2$.
\end{df}
If the topologies are given in terms of seminorms, this can be rephrased in terms of those as follows:
For every bounded subset $A\subset V_1$ and every seminorm $\rho$ of $V_3$, there are seminorms $(\rho_i)_{i\leq N}$ of $V_2$ and $C_A\in\R$ such that for all $v\in A$, $w\in V_2$, we have
\[\rho(b(v,w))\leq C_A\sum\limits_{i=0}^N\rho_i(w)\]
and the same holds with $V_1$ and $V_2 $ interchanged.

As any bounded set is contained in a suitably scaled version of every neighborhood of zero, this is weaker than continuity. As each bounded set $A_i$ is an open subset of itself and translations of bounded sets are bounded, a hypocontinuous map is continuous when restricted to $A_1\times V_2$ or $V_1\times A_2$. In particular, as points are bounded, it is separately continuous in each argument when keeping the other argument fixed. Moreover, since all convergent sequences are bounded, hypocontinuous maps are sequentially continuous. As differential quotients can be defined via limits of sequences, hypocontinuous maps preserve differentiability and holomorphicity. As images of bounded sets under continuous linear maps are bounded and preimages of open sets are open, compositions of hypocontinuous bilinear maps with continuous linear maps (either postcomposition or precomposition in either argument) are again hypocontinuous. The bilinear operations we will encounter in wavefront calculus are generally not continuous, but hypocontinuous.

One of the advantages of wavefront calculus is that it allows us to perform operations that are not well-defined for general distributions, like multiplications or pull-backs, on distributions with the right wavefront.
The principle is always (mostly) the same: If the image of some closed conic source wavefront set(s) is contained in a target wavefront set, then the operation is well-defined and continuous between the associated wavefront spaces. In particular, the operation is well-defined if the image of the wavefront set(s) does not intersect the zero section. We now define the images of wavefront sets under the operations that will be needed later:
\begin{df}
	\label{WFops}
	For a smooth map $f\in C^\infty (X,Y)$ and closed conic sets $\Lambda,\Lambda'\subseteq \Tdot(X)$ and $\Theta\subseteq \Tdot(Y)$, define
	\begin{itemize}
		\item $f^*(\Theta):=\{(df_x)^*(\xi)\mid x\in X, \xi\in \Theta\cap T_{f(x)}(Y)\}$
		\item $f_*(\Lambda):=\{\xi\in \Tdot(Y)\mid\exists x\in X\colon \xi\in T_{f(x)}(Y)\wedge (df_x)^*\xi\in \Lambda \cup 0\}$
		\item $\Lambda\bar +\Lambda':=\{\xi+\xi'\mid\xi\in\Lambda,\xi'\in\Lambda'\} \cup\Lambda\cup \Lambda'$
		\item $\Lambda\bar \times \Theta:=((\Lambda\cup 0)\times(\Theta\cup 0))\backslash 0$
	\end{itemize}
(Here $(df_x)^*$ denotes the adjoint of the total differential of $f$ at $x$).
\end{df}
In the case of pushforwards, there is an additional restriction that is required even in the case of smooth functions:
\[f_*(\phi)[\psi]:=\phi[\psi\circ f]\]
is not well defined in general, as $\psi\circ f$ may not be compactly supported. We thus have to restrict to those distributions with a suitable support.
\begin{df}
	For $C\subseteq X$ and $\Lambda\subseteq \Tdot(X)$, let $\D'_\Lambda(X)|_C$ denote the subspace of $\D'_\Lambda(X)$ consisting of those elements with support in $C$. Let $\Lambda|_C$ denote the set of all elements of $\Lambda$ whose basepoint lies in $C$.
\end{df}
\begin{thm}[wavefront calculus on $\R^n$]
	\label{WcalcRn}
	Let $X$ and $Y$ be open subsets of $\R^n$ and $\R^m$, with $n,m\in \N$. Let $\Lambda, \Lambda'\subseteq \Tdot X$ and $\Theta\subseteq \Tdot(Y)$ be closed conic subsets. Let $f\colon X\rightarrow Y$ be a smooth map.
	\begin{itemize}
		\item If $f^*(\Theta)$ does not intersect the zero section, $f^*$ extends to a continuous map from $\D'_\Theta(Y)$ to $\D'_{f^*(\Theta)}(X)$.
		\item If $C\subseteq X$ is a closed set such that $f|_{C}$ is proper, $f_*$ defines a continuous map from $\D'_{\Lambda}(X)|_C$ to $\D'_{f_*(\Lambda)}(Y)$.
		\item If $\Lambda\bar +\Lambda'$ does not intersect the zero section, multiplication of functions extends to a hypocontinuous bilinear map
		\[\D'_\Lambda(X)\times\D'_{\Lambda'}(X)\rightarrow\D'_{\Lambda\bar +\Lambda'}(X).\]
		\item The tensor product defines a hypocontinuous bilinear map
		\[\D'_\Lambda(X)\times\D'_\Theta(Y)\rightarrow \D'_{\Lambda\bar \times\Theta}(X\times Y).\]
	\end{itemize}
\end{thm}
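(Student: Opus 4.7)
My plan is to prove the four statements in an order that lets later parts rest on earlier ones: tensor product first, then pullback as the main technical piece, then multiplication as a pullback along the diagonal map, and finally pushforward. In every case the scheme is the same. First, verify the operation on the dense subspace $C_c^\infty$ using the classical definition. Second, bound the wavefront seminorms $\|\cdot\|_{k, V, \chi}$ of the output in terms of those of the input, by controlling Fourier decay in cones avoiding the target wavefront using Fourier decay in cones avoiding the source wavefronts. Third, extend by density, using that $C_c^\infty$ is dense in every $\D'_\Lambda$ and that these spaces are complete.

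The tensor product case is essentially immediate: the Fourier transform of $(\chi_1 \otimes \chi_2)(\eta_1 \otimes \eta_2)$ factorizes as $\F(\chi_1 \eta_1)(\xi_1) \cdot \F(\chi_2 \eta_2)(\xi_2)$, and a closed cone $V \subseteq \R^{n+m} \setminus 0$ disjoint from $\Lambda \bar\times \Theta$ splits, via a sphere compactness argument, into directions where at least one of the two factors lies in a cone of rapid decay for the corresponding input. The crux is then the pullback estimate. Fix $\chi \in C_c^\infty(X)$ and a closed cone $O$ with $(\supp \chi \times O) \cap f^*(\Theta) = \emptyset$; I need to show rapid decay of $\F(\chi f^*\eta)(\xi)$ on $O$. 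Inserting the Fourier inversion of $\eta$, localized by a cutoff $\chi'$ covering $f(\supp \chi)$, produces the oscillatory integral
\[\F(\chi f^*\eta)(\xi) = (2\pi)^{-m}\int \chi(x) e^{-ix\cdot\xi}\left(\int e^{if(x)\cdot\zeta} \F(\chi'\eta)(\zeta)\,d\zeta\right) dx.\]
Partition the $\zeta$-integration using a bad cone $V' \subseteq \R^m \setminus 0$ (the smallest closed cone containing $\Theta$ over $\supp \chi'$). Outside $V'$, rapid decay of $\F(\chi'\eta)$ gives the bound directly. Inside $V'$, the hypothesis that $(df_x)^* \zeta \ne \xi$ for all $x \in \supp \chi$, $\zeta \in V'$ and $\xi \in O$, combined with a compactness argument on unit spheres, yields a uniform estimate $|\xi - (df_x)^*\zeta| \geq c(|\xi| + |\zeta|)$; iterating integration by parts in $x$ against this non-stationary phase produces arbitrary polynomial decay in $(\xi, \zeta)$ jointly, which integrates to the required rapid decay in $\xi$.

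Multiplication then follows from pullback along the diagonal $\Delta\colon X \to X \times X$, via $\eta_1 \eta_2 = \Delta^*(\eta_1 \otimes \eta_2)$. Since $(d\Delta_x)^*(\xi_1, \xi_2) = \xi_1 + \xi_2$, one computes $\Delta^*(\Lambda \bar\times \Lambda') = \Lambda \bar + \Lambda'$, and the hypothesis that this avoids the zero section is precisely what is needed to apply the pullback theorem to the tensor product. Pushforward can be handled by an estimate dual to the pullback one, or by taking formal adjoints: for $\psi \in C_c^\infty(Y)$, the properness of $f|_C$ ensures $\psi \circ f$ is compactly supported on $C$, so $f_*\eta[\psi] := \eta[\psi \circ f]$ is well-defined on $\D'_\Lambda(X)|_C$, and the wavefront bound $\WF(f_*\eta) \subseteq f_*(\Lambda)$ and continuity are obtained from stationary phase estimates of the same flavor as the pullback case.

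The main obstacle is the pullback estimate above, and in particular the uniform lower bound $|\xi - (df_x)^*\zeta| \geq c(|\xi| + |\zeta|)$ underpinning the non-stationary phase argument. The subtlety is that $\xi$ and $\zeta$ both range over unbounded cones, so the bound cannot come from a naive continuity argument on a compact set; instead one compactifies by restricting to products of unit spheres and exploits that $(df_x)^*$ maps the compact set of admissible $\zeta$-directions to a closed set disjoint from $O$. Once this is in hand, hypocontinuity for the bilinear parts is automatic because all seminorm bounds factor as products of seminorms on the two arguments, so replacing one argument by a bounded subset uniformly controls the corresponding factor, satisfying the seminorm formulation of hypocontinuity.
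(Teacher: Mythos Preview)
The paper does not actually prove this theorem: it cites \cite[Proposition 5.1, Theorem 6.3, Theorem 6.1 and Theorem 4.6]{BDH} for the four statements, and only supplies a short bridging lemma (that $\D'_\Lambda(X)|_C = \D'_{\Lambda|_C}(X)|_C$ as topological vector spaces) to reconcile its pushforward formulation with the one in that reference. Your proposal, by contrast, sketches the direct H\"ormander-style argument, which is essentially what \cite{BDH} carries out in detail. The order you choose (tensor product, then pullback via non-stationary phase, then multiplication as pullback along the diagonal, then pushforward by duality) and the key oscillatory-integral estimate are standard and correct in outline. One small caveat on the pullback step: the uniform lower bound $|\xi - (df_x)^*\zeta| \geq c(|\xi| + |\zeta|)$ for $\xi \in O$ and $\zeta \in V'$ does not follow from the condition $(df_x)^*\zeta \neq \xi$ alone --- to control the $|\zeta|$-contribution you must also invoke the hypothesis that $f^*(\Theta)$ avoids the zero section, which gives $|(df_x)^*\zeta| \geq c'|\zeta|$ on the bad cone; with both ingredients the sphere-compactness argument you describe goes through. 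In short, your route is a genuine self-contained proof where the paper only cites; the paper's choice is more economical for a thesis whose focus lies elsewhere, while yours would make the exposition independent of \cite{BDH}.
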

This is shown in \cite[Proposition 5.1, Theorem 6.3, Theorem 6.1 and Theorem 4.6 ]{BDH}, with one difference: For their push-forward theorem, \cite{BDH} assume that their wavefront set $\Lambda$ only contains covectors with basepoint in $C$. This implies the version above, if we can show the following Lemma:
\begin{lemma}
	For $C\subseteq X$ closed, we have \[\D'_\Lambda(X)|_C=\D'_{\Lambda|_C}(X)|_C\] as topological vector spaces.
\end{lemma}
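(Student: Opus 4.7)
The plan is to split the claim into two parts: equality of underlying sets, and equality of the topologies these sets inherit. For the set-theoretic part, the inclusion $\D'_{\Lambda|_C}(X)|_C\subseteq \D'_\Lambda(X)|_C$ is immediate from $\Lambda|_C\subseteq\Lambda$. For the reverse, I would invoke the standard fact that the projection of $\WF(\eta)$ onto $X$ is contained in $\singsupp(\eta)\subseteq \supp(\eta)$: if $\eta$ is supported in $C$ and has $\WF(\eta)\subseteq\Lambda$, then every element of $\WF(\eta)$ has basepoint in $C$, forcing $\WF(\eta)\subseteq\Lambda|_C$.

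For the topological part, observe that any pair $(\chi,V)$ admissible for $\D'_\Lambda$ (in the sense $\supp(\chi)\times V\cap\Lambda=\emptyset$) is automatically admissible for $\D'_{\Lambda|_C}$, so the topology on $\D'_{\Lambda|_C}(X)|_C$ is a priori finer than the one on $\D'_\Lambda(X)|_C$. It therefore suffices to bound each additional seminorm $\|\cdot\|_{k,V,\chi}$ of $\D'_{\Lambda|_C}$ by an admissible seminorm of $\D'_\Lambda$ when restricted to distributions supported in $C$.

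The key step is a cut-off construction. Given $(k,V,\chi)$ with $(\supp(\chi)\cap C)\times V\cap\Lambda=\emptyset$, intersect with the unit sphere and set $F:=\pi\bigl(\Lambda\cap(X\times(V\cap S^{n-1}))\bigr)$; this is a closed subset of $X$ since $S^{n-1}$ is compact and $\Lambda$ is closed, and by the cone invariance of $\Lambda$ one has the alternative description $F=\{x\in X\mid \exists\xi\in V\O\colon(x,\xi)\in\Lambda\}$. Since $F$ is disjoint from the compact set $\supp(\chi)\cap C$, I would pick $\chi_2\in C_c^\infty(X)$ that equals $1$ on a neighbourhood of $\supp(\chi)\cap C$ and whose support avoids $F$. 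Then $\supp(\chi\chi_2)\times V\cap\Lambda=\emptyset$, so $\|\cdot\|_{k,V,\chi\chi_2}$ is admissible for $\D'_\Lambda$, while $\chi(1-\chi_2)$ has support disjoint from $C$; for any $\eta\in\D'_\Lambda(X)|_C$ this gives $\chi(1-\chi_2)\eta=0$, hence $\chi\eta=\chi\chi_2\eta$ and $\|\eta\|_{k,V,\chi}=\|\eta\|_{k,V,\chi\chi_2}$.

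The main obstacle is precisely the construction of $\chi_2$ with all three required properties simultaneously: I need closedness of the "bad" set $F$ so that the compact piece $\supp(\chi)\cap C$ sits inside the open set $X\setminus F$ at positive distance, and I need cone invariance of $\Lambda$ so that enlarging $\supp(\chi_2)$ slightly does not reintroduce elements of $\Lambda$ in the relevant directions. Both ingredients come from intersecting with $S^{n-1}$ and using its compactness; without them, an ordinary separation-of-supports argument would be insufficient.
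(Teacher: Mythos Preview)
Your proof is correct and follows essentially the same approach as the paper: both establish set equality via $\singsupp(\eta)\subseteq\supp(\eta)\subseteq C$, and both handle the topology by constructing a cutoff that equals $1$ near $\supp(\chi)\cap C$ while keeping its support disjoint from the ``bad'' basepoints, using compactness of $V\cap S^{n-1}$. The only cosmetic difference is that the paper builds the open neighbourhood $O$ of $\supp(\chi)\cap C$ via an explicit double finite-cover argument, whereas you obtain it more directly by observing that the projection $F=\pi\bigl(\Lambda\cap(X\times(V\cap S^{n-1}))\bigr)$ is closed (properness of the projection over a compact fibre) and then separating the compact set $\supp(\chi)\cap C$ from $F$; both routes yield the same cutoff $\psi$ (your $\chi_2$) with $\psi\chi\eta=\chi\eta$ for $\eta$ supported in $C$.
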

\begin{proof}
	Note that $\Lambda|_C=\Lambda\cap(C\times (\R^n\O))$ is the intersection of two closed conic subsets of $\Tdot(\R^n)$ and hence closed and conic again.
	As $\Lambda|_C\subseteq \Lambda$, we automatically have a bounded inclusion \[\D'_{\Lambda|_C}(X)|_C\subseteq \D'_\Lambda(X)|_C.\]
	Fix $\eta\in \D'_\Lambda(X)|_C$. As every element of $\WF(\eta)$ has basepoint in $$\singsupp(\eta)\subseteq\supp(\eta)\subseteq C,$$
	 $\eta$ has wavefront in $\Lambda|_C$, so the two spaces agree as sets. It remains to show that they have the same topology. Consider some seminorm $\|\cdot\|_{k,V,\chi}$ of $\D'_{\Lambda|_C}(X)|_C$. Then $\supp(\chi)\times V$ does not intersect $\Lambda|_C$ and hence $(\supp(\chi)\cap C)\times V$ does not intersect $\Lambda$. Let $V_1$ denote the set of unit vectors in $V$. As $\Lambda$ is closed, every
	\[\xi\in (\supp(\chi)\cap C)\times V_1\]
	has an open neighborhood $U(\xi)$ that is disjoint from $\Lambda$. As product neighborhoods form a basis, we may assume that this is of the form
	\[U(\xi)=U_1(\xi)\times U_2(\xi).\]
	As $V_1$ is compact, for each $x\in (\supp(\chi)\cap C)$ there are finitely many $(v_i)_{i\in I}\in V_1$ such that the corresponding neighborhoods $U_2(x,v_i)$ cover $V_1$. Set $U'(x):=\bigcap\limits_{i\in I}U_1(x,v_i)$. Then $U'(x)\times V_1$ does not intersect $\Lambda$, since any $U'(x)\times U_2(x,v_i)$ doesn't. As $(\supp(\chi)\cap C)$ is compact, we may find finitely many $(x_j)_{j\in J}\in (\supp(\chi)\cap C)$ such that 
	$$O:=\bigcup\limits_{i\in I}U'(x_i)$$
	 contains $(\supp(\chi)\cap C)$. By construction $O\times V_1$ does not intersect $\Lambda$. As $\Lambda$ is conic, the same holds for $O\times V$. Choose $\psi\in C_c^\infty(O)$ that is $1$ on $\supp(\chi)\cap C$. Then for arbitrary $\eta\in\D'_\Lambda(X)|_C$, we have $\psi\chi\eta=\chi\eta$ and hence
	\[\|\eta\|_{k,V,\chi}=\|\eta\|_{k,V,\psi\chi}.\]
	Moreover, $\supp(\psi\chi)\times V\subseteq O\times V$ does not intersect $\Lambda$, so the right hand side is a seminorm of $\D'_\Lambda(X)|_C$. Thus the two spaces have the same seminorms and hence the same topology.
	
\end{proof}
\subsection{Wavefront calculus on manifolds}
\label{wfM}
We now turn from functions on subsets of $\R^n$ to sections in vector bundles.  Defining the wavefront set and extending the previous results for sections can be done in a fairly straightforward way by considering their components in some coordinate chart:
\begin{df}
	By a "bundle coordinate" on a vector bundle $F$ over a manifold $X$, we shall mean a smooth, fibrewise linear map $F\rightarrow \C$ (these can be canonically identified with sections of $F^*$ or bundle homomorphisms $F\rightarrow X\times \C$ over the identity).
	Let $A$ be a bundle coordinate or a bundle homomorphism over the identity. For a distribution $\eta \in \D'(F)$ and any scalar test function $\phi$, define
	\[(A\eta)[\phi]:= \eta[A^*\circ\phi],\]
	where $A^*$ is the fibrewise adjoint of $A$.
\end{df}
We have for any $g\in \Gamma(F)$, test section $\phi$ and $\eta\in \D'(M)$:
\[A(g\eta)[\phi]=g\eta[A^*\circ\phi]=\eta[x\mapsto ((A^*\phi(x))(g(x)))]=\eta[x\mapsto (\phi(x)(A(g(x))))]=(A\circ g)\eta[\phi].\]
In particular (set $\eta=1$), we have $Ag=A\circ g$.

With this, we define the wavefront set of a distribution in a vector bundle:
\begin{df}
	\label{dfwfmfd}
	Let $E$ be a vector bundle over a manifold $X$.
	 For $\eta\in \D'(E)$ define the wavefront set of $\eta$ as the union of all sets of the form
	\[\psi^*(\WF((\psi^{-1})^*( A\eta|_{\Dom(\psi)})))\]
	for coordinate charts $\psi$ and bundle coordinates $A$ defined on $\Dom(\psi)$.
	The space $\D'_\Lambda(E)$ of all distributions with wavefront in a given closed conic set $\Lambda$ is topologized by the (strong) seminorms of $\D'(E)$ and all seminorms
	\[\|\eta\|_{k,V,\chi,\psi,A}:=\|(\psi^{-1})^*(A\eta|_{\Dom(\psi)})\|_{k,V,\chi\circ\psi^{-1}}=\sup\limits_{\xi\in V}(|\xi|+1)^k\big|\F\big((\psi^{-1})^*(\chi A\eta)\big)(\xi)\big|,\]
	where $\psi$ and $A$ are as above, $\chi\in C_c^\infty(\Dom(\psi))$ is some cut-off, $k\in \N$ and $V$ is a closed cone in $\R^n$ such that $\supp(\chi\circ\psi^{-1})\times V$ does not intersect $(\psi^{-1})^*(\Lambda)$.
	Write $\D'_\Lambda(X)$ for $\D'_\Lambda(X\times \C)$.
\end{df}
Basically all the results on subsets of $\R^n$ generalize to this setting. See the appendix (\ref{append}) for the proof of the following theorem:
\begin{thm}[Wavefront calculus on manifolds]
	\label{WFMFD}
	The results obtained for scalar functions on $\R^n$ carry over to sections of vector bundles on manifolds. Assume that $X$ and $Y$ are manifolds. For $i\in \{1,2,3\}$, let $E_i$ be a vector bundle over $X$ and let $F$ be a vector bundle over $Y$. Let $\Lambda, \Lambda'\subseteq \Tdot(X)$ and $\Theta\subseteq \Tdot(Y)$ be closed conic subsets.
	\begin{enumerate}
		\item The seminorms defined for $\D'_\Lambda(E_1)$ are finite. In case $X$ is a subset of $\R^n$ and $E_1=X\times \C$, the new definition coincides with the earlier one.
		\item $\D'_\Lambda(E_1)$ is complete.
		\item $\Gamma_c(E_1)$ is dense in $\D'_\Lambda(E_1)$.
		\item $\D'_\emptyset(E_1)$ coincides with $\Gamma(E_1)$ as a topological vector space.
		\item Differential operators between sections of $E_1$ and $E_2$ map $\D'_\Lambda(E_1)$ to $\D'_\Lambda(E_2)$ continuously.
		\item Let $f\colon X\rightarrow  Y$  be a smooth map.
		\begin{itemize}
			\item If $f^*(\Theta)$ does not intersect the zero section, $f^*$ extends to a continuous map from $\D'_\Theta(F)$ to $\D'_{f^*(\Theta)}(f^*(F))$.
			\item If $C\subseteq X$ is a closed set such that $f|_C$ is proper, $f_*$ defines a continuous map from $\D'_\Lambda(f^*F)|_C$ to $\D'_{f_*(\Lambda)}(F)$.
			\item If $\Lambda\bar + \Lambda'$ does not intersect the zero section,  pointwise "multiplication" of sections in $E_1\otimes E_2^*$ and $E_2\otimes E_3$, defined at each point by
			\[(a\otimes b)\cdot(c\otimes d):=b(c)\cdot(a\otimes d)\]
			extends to a hypocontinuous bilinear map
			\[\D'_\Lambda(E_1\otimes E_2^*)\times \D'_{\Lambda'}(E_2\otimes E_3)\rightarrow \D'_{\Lambda\bar +\Lambda'}(E_1\otimes E_3).\]
			\item The tensor product defines a hypocontinuous bilinear map
			\[\D'_\Lambda(E_1)\times D'_\Theta(F)\rightarrow \D'_{\Lambda\bar\times \Theta}(E_1\boxtimes F).\]
			\item If $\Lambda\bar + \Lambda'$ does not intersect the zero section and $K\subseteq X$ is compact, evaluation of distributions extends to hypocontinuous bilinear maps
			\[\D'_\Lambda(E_1)\times \D'_{\Lambda'}(E_1^*)|_K\rightarrow \C\]
			and
			\[\D'_{\Lambda'}(E_1^*)|_K\times \D'_\Lambda(E_1)\rightarrow \C.\]
			These are symmetric, in the sense that
			\[\zeta[\eta]=\eta[\zeta].\]
			\end{itemize}
	\end{enumerate}
\end{thm}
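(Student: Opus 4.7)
The plan is to reduce every assertion to the scalar case on open subsets of $\R^n$, which is already provided by Theorem~\ref{WcalcRn}, by using local trivializations of the vector bundles and coordinate charts on the manifolds. Given a vector bundle $E_1\to X$, a chart $\psi$ on $X$ and a bundle coordinate $A$ defined on $\Dom(\psi)$, the scalar distribution $\eta^{\psi,A}:=(\psi^{-1})^*(A\eta|_{\Dom(\psi)})$ captures $\eta$ up to a locally finite collection of such pairs (chosen to cover $X$ and trivialize $E_1$). Since $\WF(\eta)$ is by definition the union of the $\psi^*(\WF(\eta^{\psi,A}))$, any wavefront constraint on $\eta$ translates directly into constraints on each $\eta^{\psi,A}$, and the seminorms of $\D'_\Lambda(E_1)$ are essentially pulled back from those of the scalar spaces on $\psi(\Dom(\psi))$.

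For the first four items, I would proceed as follows. Finiteness of the seminorms holds because the condition that $\supp(\chi\circ\psi^{-1})\times V$ avoid $(\psi^{-1})^*(\Lambda)$ implies it also avoids $\WF(\eta^{\psi,A})$, so the corresponding scalar seminorm is finite by definition of $\D'_{(\psi^{-1})^*(\Lambda)}$. Completeness follows from the fact that a Cauchy net in $\D'_\Lambda(E_1)$ is Cauchy in $\D'(E_1)$ (which is complete in the strong topology) and has Cauchy components, so the limit inherits the wavefront bound. Density of $\Gamma_c(E_1)$ follows by writing $\eta$ via a partition of unity as a locally finite sum of pieces supported in trivializing charts, approximating each piece componentwise using the $\R^n$ version, and recombining. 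The identification $\D'_\emptyset(E_1)=\Gamma(E_1)$ as topological vector spaces is immediate once every $\eta^{\psi,A}$ is smooth, as this means $\eta$ is smooth in every chart and bundle coordinate.

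For items 5 and 6 the strategy is the same: express each operation locally in charts and trivializations. A differential operator acts on each component $\eta^{\psi,A}$ as a finite sum of coordinate derivatives with smooth coefficients, so continuity and wavefront preservation come directly from the scalar case. Pullback and pushforward reduce by choosing compatible charts on $X$ and $Y$ and corresponding trivializations of $F$ and $f^*F$, in which $f^*$ and $f_*$ act componentwise. Pointwise multiplication $(a\otimes b)\cdot(c\otimes d)=b(c)(a\otimes d)$ unwinds, in a local basis, to a finite sum of products of scalar components, each of which is hypocontinuous by the scalar theorem; the tensor product works analogously. Evaluation is obtained by composing the multiplication with pushforward to a point along the projection of a compact set, using the compact support of one factor to make the pushforward well defined. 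Globalization in each case uses a locally finite partition of unity subordinate to trivializing charts.

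The main technical obstacle will be checking that the resulting definitions and topologies are genuinely independent of all auxiliary choices (charts, bundle coordinates, cutoffs, partitions of unity). Concretely, one has to verify that smooth changes of chart and of fibrewise basis preserve the wavefront set, which follows from the diffeomorphism invariance of $\WF$ and the fact that multiplication by a smooth function cannot enlarge it — but it must be done carefully so that $\D'_\Lambda(E_1)$ acquires a well-defined topology in which every seminorm $\|\cdot\|_{k,V,\chi,\psi,A}$ is continuous. A related subtlety is the evaluation statement: one needs the multiplication result together with the pushforward to a point, and the wavefront condition $\Lambda\bar +\Lambda'$ away from zero combined with the compact support ensures that this composite is actually hypocontinuous and symmetric. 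Once these foundational checks are carried out, every remaining assertion is a localization argument.
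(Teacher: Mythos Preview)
Your proposal is correct and follows essentially the same route as the paper: fix a locally finite cover by trivializing charts with frames and a subordinate partition of unity, establish that the seminorms of $\eta$ are equivalent to those of its scalar components $\eta_{ij}$ (this is the paper's Lemma~\ref{WFcoord}, which is exactly the ``independence of auxiliary choices'' check you flag), and then reduce each item to Theorem~\ref{WcalcRn}. The only minor deviations are that the paper obtains the tensor product from pullback and multiplication via $\eta\otimes\zeta=\pi_1^*\eta\cdot\pi_2^*\zeta$ rather than doing a separate componentwise argument, and obtains evaluation as $\eta[\zeta]:=(\zeta\eta)[\chi]$ for a fixed cutoff $\chi\equiv1$ on $K$ rather than via a pushforward to a point; both are equivalent to what you sketch.
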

\begin{rem}
	The multiplication map described here corresponds to pointwise composition when identifying tensor products with spaces of linear maps as in \ref{identify}. Using the other identifications of \ref{identify}, this contains the following as special cases (by setting all non-occuring vector bundles to $X\times \C$):
	\begin{itemize}
	\item Products of scalar functions
	\[\D'_\Lambda(X)\times \D'_{\Lambda'}(X)\rightarrow \D'_{\Lambda\bar +\Lambda'}(X).\]
	\item Scalar multiplication
	\[\D'_\Lambda(X)\times \D'_{\Lambda'}(E_3)\rightarrow \D'_{\Lambda\bar +\Lambda'}(E_3)\]
	\item Pointwise tensor product
	\[\D'_\Lambda(E_1)\times \D'_{\Lambda'}(E_3)\rightarrow \D'_{\Lambda\bar +\Lambda'}(E_1\otimes E_3).\]
	\item Pointwise dual pairing
	\[\D'_\Lambda(E_2^*)\times \D'_{\Lambda'}(E_2)\rightarrow \D'_{\Lambda\bar +\Lambda'}(X).\]
\end{itemize}
\end{rem}

The tensor product and pushforward interact in the way one would expect:
\begin{prop}
	If $f\colon X\rightarrow Y$ and $g\colon X'\rightarrow Y'$ are smooth maps between manifolds, we have for distributions $\eta$ and $\zeta$ in vector bundles over $X$ and $X'$ whose pushforward under $f$ respectively $g$ is well-defined:
	\[(f_*\eta)\otimes(g_*\zeta)=(f\times g)_*(\eta\otimes \zeta),\]
	where $f\times g(x,x'):=(f(x),g(x'))$.
\end{prop}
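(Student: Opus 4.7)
The plan is to reduce the claimed identity to an equality on pure-tensor test sections and then extend by continuity and density.

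First, I would pair each side with a test section of the form $\phi\otimes\psi$, where $\phi$ and $\psi$ are compactly supported test sections of the dual bundles over $Y$ and $Y'$ respectively. Unwinding the definitions of tensor product and pushforward (Definition \ref{pullforward}), the left-hand side satisfies
\[
((f_*\eta)\otimes(g_*\zeta))[\phi\otimes\psi] = (f_*\eta)[\phi]\cdot(g_*\zeta)[\psi] = \eta[f^*\phi]\cdot\zeta[g^*\psi].
\]
For the right-hand side, the key observation is that pullback under the product map factors as $(f\times g)^*(\phi\otimes\psi) = (f^*\phi)\otimes(g^*\psi)$, so
\[
((f\times g)_*(\eta\otimes\zeta))[\phi\otimes\psi] = (\eta\otimes\zeta)[(f^*\phi)\otimes(g^*\psi)] = \eta[f^*\phi]\cdot\zeta[g^*\psi].
\]
Thus both sides agree on every pure tensor test section.

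Next, I would invoke the standard density statement: pure tensors $\phi\otimes\psi$ are dense in the test section space $\Gamma_c(F_1^*\boxtimes F_2^*)$ for the external tensor product of the target bundles. This reduces, via a partition of unity and trivializations, to density of finite sums of tensor products of $C_c^\infty$-functions in $C_c^\infty(U\times U')$ for open $U\subseteq \R^n$, $U'\subseteq \R^m$, which is classical (for instance via Stone--Weierstrass combined with mollification to control all derivatives on a compact set). Before applying density, I need to check that both sides are genuinely continuous linear functionals on this test space: the left side is by construction the tensor product of two distributions, hence a distribution; the right side is the pushforward of a distribution under $f\times g$, and this pushforward is well-defined because properness of $f|_{\supp(\eta)}$ and $g|_{\supp(\zeta)}$ implies properness of $(f\times g)|_{\supp(\eta)\times\supp(\zeta)}$, which contains $\supp(\eta\otimes\zeta)$.

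With both sides continuous and agreeing on a dense subspace, the identity follows. The only place where some care is needed is in the bundle bookkeeping: one has to unpack how the bundle coordinates of the pushforward interact with those of the tensor product, but this is tautological once pushforward is defined componentwise (as in the conventions of Theorem \ref{WFMFD}), so no real obstacle arises. The proof therefore amounts to a direct verification on pure tensors plus a density argument, and the main thing to be careful about is simply that the two pushforwards appearing in the statement and the pushforward on the right exist under the stated hypotheses.
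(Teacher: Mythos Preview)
Your proof is correct and follows the same approach as the paper: test both sides against pure tensors $\phi\otimes\psi$, verify they agree, and conclude by density. You are more explicit than the paper about the density statement and about checking that $(f\times g)_*$ is well-defined on $\eta\otimes\zeta$, but the argument is the same.
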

\begin{proof}
	If $f$ and $g$ are smooth and $\phi$ and $\psi$ are test sections, we have
	\[(f_*\eta)\otimes(g_*\zeta)[\phi\otimes\psi]=\eta[\phi\circ f]\cdot\zeta[\psi\circ g]=\eta\otimes\zeta[(\phi\otimes\psi)\circ(f\times g)]=(f\times g)_*(\eta\otimes \zeta)[\phi\otimes\psi].\]
	Thus 
	\[(f_*\eta)\otimes(g_*\zeta)=(f\times g)_*(\eta\otimes \zeta).\]
	As smooth functions are dense, this holds for arbitrary distributions.
\end{proof}

\newpage
\chapter{Asymptotic expansions related to Green's operators}
In this section, we will develop asymptotic expansions in differentiability orders for the kernels of powers of $G^\pm_P$ and that of $G^\pm_{P-z}$ for $z\in \C$. This is inspired by \cite{DaWr}, where similar results are obtained for Feynman inverses. However, the methods used here are different. While the only things necessary for the proof of our formula for the Hadamard coefficients are the standard Hadamard expansion and the formula for the $z$-dependent Hadamard coefficients shown in Proposition \ref{Vz}, the other results might be of independent interest.
\section{Powers of Green's operators}
We first derive an expansion for powers of the advanced/retarded Green's operators. Before we can start the main proof, we need to do some technical work to guarantee that the Green's operators do not decrease differentiability orders too much (locally). This will be necessary to control the remainder terms of our asymptotic expansion. The result is basically a consequence of mapping smooth sections to smooth sections continuously (a better result could be obtained by investigating $G$ as a Fourier integral operator).
\begin{prop}
	Let $A\subset M$ be past/future compact compact and $U\subset M$ be open and relatively compact. Let $r_U$ denote restriction to $U$. Then for every $n\in \N$ there is $m\in \N$ such that $r_U\circ G^\pm$ maps $\Gamma^m_A(E)$ to $\Gamma^n(E|_U)$ continuously.
\end{prop}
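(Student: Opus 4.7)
The plan is to reduce to compactly supported inputs by a cutoff argument and then bootstrap from the quantitative continuity of $G^\pm$ on smooth sections. First, since $A$ is past/future compact and $\bar U$ is compact, the intersection $K:=A\cap J_\mp(\bar U)$ is compact (by the proposition stating that $A\cap J_\mp(K')$ is compact whenever $A$ is past/future compact and $K'$ is compact). I would pick a compact neighborhood $K'$ of $K$ and a cutoff $\chi\in C_c^\infty(M)$ with $\chi\equiv 1$ on $K$ and $\supp(\chi)\subseteq K'$. For every $\phi\in\Gamma^m_A(E)$, the support property in Proposition \ref{extendedG} yields $\supp(G^\pm(\phi-\chi\phi))\cap U=\emptyset$, so $r_U\circ G^\pm\phi=r_U\circ G^\pm(\chi\phi)$. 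Since multiplication by $\chi$ is a continuous linear map $\Gamma^m_A(E)\to\Gamma^m_{K'}(E)$, it will suffice to establish continuity of $r_U\circ G^\pm\colon\Gamma^m_{K'}(E)\to\Gamma^n(E|_U)$ for some $m$.

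Next, fix a compact neighborhood $K''$ of $K'$. By Proposition \ref{extendedG}, the map $G^\pm\colon\Gamma_\pm(E)\to\Gamma_\pm(E)$ is continuous, and restricting to the Fréchet subspace $\Gamma_{K''}(E)$ (smooth sections supported in $K''$, topologised by the $C^k$-seminorms) gives a continuous linear map into the Fréchet space $\Gamma(E)$. Translating this Fréchet-to-Fréchet continuity into seminorm estimates yields the a priori bound: for every $n\in\N$ there exist $m\in\N$ and $C>0$ such that
\[\|G^\pm\psi\|_{C^n(\bar U)}\leq C\,\|\psi\|_{C^m(K'')}\qquad\text{for all }\psi\in\Gamma_{K''}(E).\]

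Finally, I would extend this estimate from smooth to $C^m$ inputs by approximation. Given $\phi\in\Gamma^m_{K'}(E)$, mollification in finitely many local charts and trivializations (arranged so that the mollifying kernels keep supports inside $K''$) produces a sequence $\psi_k\in\Gamma_{K''}(E)$ with $\psi_k\to\phi$ in $C^m(K'')$. The a priori estimate forces $(G^\pm\psi_k)$ to be Cauchy in $C^n(\bar U)$, hence convergent to some $u\in\Gamma^n(E|_{\bar U})$. Simultaneously, the distributional continuity in Proposition \ref{extendedG} gives $G^\pm\psi_k\to G^\pm\phi$ in $\D'_\pm(E)$, and since $C^n$-convergence on $\bar U$ implies distributional convergence there, $u=G^\pm\phi|_{\bar U}$. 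Passing to the limit in the estimate, and using $\supp(\phi)\subseteq K'$ to rewrite the right-hand side as $C\|\phi\|_{C^m(K')}$, delivers the desired continuity. The only genuine difficulty is thus organizational: we must couple the \emph{quantitative} smooth-to-smooth continuity of $G^\pm$ (which supplies the finite $m$) with the distributional continuity (which identifies the limit after mollification). A sharper bound on the regularity loss $m-n$ in terms of $d$ would require treating $G^\pm$ as a Fourier integral operator, as alluded to in the statement, but is not needed here.
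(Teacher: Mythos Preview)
Your argument is correct and follows essentially the same strategy as the paper: obtain a seminorm estimate $\|G^\pm\psi\|_{C^n(\bar U)}\leq C\|\psi\|_{C^m(K)}$ from the Fr\'echet continuity of $G^\pm$ on smooth sections, then extend to $C^m$-sections by density, identifying the limit via the distributional continuity in Proposition~\ref{extendedG}.

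The one organizational difference is that the paper skips your cutoff reduction entirely. It works directly with the continuous map $G^\pm\colon\Gamma_A(E)\to\Gamma_{J_\pm(A)}(E)$ (obtained by restricting $G^\pm\colon\Gamma_\pm(E)\to\Gamma_\pm(E)$ to the subspace of sections supported in $A$); the seminorm characterization of continuity then automatically produces a compact $K\subseteq M$ and an estimate $\|G^\pm\phi\|_{C^n(\bar U)}\leq c\|\phi\|_{C^m(K)}$ valid for all $\phi\in\Gamma_A(E)$. Since the right-hand side is already a seminorm of $\Gamma^m_A(E)$, the extension step goes through immediately without ever localizing the support of $\phi$. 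Your cutoff argument achieves the same end by hand, and is harmless; if you keep it, you should take $\chi\equiv 1$ on an open neighborhood of $K$ (not just on $K$ itself) so that $\supp((1-\chi)\phi)$ is genuinely disjoint from $J_\mp(\bar U)$.
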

\begin{proof}
	We know (see \ref{extendedG}) that
	\[G^\pm\colon \Gamma_\pm(E)\rightarrow \Gamma_\pm(E)\]
	is continuous. Since the topologies of $\Gamma_{A}(M)$ and $\Gamma_{J_\pm(A)}(M)$ coincide with the subspace topology from $\Gamma_\pm$ (as the latter carries the limit topology), we know that the restriction
	\[G^\pm\colon \Gamma_{A}(E)\rightarrow \Gamma_{J_\pm(A)}(E)\]
	is also continuous. $C^k$- norms on ${\overline{U}}$ are seminorms on the target space. Thus by the seminorm-characterization of continuity, we know that for any $n\in \N$, there is $c\in\R$, $m\in \N$ and $K\subseteq M$  such that for all $\phi\in \Gamma_A(E)$
	\[\|G^\pm\phi\|_{C^n(\overline{U})}\leq c\|\phi\|_{C^m(K)}.\]
	The right hand side is also a seminorm of $\Gamma^m_A(E)$.
	Thus $r_U\circ G^\pm|_{\Gamma_{A}(E)}$ extends continuously to a map $\Gamma^m_A(E)\rightarrow \Gamma^n(E|_U)$. As $C^k$- convergence implies distributional convergence, this continuous extension must coincide with $r_U\circ G^\pm|_{\Gamma^m_{A}(E)}$, as the latter is distributionally continuous.
\end{proof}
As Hadamard coefficients and Green's kernels are sections in $E\boxtimes E^*$ and we want differentiability in both coordinates, we need to extend the above to the case where $G^\pm$ acts in the first component on sections of a box product.
\begin{prop}
	\label{Greg}
	Let $A\subset M$ be past/future compact and $U\subset M$ be open and relatively compact. Let $O$ be some manifold and $F$ some vector bundle over $O$. Then for each $n\in \N$, there is $m\in \N$ such that for any $f\in \Gamma^m_{A\times O}(E\boxtimes F)$, we have $G^\pm f|_{U\times O}\in \Gamma^n((E\boxtimes F)|_{U\times O})$, where $G^\pm f(y,x):=G^\pm (f(\cdot,x))(y)$.
\end{prop}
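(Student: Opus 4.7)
The plan is to reduce the statement to the previous proposition by using that $G^\pm$ acts only in the first variable and therefore commutes with any differential operator on $O$. Since $C^n$-regularity on a product is a local property in $O$ and is defined via local trivializations of $F$, I would first reduce to the case where $O$ is an open subset of some $\R^{d'}$ and $F=O\times \C$ is trivial; it then suffices to show that $f\in \Gamma^m_{A\times O}(E\boxtimes \C)$ implies $G^\pm f|_{U\times O}\in \Gamma^n((E|_U)\boxtimes \C)$.

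The previous proposition supplies, for the prescribed $n$, an integer $m_0$ such that $r_U\circ G^\pm\colon \Gamma^{m_0}_A(E)\rightarrow \Gamma^n(E|_U)$ is continuous linear. I would set $m:=m_0+n$ and show that the parametrised family $\Phi(x):=G^\pm(f(\cdot,x))|_U$ defines a $C^n$ map from $O$ into the Banach-type space $\Gamma^n(E|_U)$. For each multi-index $\beta$ with $|\beta|\leq n$ the section $\partial^\beta_x f(\cdot,x)$ lies in $\Gamma^{m-|\beta|}_A(E)\subseteq \Gamma^{m_0}_A(E)$ and depends jointly continuously on $x$ by $C^m$-smoothness of $f$. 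An induction on $|\beta|$ via difference quotients, using that $h^{-1}\bigl(\partial^\beta_x f(\cdot,x+he_i)-\partial^\beta_x f(\cdot,x)\bigr)$ converges to $\partial_{x_i}\partial^\beta_x f(\cdot,x)$ in $\Gamma^{m-|\beta|-1}_A(E)\subseteq \Gamma^{m_0}_A(E)$ (since $|\beta|<n$ and $m-n\geq m_0$), combined with continuity of $G^\pm$, gives $\partial^\beta_x\Phi(x)=G^\pm\bigl(\partial^\beta_x f(\cdot,x)\bigr)|_U$ as a continuous $\Gamma^n(E|_U)$-valued function of $x$.

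The $C^n$-regularity of $\Phi$ as a $\Gamma^n(E|_U)$-valued map translates into joint $C^n$-regularity of $(y,x)\mapsto G^\pm f(y,x)$ on $U\times O$: any mixed partial $\partial^\alpha_y\partial^\beta_x$ of total order at most $n$ equals $\partial^\alpha_y\bigl(\partial^\beta_x\Phi(x)\bigr)(y)$, which is continuous in $y$ for fixed $x$ because $\partial^\beta_x\Phi(x)\in \Gamma^n(E|_U)$, and continuous in $x$ uniformly in $y$ on compact subsets of $U$ by the continuity of $\partial^\beta_x\Phi$ in the $\Gamma^n$-topology; together these yield joint continuity. The main obstacle is the rigorous justification of the exchange of $x$-derivatives with $G^\pm$ in the $\Gamma^n(E|_U)$-topology, i.e.\ carrying out the difference-quotient step in the correct $\Gamma^k_A(E)$-space; once that is established, the rest is bookkeeping, and the proof additionally goes through for the original nontrivial $F$ by applying the scalar case component-wise under a finite open cover of $O$ by trivializing charts.
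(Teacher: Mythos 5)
Your proposal is correct and follows essentially the same route as the paper's proof: reduce to a trivialized chart, invoke the preceding proposition for the loss of derivatives, and justify commuting $\partial_x^\beta$ past $G^\pm$ by showing the difference quotients of $\partial_x^\beta f$ converge in a suitable $\Gamma^k_A(E)$-space. The only difference is bookkeeping — you fix a single target $\Gamma^n(E|_U)$ and take $m=m_0+n$, whereas the paper lets the target regularity degrade with $|\beta|$ — but both versions of the accounting work.
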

\begin{proof}
	As differentiability is local, it suffices to show this only on a chart domain in $O$ on which $F$ is trivial. As a section is smooth if and only if its composition with any bundle coordinate is smooth and the application of an operator in the first component commutes with taking bundle coordinates in the second component, it suffices to consider the case $O=\R^j$, $F=\R^j\times\R$. Likewise, we may also assume that $U$ is contained in a chart with local trivialization for $E$ and thus partial derivatives are well-defined.
	
	For $n\in \N$, let $m\geq n$ be large enough such that for any $k\leq n$, $G^\pm $ maps $\Gamma^{m-k}_A(E)$ to $\Gamma^{n-k}(E|_U)$ continuously. This is possible by applying the previous proposition $n$ times. We will inductively show the following in the domain of this chart: 
	
	\begin{claim}
		For any multiindices $\alpha$ and $\beta$ with $|\alpha|+|\beta|\leq k\leq n$, we have
		\[\partial_x^\beta \partial_y^\alpha G^\pm (f(\cdot,x))(y)=\partial_y^\alpha G^\pm (\partial_x^\beta f(\cdot,x))(y)\]
		with the derivatives on the left hand side existing. 
	\end{claim}
	 We proceed by induction over $k$, the case $k=0$ being immediate. Suppose we have shown the statement for some $k<n$ and assume $|\alpha|+|\beta|\leq k$. We have to show for every index $i$:
	\[\partial_{x_i}\partial_y^\alpha G^\pm (\partial_x^\beta f(\cdot,x))(y)=\partial_y^\alpha G^\pm (\partial_{x_i}\partial_x^\beta f(\cdot,x))(y)\]
	(including existence of the derivative). 
	We have for $h>0$ ($e_i$ denoting the $i$-th standard unit vector) and any compact set $K$:
	\begin{align*}
		&\|\frac{\partial_x^\beta f(\cdot,x+he_i)-\partial_x^\beta f(\cdot,x)}{h}-\partial_{x_i}\partial_x^\beta f(\cdot,x)\|_{C^{m-|\beta|-1}(K)}\\
		&=\|\frac{1}{h}\int\limits_0^h \partial_{x_i}\partial_x^\beta f(\cdot,x+te_i)-\partial_{x_i}\partial_x^\beta f(\cdot,x)dt\|_{C^{m-|\beta|-1}(K)}\\
		&\leq\sup\limits_{t<h}\|\partial_{x_i}\partial_x^\beta f(\cdot,x+te_i)-\partial_{x_i}\partial_x^\beta f(\cdot,x)\|_{C^{m-|\beta|-1}(K)}.
	\end{align*}
	All derivatives of $\partial_{x_i}\partial_x^\beta f(y,x)$ of order up to $m-|\beta|-1$ are continuous and hence uniformly continuous on the compact set
	\[K\times\{x+te_i\mid t\in[0,1]\}.\]
	Thus the end result of the above equation converges to $0$ for $h\rightarrow 0$. This means that the differential quotient 
	\[\frac{\partial_x^\beta f(\cdot,x+he_i)-\partial_x^\beta f(\cdot,x)}{h}\]
	converges to $\partial_{x_i}\partial_x^\beta f(\cdot,x)$ in $\Gamma^{m-|\beta|-1}_A(E)$. As $G^\pm $ maps $C^{m-|\beta|-1}$-sections to $C^{n-|\beta|-1}$ sections continuously and $\partial_y^\alpha$ maps those to $C^0$-sections (over $U\times O$) continuously, we have
	\begin{align*}
		&\partial_{x_i}\partial_y^\alpha G^\pm (\partial_x^\beta f(\cdot,x))(y)\\
		&=\lim\limits_{h\rightarrow 0}\partial_y^\alpha G^\pm \left(\frac{\partial_x^\beta f(\cdot,x+he_i)-\partial_x^\beta f(\cdot,x)}{h}\right)(y)\\
		&=\partial_y^\alpha G^\pm \left(\lim\limits_{h\rightarrow 0}\frac{\partial_x^\beta f(\cdot,x+he_i)-\partial_x^\beta f(\cdot,x)}{h}\right)(y)\\
		&=\partial_y^\alpha G^\pm (\partial_{x_i}\partial_x^\beta f(\cdot,x))(y),
	\end{align*}
	which finishes the proof of the claim.
	
	To conclude the proof of the proposition, note that an arbitrary partial derivative has the form
	\[\prod\limits_{j=0}^l\partial_x^{\beta_j}\partial_y^{\alpha_j}\]
	and iterated application of the claim yields that
	\[\prod\limits_{j=0}^l\partial_x^{\beta_j}\partial_y^{\alpha_j}G^\pm (f(\cdot,x))(y)=\prod\limits_{j=0}^l\partial_y^{\alpha_j}G^\pm (\prod\limits_{j=0}^l\partial_x^{\beta_j}f(\cdot,x))(y)\]
	exists and is continuous.
\end{proof}
We now introduce the precise form of the objects for which we want an asymptotic expansion. Morally, they can be thought of as a rigorous version of
\[\K(G^{\pm m})(\cdot,x).\]
\begin{df}
	\label{dfGx}
	For $x\in M$, $m\in \Z$ define distributions in $\D'(E\otimes E_x^*)$ via
	\[G^{\pm m}_x:=(G^\pm)^m\delta_x\]
	(where $\delta_x$ is the Dirac distribution at $x$, defined by $\delta_x[\psi\otimes v]:=\psi(x)(v)$ for $\psi\in \Gamma_c(E)$ and $v\in E^*_x$).
	In case $m=1$, we omit it:
	\[G^\pm_x:=G^{\pm 1}_x.\]
\end{df}
\begin{rem}
	For negative $m$, we obtain Schwartz kernels for powers of $P$ ("evaluated" at $x$) as both $G^+$ and $G^-$ are an inverse for $P$ on a suitable space.
\end{rem}
Note that here we used the notation introduced at the end of \ref{identify}, i.e. $G^\pm$ acts as the identity in the $E^*_x$-part. There are multiple other ways to think of $G^{\pm m}_x$. As we will show later, it is the pullback of $\K(G)$ with respect to the inclusion of $M$ at $x$ (see Lemma \ref{resGx}). Another way to see it is evaluation at $x$ composed with the adjoint of $(G^\pm)^m$:
\begin{prop}
	For any $\psi\in \Gamma_c(E^*)$, we have
	\[G^{\pm m}_x[\psi]=(G^\mp_{P^*})^m\psi(x)\]
\end{prop}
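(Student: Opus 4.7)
The plan is to reduce the statement to the formal adjoint relation of Proposition \ref{Gdual} by a density argument. By Definition \ref{dfGx} together with the convention on extending operators to tensored spaces (the fixed notation block following Definition \ref{SKnotation}), $G_x^{\pm m} = (G^\pm)^m \delta_x$, where $(G^\pm)^m$ is understood to act as the identity on the $E_x^*$-factor and $\delta_x$ has compact support $\{x\}$. Thus $\delta_x$ lies in both $\D'_+(E) \otimes E_x^*$ and $\D'_-(E) \otimes E_x^*$, so by Proposition \ref{extendedG} the expression $G_x^{\pm m}[\psi] \in E_x^*$ is well-defined, and the claim amounts to showing that $(G^\pm)^m$ can be moved across the pairing with $\psi$.

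For $\phi \in \Gamma_c(E)$ and $\psi \in \Gamma_c(E^*)$, Proposition \ref{Gdual} directly yields
\[\bigl((G^\pm)^m \phi\bigr)[\psi] = \phi\bigl[(G^\mp_{P^*})^m \psi\bigr],\]
the right-hand side making sense because $(G^\mp_{P^*})^m \psi$ is strictly past/future compactly supported (its support lies in $J_\mp(\supp\psi)$, the image under Green's operators of a compact set). I would then observe that both sides of this identity depend continuously on $\phi$ with respect to the topology of $\D'_\pm(E)$: the left-hand side because $(G^\pm)^m$ is continuous on $\D'_\pm(E)$ by Proposition \ref{extendedG} and evaluation on a fixed test section is continuous, the right-hand side because evaluation of distributions on a fixed test section is continuous. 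Using the density of $\Gamma_c(E)$ in $\D'_\pm(E)$ stated earlier in the preliminaries, the identity extends to all $\phi \in \D'_\pm(E)$, and in particular to $\phi = \delta_x$.

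Specializing to $\delta_x$ and using its defining property yields
\[G_x^{\pm m}[\psi] = \bigl((G^\pm)^m \delta_x\bigr)[\psi] = \delta_x\bigl[(G^\mp_{P^*})^m \psi\bigr] = \bigl((G^\mp_{P^*})^m \psi\bigr)(x),\]
which is the claim. The main point that needs care is the bookkeeping around the auxiliary $E_x^*$-factor: one should check that the convention identifying $(G^\pm)^m$ with $(G^\pm)^m \otimes \mathrm{id}_{E_x^*}$ is compatible with the density and continuity arguments. This causes no trouble because everything in sight is linear (and hence continuous) in the $E_x^*$-component, so the argument goes through componentwise. Overall I expect the proof to be very short, the only non-formal ingredient being the continuous extension of the adjoint identity from smooth compactly supported sections to $\delta_x$.
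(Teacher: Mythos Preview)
Your proposal is correct and follows essentially the same approach as the paper: both use the formal adjoint relation of Proposition \ref{Gdual} for compactly supported smooth sections and then pass to $\delta_x$ by an approximation/density argument. The paper makes the approximation explicit by choosing a sequence $f_n\to\delta_x$ in $\D'$, while you phrase it via density of $\Gamma_c(E)$ in $\D'_\pm(E)$ together with continuity of both sides, but this is the same idea.
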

\begin{proof}
	Let $f_n$ be a sequence of test functions converging distributionally to $\delta_x$. Then we have by Proposition \ref{Gdual} and continuity of $G^\pm$:
	\begin{align*}
	G^{\pm m}_x(\psi)&=(G^\pm)^m\delta_x(\psi)\\
	&=\limn\int\limits_M\psi((G^\pm)^mf_n)\\
	&=\limn\int\limits_M(G^\mp_{P^*})^m\psi(f_n)\\
	&=\delta_x((G^\mp_{P^*})^m\psi)\\
	&=(G^\mp_{P^*})^m\psi(x). \qedhere
	\end{align*}
\end{proof}
To obtain the asymptotic expansion we want, we first need to restrict to a suitable subset of $M$. Such subsets will be reffered to as GE sets (it is left to the reader to decide whether this stands for "Green's expansion" or for "good enough").
\begin{df}
	\label{dfGE}
	A subset $U$ of $M$ shall be called GE, if it is open, geodesically convex, globally hyperbolic, and causally compatible. 
\end{df}
\begin{rem}
	Openness is required to restrict distributions to $U$. Geodesic convexity ensures the existence of Riesz distributions and of Hadamard coefficients. Global hyperbolicity ensures that $U$ has unique Green's operators, while causal compatibility ensures that those agree with the restriction of the Green's operators on $M$.
\end{rem}
	We shall make use of the following fact:
	\begin{prop}
		\label{GEN}
		Every $p\in M$ has a basis of GE neighborhoods.
	\end{prop}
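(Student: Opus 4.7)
The plan is to combine two classical facts for globally hyperbolic Lorentzian manifolds: the existence of a basis of geodesically convex neighborhoods (standard from ODE arguments with the exponential map), and the fact that $M$ is strongly causal, as follows from Theorem \ref{Ghyper}. The second fact gives, for any $p \in M$, a basis of open neighborhoods $W$ such that every causal curve in $M$ with both endpoints in $W$ stays in $W$; I will call such $W$ causally convex.

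The key observation I would establish first is that an open neighborhood $U$ of $p$ which is simultaneously geodesically convex and causally convex is automatically GE. For causal compatibility, given $y \in J^M_+(x) \cap U$ with $x \in U$, a causal curve in $M$ from $x$ to $y$ stays in $U$ by causal convexity, hence by Proposition \ref{exJ} there is a causal geodesic in $U$ joining $x$ and $y$, so $y \in J^U_+(x)$; the reverse inclusion is immediate. For global hyperbolicity I would apply part (1) of Theorem \ref{Ghyper}: strong causality of $U$ is inherited from that of $M$ by restricting the basis, and for $x, y \in U$, any $z \in J^M_+(x) \cap J^M_-(y)$ lies on a concatenated causal curve from $x$ through $z$ to $y$ with endpoints in $U$, which by causal convexity stays in $U$; hence $J^M_+(x) \cap J^M_-(y) \subseteq U$ and, combined with causal compatibility, $J^U_+(x) \cap J^U_-(y) = J^M_+(x) \cap J^M_-(y)$ is compact by global hyperbolicity of $M$.

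It then remains to produce, inside any neighborhood $V$ of $p$, an open set $U \subseteq V$ that is simultaneously geodesically convex and causally convex. I would start with a geodesically convex $U_0 \subseteq V$ and a causally convex $W \subseteq U_0$ from the strong causality basis, and then take $U$ to be an exponential ball $\exp_p(B_r(0))$ of sufficiently small radius in some auxiliary Euclidean norm on $T_p M$. For small $r$ such balls are geodesically convex by a standard fact, and for $r$ small enough they are contained in $W$, so by causal convexity of $W$ any causal curve in $M$ with endpoints in $U$ stays in $W \subseteq U_0$. The remaining step, which is the main obstacle, is to ensure such a curve stays inside $U$ itself; this can be closed by a quantitative comparison between arbitrary causal curves in $U_0$ and the unique causal geodesic joining their endpoints in $U_0$, provided $r$ is chosen small enough. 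Alternatively one can follow the standard ``diamond'' construction inside $U_0$, using intersections of chronological futures and pasts of two nearby points, which is known to yield simultaneously convex and causally convex neighborhoods; compare \cite{BGP} and \cite{One}.
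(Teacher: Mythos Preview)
Your reduction is essentially the same as the paper's: both argue that a geodesically convex, causally convex open set is automatically GE. Your derivation of causal compatibility matches the paper's verbatim, and your additional argument that global hyperbolicity follows from causal convexity together with global hyperbolicity of $M$ (via Theorem~\ref{Ghyper}(1)) is correct and slightly more self-contained than the paper, which simply takes global hyperbolicity of the neighborhood as part of the cited result.

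The gap is in your construction step. You correctly note that producing an open set that is \emph{simultaneously} geodesically convex and causally convex is the crux, but you do not actually close it. Shrinking to an exponential ball $U\subseteq W$ gives geodesic convexity, and causal curves with endpoints in $U$ stay in $W$ by causal convexity of $W$, but there is no reason they stay in $U$; your ``quantitative comparison'' is not spelled out, and the diamond $I^{U_0}_+(p_-)\cap I^{U_0}_-(p_+)$ is causally convex but not obviously geodesically convex. Obtaining both properties at once is precisely the nontrivial content of \cite[Corollary~2 and Remark~14]{Min}, which the paper simply cites. So either invoke that reference (as the paper does), or supply a genuine argument; as written, the last paragraph is a sketch that stops short of the actual difficulty.
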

	\begin{proof}
		This is basically the statement of \cite[Corollary 2 and Remark 14]{Min}, which asserts the existence of a neighborhood basis of geodesically convex, globally hyperbolic, causally convex sets. Causally convex means that for any neighborhood $U$ from this basis, its intersection with every causal curve is connected. We only need to show that this implies causal compatibility, which is not too difficult: If $y\in J_\pm^M(x)$ for $x\in U$, there must be a future/past directed causal curve $\gamma$ in $M$ that connects $x$ and $y$. As the intersection of $\gamma$ with $U$ is connected and contains $x$ and $y$, it must also contain the segment between $x$ and $y$, so the two can be joined by a future/past directed causal curve in $U$. Thus $x\in J_\pm^U(y)$ and hence $U$ is causally compatible.
	\end{proof}
We can now prove the main theorem of this section
\begin{thm}
	\label{PowExp}
	Let $U\subseteq M$ be a GE set and let $m\in \Z$. Then we have the asymptotic expansion
	\[G^{\pm m}_x|_U\sim_x\insum{k} \binom{m+k-1}{k}V^{k,U}_xR^U_\pm(2k+2m,x).\]
	For $m<0$, the right hand side is a finite sum, as all summands with $k+m>0$ vanish, and we have equality.
\end{thm}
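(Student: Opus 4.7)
The plan is to proceed by induction on $m \in \Z$, reducing everything to a single algebraic identity for $P$ applied to a truncation of the putative expansion. Define the partial sum
\[F_N^{(m)}(x) := \sum_{k=0}^N \binom{m+k-1}{k} V^{k,U}_x R^U_\pm(2k+2m, x).\]
The heart of the proof is to show that
\[P F_N^{(m)}(x) = F_N^{(m-1)}(x) + \binom{m+N-1}{N}(PV^N_x) R^U_\pm(2N+2m, x).\]
To establish this I would apply Proposition \ref{Prho} term-by-term with $\alpha = 2k+2m-2$, substitute the transport equation $(\rho_x^U - 2k)V^k_x = 2k PV^{k-1}_x$, and then exploit the binomial identities $\binom{m+k-1}{k}\cdot\frac{k}{k+m-1}=\binom{m+k-2}{k-1}$ and $\binom{m+k-1}{k}\cdot\frac{m-1}{k+m-1}=\binom{m+k-2}{k}$. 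The first identity, together with the reindexing $k' = k-1$, causes two of the three resulting sums to telescope down to a single boundary contribution at $k=N$; the second identity rewrites the remaining sum as exactly $F_N^{(m-1)}(x)$.

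With the algebraic identity in hand, the base case $m=1$ is the classical Hadamard expansion from \cite[Proposition 2.5.1]{BGP} (the binomial coefficient reduces to $\binom{k}{k}=1$); the extension from the small neighborhoods used in \cite{BGP} to an arbitrary GE set follows from the compatibility of Hadamard coefficients and Riesz distributions under restriction to nested convex subsets, together with causal compatibility ensuring that the Green's operator on $U$ agrees with the restriction of $G^\pm$ from $M$. For the inductive step $m-1 \to m$ with $m \geq 2$, I apply $G^\pm$ to both sides of the algebraic identity. Since each summand in $F_N^{(m)}(x)$ is past/future compactly supported in $U$, we have $G^\pm P F_N^{(m)} = F_N^{(m)}$, giving
\[F_N^{(m)}(x) - G^{\pm m}_x|_U = G^\pm\bigl(R_N(\cdot, x)\bigr) - \binom{m+N-1}{N}\, G^\pm\bigl((PV^N_x) R^U_\pm(2N+2m, x)\bigr),\]
where $R_N(\cdot, x)$ is the $\sim_x$-remainder supplied by the induction hypothesis at level $m-1$. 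For $N$ sufficiently large, the tail term has high joint $C^n$-regularity in $(y,x)$ because $R^U_\pm(\alpha, x)$ is $C^j$ for $\Re(\alpha) \geq d+2j$, and Proposition \ref{Greg} then guarantees that applying $G^\pm$ in the first variable preserves a controlled amount of this regularity on any relatively compact open subset of $U$. Exhausting $U$ by such subsets produces the required $\sim_x$-expansion.

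For $m < 0$, the binomial coefficient $\binom{m+N-1}{N}$ vanishes identically once $N \geq 1-m$, so the algebraic identity collapses to the exact equality $PF^{(m)} = F^{(m-1)}$, with both sides now finite sums since $\binom{m+k-1}{k}$ also vanishes for $k > -m$. Starting from the trivial base case $F^{(0)} = V^0_x(x) \delta_x = \delta_x = G^{\pm 0}_x$ and iterating $P$ downward in $m$, we obtain $F^{(m)} = P^{-m}\delta_x = G^{\pm m}_x$ for every $m \leq 0$, which is the claimed exact equality. The main obstacle throughout is the careful tracking of joint $(y,x)$-regularity needed to match the third bullet of Definition \ref{dfasympt}: Proposition \ref{Greg} addresses this, but it must be applied to a tail whose smoothness in $x$ relies on the joint smoothness of $V^k_x(y)$ and on the corresponding joint regularity of $R^U_\pm(\alpha, x)$ as $x$ varies, both of which ultimately rest on the smoothness of the exponential map on the convex set $U$.
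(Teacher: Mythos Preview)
Your overall strategy---two-way induction on $m$ via the algebraic identity $P F_N^{(m)} = F_N^{(m-1)} + \text{boundary term}$, combined with Proposition~\ref{Greg} for the regularity bookkeeping on relatively compact pieces of $U$---is exactly the paper's approach. However, there is a genuine gap in your base case for the upward induction. Invoking \cite[Proposition~2.5.1]{BGP} for $m=1$ is problematic: that result is proved on small neighborhoods satisfying a volume bound, for BGP's locally constructed fundamental solutions, not on an arbitrary GE set with the restriction of the global Green's operator. The paper's remark immediately after the theorem explicitly says that reducing one statement to the other ``is not easy'', and your one-line appeal to compatibility under restriction does not settle it (the BGP neighborhoods need not be causally compatible with $M$, and for off-diagonal pairs $(y,x)$ in a large GE set there is no obvious common small neighborhood to patch from). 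The fix is simple and is what the paper actually does: take $m=0$ as the base case for \emph{both} directions of the induction. This is immediate from $R_\pm(0,x)=\delta_x$ and $V^0_x(x)=1$, and your inductive step $m-1\to m$ then manufactures the $m=1$ case as its first application, using precisely the localization argument you already describe.

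Two smaller points also need attention. First, applying Proposition~\ref{Prho} with $\alpha=2k+2m-2$ degenerates at $\alpha=0$, i.e.\ at $k=0$, $m=1$; the paper treats this case by a limit argument, obtaining $P(V^0_xR_\pm(2,x))=\delta_x+(PV^0_x)R_\pm(2,x)$, and you will need the same. Second, for $m\leq 0$ your claim that the error term $\binom{m+N-1}{N}(PV^N_x)R_\pm(2N+2m,x)$ vanishes for large $N$ is correct as a statement, but your telescoping argument does not produce the identity in that form: the nonzero terms of $F_N^{(m)}$ stop at $k=-m$, so the telescoping boundary sits there, leaving $\binom{-1}{-m}(PV^{-m}_x)(x)\,\delta_x$. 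One must then recognize this, via $PV^{k}_x(x)=-V^{k+1}_x(x)$ and $R_\pm(0,x)=\delta_x$, as precisely the $k=1-m$ term of $F^{(m-1)}$. The paper carries out exactly this identification in its Case~1.
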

\begin{rem}
	Note that the special case $m=1$ reproduces the expansion of \cite[Proposition 2.5.1]{BGP}, though on slightly different domains (this means that even though they are morally the same, it is not easy to reduce either one to the other). Unlike in \cite{BGP}, there is no assumption on the volume of $U$ and it may be chosen independently of $P$.
\end{rem}
\begin{rem}
	In case $m<0$, this is actually a formula for the kernel of  powers of $P$. All even Riesz distributions of order $0$ or less are the same in the advanced and retarded case and supported only at the diagonal, so this expansion is not as surprising as it might seem on first glance. From the case $m=-1$, we obtain that 
	\[\K(P)(\cdot,x)=V^0_xR(-2,x)-V^1_x\delta_x.\]
	In particular, $P$ is uniquely determined by the metric $g$ (which determines the Riesz-distributions) and the first two Hadamard coefficients $V^0$ and $V^1$.
\end{rem}
\begin{proof}
	We proceed by two-way induction on $m$, showing that if the statement holds for $m\in \Z$, it also holds for $m+1$ if $m\geq 0$ and for $m-1$, if $m\leq 0$. The case $m=0$ follows from $R_\pm(0,x)=\delta_x$. Fix $x\in U$.
	
	We make some preliminary calculations. For $k+m\neq 0$ we can use the characterization of $\rho_x^U$ (\ref{Prho}) and the transport equations (see \ref{Vdef}) to calculate
	\begin{align*}
		&\binom{k+m}{k}P(V^{k,U}_xR^U_\pm(2k+2m+2,x))\\
		&=\binom{k+m}{k}\Big(\frac{-1}{2k+2m}(\rho_x^U-2k-2m)V^{k,U}_xR^U_\pm(2k+2m,x)+(PV^{k,U}_x)R^U_\pm(2k+2m+2,x)\Big)\\
		&=\binom{k+m}{k}\Big(\frac{-1}{2k+2m}((2kPV^{k-1,U}_x-2mV^{k,U}_x)R^U_\pm(2k+2m,x))\\
		&+(PV^{k,U}_x)R^U_\pm(2k+2m+2,x)\Big)\\
		&=\binom{k+m}{k}\frac{2m}{2k+2m}V^{k,U}_xR^U_\pm(2k+2m,x)-\binom{k+m}{k}\frac{2k}{2m+2k} (PV^{k-1,U}_x)R^U_\pm(2k+2m,x)\\
		&+\binom{k+m}{k}(PV^{k,U}_x)R^U_\pm(2k+2m+2,x)\\
		&=\binom{k+m-1}{k}V^{k,U}_xR^U_\pm(2k+2m,x)-\binom{k+m-1}{k-1} (PV^{k-1,U}_x)R^U_\pm(2k+2m,x)\\
		&+\binom{k+m}{k}(PV^{k,U}_x)R^U_\pm(2k+2m+2,x))\\
	\end{align*}
	In the special case $k=m=0$, we obtain:
	\begin{align*}
		P(V^{0,U}_xR_\pm(2,x))&=\lim\limits_{\alpha\rightarrow 0}P(V^{0,U}_xR_\pm(\alpha+2,x))\\
		&=\lim\limits_{\alpha\rightarrow 0}P(V^{0,U}_x)R_\pm(\alpha+2,x)-(\frac{1}{\alpha}\rho_x^U V^{0,U}_x- V^{0,U}_x)R_\pm(\alpha,x)\\	 
		&=\lim\limits_{\alpha\rightarrow 0}P(V^{0,U}_x)R_\pm(\alpha+2,x)+ V^{0,U}_xR_\pm(\alpha,x)\\	
		&=P(V^{0,U}_x)R_\pm(2,x)+ V^{0,U}_xR_\pm(0,x)\\
		&=P(V^{0,U}_x)R_\pm(2,x)+ V^{0,U}_x(x)\delta_x\\
		&=\delta_x+P(V^{0,U}_x)R_\pm(2,x).
	\end{align*}
either way, we obtain for $k=0$:
\[P(V^{0,U}_xR^U_\pm(2m+2,x))=V^{0,U}_xR^U_\pm(2m,x)+(PV^{0,U}_x)R^U_\pm(2m+2,x)\]
	
	Putting everything together, for any $N\in\N$ in case $m\geq 0$ and for any $N<-m$ if $m<0$, we have
	\begin{align*}
		&P\sum\limits_{k=0}^N\binom{m+k}{k}V^{k,U}_xR^U_\pm(2k+2m+2,x)\\
		&=V^{0,U}_xR^U_\pm(2m,x)+(PV^{0,U}_x)R^U_\pm(2m+2,x)+\sum\limits_{k=1}^N\binom{k+m-1}{k}V^{k,U}_xR^U_\pm(2k+2m,x)\\
		&-\binom{k+m-1}{k-1} (PV^{k-1,U}_x)R^U_\pm(2k+2m,x) +\binom{k+m}{k}(PV^{k,U}_x)R^U_\pm(2k+2m+2,x)\\
		&=\sum\limits_{k=0}^N\binom{k+m-1}{k}V^{k,U}_xR^U_\pm(2k+2m,x)-\sum\limits_{k=0}^{N-1}\binom{k+m}{k}(PV^{k,U}_x)R^U_\pm(2k+2m+2,x)\\
		&+\sum\limits_{k=0}^{N}\binom{k+m}{k}(PV^{k,U}_x)R^U_\pm(2k+2m+2,x))\\
		&=\sum\limits_{k=0}^N\binom{k+m-1}{k}V^{k,U}_xR^U_\pm(2k+2m,x)+\binom{N+m}{N}(PV^{N,U}_x)R^U_\pm(2N+2m+2,x)\\
		&=:\sum\limits_{k=0}^N\binom{k+m-1}{k}V^{k,U}_xR^U_\pm(2k+2m,x)+E_N(\cdot,x)\\
	\end{align*}
where 
\[E_N(\cdot,x):=\binom{N+m}{N}(PV^{N,U}_x)R^U_\pm(2N+2m+2,x)\]
is $C^{n}$ in both variables for $N\geq \frac{d}{2}+n-m$.

\textbf{Case 1: $m+1\Rightarrow m$, $m<0$}\\
Suppose for induction that
\[G^{\pm m+1}_x=\sum\limits_{k=0}^{-m-1}\binom{m+k}{k}V^{k,U}_xR^U_\pm(2k+2m+2,x)\]
(This implies the theorem for $m+1$, as all further summands are 0). Let $N=-m-1$. 
Using the fact (see \cite[below Proposition 2.3.1]{BGP} that 
\[PV^{N,U}_x(x)=-V^{N+1,U}_x(x)\]
and the identity
\[\binom{-a}{b}=(-1)^b\binom{a+b-1}{b}\] 
for integer $b$, we calculate
\begin{align*}
	E_N(\cdot,x)&=\binom{-1}{N}(PV^{N,U}_x)R^U_\pm(0,x)\\
	&=(-1)^N\binom{N}{N}(PV^{N,U}_x)\delta_x\\
	&=(-1)^NPV^{N,U}_x(x)\delta_x\\
	&=(-1)^{N+1}V^{N+1,U}_x(x)\delta_x\\
	&=(-1)^{N+1}\binom{N+1}{N+1}V^{N+1,U}_x(x)\delta_x\\
	&=\binom{-1}{N+1}V^{N+1,U}_xR^U_\pm(0,x)\\
	&=\binom{-1}{-m}V_x^{-m,U}R_\pm^U(0,x).
\end{align*}
We obtain from the inductive hypothesis and our previous calculations:
\begin{align*}
	G^{\pm m}_x
	&=PG^{\pm m+1}_x\\
	&=P\sum\limits_{k=0}^{N}\binom{m+k}{k}V^{k,U}_xR^U_\pm(2k+2m+2,x)\\
	&=\sum\limits_{k=0}^N\binom{k+m-1}{k}V^{k,U}_xR^U_\pm(2k+2m,x)+E_N(\cdot,x)\\
	&=\sum\limits_{k=0}^{-m-1}\binom{k+m-1}{k}V^{k,U}_xR^U_\pm(2k+2m,x)+\binom{-1}{-m}V^{-m,U}_xR^U_\pm(0,x)\\
	&=\sum\limits_{k=0}^{-m}\binom{k+m-1}{k}V^{k,U}_xR^U_\pm(2k+2m,x)\\
\end{align*}

\textbf{Case 2: $m\Rightarrow m+1$, $m\geq 0$}\\
Fix $n\in \N$ and assume the theorem holds for some $m\geq 0$. We first show that the expansion holds on relatively compact GE subsets of $U$. Let $W,O\subseteq U$ be relatively compact (in $U$) and GE. Proposition \ref{Greg} implies that there is $n'\in \N$ such that $G^\pm_{P|_U}$ maps  $\Gamma^{n'}_{J^U_\pm(\overline O)\times O}(E|_U\boxtimes E^*|_U)$ to sections that are $C^n$ on $W\times O$. Assume $x\in O$. By the inductive hypothesis, we may choose $N\geq \frac{d}{2}+n'$ such that
\[F_N(\cdot,x):=G^{\pm m}_x|_U-\sum\limits_{k=0}^N \binom{m+k-1}{k}V^{k,U}_xR^U_\pm(2k+2m,x)\]
defines a $C^{n'}$-section $F_N$ on $U$. We then have
\begin{align*}
	&G^{\pm m+1}_x|_U\\
	&=(G^\pm_P G^{\pm m}_x)|_U\\
	&=G^\pm_{P|_U}G^{\pm m}_x|_U\\
	&=G^\pm_{P|_U}\left(\sum\limits_{k=0}^N \binom{m+k-1}{k}V^{k,U}_xR^U_\pm(2k+2m,x)+F_N(\cdot,x)\right)\\
	&=G^\pm_{P|_U}\left(P\sum\limits_{k=0}^N\binom{m+k}{k}V^{k,U}_xR^U_\pm(2k+2m+2,x)-E_N(\cdot,x)+F_N(\cdot,x)\right)\\
	&=\sum\limits_{k=0}^N\binom{m+k}{k}V^{k,U}_xR^U_\pm(2k+2m+2,x)+G^\pm_{P|_U}(F_N(\cdot,x)-E_N(\cdot,x))\\
	&=:\sum\limits_{k=0}^N\binom{m+k}{k}V^{k,U}_xR^U_\pm(2k+2m+2,x)+\tilde F_N(\cdot,x).\\
\end{align*}
with
\[\tilde F_N(\cdot,x):=G^{\pm m+1}_x|_U-\sum\limits_{k=0}^N\binom{m+k}{k}V^{k,U}_xR^U_\pm(2k+2m+2,x)=G^\pm_{P|_U}(F_N(\cdot,x)-E_N(\cdot,x)).\]
By our choice of ${n'}$ and $N$, $\tilde F_N$ is $C^n$ on $W\times O$, as both $E_N|_{U\times O}$ and $F_N|_{U\times O}$ are $C^{n'}$ and supported in $J_\pm(O)\times O$. Note that the only choices we made after choosing $W$ and $O$ (apart from assuming $x\in O$) were that of ${n'}$ and $N$. We thus want to make the expansion independent of the choice of $N$.

Let $N_0\geq\frac{d}{2}+n$ be arbitrary. For $k\geq N_0$, all Riesz distributions $R^U_\pm(2k+2m+2,x)$ are given by $C^n$-functions. We see that
\begin{align*} 
	\tilde F_{N_0}(\cdot,x)&=G^{\pm m+1}_x|_U-\sum\limits_{k=0}^{N_0}\binom{m+k}{k}V^{k,U}_xR^U_\pm(2k+2m+2,x)\\
	&=\sum\limits_{k=N_0+1}^{N}\binom{m+k}{k}V^{k,U}_xR^U_\pm(2k+2m+2,x)+\tilde F_N(\cdot,x)
\end{align*}
is $C^n$ on $W\times O$. As this is independent of $W$ and $O$, these can replaced by arbitrary relatively compact GE sets. All points in $U$ have relatively compact GE neighborhoods (choose any relatively compact neighborhood and then choose a GE neighborhood contained in it by Proposition \ref{GEN}). Thus any $(y,x)\in U\times U$ has a neighborhood on which $\tilde F_{N_0}$ is $C^n$, which means it is $C^n$ on all of $U\times U$. As $n$ was arbitrary, we have the desired asymptotic expansion for $m+1$ and have thus concluded our induction.
\end{proof}

\section{Green's ``resolvent''}
\label{Greschap}
We now consider the Green's operators for the operator $P-z$ for $z\in \C$, which is still normally hyperbolic. If we view $G^\pm_P$ as something like an inverse for $P$, then $G^\pm_{P-z}$ is something like a resolvent. We seek to derive an expansion similar to the one above, where only the Hadamard coefficients of $P$ rather than $P-z$ appear and the $z$-dependence is instead shifted to a generalzed version of Riesz distributions. For this section, fix a convex open set $U$ and let $p\in U$ be some point. Any Hadamard coefficients, Riesz distributions and similar objects depending on a convex open subset will be defined on $U$ unless otherwise indicated and we will omit the superscript $U$.
\begin{df}
	\label{Rieszres}
	Let $\square_V$ denote the d'Alembertian on a Lorentzian vector space $V$.
	
	For $z\in \C$ and $m \in \N$, we define resolvent Riesz distributions as
	\[R_\pm(z,2m,x):=(\exp_x)^{-1*}(G^{\pm m}_{\square_{V}-z,0})\]
\end{df}
The motivation for this is that it reproduces the original Riesz-distributions in the case $z=0$:
\begin{prop}
	\label{RFS}
	On a Lorentzian vector space $V$, we have
	\[R^V_\pm(2m)=G^{\pm m}_{\square_{V},0}.\]
	Thus we have
	\[R_\pm(0,2m,x)=R_\pm(2m,x).\]
\end{prop}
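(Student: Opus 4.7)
The plan is to prove the first identity $R^V_\pm(2m) = G^{\pm m}_{\square_V, 0}$ by induction on $m$, then derive $R_\pm(0, 2m, x) = R_\pm(2m, x)$ formally by pulling back along $\exp_x^{-1}$. The two ingredients I will need are the defining property $\square R^V_\pm(\alpha+2) = R^V_\pm(\alpha)$ from Definition/Proposition \ref{dfRiesz} (together with $R^V_\pm(0) = \delta_0$) and the extended Green's operator identity $G^\pm_{\square_V} \square = \text{id}$ on $\D'_\pm(V)$ from Proposition \ref{extendedG}. Recall that $V$, being a Lorentzian vector space, is isometric to Minkowski space and hence globally hyperbolic, so these results apply.

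The base case $m = 0$ is $R^V_\pm(0) = \delta_0 = (G^\pm_{\square_V})^0 \delta_0 = G^{\pm 0}_{\square_V, 0}$. For the inductive step I assume $R^V_\pm(2(m-1)) = (G^\pm_{\square_V})^{m-1} \delta_0$ and compute
\begin{align*}
G^{\pm m}_{\square_V, 0}
&= G^\pm_{\square_V}\,(G^\pm_{\square_V})^{m-1}\delta_0
 = G^\pm_{\square_V}\, R^V_\pm(2m-2) \\
&= G^\pm_{\square_V}\, \square R^V_\pm(2m)
 = R^V_\pm(2m),
\end{align*}
where the third equality uses the defining property of the Riesz distributions and the last uses Proposition \ref{extendedG}. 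The only point that requires care is the application of $G^\pm_{\square_V}$ in the second line: this requires $R^V_\pm(2m) \in \D'_\pm(V)$. This is the main (mild) obstacle, and it is handled by observing that $\supp(R^V_\pm(2m)) \subseteq J_\pm(0)$ is the future/past of a single compact point, hence strictly future/past compact, and in particular past/future compact by the proposition preceding Definition \ref{df*}.

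For the second statement, I simply apply $(\exp_x)^{-1*}$ to the first identity, taken for $V = T_xM$. By Definition \ref{Rieszres} with $z = 0$,
\[R_\pm(0, 2m, x) = (\exp_x)^{-1*}\bigl(G^{\pm m}_{\square_{T_xM} - 0, 0}\bigr) = (\exp_x)^{-1*}\bigl(G^{\pm m}_{\square_{T_xM}, 0}\bigr),\]
and the first identity (with the implicit restriction to $\Dom(\exp_x^U)$) converts this into $(\exp_x)^{-1*}(R^{T_xM}_\pm(2m)|_{\Dom(\exp_x^U)}) = R_\pm(2m, x)$ by Definition \ref{dfGamma}. No further work is needed.
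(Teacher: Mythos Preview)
Your proof is correct and follows essentially the same approach as the paper: both use the recursion $R^V_\pm(2m+2)=G^\pm_{\square_V}R^V_\pm(2m)$ (obtained from $\square R^V_\pm(\alpha+2)=R^V_\pm(\alpha)$ and $G^\pm_{\square_V}\square=\mathrm{id}$ on $\D'_\pm$) together with the initial value $R^V_\pm(0)=\delta_0$, and then pull back by $\exp_x^{-1}$ for the second claim. Your explicit discussion of why $R^V_\pm(2m)\in\D'_\pm(V)$ is a useful addition that the paper leaves implicit.
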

\begin{proof}
	We have (\cite[Lemma 1.2.2]{BGP})
	\[\square_V R_\pm^V(2m+2)=R_\pm^V(2m)\]
	and hence
	\[R_\pm^V(2m+2)=G^\pm_{\square_V}\square_V R_\pm^V(2m+2)=G^\pm_{\square_V}R_\pm^V(2m).\]
	Together with
	\[R_\pm^V(0)=\delta_0,\]
	we find that the $R_\pm^V(2m)$ satisfy the recursion defining $G^{\pm m}_{\square_V,0}$, hence we have shown the first part. 
	
	The second part follows by inserting the first part into the definition of the Riesz distributions on $U$ and comparing with that of the resolvent Riesz distributions.
\end{proof}
We want to exploit the asymptotic expansion that we already have in order to obtain the one we would like to get. For this, we need to know the Hadamard coefficients for $P-z$ in terms of those for $P$.
\begin{df}
	\label{defVz}
	For $z\in \C$,  let $V^{k}_{x}(z)$ be the corresponding Hadamard coefficients for $P-z$.
\end{df}
\begin{prop}
	\label{Vz}
	We have
	\[V^{k}_{x}(z)=\sum\limits_{m=0}^k\binom{k}{m}z^mV^{k-m}_x.\]
\end{prop}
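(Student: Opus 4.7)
The plan is to prove the identity by induction on $k$, using the uniqueness statement in \ref{Vdef} together with the transport equations that define the Hadamard coefficients. The key observation is that the operator $\rho_x$ (and the initial condition $V^0_x(x)=1$) depends only on the metric, not on $P$, so the only $P$-dependence in the transport equations sits in the right-hand side $2k\,PV^{k-1}_x$.

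First I would dispose of the base case. For $k=0$, both $V^0_x$ and $V^0_x(z)$ solve $\rho_x V=0$ with $V(x)=1$; since this does not involve $P$ at all, they coincide, agreeing with $\sum_{m=0}^0\binom{0}{m}z^m V^{-m}_x = V^0_x$.

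For the inductive step, suppose the formula holds for all $j<k$, and define provisionally
\[W^k_x := \sum_{m=0}^k \binom{k}{m} z^m V^{k-m}_x.\]
The plan is to verify that $W^k_x$ satisfies the transport equation for $P-z$, namely
\[(\rho_x - 2k) W^k_x = 2k(P-z)\, W^{k-1}_x = 2k(P-z)\, V^{k-1}_x(z),\]
the second equality being the inductive hypothesis. Applying the transport equation for each $V^{k-m}_x$ (with the convention that $PV^{-1}_x=0$), one gets
\[(\rho_x-2k)V^{k-m}_x = 2(k-m)PV^{k-m-1}_x - 2m\, V^{k-m}_x,\]
so
\[(\rho_x-2k)W^k_x = 2\sum_{m=0}^{k-1}\binom{k}{m}(k-m)z^m PV^{k-m-1}_x - 2\sum_{m=1}^{k}\binom{k}{m}m\, z^m V^{k-m}_x.\]
Then I would apply the standard identities $\binom{k}{m}(k-m) = k\binom{k-1}{m}$ and $\binom{k}{m}m = k\binom{k-1}{m-1}$ and re-index (set $j=m$ in the first sum, $j=m-1$ in the second) to recognize the right-hand side as exactly $2k(P-z)W^{k-1}_x$.

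Finally, I invoke uniqueness from \ref{Vdef}: since $W^k_x$ is smooth on $U$ (a finite linear combination of smooth sections) and satisfies the transport equation for the $k$-th Hadamard coefficient of $P-z$, it must coincide with $V^k_x(z)$. The main potential pitfall is the bookkeeping in the base case $m=k$ of the sum, where $PV^{-1}_x$ is formally absent and the term $\rho_x V^0_x = 0$ must be handled separately; apart from that the proof is purely combinatorial manipulation of binomial coefficients and poses no substantive obstacle.
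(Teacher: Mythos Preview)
Your proposal is correct and takes essentially the same approach as the paper: define the candidate $W^k_x=\sum_m\binom{k}{m}z^mV^{k-m}_x$, verify the initial condition, use the transport equations for the $V^{k-m}_x$ together with the identities $\binom{k}{m}(k-m)=k\binom{k-1}{m}$ and $\binom{k}{m}m=k\binom{k-1}{m-1}$ to obtain $(\rho_x-2k)W^k_x=2k(P-z)W^{k-1}_x$, and conclude by the uniqueness in \ref{Vdef}. The only cosmetic difference is that the paper verifies the transport equations for all $k$ at once and then invokes uniqueness of the whole family, whereas you phrase it as an induction; the underlying computation is identical.
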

\begin{rem}
	The corresponding formula for the heat coefficients
	\[a_k(\Delta-z)=\sum\limits_{m=0}^k\frac{1}{m}z^ma_{k-m}(\Delta)\]
	can be shown analogously. Alternatively, that formula can also be deduced by Taylor expanding $e^{tz}$ and multiplying out the expansions in 
	\[e^{-t(\Delta-z)}=e^{tz}e^{-t\Delta}.\]
\end{rem}
\begin{proof}
	 Let
	 \[V_k(z):=\sum\limits_{m=0}^k\binom{k}{m}z^mV^{k-m}_x\]
	 We need to show that $V_0(z)(x,x)=1$ and the $V_k(z)$ satisfy the transport equations.
	The former holds, as $V_0(z)=V^0_x$. For the latter, we calculate using the transport equation for $P$:
	\begin{align*}
		&(\rho_x^U-2k)V_k(z)\\
		&=\sum\limits_{m=0}^k\binom{k}{m}z^m(\rho_x^U-2k)V^{k-m}_x\\
		&=\sum\limits_{m=0}^k\binom{k}{m}z^m((\rho_x^U-2(k-m))V^{k-m}_x-2mV^{k-m}_x)\\
		&=\sum\limits_{m=0}^k\binom{k}{m}z^m2(k-m)PV^{k-m-1}_x-\sum\limits_{m=0}^k\binom{k}{m}z^m2mV^{k-m}_x\\
		&=\sum\limits_{m=0}^k\binom{k-1}{m}z^m2kPV^{k-m-1}_x-\sum\limits_{m=1}^k\binom{k-1}{m-1}z^m2kV^{k-m}_x\\
		&=\sum\limits_{m=0}^{k-1}\binom{k-1}{m}z^m2kPV^{k-m-1}_x-\sum\limits_{m=0}^{k-1}\binom{k-1}{m}z^{m+1}2kV^{k-m-1}_x\\
		&=2k(P-z)\sum\limits_{m=0}^{k-1}\binom{k-1}{m}z^mV^{k-m-1}_x\\
		&=2k(P-z)V_{k-1}(z).
	\end{align*}
	The $V_k(z)$ thus satisfy the transport equations for $P-z$, so they are the desired Hadamard coefficients.
\end{proof}
The other ingredient we need is an expansion for the resolvent Riesz distributions in terms of the standard ones.
\begin{lemma}
	On Minkowski space, we have the asymptotic expansion
	\[G^{\pm k}_{\square-z,0}\sim\insum{m}\binom{k+m-1}{m}z^mR_\pm(2m+2k).\]
\end{lemma}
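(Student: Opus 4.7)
The plan is to deduce this directly from Theorem \ref{PowExp} (the Hadamard expansion for powers of Green's operators) applied to the normally hyperbolic operator $P = \square - z$ on Minkowski space, with base point $x = 0$. Minkowski space is itself a GE set in the sense of Definition \ref{dfGE} (it is open, geodesically convex, globally hyperbolic, and trivially causally compatible with itself), so the theorem applies, giving
\[G^{\pm k}_{\square-z,0}\sim\sum_{m=0}^\infty \binom{k+m-1}{m} V^m_0(z)\, R_\pm(2m+2k,0),\]
where $V^m_0(z)$ are the Hadamard coefficients of $\square - z$ at $0$. Moreover, as noted in the remark following Definition \ref{dfGamma}, the identification of Minkowski space with its tangent space at $0$ turns $\exp_0$ into the identity, so $R_\pm(2m+2k,0) = R_\pm(2m+2k)$.

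The substantive step is then to identify $V^m_0(z)$. By Proposition \ref{Vz}, $V^m_0(z) = \sum_{j=0}^m \binom{m}{j} z^j V^{m-j}_0$ in terms of the Hadamard coefficients $V^{m-j}_0$ of $\square$. So I would verify that on Minkowski space these reduce to $V^0_0 = 1$ and $V^j_0 = 0$ for $j \geq 1$. A direct computation with $\Gamma_0(y) = -\eta(y,y)$ gives $\square \Gamma_0 = 2d$, so the mass term $\tfrac12\square\Gamma_0 - d$ in the operator $\rho_0$ from Definition \ref{dfrho} vanishes and $\rho_0 = \nabla_{\grad\Gamma_0}$. The constant section $V^0_0 \equiv 1$ then solves $\rho_0 V^0_0 = 0$ and the normalization $V^0_0(0)=1$. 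Inductively, if $V^{m-1}_0 = 0$ (or $V^{m-1}_0 = 1$ when $m=1$, in which case $\square V^{m-1}_0 = 0$ as well), then the right-hand side of the transport equation for $V^m_0$ vanishes, so $V^m_0 \equiv 0$ is a solution. By the uniqueness statement of Definition/Proposition \ref{Vdef}, these are the Hadamard coefficients. Consequently only the $j = m$ term survives in Proposition \ref{Vz}, yielding $V^m_0(z) = z^m$.

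Plugging this back into the expansion above gives precisely
\[G^{\pm k}_{\square-z,0}\sim\sum_{m=0}^\infty \binom{k+m-1}{m} z^m R_\pm(2m+2k),\]
which is the claim. The only non-routine part of the argument is the verification that the Hadamard coefficients of the flat d'Alembertian vanish beyond $V^0$; this is morally the statement that $R_\pm(2)$ is already an exact fundamental solution of $\square$ on Minkowski space, so no further correction terms are needed. Everything else is a direct substitution into Theorem \ref{PowExp} and Proposition \ref{Vz}.
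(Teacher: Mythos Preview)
Your proposal is correct and follows essentially the same route as the paper: apply Theorem \ref{PowExp} to $\square-z$ on Minkowski space (a GE set) and identify the Hadamard coefficients at $0$ as $z^m$. The only cosmetic difference is that you pass through Proposition \ref{Vz} and the computation $V^j_0(\square)=\delta_{j0}$, whereas the paper verifies directly that the constant functions $V_m(z):=z^m$ solve the transport equations for $\square-z$; both computations hinge on the same fact that $\tfrac{1}{2}\square\Gamma_0-d=0$ on Minkowski space, so $\rho_0$ annihilates constants.
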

\begin{proof}
	As $R_\pm(2m+2k)=R^{\R^d}_\pm(2m+2k,0)$, Theorem \ref{PowExp} applied to $\square-z$ gives us the asymptotic expansion
	\[G^{\pm k}_{\square-z,0}\sim\insum{m}\binom{k+m-1}{m}V_m(z)R_\pm(2m+2k),\]
	where the $V_m(z)$ are the Hadamard coefficients at $0$ for $\square-z$, i.e. the unique solutions to 
	\[V_0(z)(0)=1\]
	and
	\[\rho_0^{{\R^d}} V_m(z)-2mV_m(z)=2m(\square-z) V_{m-1}(z).\]
	These equations are solved by the constant functions $V_m(z):=z^m$, as
	\[\rho_0^{\R^d}z^m=\<\grad\gamma,\grad z^m\>+(\square\gamma-d)z^m=0\]
	and
	\[\square z^m=0,\]
	so we obtain
	\[G^{\pm k}_{\square-z,0}\sim\insum{m}\binom{k+m-1}{m}z^mR_\pm(2m+2k).\qedhere\]
\end{proof}
This carries over from Minkowski space to $M$ in a straightforward way:
\begin{lemma}
	\label{Rzex}
	We have the asymptotic expansion
	\[R_\pm(z,2k,x)\sim_x \insum{m}\binom{k+m-1}{m}z^mR^{U}_\pm(2m+2k,x).\]
\end{lemma}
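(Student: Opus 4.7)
Both sides of the desired expansion are defined as pullbacks via the inverse exponential map of distributions on the Lorentzian vector space $V := T_xM$. By Definition \ref{Rieszres} we have $R_\pm(z,2k,x) = (\exp_x^{-1})^* G^{\pm k}_{\square_V - z, 0}$, and by Definition \ref{dfGamma} together with Proposition \ref{RFS} we have $R^U_\pm(2m+2k, x) = (\exp_x^{-1})^* R^V_\pm(2m+2k)$. The previous lemma, stated on Minkowski space, applies intrinsically on any Lorentzian vector space of dimension $d$ and yields
\[
G^{\pm k}_{\square_V - z, 0} \sim \sum_m \binom{k+m-1}{m} z^m R^V_\pm(2m+2k)
\]
on each $V$, with a remainder of arbitrarily high $C^n$-regularity once enough terms are taken.

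Since $\exp_x^{-1}$ is a diffeomorphism from $U$ onto an open subset of $T_xM$, its pullback preserves $C^n$-regularity. Applying $(\exp_x^{-1})^*$ term by term thus transfers the Minkowski expansion to the desired one, with a $C^n$-remainder on $U$, for every fixed $x$.

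To lift this pointwise statement to the $\sim_x$ formulation (joint $C^n$-regularity of the remainder in $(y,x)$ on $U \times U$), I use that the Minkowski-side distributions depend smoothly on the underlying Lorentzian form. Once $N$ is large enough that all summands $R^V_\pm(2m+2k)$ for $m \le N$ and the power $G^{\pm k}_{\square_V-z, 0}$ are represented by $C^n$-functions of $v$, these are given by explicit expressions involving $\gamma^V$ (namely $c_\alpha\,\gamma^V(v)^{(m+k-d/2)}$ on the closed light cone for the Riesz terms, and similar closed formulas for the Green's power via the recursion implicit in Proposition \ref{RFS}, extended by holomorphic continuation where necessary). In a local trivialization of $TU$ the metric coefficients $g_{ij}(x)$ vary smoothly with $x$, so these expressions are jointly $C^n$ in $(v,x)$. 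Composing with the jointly smooth map $(y,x) \mapsto (\exp_x^{-1}(y), x)$ on $U \times U$ then yields joint $C^n$-regularity of the total remainder.

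The main technical obstacle is precisely this joint-regularity bookkeeping: since the tangent space $V = T_xM$ itself varies with $x$, one must work in a local trivialization of $TU$ and check that the explicit formulas defining $R^V_\pm(2m+2k)$ and $G^{\pm k}_{\square_V-z,0}$ depend smoothly on the metric coefficients. Once this is established, nothing beyond the regularity-preserving properties of pullback by a smooth diffeomorphism is needed.
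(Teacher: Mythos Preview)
Your overall strategy is correct and matches the paper's: both sides are $(\exp_x^{-1})^*$ applied to the corresponding Minkowski-space objects, so one transports the Minkowski expansion through the exponential map and then checks joint $C^n$-regularity of the remainder via a trivialization of $TU$.

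However, your joint-regularity paragraph contains a genuine error. You write that for $N$ large enough, ``all summands $R^V_\pm(2m+2k)$ for $m\le N$ and the power $G^{\pm k}_{\square_V-z,0}$ are represented by $C^n$-functions of $v$''. This is false: $G^{\pm k}_{\square_V-z,0}$ is singular along the light cone regardless of $N$, and the low-$m$ summands $R^V_\pm(2m+2k)$ are likewise not $C^n$. Only their \emph{difference} --- the remainder --- is $C^n$, and there is no closed formula for $G^{\pm k}_{\square_V-z,0}$ of the kind you invoke (Proposition~\ref{RFS} treats only $z=0$). Consequently, your argument that ``explicit expressions depend smoothly on the metric coefficients'' does not go through as written: you cannot deduce joint regularity of the remainder from joint regularity of pieces that are not individually regular.

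The paper sidesteps this completely by choosing an \emph{isometric}, time-orientation-preserving trivialization $\phi\colon U\times\R^d\to TU$ (which exists since $U$ is contractible). Because each fibre map $\phi_x$ is an isometry, one has $\phi_x^{-1*}G^{\pm k}_{\square_{\R^d}-z,0}=G^{\pm k}_{\square_{T_xM}-z,0}$ and $\phi_x^{-1*}R^{\R^d}_\pm(2m+2k)=R^{T_xM}_\pm(2m+2k)$. Thus the Minkowski remainder $F\in C^n(\R^d)$ is a \emph{single} function, independent of $x$, and the $U\times U$ remainder is simply $F\circ\phi^{-1}\circ\Exp^{-1}$, a composition of a $C^n$ function with smooth maps --- hence jointly $C^n$ for free. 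This is the missing idea in your argument: rather than tracking how a family of remainders varies with the metric, choose the trivialization so that there is only one remainder to begin with.
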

\begin{proof}
	As $U$ is convex, it is contractible and thus $TU$ is trivial. Let $\phi\colon U\times\R^d\rightarrow TU$ be an isometric trivialization that preserves time-orientation.
	As $\phi$ is a time-orientation preserving isometry on every fibre, it preserves all objects defined only in terms of the metric and time-orientation, i.e.
	\[\phi_x^{-1*}(R^{\R^ d}_\pm(2m+2k))=R^{T_x M}_\pm(2m+2k)\]
		and
	\[\phi_x^{-1*}(G^{\pm k}_{\square_{\R^d}-z,0})=G^{\pm k}_{\square_{T_xM}-z,0}\]
	Let \[\Exp\colon \Dom(\Exp)\subseteq TU\rightarrow U\times U\]
	denote the exponential map with variable basepoint, i.e. $\Exp(v)=(x,\exp_x(v))$ for $v\in T_xU$. This is smooth.
	
	Let $n\in\N$ be arbitrary. For $N$ large enough such that
	\[G^{\pm k}_{\square_{\R^d}-z,0}=\sum\limits_{m=0}^{N}\binom{k+m-1}{m}z^mR^{\R^ d}_\pm(2m+2k)+F\]
	with $F\in C^n(\R^d)$,
	we have
	\begin{align*}
	&R_\pm(z,2k,x)\\
	&=(\exp_x)^{-1*}(G^{\pm k}_{\square_{T_xM}-z,0})\\
	&=(\exp_x\circ\phi_x)^{-1*}(G^{\pm k}_{\square_{\R^d}-z,0})\\
	&=(\exp_x\circ\phi_x)^{-1*}\left(\sum\limits_{m=0}^{N}\binom{k+m-1}{m}z^mR^{\R^ d}_\pm(2m+2k)+F\right)\\
	&=\sum\limits_{m=0}^{N}\binom{k+m-1}{m}z^m(exp_x)^{-1*}R^{T_xM}_\pm(2m+2k)+F\circ\phi_x^{-1}\circ exp_x^{-1}\\
	&=\sum\limits_{m=0}^{N}\binom{k+m-1}{m}z^mR^{U}_\pm(2m+2k,x)+F\circ\phi^{-1}\circ \Exp^{-1}(\cdot,x).\\
	\end{align*}
	as $F\circ\phi^{-1}\circ \Exp^{-1}$ is $C^n$ and $n$ was arbitrary, we have the desired expansion.
\end{proof}
We now have everything in place to prove the main result of this section: If, in the asymptotic expansions for the Green's operators of $P$, we replace the standard Riesz distributions with the resolvent Riesz distributions for $z\in \C$, then we obtain asymptotic expansions for the Green's operators of $P-z$.
\begin{thm}
	\label{Hadamexpz}
	If $U\subseteq M$ is GE, we have the following asymptotic expansion in $U$:
	\[G^{\pm}_{P-z,x}|_U\sim_x \insum{k}V^{k,U}_xR^U_\pm(z,2k+2,x).\]
	More generally,
	\[G^{\pm m}_{P-z,x}|_U\sim_x \insum{k}\binom{k+m-1}{k}V^{k,U}_xR^U_\pm(z,2k+2m,x).\]
\end{thm}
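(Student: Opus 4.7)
The plan is to obtain this expansion by combining three ingredients already proved: Theorem \ref{PowExp} applied to the normally hyperbolic operator $P-z$, Proposition \ref{Vz} which expresses the Hadamard coefficients of $P-z$ in terms of those of $P$, and Lemma \ref{Rzex} which expresses the resolvent Riesz distributions in terms of the ordinary Riesz distributions. All that is really left is a book-keeping argument showing that the two reorganisations of the double sum agree, together with an appeal to the repackaging proposition for asymptotic expansions.

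First I would note that $P-z$ is again a normally hyperbolic operator on $E$, since subtracting a scalar does not change the principal symbol, so Theorem \ref{PowExp} applies verbatim and yields
\[G^{\pm m}_{P-z,x}|_U \sim_x \insum{k} \binom{k+m-1}{k} V^{k,U}_x(z)\, R^U_\pm(2k+2m,x),\]
where $V^{k,U}_x(z)$ are the Hadamard coefficients of $P-z$. Substituting Proposition \ref{Vz} into this gives, after interchanging the order of the (finite inner) summation and relabelling $i = k-j$,
\[G^{\pm m}_{P-z,x}|_U \sim_x \insum{k}\sum_{i=0}^{k} \binom{k+m-1}{k}\binom{k}{i} z^{k-i}\, V^{i,U}_x\, R^U_\pm(2k+2m,x).\]

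On the other hand, the right-hand side that we want involves $R^U_\pm(z,2k+2,x)$. Using Lemma \ref{Rzex}, I would expand
\[V^{i,U}_x\, R^U_\pm(z,2i+2m,x) \sim_x \insum{l} \binom{i+m+l-1}{l} z^l\, V^{i,U}_x\, R^U_\pm(2i+2m+2l,x),\]
and then apply the repackaging part of the asymptotic-expansion proposition to pull the outer sum inside. Setting $k=i+l$, the target expansion becomes a double sum with coefficient $\binom{i+m-1}{i}\binom{i+m+l-1}{l}$ in front of $z^l V^{i,U}_x R^U_\pm(2k+2m,x)$.

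The only real verification is therefore the combinatorial identity
\[\binom{i+m-1}{i}\binom{i+l+m-1}{l} \;=\; \binom{i+l+m-1}{i+l}\binom{i+l}{l},\]
which a direct factorial computation (both sides equal $\frac{(i+l+m-1)!}{i!\,l!\,(m-1)!}$) confirms; this matches the coefficient appearing in the rearrangement of the expansion coming from Theorem \ref{PowExp} and Proposition \ref{Vz}. Both double series therefore have exactly the same set of summands, so by the reordering/repackaging proposition they define the same asymptotic expansion, proving the claim for general $m$; the case $m=1$ is the displayed first statement.

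The main obstacle is really only conceptual rather than technical: one has to be careful that the asymptotic-expansion proposition legitimately allows us to move from $\sim_x \sum_k A_k$ with $A_k \sim_x \sum_l a_{kl}$ to $\sim_x \sum_{k,l} a_{kl}$, and to verify that all the summands involved become arbitrarily differentiable for $k+l$ large (this follows from $R^U_\pm(2k+2m+2l,x)$ being $C^n$ once $k+l$ is large enough relative to $n$ and $d$, exactly as in the proof of Theorem \ref{PowExp}). Once this is in place, the identification of the two double sums is the whole proof.
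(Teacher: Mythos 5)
Your proposal is correct and follows essentially the same route as the paper: apply Theorem \ref{PowExp} to $P-z$, insert Proposition \ref{Vz}, reorganise the resulting double sum via the same binomial identity (the paper writes it as $\binom{a}{b}\binom{b}{c}=\binom{a}{c}\binom{a-c}{b-c}$), and conclude with Lemma \ref{Rzex} and the repackaging property of asymptotic expansions. The only difference is presentational — you expand the target and match summand sets, whereas the paper transforms the source expansion directly into the target — which is immaterial.
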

\begin{proof}
	Applying Theorem \ref{PowExp} to $P-z$ and inserting our formula for the $z$-dependent Hadamard coefficients (Proposition \ref{Vz}), we obtain
	\begin{align*}
		&G^{\pm m}_{P-z,x}|_U\\
		&\sim_x \insum{k}\binom{k+m-1}{k}V^{k}_x(z)R_\pm(2k+2m,x)\\
		&=\insum{k}\binom{k+m-1}{k}\sum\limits_{l+n=k}\binom{k}{n}z^nV^{l}_xR_\pm(2k+2m,x)\\
		&=\insum{l}\insum{n}\binom{n+l+m-1}{n+l}\binom{n+l}{n}z^nV^{l}_xR_\pm(2(n+m+l),x)\\
		&=\insum{l}\insum{n}\binom{n+l+m-1}{n}\binom{l+m-1}{l}z^nV^{l}_xR_\pm(2(n+m+l),x)\\
		&=\insum{l}\binom{l+m-1}{l}V^{l}_x\insum{n}\binom{n+l+m-1}{n}z^nR_\pm(2(n+m+l),x)\\
		&\sim_x\insum{l}\binom{l+m-1}{l}V^{l}_xR_\pm(z,2(m+l),x).\\
	\end{align*}
 Here we used the identity
\[\binom{a}{b}\binom{b}{c}=\binom{a}{c}\binom{a-c}{b-c}\]
and Lemma \ref{Rzex}.

This proves the second claim, the first claim follows as the special case $m=1$.
\end{proof}
\newpage
\chapter{Constructing an action-like function}
\label{timeint}
In this chapter we will take our first step towards extracting the Hadamard coefficients from the asymptotic expansion we have seen. We will construct a function from the Green's operator that has an asymptotic expansion in terms of the Hadamard coefficients. This is similar to an analogue of a spectral action, but with additional terms coming from the Hadamard coefficients' derivatives. 

For technical reasons, we'll be looking not at the two Green's operators individually, but at their difference, sometimes referred to as the causal propagator.
\begin{df}
	\label{dfcp}
	Set
	\[G:=G^+-G^-,\]
	\[G_x:=G^+_x-G^-_x,\]
	\[R(\alpha):=R_+(\alpha)-R_-(\alpha)\]
	and
	\[R^U(\alpha,x):=R^U_+(\alpha,x)-R^U_-(\alpha,x).\]
\end{df}
Taking the difference of the asymptotic expansions for $G^\pm_x$, we obtain
	\[G_{x}|_U\sim_x\insum{k}V^{k,U}_xR^{U}(2k+2,x).\]
We are motivated by the following formal calculation:

Let $w$ be a timelike geodesic in $M$ and let $x=w(0)$. Then we have (for $\Re(\alpha)$ large enough)
\[R^U(\alpha,x)(w(t))=c_\alpha |t|^{\alpha-d}\sign(t)\]
Writing 
\[\tilde V_k(t):=V^{k,U}_x(w(t)),\]
we can taylor expand
\[\tilde V_k(t)\sim\insum{n}\frac{1}{n!}\tilde V_k^{(n)}(0)t^n.\]
For some function $f\in C_c^\infty(\R)$ and $s>0$, we can then calculate (formally):
\begin{align*}
	&\intR G_x(w(t)) f(\tfrac{t}{s})dt\\
	&\sim \insum k \intR V^{k,U}_x(w(t))R^U(2k+2,x)(w(t))f(\tfrac{t}{s})dt\\
	&= \insum k \intR \tilde V_k(t)c_{2k+2}t^{2k+2-d}\sign(t)f(\tfrac{t}{s})dt\\
	&\sim \insum{k}c_{2k+2}\intR  \insum{n}\frac{1}{n!}\tilde V_k^{(n)}(0)t^{2k+n+2-d}\sign(t)f(\tfrac{t}{s})dt\\
	&=\insum{k}\insum{n}\frac{c_{2k+2}}{n!}\tilde V_k^{(n)}(0)s^{2k+n+3-d}\intR \frac{1}{s}  \left(\frac{t}{s}\right)^{2k+n+2-d}\sign(\tfrac{t}{s})f(\tfrac{t}{s})dt\\
	&=\insum{k}\insum{n}\frac{c_{2k+2}}{n!}\tilde V_k^{(n)}(0)s^{2k+n+3-d}\intR  t^{2k+n+2-d}\sign(t)f({t})dt\\
	&=:\insum{k}\insum{n}a_{k,n}(f)\tilde V_k^{(n)}(0)s^{2k+n+3-d}
\end{align*}
with some coefficients $a_{k,n}(f)$ independent of $s$, $M$ and $P$ (except through the dimension). From this, or something similar, one might hope to extract the values of the Hadamard coefficients at the diagonal, i.e. $\tilde V_k(0)$. The problem of performing this extraction is postponed to the next chapter, as this chapter is devoted to making the above calculation precise. The main problem in this is that, for small $k$, the Riesz-distributions are not given by functions, so evaluation at $w(t)$ is ill-defined.

\section{Holomorphicity of Riesz distributions for some wavefront}
While evaluating the Riesz-distributions at a single point is problematic, we will show that restricting them to a timelike geodesic is possible. By wavefront calculus, it suffices to show that the Riesz-distributions' wavefront does not intersect the conormal bundle of the curve. To actually compute what the restriction does, we also need that the Riesz-distributions are holomorphic in the space of distributions with a suitable wavefront, so that the restriction is still holomorphic. Showing these two things is the goal of this section.

The conormal bundle of a timelike geodesic consists of spacelike vectors. Therefore we want to show that the wavefront set of the Riesz distributions does not contain spacelike vectors. This is the point where we have to use that we work with the difference $R=R_+-R_-$, rather than the advanced or retarded Riesz distributions individually. In this difference, the worst singularities at $0$ will cancel out. As wavefront calculus is invariant under diffeomorphisms, it will be sufficient to work on Minkowski space.

We thus consider a spacelike vector $\xi\in \R^d$. Deciding whether $\xi$ is in the wavefront set of a function involves integration against $e^{-i\<\xi,\cdot\>}$. Our strategy is to find, for each $x\in \R^d$, another point $O_\xi(x)$ such that 
\[e^{-i\<\xi,O_\xi(x)\>}=e^{i\<\xi,x\>}\]
and such that the value of $R_+(\alpha)$ at $x$ cancels with that of $R_-(\alpha)$ at $O_\xi(x)$. Thus the integral of $R(\alpha)$ over each level set of $e^{-i\<\xi,\cdot\>}$ vanishes ($O_\xi$ will also be volume preserving). We will construct $O_\xi$ on each level set as a reflection through (multiples of) a suitable vector $y_\xi$. $y_\xi$ and $O_\xi$ will be constructed in the next two theorems and are illustrated in the following graphic.
\newline
\begin{center}
	\begin{tikzpicture}
		\draw [dotted](-4,-4) -- (4,4);
		\draw [dotted](-4,4) -- (4,-4);
		\draw [green, -stealth] (0,0) -- (2,-1) node[below]{$\xi$};
		\draw [blue, dotted] (-1,-4) -- (3,4);
		\node at (0.2,-3) [blue]{$x+\xi^\bot$};
		\filldraw [blue] (7/3,8/3) circle (1pt);
		\node [blue, left] at (7/3,8/3){$x$};
		\draw [blue, -stealth] (7/3,8/3) -- (1/3,-4/3) node[below]{$O_\xi(x)$};
		\draw [orange, -stealth] (0,0) -- (4/3,2/3) node[right]{$\<\xi,x\> y_\xi$};
		\draw [orange, dotted] (-4,-2) -- (4,2);
		\filldraw [cyan] (-3/4,1.5) circle (1pt);
		\node [cyan, left] at (-3/4,1.5){$x'$};
		\draw [cyan, -stealth] (-3/4,1.5) -- (-3.25,-3.5) node[right]{$O_\xi(x')$};
		\filldraw [cyan] (-1/2,1/2) circle (1pt);
		\node [cyan, left] at (-1/2,1/2){$x''$};
		\draw [cyan, -stealth] (-1/2,1/2) -- (-3/2,-3/2) node[right]{$O_\xi(x'')$};
	\end{tikzpicture}
\end{center}	
\begin{defprop}
	\label{yxi}
	For any spacelike unit vector $\xi\in \R^d$, there is a spacelike $y_\xi\in \R^d$ such that $\<y_\xi,\xi\>=1$ and for all $v\bot\xi$ and $\lambda\in\R$, we have
	\[\gamma(\lambda y_\xi+v)=\gamma(\lambda y_\xi-v).\]
	(scalar product and orthogonality refer to the euclidean scalar product on $\R^d$.)
\end{defprop}
\begin{rem}
	On each $\xi$-orthogonal hyperplane $\<\cdot, \xi\>=\lambda$, $\gamma$ is given by a (higher-dimensional) parabola. $\lambda y_\xi$ is the minimum of this parabola, around which it is symmetric.
\end{rem}
\begin{proof}
	Without loss of generality, we may assume that \[\xi=\cos(\theta)e_0+\sin(\theta)e_1,\] 
	where $e_i$ is the $i$-th standard unit vector and
	 \[\theta:=\arccos(\<\xi,e_0\>)\in(\frac{\pi}{4},\frac{3\pi}{4}),\] 
	 as the problem is symmetric under rotation in the spatial coordinates (the condition on $\theta$ is equivalent to $|\xi_0|<|\xi_1|$). Let $R$ be the rotation in the first two coordinates that maps $e_0$ to $\xi$, i.e. 
	\[R=\mat{\cos(\theta)}{-\sin(\theta)}{\sin(\theta)}{\cos(\theta)}\oplus 1.\]
	$R$ to maps planes orthogonal to $\xi$ to planes where $x_0$ is constant. There the problem basically reduces to finding the minimum of a quadratic function in $x_1$:
	Writing $x_r$ to denote the last $d-2$ components of $x\in \R^d$, we have
	\begin{align*}
		\gamma(Rx)&=\gamma(\cos(\theta) x_0-\sin(\theta) x_1,\sin(\theta) x_0+\cos(\theta) x_1,x_r)\\
		&=(\cos(\theta) x_0-\sin(\theta) x_1)^2-(\sin(\theta) x_0+\cos(\theta) x_1)^2-\|x_r\|^2\\
		&=(\cos(\theta)^2-\sin(\theta)^2)(x_0^2-x_1^2)-4\sin(\theta) \cos(\theta) x_0x_1-\|x_r\|^2\\
		&=\cos(2\theta)(x_0^2-x_1^2)-2\sin(2\theta)x_0x_1-\|x_r\|^2\\
		&=-\cos(2\theta)((x_1+\tan(2\theta)x_0)^2-(1+\tan(2\theta)^2)x_0^2)-\|x_r\|^2.
	\end{align*}
	On the plane $x_0=\lambda$, this takes its minimum at $\lambda e_0-\lambda\tan(2\theta)e_1$, which we will thus take as our point for reflection.
	We have for $w\bot e_0$ and $\lambda\in \R$:
	\begin{align*}
		\gamma(R(\lambda e_0-\lambda \tan(2\theta)e_1+w))&=-\cos(2\theta)((w_1)^2-(1+\tan(2\theta)^2)\lambda^2)-\|w_r\|^2\\
		&=-\cos(2\theta)((-w_1)^2-(1+\tan(2\theta)^2)\lambda^2)-\|-w_r\|^2\\
		&=\gamma(R(\lambda e_0-\lambda \tan(2\theta)e_1-w)).
	\end{align*}
	To solve our original problem on the plane $\<x,\xi\>=\lambda$, we simply rotate back.
	Define $y_\xi:= R(e_0-\tan(2\theta)e_1)$. If $v\bot\xi$, then $R^{-1}v\bot e_0$, so we have
	\[\gamma(\lambda y_\xi+v)=\gamma(R(\lambda e_0-\lambda \tan(2\theta)e_1+R^{-1}v))=\gamma(R(\lambda e_0-\lambda \tan(2\theta)e_1-R^{-1}v))=\gamma(\lambda y_\xi-v)\]
	as desired. It remains to show that $y_\xi$ is indeed spacelike. We can reuse the calculation above to obtain
	\begin{align*}
		\gamma(y_\xi)&=\gamma(R(1e_0-\tan(2\theta)e_1))\\
		&=-\cos(2\theta)(0-(1+\tan(2\theta)^2)1)-0\\
		&=\cos(2\theta)\left(\frac{\cos(2\theta)^2+\sin(2\theta)^2}{\cos(2\theta)^2}\right)\\
		&=\frac{1}{\cos(2\theta)}.
	\end{align*}
	As $2\theta\in(\frac{\pi}{2},\frac{3\pi}{2})$, this is negative, hence $y_\xi$ is spacelike.
\end{proof} 
We can now define $O_\xi$ by reflecting through $y_\xi$ in each $\xi$-orthogonal hyperplane:
\begin{defprop}
	\label{Oxi}
	For each spacelike $\xi$ in $\R^d$, the linear map $O_\xi$ on $\R^d$ defined by
	\[O_\xi(x):=2\<x,\xi\>y_\xi-x\]
	satisfies:
	\begin{enumerate}
		\item \label{O0} For any $x\in \R^d$, there are $\lambda\in \R$ and $v\bot \xi$ such that $x=\lambda y_\xi+v$ and $O_\xi(x)=\lambda y_\xi-v$
		\item \label{O1}$O_\xi$ leaves hyperplanes orthogonal to $\xi$ invariant, i.e. for all $x\in \R^d$:
		\[\<x,\xi\>=\<O_\xi(x),\xi\>\]
		\item \label{O2}$O_\xi$ is a Lorentz-transformation, i.e. \[\gamma\circ O_\xi=\gamma\]
		\item \label{O3}$O_\xi$ preserves volume, i.e.
		\[|det(O_\xi)|=1\]
		\item \label{O4}$O_\xi$ maps future oriented vectors to past oriented vectors, i.e. \[\sign(O_\xi(x)_0)=-\sign(x_0)\] for any causal vector $x$.
	\end{enumerate}
\end{defprop}
\begin{proof}
	\begin{enumerate}
		\item $y_\xi$ is not in $\xi^\bot$ and hence 
		\[\spann (\xi^\bot\cup\{y_\xi\})=\R^d.\] 
		Thus any $x\in \R^d$ can be written as in Part \ref{O0}.
		As 
		\[O_\xi(\lambda y_\xi+v)=2\<\lambda y_\xi,\xi\>y_\xi-(\lambda y_\xi+v)=\lambda y_\xi-v,\]
		we get Part \ref{O0}.
		We also get Part \ref{O1}, as
		\[\<\lambda y_\xi+v,\xi\>=\<\lambda y_\xi,\xi\>=\<\lambda y_\xi-v,\xi\>.\]
		Part \ref{O2} immediately follows from Part \ref{O0} and Lemma \ref{yxi}.
		The more customary definition of a Lorentz transform, that
		\[\eta(O_\xi(x),O_\xi(y))=\eta(x,y),\]
		where $\eta$ is the Minkowski metric, follows from Part \ref{O2} and the polarization identity
		\[-\eta(x,y)=\frac{1}{2}(\gamma(x+y)-\gamma(x)-\gamma(y)).\]
		Part \ref{O3} is true for any Lorentz transformation and follows form applying determinants to the matrix version of the Lorentz condition
		\[O_\xi^T\eta O_\xi=1.\]
		Finally, note that
		\[\frac{1}{2}(x+O_\xi(x))=\<x,\xi\>y_\xi\]
		is spacelike and in the convex hull of $x$ and $O_\xi(x)$. Lorentz transformations map causal vectors to causal vectors and the past and future solid lightcone are each convex. Thus, for any causal vector $x$, $x$ and $O_\xi(x)$ must be contained in different lightcones, otherwise their convex hull could not contain a spacelike vector. This proves Part \ref{O4}.\qedhere
	\end{enumerate}
\end{proof}
\begin{df}
	\label{dfsigma}
	For $x\in \R^d$ define $\sigma(x):=sign(x_0)$.
\end{df}
We will consider general functions of the form
\[\sigma\gamma^*(f)(x)=\sigma(x)f(\gamma(x))\]
for functions $f$ on $\R$ with support in $[0,\infty)$ and singular support at $0$. This includes $R(\alpha)$ for $\Re(\alpha)$ large enough as the special case $f(x)=c_\alpha\mathbbm1 _{x\geq0}x^\frac{\alpha-d}{2}$. We want to show that no spacelike vector $\xi$ is in the wavefront of $\sigma\gamma^*f$.

Morally, what we want to do is to use the transformation formula and invariance (up to sign) of the integrand under $O_\xi$ to compute (ignoring integrability concerns)
\begin{align*}
	\F(\sigma\gamma^*f)(\xi)&=\int\limits_{\R^d} e^{-i\<x,\xi\>}\sigma(x)f(\gamma(x))dx\\
	&=\int\limits_{\R^d} e^{-i\<O_\xi(x),\xi\>}\sigma(O_\xi(x))f(\gamma(O_\xi(x)))dx\\
	&=\int\limits_{\R^d} e^{-i\<x,\xi\>}(-\sigma(x))f(\gamma(x))dxdz\\
	&=-\F(\sigma\gamma^*f)(\xi).
\end{align*}
and conclude that $\F(\sigma\gamma^*f)(\xi)$ is 0 and hence rapidly decaying. However, we need to multiply the integrand with arbitrary cutoff-functions to obtain wavefront norms of $\sigma\gamma^*(f)$. These will not be invariant under $O_\xi$, so we cannot immediately apply the reasoning above. The strategy is to first introduce a suitable family of $\xi$-invariant cutoff-functions and then use those to show the statement for arbitrary cutoffs.
\begin{lemma}
	\label{phixi}
	Let $V$ be a closed cone of spacelike vectors. Then there is a family of $\psi_\xi\in C^\infty_c(\R^d)$ for ${\xi\in V\cap S^{d-1}}$  satisfying the following:
	\begin{enumerate}
		\item \label{phi1} $\psi_\xi\circ O_\xi=\psi_\xi$
		\item \label{phi2} all $\psi_\xi$ are constantly 1 on the unit ball $B_1(0)$
		\item \label{phi3} there is a compact set $K$ such that all $\psi_\xi$ are supported in $K$.
		\item \label{phi4} all $\|\psi_\xi\|_{C^k}$ are bounded uniformly in $\xi$.
	\end{enumerate}
\end{lemma}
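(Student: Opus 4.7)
The plan is to symmetrize a fixed cutoff function over the involution $O_\xi$. Since $\<y_\xi,\xi\>=1$ implies that $O_\xi$ squares to the identity (direct computation using the definition in \ref{Oxi}: $O_\xi^2(x)=2\<O_\xi(x),\xi\>y_\xi-O_\xi(x)=2(2\<x,\xi\>-\<x,\xi\>)y_\xi-2\<x,\xi\>y_\xi+x=x$), setting
\[\psi_\xi(x):=\tfrac{1}{2}\bigl(\chi(x)+\chi(O_\xi(x))\bigr)\]
for any $\chi\in C_c^\infty(\R^d)$ automatically yields property \ref{phi1}. The remaining task is to choose $\chi$ so that the other three properties hold uniformly in $\xi\in V\cap S^{d-1}$, and this reduces to obtaining a uniform bound on $y_\xi$.

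To get such a bound I would first derive a closed-form expression for $y_\xi$. The defining symmetry condition in \ref{yxi}, rewritten via polarization of the quadratic form $\gamma$, is equivalent to $\eta(y_\xi,v)=0$ for all $v$ euclidean-orthogonal to $\xi$; this forces the ``time-flipped'' vector $y_\xi^\flat:=(-{(y_\xi)}_0,(y_\xi)_1,\ldots,(y_\xi)_{d-1})$ to be a scalar multiple of $\xi$, and together with $\<y_\xi,\xi\>=1$ one gets $y_\xi=-\xi^\flat/\gamma(\xi)$. Since $V$ is a closed cone of spacelike vectors, $V\cap S^{d-1}$ is compact and disjoint from the lightcone, so $|\gamma(\xi)|$ stays bounded below by a positive constant there. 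Hence $\|y_\xi\|$ is uniformly bounded, which in turn bounds the operator norm of the linear map $O_\xi(x)=2\<x,\xi\>y_\xi-x$ by some constant $C\geq1$.

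With this in hand I would pick $\chi\in C_c^\infty(\R^d)$ that is identically $1$ on $B_C(0)$ and supported in $B_{2C}(0)$. Property \ref{phi2} holds because for $x\in B_1(0)$ both $x$ and $O_\xi(x)$ lie in $B_C(0)$, so $\chi(x)=\chi(O_\xi(x))=1$ and therefore $\psi_\xi(x)=1$. Property \ref{phi3} holds with $K:=B_{2C^2}(0)$, since $\supp(\psi_\xi)\subseteq\supp(\chi)\cup O_\xi^{-1}(\supp(\chi))\subseteq B_{2C^2}(0)$ independently of $\xi$. Property \ref{phi4} follows from the chain rule: derivatives up to any fixed order of $\chi\circ O_\xi$ are controlled by $\|\chi\|_{C^k}$ times a polynomial in the matrix entries of $O_\xi$, and those entries are uniformly bounded by our estimate on $y_\xi$.

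The only nontrivial step is the uniform boundedness of $y_\xi$, and both hypotheses on $V$ are used precisely there: closedness gives compactness after intersecting with $S^{d-1}$, and the spacelike assumption keeps $\gamma(\xi)$ away from zero on that compact set. Everything else is routine manipulation of cutoffs together with the involution property $O_\xi^2=\text{id}$, so I do not anticipate any deeper obstacle.
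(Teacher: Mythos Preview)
Your proof is correct and takes a genuinely different route from the paper. The paper constructs $\psi_\xi$ directly as an explicitly $O_\xi$-invariant function: it sets $\psi_\xi(x)=\chi(\langle x,\xi\rangle)\,\phi(\|x-\langle x,\xi\rangle y_\xi\|)$ for suitable one-variable cutoffs $\chi,\phi$, exploiting that both $\langle x,\xi\rangle$ and $\|x-\langle x,\xi\rangle y_\xi\|$ are invariant under $O_\xi$ (this is immediate from the decomposition $x=\lambda y_\xi+v$ in \ref{Oxi}). You instead symmetrize a single fixed cutoff over the involution $O_\xi$, which is the standard group-averaging trick and is arguably cleaner. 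Both arguments rest on the same crucial ingredient---a uniform bound on $\|y_\xi\|$ over the compact set $V\cap S^{d-1}$---but obtain it differently: the paper bounds $\tan(2\theta)$ using the explicit angle parametrization from the proof of \ref{yxi}, while you derive the closed form $y_\xi=-\xi^\flat/\gamma(\xi)$ and bound $|\gamma(\xi)|$ away from zero by compactness. Your closed form is a pleasant bonus that makes the dependence on $\xi$ (and hence the smooth dependence of $O_\xi$ and $\psi_\xi$ on $\xi$) transparent without passing through coordinates.
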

\begin{proof}
	Let $\theta$ again denote the angle between $\xi$ and $e_0$. As $V\cap S^{d-1}$ is compact, $\tan(2\theta)$ is bounded on $V$ by some constant $C-1$. Then $y_\xi$, as defined in \ref{yxi} has norm at most $C$ for $\xi$ in $V$.  Choose $\chi\in C^\infty_c(\R)$ that is 1 on $[-1,1]$ and $0$ outside $[-2,2]$. Choose $\phi\in C_c^\infty(\Rp)$ such that $\phi(x)$ is $1$ for $x\leq C+1$ and $0$ for $x\geq C+2$. Define 
	\[\psi_\xi(x):=\chi(\<x,\xi\>)\phi(\|x-\<x,\xi\>y_\xi\|).\]
	For $\lambda\in \R$, $v\in\xi^\bot$ and $x=\lambda y_\xi+v$, we have $\<x,y_\xi\>=\lambda$ and hence
	\[\psi_\xi(x)=\chi(\lambda)\phi(\|\lambda y_\xi+v-\lambda y_\xi\|)=\chi(\lambda)\phi(\|v\|)=\chi(\lambda)\phi(\|-v\|)=\psi_\xi(\lambda y_\xi-v)=\psi_\xi(O_\xi(x)).\]
	Thus \ref{phi1}. holds.
	
	For $x\in B_1(0)$, we have
	\[|\<x,\xi\>|\leq 1\]
	and
	\[\|x-\<x,\xi\>y_\xi\|\leq\|x\|+\|y_\xi\|\leq C+1.\]
	Thus $\psi_\xi$ is 1 on the unit ball and hence \ref{phi2}. holds. 
	
	Now assume $x\in supp(\psi_\xi)$. Then we have
	\[|\<x,\xi\>|\leq 2\]
	and
	\[\|x-\<x,\xi\>y_\xi\|\leq C+2.\]
	Thus
	\[\|x\|\leq C+2+2\|y_\xi\|\leq 3C+2\]
	and we have $\supp(\psi_\xi)\subseteq K:= B_{3C+2}(0)$, so \ref{phi3}. holds.
	
	As $(\xi,x)\mapsto \partial^\alpha\psi_\xi(x)$ is continuous, it is bounded on the compact set $(V\cap S^{d-1})\times K$. Thus we obtain \ref{phi4}. as well.
\end{proof}
With these $\psi_\xi$, we can now make the moral calculation from before rigorous:
\begin{prop}
	\label{WFcancel}
	For any $f\in C^0(\R)$ supported in $\Rp$, $\lambda\in \R$ and $\xi\in V\cup S^{d-1}$, we have
	\[\F(\psi_\xi\sigma\gamma^*f)(\lambda\xi)=0\]
\end{prop}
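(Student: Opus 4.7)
The plan is to make rigorous the moral calculation immediately preceding the statement, exploiting the $O_\xi$-invariance of the integrand up to a sign via the change-of-variables formula. First, note that the integral
\[\F(\psi_\xi\sigma\gamma^*f)(\lambda\xi)=\int_{\R^d}e^{-i\lambda\<x,\xi\>}\psi_\xi(x)\sigma(x)f(\gamma(x))\,dx\]
is absolutely convergent: $\psi_\xi\in C_c^\infty(\R^d)$ and $f\circ\gamma$ is continuous, so the integrand is continuous with compact support.

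Next I would substitute $x=O_\xi(y)$. Since $O_\xi$ is a linear involution with $|\det O_\xi|=1$ (part \ref{O3} of Proposition \ref{Oxi}), and since $\<O_\xi(y),\xi\>=\<y,\xi\>$, $\gamma(O_\xi(y))=\gamma(y)$, and $\psi_\xi(O_\xi(y))=\psi_\xi(y)$ by parts \ref{O1}, \ref{O2} of Proposition \ref{Oxi} and part \ref{phi1} of Lemma \ref{phixi}, this substitution yields
\[\F(\psi_\xi\sigma\gamma^*f)(\lambda\xi)=\int_{\R^d}e^{-i\lambda\<y,\xi\>}\psi_\xi(y)\sigma(O_\xi(y))f(\gamma(y))\,dy.\]

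The final step is to replace $\sigma(O_\xi(y))$ by $-\sigma(y)$ on the support of the integrand. Part \ref{O4} of Proposition \ref{Oxi} grants this identity whenever $y$ is causal. Since $f$ is supported in $\Rp$ and $\gamma(y)\geq 0$ iff $y$ is causal, the factor $f(\gamma(y))$ vanishes for all non-causal $y$; at $y=0$ the integrand vanishes anyway because $\sigma(0)=0$. Thus the right-hand side equals $-\F(\psi_\xi\sigma\gamma^*f)(\lambda\xi)$, forcing the Fourier transform to vanish. There is no serious obstacle — the construction of $O_\xi$ and of $\psi_\xi$ in the preceding lemmas was arranged precisely to make this symmetric cancellation go through; the only subtlety is the brief support argument needed to justify the sign flip, and it is immediate.
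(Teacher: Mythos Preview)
Your proof is correct and follows essentially the same route as the paper's: apply the change of variables $x\mapsto O_\xi(x)$, use the invariance properties of $\langle\cdot,\xi\rangle$, $\gamma$, and $\psi_\xi$ together with $|\det O_\xi|=1$, and then flip the sign of $\sigma$ to conclude the integral equals its own negative. Your explicit justification of the sign flip via the support of $f\circ\gamma$ (and the observation that $\sigma(0)=0$) is in fact slightly more careful than the paper, which only remarks parenthetically that the flip holds ``on causal vectors''.
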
	
\begin{proof}
	Using first the transformation formula and then invariance under $O_\xi$ up to sign of $\<\cdot,\xi\>$, $\psi_\xi$, $\sigma$ (on causal vectors) and $\gamma$, we obtain
	\begin{align*}
		\F(\psi_\xi\sigma\gamma^*f)(\lambda\xi)&=\int\limits_{\R^d} e^{-i\lambda\<x,\xi\>}\psi_\xi(x)\sigma(x)f(\gamma(x))dx\\
		&=\int\limits_{\R^d} e^{-i\lambda\<O_\xi(x),\xi\>}\psi_\xi(O_\xi(x))\sigma(O_\xi(x))f(\gamma(O_\xi(x)))dx\\
		&=\int\limits_{\R^d} e^{-i\lambda\<x,\xi\>}\psi_\xi(x)(-\sigma(x))f(\gamma(x))dx\\
		&=-\F(\psi_\xi\sigma\gamma^*f)(\lambda\xi). \qedhere
	\end{align*}
\end{proof}
We will now need the notations and results of wavefront calculus described in  Section \ref{WFcalc}.
In order to generalize estimates from $\psi_\xi$ to more general cutoff functions, we will have to delve into more technical considerations on wavefront norms.
\begin{prop}
	\label{Hormestimate}For any $u\in L^1_{loc}(\R^d)$, $n\in\N$, closed cones $V, V'\subseteq \R^d\O$ such that $V$ contains an open neighborhood of $V'$ and functions $\psi,\chi\in C_c^\infty(\R^d)$ we have
	\[\|\chi  u\|_{n,V',\psi}\leq C \|\chi\|_{C^{n}} (\|u\|_{n,V,\psi}+\|\psi u\|_{L^1})\]
	with some constant $C$ depending on $V$, $V'$ and $m$.
	
\end{prop}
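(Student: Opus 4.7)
The plan is to unpack the seminorm, rewrite the Fourier transform of a product as a convolution, split the convolution integral into a piece over $V$ and a piece over $V^c$, and estimate each piece against one of the two terms on the right-hand side.

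First I would note that by definition $\|\chi u\|_{n,V',\psi}=\sup_{\xi\in V'}(1+|\xi|)^n|\mathcal{F}(\psi\chi u)(\xi)|$, and since $\psi$ and $\chi$ are functions, $\psi\chi u=\chi(\psi u)$. Setting $v:=\psi u$, which is a compactly supported $L^1$-function with $\|v\|_{L^1}=\|\psi u\|_{L^1}$, we have $\mathcal{F}(\chi v)=\hat\chi*\hat v$. The task is therefore to bound
\[(1+|\xi|)^n\,\Bigl|\int\hat\chi(\xi-\eta)\hat v(\eta)\,d\eta\Bigr|\]
uniformly in $\xi\in V'$.

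Next I would exploit the geometric hypothesis that $V$ contains an open conic neighborhood of $V'$. A compactness argument on the unit sphere produces an $\epsilon>0$ such that $|\xi-\eta|\geq\epsilon\max(|\xi|,|\eta|)$ whenever $\xi\in V'\setminus\{0\}$ and $\eta\notin V$. Split the convolution as $\int_V+\int_{V^c}$. On the $V$-piece I would apply Peetre's inequality $(1+|\xi|)^n\leq 2^n(1+|\eta|)^n(1+|\xi-\eta|)^n$: the factor $(1+|\eta|)^n|\hat v(\eta)|$ is bounded pointwise by $\|u\|_{n,V,\psi}$ (this is precisely what the seminorm controls on $V$), so after changing variables $\zeta=\xi-\eta$ we are left with $\int(1+|\zeta|)^n|\hat\chi(\zeta)|\,d\zeta$. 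On the $V^c$-piece the geometric bound gives $(1+|\xi|)^n\leq\epsilon^{-n}(1+|\xi-\eta|)^n$ and the trivial estimate $|\hat v(\eta)|\leq\|v\|_{L^1}$ reduces matters to the same integral. Finally, the standard estimate $|\hat\chi(\zeta)|\leq C_{N,d}\|\chi\|_{C^N}(1+|\zeta|)^{-N}$ (obtained by integration by parts) shows that $\int(1+|\zeta|)^n|\hat\chi(\zeta)|\,d\zeta$ is controlled by a constant times $\|\chi\|_{C^N}$ as soon as $N>n+d$, and this constant, together with $\epsilon^{-n}$ and the dimensional factors from the integrals, is what gets absorbed into $C$.

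The main obstacle, though mostly a bookkeeping one, is producing the conic separation $|\xi-\eta|\geq\epsilon\max(|\xi|,|\eta|)$ cleanly; this genuinely requires the open-neighborhood hypothesis and not merely $V'\subseteq V$, and it is the point where closedness and the compactness of the unit sphere are used. A secondary wrinkle is that the estimate on $|\hat\chi|$ really needs $\|\chi\|_{C^N}$ with $N$ sufficiently large compared to $n+d$; in the statement this is presumably implicit in the constant $C$ and in the convention for the $C^n$-norm, or one simply re-labels, since $\|\chi\|_{C^N}$ for larger $N$ also controls $\|\chi\|_{C^n}$ and the converse fails only up to polynomial constants that may be swallowed into $C$.
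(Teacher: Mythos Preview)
Your argument is correct and essentially identical to the paper's: set $v=\psi u$, write $\mathcal{F}(\chi v)=\hat\chi*\hat v$, split the convolution, and use Peetre plus the decay of $\hat\chi$ on one piece and the trivial bound $|\hat v|\leq\|v\|_{L^1}$ on the other. The only cosmetic difference is the splitting variable: the paper writes the convolution as $\int\hat\chi(\xi)\hat v(k-\xi)\,d\xi$ and splits on $|\xi|<c|k|$ versus $|\xi|\geq c|k|$ (so that $k-\xi\in B_{c|k|}(k)\subset V$ on the first piece), whereas you split directly on $\eta\in V$ versus $\eta\notin V$; these are equivalent once one has the conic separation. Your observation about needing $\|\chi\|_{C^N}$ with $N>n+d$ rather than $\|\chi\|_{C^n}$ is well taken---the paper's proof in fact asserts $\|(1+|\cdot|)^n\hat\chi\|_{L^1}\leq C\|\chi\|_{C^n}$ without comment, so the same imprecision is present there; in the applications the cut-offs have all $C^k$-norms uniformly bounded, so it does not matter.
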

\begin{proof}
	Let $v:=\psi u$.
	Choose $c$ such that for $k\in V'$ with $|k|=1$, we have $B_c(k)\subset V$. By scaling invariance, we have for arbitrary $k\in V'$ that $B_{c|k|}(k)\subseteq V$.
	We have
	\begin{align*}
		(|k|+1)^n\F (\chi v)(k)&= (|k|+1)^n(\F(\chi)*\F(v))(k)\\
		&=\int\limits_{|\xi|<c|k|} (|k|+1)^n \hat \chi(\xi)\hat v(k-\xi)d\xi+\int\limits_{|\xi|\geq c|k|} (|k|+1)^n \hat\chi(\xi)\hat v(k-\xi)d\xi\\
	\end{align*}
	As for arbitrary $\xi$, we have
	\[1+|k|\leq1+|k-\xi|+|\xi|\leq1+|k-\xi|+|\xi|+|\xi||k-\xi|=(|k-\xi|+1)(|\xi|+1),\]
	we can estimate the first summand by 
	\begin{align*}
		|\int\limits_{|\xi|<c|k|} (|k|+1)^n \hat \chi(\xi)\hat v(k-\xi)d\xi|&\leq| \int\limits_{|\xi|<c|k|} (|\xi|+1)^n \hat \chi(\xi)(|k-\xi|+1)^n\hat v(k-\xi)d\xi|\\
		&\leq\|(|\cdot|+1)^n\hat\chi\|_{L^1}\sup\limits_{k'\in V}((|k'|+1)^n|\hat v(k')|)\\
		&\leq C\|\chi\|_{C^n}\|u\|_{n,V,\psi}
	\end{align*}
	For the second summand, we use that
	\[|\hat v(k-\xi)|\leq \|v\|_{L^1}.\]
	Thus we get 
	\begin{align*}
		|\int\limits_{|\xi|\geq c|k|} (|k|+1)^n \hat\chi(\xi)\hat v(k-\xi)d\xi|&\leq
		|\int\limits_{|\xi|\geq c|k|} (|c^{-1}\xi|+1)^n \hat\chi(\xi)\|v\|_{L^1}d\xi|\\
		&\leq |\int\limits_{|\xi|\geq c|k|} (|c^{-1}\xi|+1)^n \hat\chi(\xi)\|v\|_{L^1}d\xi|\\
		&\leq C'\|(|\cdot|+1)^{n}\hat\chi\|_{L^1}\|v\|_{L^1}\\
		&\leq C\|\chi\|_{C^{n}}\|v\|_{L^1}.\\
	\end{align*}
	Overall, we can conclude
	\[\|\chi  u\|_{n,V',\psi}=\sup\limits_{k\in V'} (|k|+1)^n\F (\chi v)(k)\leq C\|\chi\|_{C^n}(\|u\|_{n,V,\psi}+\|v\|_{L^1}).\qedhere\]
\end{proof}
We want to show that the Riesz distributions are holomorphic in the space of distributions with some suitable wavefront $\Lambda$. As the Riesz distributions are the image of the functions $c_\alpha x^\alpha \mathbbm{1}_{(0,\infty)}$ under the operator $\sigma\gamma^*$, we want to show that this operator is continuous between suitable spaces. The domain to consider should be large enough to contain the above functions (for some values of $\alpha$) but no larger than necessary, so that it is as easy as possible to show continuity of $\sigma\gamma^*$ on this domain. This motivates the following definition: 
\begin{df}
	Let $C^+_\infty$ denote the space of all continuous functions on $\R$ that vanish on $(-\infty,0]$ and are smooth on $(0,\infty)$, endowed with both the seminorms of $C^\infty(\R\O)$ (i.e. the $C^k(K)$-norms for compact $K$ with $0\notin K$) and the seminorms of $C_0(\R)$ (supremum norms on compact sets).
	
	Define for the remainder of this chapter: \label{deflambda}
	\[\Lambda:=\{(x,v)\in \dot T^*_0(\R^d)\mid\text {$x$ lightlike and $v$ parallel to $d\gamma(x)$ or $x=0$ and $v$ not spacelike}\}.\]
\end{df}
We can now show the technical centerpiece of this section:
\begin{thm}
	The map $\sigma\gamma^*$ defined by $\sigma\gamma^*(f)=\sigma f\circ\gamma$ is a continuous map from $C^+_\infty$ to $\D'_\Lambda(\R^d)$. 
\end{thm}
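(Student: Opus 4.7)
The plan is to verify continuity in two stages: first as a map into $\D'(\R^d)$ with the strong topology, then upgrade to the wavefront seminorms of $\D'_\Lambda(\R^d)$ by separating cutoffs that meet the origin from those that do not.

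First, I would note that $\sigma\gamma^* f$ is in fact continuous on all of $\R^d$: $f$ vanishes on $(-\infty,0]$, so $\sigma\gamma^* f = 0$ on the spacelike region $\{\gamma < 0\}$; the potential discontinuity of $\sigma$ on $\{x_0 = 0\}$ is killed because every nonzero $x$ there is spacelike, and at $x = 0$ we have $f(\gamma(0)) = f(0) = 0$; meanwhile on $\{\gamma > 0\}$ the function is smooth since $\sigma$ is locally constant and $f$ is smooth on $(0,\infty)$. Hence $f \mapsto \sigma\gamma^* f$ is continuous $C^+_\infty \to C^0(\R^d)$ (with local sup seminorms), and a fortiori into $\D'(\R^d)$.

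For wavefront seminorms $\|\cdot\|_{n,V,\chi}$ with $0 \notin \supp\chi$: any $x \in \supp\chi$ is either timelike (where $f\circ\gamma$ is smooth, with bounds given by $C^k$-seminorms of $f$ on a compact subset of $(0,\infty)$), or lightlike nonzero (where $\gamma$ is a submersion). In the latter case I would apply the pullback result of Theorem \ref{WFMFD} to $\gamma$, obtaining $\WF(\gamma^* f) \subseteq \{(x,\lambda\, d\gamma(x)) : \gamma(x)=0,\ \lambda \neq 0\} \subseteq \Lambda$ together with the corresponding continuity of pullback between wavefront spaces. Since $V$ is assumed disjoint from these conormal rays on $\supp\chi$, this yields the required bound in terms of the sup-norm and $C^k$-seminorms of $f$.

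The remaining case is $0 \in \supp\chi$, where $V$ must consist entirely of (Euclidean) spacelike directions. I would invoke Lemma \ref{phixi} to get the $O_\xi$-invariant cutoffs $\psi_\xi$, equal to $1$ on the unit ball and uniformly bounded in $C^k$. Proposition \ref{WFcancel} then asserts $\F(\psi_\xi \sigma\gamma^* f)(\lambda\xi) = 0$ for every $\xi \in V \cap S^{d-1}$ and $\lambda \in \R$; the reflection argument in its proof actually yields the functional equation $\F(\psi_\xi g)(\zeta) = -\F(\psi_\xi g)(O_\xi^T \zeta)$ for all $\zeta$. Combined with the wavefront control already established at nonzero points (which restricts the singularities of $\psi_\xi \sigma\gamma^* f$ to null-covector directions on the light cone), this forces rapid decay of $\F(\psi_\xi \sigma\gamma^* f)$ on a conic neighborhood of $\xi$. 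A final application of Proposition \ref{Hormestimate} then lets me replace the special cutoff $\psi_\xi$ by an arbitrary $\chi$ supported near $0$ (using $\psi_\xi \equiv 1$ on the unit ball), and the uniform $C^k$-bound on $\psi_\xi$ keeps the estimate uniform in $\xi \in V \cap S^{d-1}$.

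The hardest step will be converting the pointwise-on-rays vanishing from Proposition \ref{WFcancel} into a genuine rapid-decay estimate on an entire conic neighborhood; the $O_\xi$-reflection alone gives only a symmetry, not decay. I expect the argument must feed the pre-established wavefront control away from $0$ into the reflection identity to rule out spacelike Fourier concentration at the origin, after which Proposition \ref{Hormestimate} may have to be iterated on shrinking conic neighborhoods to extract the final quantitative seminorm bound.
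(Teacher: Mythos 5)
Your architecture matches the paper's: continuity into $\D'(\R^d)$ via local boundedness, wavefront norms with $0\notin\supp\chi$ via the submersion property of $\gamma$ on $\R^d\O$ and the pullback theorem (with $\sigma$ replaced by a smooth $\tilde\sigma$ agreeing with it on $\gamma^{-1}(\Rp)\O$), and wavefront norms at $0$ via the invariant cutoffs $\psi_\xi$, Proposition \ref{WFcancel} and Proposition \ref{Hormestimate}. But the step you yourself flag as the hardest one is a genuine gap, and the mechanism you guess at is not the right one. The assertion that the vanishing of $\F(\psi_\xi\sigma\gamma^*f)$ on the single ray $\R\xi$, ``combined with the wavefront control already established at nonzero points, forces rapid decay on a conic neighborhood of $\xi$'' is essentially a restatement of the conclusion: rapid decay of $\F(\psi_\xi\sigma\gamma^*f)$ on a spacelike cone around $\xi$ is precisely the statement that $\xi$ is not in the wavefront of $\sigma\gamma^*f$ over $\supp(\psi_\xi)\ni 0$, which is what you are trying to prove. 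No amount of iterating Proposition \ref{Hormestimate} on shrinking cones will convert exact vanishing on one ray into decay on a neighborhood.

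The paper's actual trick avoids this entirely. One never proves decay of $\F(\psi_\xi\sigma\gamma^*f)$ off the ray. Instead, fix an auxiliary cutoff $\phi$ that is $1$ on $B_1(0)$ and use the cancellation only to subtract zero:
\[\F(\phi\,\sigma\gamma^*f)(\lambda\xi)=\F\bigl((\phi-\psi_\xi)\,\sigma\gamma^*f\bigr)(\lambda\xi).\]
The point is that $\phi-\psi_\xi$ vanishes on the unit ball (both factors are $1$ there by Lemma \ref{phixi}), so $(\phi-\psi_\xi)\sigma\gamma^*f$ is localized away from the origin. Enlarging the supremum to all directions $k$ in a slightly bigger cone $V'$ and applying Proposition \ref{Hormestimate} with a cutoff $\chi$ vanishing near $0$ and satisfying $\chi(\phi-\psi_\xi)=\phi-\psi_\xi$, the estimate reduces to $\|\sigma\gamma^*f\|_{n,V'',\chi}$ --- a wavefront norm with cutoff supported away from $0$, already controlled in your second step --- plus an $L^1$ term bounded by sup-norms of $f$; the uniform $C^n$-bounds on $\psi_\xi$ keep the constant independent of $\xi$. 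A second application of Proposition \ref{Hormestimate} then replaces $\phi$ by the arbitrary cutoff $\psi$, as you anticipated. So the cancellation on the ray is used only to trade the cutoff $\phi$ (which sees the origin) for a cutoff supported away from the origin; the decay itself comes from the already-established estimates away from $0$, not from the reflection symmetry.
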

\begin{proof}
	As 
	\[\sup\limits_{x\in K}|\sigma f(\gamma(x))|=\sup\limits_{y\in \gamma(K)} |f(y)|\]
	and $\gamma(K)$ is compact for $K$ compact, $\sigma\gamma^*$ is continuous as a map into $L^\infty_{loc}(\R^d)$. This embeds continuously in the space of distributions, so $\sigma\gamma^*$ maps continuously into $\D'(\R^d)$ (with respect to the strong topology). Thus we only need to check that $\sigma\gamma^*$ is bounded with respect to the wavefront-set related norms
	$\|u\|_{n,V,\psi}$
	for $\supp(\psi)\times V\cap \Lambda=\emptyset$. \\	
	\textbf{Wavefront norms away from 0}:\\	
	First assume that $0\notin\supp (\phi)$. Then we may as well exclude $0$ from the domain of $\sigma f\circ \gamma$ when calculating the corresponding norms. But on $\R^d\O$, $\gamma$ is a submersion, which allows us to use wavefront calculus: As $d\gamma$ is non-vanishing, we find that
	\[\gamma|_{\R^d\O}^*(T_0(\R))=\{(x,v)\in \dot T^*_0(\R^d\O)\mid\gamma(x)=0, v=cd\gamma(x) \text{ for $c\in\R\O$} \}\]
	does not intersect the zero section, so $\gamma|_{\R^d\O}^*$ maps continuously from $\D'_{T^*_0(\R)}(\R)$ into $\D'_{\gamma|_{\R^d\O}^*(T_0(\R))}(\R^d\O)=\D'_{\Lambda|_{\R^d\O}}(\R^d\O)$. 
	
	The seminorms of $\D'(\R)$ are bounded by those of $C^0(\R)$. The remaining seminorms of $\D'_{T_0(\R)}(\R)$, i.e. the seminorms $\|\cdot\|_{n,V,\psi}$ for $0\notin \supp(\psi)$ are also seminorms for $\D'_\emptyset(\R\O)= C^\infty(\R\O)$. Thus all seminorms of $\D'_{T^*_0(\R)}(\R)$ are bounded by those of $C^+_\infty$, so we can conclude that $\gamma|_{\R^d\O}^*$ also maps $C^+_\infty$ to $\D'_{\Lambda|_{\R^d\O}}(\R^d\O)$ continuously. 
	
	Choose a partition of unity $(\chi_1,\chi_2,\chi_3)$ subordinate to the open cover \[\left(\{x\in \R^d\mid x_0>0\},\{x\in \R^d\mid x_0<0\}, \R^d\backslash\gamma^{-1}(\Rp)\right)\]
	of $\R^d\O$. Then $\tilde\sigma:=\chi_1-\chi_2$ is a smooth function on $\R^d\O$ that coincides with $\sigma$ on $\gamma^{-1}(\Rp)\O$. Thus on $C^+_\infty$, we have \[\sigma\gamma|_{\R^d\O}^*=\tilde\sigma\gamma|_{\R^d\O}^*\]
	and this maps continuously from $C^+_\infty$ to $\D'_{\Lambda|_{\R^d\O}}(\R^d\O)$, since multiplication with smooth functions is continuous on the latter space. We can conclude that all seminorms $\|\sigma\gamma^*(f)\|_{n,V,\psi}$ 
	with $0\notin \supp(\psi)$ and $\supp(\psi)\times V\cap \Lambda=\emptyset$ are bounded by the $C^+_\infty$ seminorms of $f$.\\
	\textbf{Wavefront norms at 0}:\\
	It now remains to bound the norms with $0\in \supp(\psi)$. The condition $\supp(\psi)\times V\cap \Lambda=\emptyset$ is then equivalent to $V$ containing only spacelike vectors. Thus we now fix $\psi\in C_c^\infty(\R^d)$ and a closed cone $V$ of spacelike vectors in $\R^d$. For spacelike vectors Proposition \ref{WFcancel} almost gives us the estimates we want, only with $\psi_\xi$ instead of $\psi$. Thus what we need to do now is replace these special $\xi$-dependent test functions with arbitrary ones. We will first replace the $\psi_\xi$ with a fixed cutoff $\phi$ (indepedent of $\xi$), which in turn will be replaced with the arbitrary cutoff $\psi$.
	
	We will successively have to make our cones smaller in the following estimate, so we need to choose slightly bigger cones. Let $V'$ be a closed spacelike cone whose interior contains $V$ and let $V''$ be a closed spacelike cone whose interior contains $V'$.
	Use the notation of Lemma \ref{phixi} (i.e. construct $\psi_\xi$) for $V''$ instead of $V$.
	Choose $\phi\in C_c^\infty(\R^d)$ that is one on the unit ball  $B_1(0)$ and supported in $K$ (the compact set from \ref{phixi}). Let $\chi$ be a function in $C_c^\infty(\R^d)$ that is 1 on $(K\cup\supp(\psi))\backslash B_1(0)$ and vanishes around 0.
	
	For any $f\in C^+_\infty$ we have:
	\begin{align*}
		&\ \|\sigma\gamma^*f\|_{n,V',\phi}\\
		&=\sup\limits_{\xi \in V',\lambda\in\R_+}(|\lambda|+1)^n\F(\phi \sigma\gamma^*f)(\lambda\xi)\\
		&=\sup\limits_{\xi \in V',\lambda\in\R_+}(|\lambda|+1)^n\F((\phi-\psi_\xi)\sigma\gamma^*f)(\lambda\xi)\\
		&\leq\sup\limits_{\xi,k \in V',\lambda\in\R_+}(|\lambda|+1)^n\F((\phi-\psi_\xi)\sigma\gamma^*f)(\lambda k)\\
	\end{align*}
	where in the second step we have subtracted $0$ by Proposition \ref{WFcancel}. Using that \[\chi(\phi-\psi_\xi)=(\phi-\psi_\xi)\] and then Proposition \ref{Hormestimate}, we can then estimate (with constants independent of $f$):
	\begin{align*}
		&\ \|\sigma\gamma^*f\|_{n,V',\phi}\\
		&\leq\sup\limits_{\xi,k \in V',\lambda\in\R_+}(|\lambda|+1)^n\F(\chi(\phi-\psi_\xi)\sigma\gamma^*f)(\lambda k)\\
		&=\sup\limits_{\xi\in V'}\|(\phi-\psi_\xi)(\sigma\gamma^*f)\|_{n,V',\chi}\\
		&\leq C\sup\limits_{\xi\in V'} \|\phi-\psi_\xi\|_{C^{n}}(\|\sigma\gamma^*f\|_{n,V'',\chi}+\|\chi\sigma\gamma^*f\|_{L^1})\\
		&\leq C(\|\phi\|_{C^{n}}+\sup\limits_{\xi\in V'}\|\psi_\xi\|_{C^{n}})(\|\sigma\gamma^*f\|_{n,V'',\chi}+\|\chi\sigma\gamma^*f\|_{L^1})\\
		&= C'(\|\sigma\gamma^*f\|_{n,V'',\chi}+\|\chi\sigma\gamma^*f\|_{L^1})\\
	\end{align*}
	As $0\notin\supp(\chi)$, the first summand is one of the seminorms we have already estimated. 
	The second summand can be estimated by
	\[\|\chi\sigma\gamma^*f\|_{L^1}\leq\|\chi\|_{L^1}\|\sigma\gamma^*f\|_{L^\infty(\supp(\chi))}\leq \|\chi\|_{L^1}\|f\|_{L^\infty(\gamma(\supp(\chi)))}\]
	and is thus bounded by $C^+_\infty$ seminorms of $f$.
	
	Define $\chi^c:=\mathbbm{1}_{B_1(0)}(1-\chi)$. We have
	\[\psi=\chi^c\psi+\chi\psi\]
	and
	\[\phi\chi^c\psi=\chi^c\psi\]
	Using Proposition \ref{Hormestimate} again, we can estimate:
	\begin{align*}
		\|\sigma\gamma^*f\|_{n,V,\psi}&\leq\|\chi^c\psi\sigma\gamma^*f\|_{n,V,\phi}+\|\sigma\gamma^*f\|_{n,V,\chi\psi}\\
		&\leq C(\|\sigma\gamma^*f\|_{n,V',\phi}+\|\phi\sigma\gamma^*f)\|_{L^1})+\|\sigma\gamma^*f\|_{n,V,\chi\psi},\\
	\end{align*}
	The first summand we already estimated, for the second we obtain as for the second summand above
	\[\|\phi(\sigma\gamma^*f)\|_{L^1}\leq\|\phi\|_{L^1}\|f\|_{L^\infty(\gamma(\supp(\phi)))}.\]
	Overall, we obtain
	\[\|\sigma\gamma^*f\|_{n,V,\psi}\leq C(\|\sigma\gamma^*f\|_{n,V'',\chi}+\|\sigma\gamma^*f\|_{n,V,\chi\psi}+\|f\|_{L^\infty(\gamma(\supp(\chi)))}+\|f\|_{L^\infty(\gamma(\supp(\phi)))}),\]
	for some constant $C$ independent of $f$. The first two summands are wavefront norms away from zero, which are bounded by $C^+_\infty$-seminorms by the first part of this proof. The other two summands are already $C^+_\infty$-seminorms. Thus we have bounded all seminorms of $\sigma\gamma^*f$ in $\D'_\Lambda(\R^d)$ in terms of the seminorms of $f$ in $C^+_\infty$. We can conclude that $\sigma\gamma^*$ is a continuous map between those spaces.	 
\end{proof}
To show the properties we want about the Riesz distributions, we now need to study those functions in $C^+_\infty$ that get mapped to Riesz distributions by $\sigma\gamma^*$.
\begin{defprop}\label{Xa}
	The family of functions
	\[X_\alpha(x):=\mathbbm{1}_{(0,\infty)}(x)x^\alpha\]
	is holomorphic on $\Re(\alpha)>0$ as a map into $C^+_\infty$ (any undefined powers of $x$ are set to $0$ but are irrelevant anyways due to the characteristic function).
\end{defprop}
\begin{proof}
	The obvious candidate for a derivative is given by the pointwise derivative:
	\[\partial_\alpha X_\alpha:=\mathbbm{1}_{(0,\infty)}(x)ln(x)x^\alpha.\]
	This is still in $C^+_\infty$, as $ln(x)$ grows slower than $x^{-\alpha}$ for $x\rightarrow 0$. We have for non-zero $h\in \C$ such that $|h|<\Re(\alpha)$:
	\begin{align*}
		E_h(x)&:=\frac{X_{\alpha+h}(x)-X_\alpha(x)}{h}-\partial_\alpha X_\alpha(x)\\
		&=\int\limits_0^1\partial_\alpha X_{\alpha+sh}(x)-\partial_\alpha X_\alpha(x)ds\\
		&=\partial_\alpha X_\alpha(x)\int\limits_0^1(x^{sh}-1)ds.
	\end{align*}
	We need to show that $E_h$ converges to $0$ in all $C^+_\infty$-seminorms for $h\rightarrow 0$. Let $\epsilon\in(0,1)$ be arbitrary.
	Let $K\subseteq\R$ be compact and $x\in K$ be positive (everything is $0$ for $x\leq0$). If $x\in(0,\epsilon)$, we can estimate $|x^{sh}-1|<1$ to obtain
	\[|\partial_\alpha X_\alpha(x)\int\limits_0^1(x^{sh}-1)ds|\leq \sup\limits_{x'\in(0,\epsilon)}|\partial_\alpha X_\alpha(x')|.\]
	For $x\geq \epsilon$, we get
	\[|\partial_\alpha X_\alpha(x)\int\limits_0^1(x^{sh}-1)ds|\leq\sup\limits_{x'\in K} |\partial_\alpha X_\alpha(x')|\sup\limits_{y\in [\epsilon,K]}|y^h-1|\leq C\max\{\max(K)^h-1,1-\epsilon^h\}.\]
	Thus we obtain overall
	\begin{align*}
		\limsup\limits_{h\rightarrow 0}\|E_h\|_{L^\infty(K)}&\leq\limsup\limits_{\epsilon\rightarrow 0}\limsup\limits_{h\rightarrow 0}\max\{\sup\limits_{x\in(0,\epsilon)}|\partial_\alpha X_\alpha(x)|,C(\max(K)^h-1),C(1-\epsilon^h)\}\\
		&=\limsup\limits_{\epsilon\rightarrow 0}\max\{ \sup\limits_{x\in(0,\epsilon)}|\partial_\alpha X_\alpha(x)|,0,0\}\\
		&=\limsup\limits_{x\rightarrow 0}|\partial_\alpha X_\alpha(x)|\\
		&=0.
	\end{align*}
	To show convegence of the $C^\infty((0,\infty))$-norms of $E_h$, it suffices to show that 
	\[\int\limits_0^1(x^{sh}-1)ds\]
	converges to zero in these seminorms, as multiplication with a smooth function is bounded in all $C^k$-norms. Thus we compute for $|h|<1$, $k\geq 1$, $K\subseteq (0,\infty)$ compact and $x\in K$:
	\begin{align*}
		|\partial_x^k\int\limits_0^1(x^{sh}-1)ds|&=|\int\limits_0^1 \prod\limits_{j=0}^{k-1}(sh-j)x^{sh-k}ds|\\
		&\leq |h|\prod\limits_{j=1}^{k-1}(j+|h|) \int\limits_0^1\max\{|x|^{-1-k},|x|^{1-k}\}ds\\
		&\leq k! \max\{\min(K)^{-1-k},\min(K)^{1-k}\} |h|.
	\end{align*}
	We can conclude that $E_h$ converges to 0 in all $C^k(K)$-seminorm. As we have already shown uniform convergence on $\R$, this means that the differential quotient converges to $\partial_\alpha X_\alpha$ in all seminorms of $C^+_\infty$, so $X_\alpha$ is holomorphic in that space.
\end{proof}
The main result of this section is now easy to prove, using the above and the extension technique also employed in \cite{BGP}:
\begin{prop}
	\label{WFhol}
	The family $R(\alpha)=R_+(\alpha)-R_-(\alpha)$ is holomorphic (as a function of $\alpha$) in $\D'_\Lambda$ (for $\Lambda$ as in definition \ref{deflambda}).
\end{prop}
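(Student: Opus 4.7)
The plan is essentially one of assembly. All the heavy lifting---namely the continuity of $\sigma\gamma^* : C^+_\infty \to \D'_\Lambda(\R^d)$---has been carried out in the preceding theorem, and the holomorphy of $X_\alpha$ in $C^+_\infty$ is established in \ref{Xa}. What remains is to identify $R(\alpha)$ with an image under $\sigma\gamma^*$ on a half-plane and then bootstrap to all of $\C$ using $\square$.

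First I would verify the identification
\[R(\alpha) \;=\; c_\alpha\,\sigma\gamma^*\bigl(X_{(\alpha-d)/2}\bigr)\qquad\text{for }\Re(\alpha)>d.\]
This is a direct computation from the defining formulas: on $J_+(0)\setminus\{0\}$ one has $\sigma=+1$ and on $J_-(0)\setminus\{0\}$ one has $\sigma=-1$, while both cones are characterised by $\gamma\geq 0$, so $\sigma\gamma^*(X_\beta)$ reproduces the piecewise formula $R_+(\alpha)-R_-(\alpha)$ almost everywhere. Since $c_\alpha$ is entire (as $1/\Gamma$ is entire), $\alpha\mapsto X_{(\alpha-d)/2}$ is holomorphic in $C^+_\infty$ on $\{\Re(\alpha)>d\}$ by \ref{Xa}, and $\sigma\gamma^*$ is continuous into $\D'_\Lambda(\R^d)$, the composition shows that $\alpha\mapsto R(\alpha)$ is holomorphic into $\D'_\Lambda(\R^d)$ on the half-plane $\Re(\alpha)>d$.

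Next I would propagate holomorphy to all of $\C$ via the relation $\square R(\alpha+2)=R(\alpha)$, which holds for $\Re(\alpha)>d$ by the very choice of the normalisation $c_\alpha$ (and extends to all $\alpha$ by uniqueness of holomorphic continuation in $\D'(\R^d)$). Iterating gives $R(\alpha)=\square^k R(\alpha+2k)$ for every $k\in\N$. Given an arbitrary $\alpha_0\in\C$, pick $k$ with $\Re(\alpha_0)+2k>d$ and a disk $D$ about $\alpha_0$ on which the same inequality still holds. On $D$, $\alpha\mapsto R(\alpha+2k)$ is holomorphic into $\D'_\Lambda(\R^d)$ by the previous step, and by Theorem \ref{WFMFD} the differential operator $\square^k$ acts continuously on $\D'_\Lambda(\R^d)$; hence $\alpha\mapsto \square^k R(\alpha+2k)=R(\alpha)$ is holomorphic on $D$. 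Since $\alpha_0$ was arbitrary, holomorphicity holds throughout $\C$.

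Consistency with the $\D'(\R^d)$-holomorphic extension of \ref{dfRiesz} is automatic from the continuity of the inclusion $\D'_\Lambda(\R^d)\hookrightarrow\D'(\R^d)$ together with agreement on $\{\Re(\alpha)>d\}$. The only genuine analytical content of the statement is the cancellation of the spacelike singularity at the origin in the combination $R_+-R_-$, and that has already been packaged into the continuity of $\sigma\gamma^*$; accordingly I expect no further obstacle here.
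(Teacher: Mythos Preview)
Your proposal is correct and follows essentially the same route as the paper: identify $R(\alpha)=c_\alpha\,\sigma\gamma^*(X_{(\alpha-d)/2})$ on $\Re(\alpha)>d$, use holomorphy of $X_\alpha$ in $C^+_\infty$ together with continuity of $\sigma\gamma^*$ into $\D'_\Lambda$, and then extend to all of $\C$ via $R(\alpha)=\square^k R(\alpha+2k)$ and the continuity of $\square$ on $\D'_\Lambda$. The only difference is cosmetic---you add an explicit remark on consistency with the $\D'$-extension, which the paper leaves implicit.
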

\begin{proof}
	For $\Re(\alpha)>d$, we have
	\[R(\alpha)(x)=c_{\alpha}\gamma(x)^{\frac{\alpha-d}{2}}\sigma(x)\mathbbm{1}_{\gamma(x)>0}=c_\alpha\sigma\gamma^*(X_{\frac{\alpha-d}{2}})(x).\]
	$X_\frac{\alpha-d}{2}$ is holomorphic in $C^+_\infty$ for $\Re(\alpha)>d$ and $\sigma\gamma^*$  maps this continuously into $\D'_\Lambda$, so $R(\alpha)$ is holomorphic in $\D'_\Lambda$ for $\Re(\alpha)>d$. Let $k\in \N$ be arbitrary. As $\square$ maps $\D'_\Lambda$ to itself continuously, we can conclude that
	\[R(\alpha-2k)=\square^kR(\alpha)\]
	is holomorphic in $\D'_\Lambda$ for $\Re(\alpha)>d$. Thus $R(\alpha)$ is holomorphic in $\D'_\Lambda$ for $\Re(\alpha)>d-2k$. As $k$ was arbitrary, it is holomorphic on all of $\C$.
\end{proof}
\section{Integrating along a timeline}
\label{timeint2}
We now seek to integrate Riesz distributions and Green's kernels along a timelike curve $w\colon I\rightarrow M$, as outlined in the beginning of this section. Evaluating the Green's kernel at some $w(t)$ is not well defined, so we instead consider the pullback under $w$.  The ill-defined expression $\intR G_x(w(t)) f(\frac{t}{s})dt$ can then be replaced with $w^*(G_x)[f(\tfrac{\cdot}{s})]$ (in the case $E=M\times \C$; for vector bundles, we need to take a bundle coordinate).\newline
We start by fixing some notation:
\begin{df}
	\label{evenodd}
	For a function $f$ on $\R$, let 
	\[(f)_{odd}(t):=\frac{1}{2}(f(t)-f(-t))\]
	and
	\[(f)_{even}(t):=\frac{1}{2}(f(t)+f(-t))\]
	denote the odd and even part of $f$. If $f$ is defined only on a subset of $\R$, extend it by $0$.
\end{df}
\begin{gn}
	\label{dfw}
	For the remainder of this section, let $U\subseteq M$ be open and convex, $I\subseteq\R$ an open interval containing $0$ and $x\in U$. Furthermore, let $w\colon I\rightarrow U$ be a future oriented smooth curve with $w(0)=x$.
\end{gn}
In several of the following theorems, there will be a "more precisely" part that keeps track of estimates for the remainder terms. These will be needed later on to provide some sort of uniformity for global considerations, but for understanding what is going on, the reader may focus on the less precise first statements.

\begin{df}
	\label{dfnu}
	Define $\nu_w\colon I\rightarrow \C$ via
	\[\nu_w(t):=\frac{\Gamma^U(w(t))}{t^2},\]
	extended continuously to $0$.
\end{df}
In some sense $\nu_w$ measures the failure of $w$ to be a geodesic. Later on, this $\nu_w$ will be a problem and we will have to restrict to the case where $w$ is a geodesic. For now, however, we include this for the sake of generality.
\begin{lemma}
	$\nu_w$ is a well-defined, smooth, strictly positive function and 
	\[\nu_w(0)=\gamma(w'(0)).\]
	If $w$ is a timelike unit speed geodesic, we have
	\[\nu_w=1\]
	constantly.
\end{lemma}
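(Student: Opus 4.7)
The plan is to push the question to the tangent space $T_xM$ via the exponential chart, where $\gamma$ is a genuine quadratic form and the division by $t^{2}$ trivialises. Concretely, I would introduce $c \colon I \to T_xM$, $c(t) := (\exp_x^U)^{-1}(w(t))$, which is well defined and smooth because $w$ takes values in the convex set $U$ on which $\exp_x^U$ is a diffeomorphism onto its image. We have $c(0) = 0$ and, since $d(\exp_x^U)_0$ is the canonical identification $T_0(T_xM) \cong T_xM$, the chain rule gives $c'(0) = w'(0)$.

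Next I would invoke Hadamard's lemma (Taylor's theorem with integral remainder at order one) to factor $c(t) = t\,\tilde c(t)$, with $\tilde c(t) := \int_0^{1} c'(st)\,ds$ smooth on $I$ and $\tilde c(0) = w'(0)$. Because $\gamma$ is a quadratic form on $T_xM$, this yields $\Gamma^U_x(w(t)) = \gamma(c(t)) = t^{2}\gamma(\tilde c(t))$, so
\[
\nu_w(t) = \gamma(\tilde c(t)),
\]
which is manifestly smooth on all of $I$ and takes the value $\gamma(w'(0))$ at $t = 0$.

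Strict positivity reduces to showing that $\tilde c(t)$ is timelike for every $t \in I$. Since being timelike is invariant under nonzero rescaling and $\tilde c(t) = c(t)/t$ for $t \neq 0$, this is equivalent to $c(t)$ being timelike for $t \neq 0$, together with $w'(0)$ being timelike (the tacit hypothesis on $w$ in this chapter). For $t \neq 0$, the timelike version of Proposition~\ref{exJ}, applied to the timelike subcurve $w|_{[0,t]}$ (or $w|_{[t,0]}$) inside the convex set $U$, places $w(t)$ in the chronological future or past of $x$, which under $(\exp_x^U)^{-1}$ corresponds precisely to the open timelike cone of $T_xM$. I expect the main subtlety to be this passage from ``causal'' to ``chronological'' in Proposition~\ref{exJ}, which relies on $w$ being strictly timelike throughout $I$ rather than merely future directed and causal.

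Finally, the geodesic case is immediate from the defining property of the exponential map: if $w$ is a unit-speed timelike geodesic with $w(0) = x$, then $w(t) = \exp_x^U(t\,w'(0))$ for all $t \in I$, hence $c(t) = t\,w'(0)$, $\tilde c(t) \equiv w'(0)$, and $\nu_w(t) \equiv \gamma(w'(0)) = -g(w'(0), w'(0)) = 1$.
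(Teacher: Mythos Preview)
Your proof is correct and follows essentially the same strategy as the paper: pass to $T_xM$ via the exponential chart, exploit the quadratic nature of $\gamma$ to extract the factor $t^2$, and handle strict positivity by observing that $c(t)$ lies in the open timelike cone for $t\neq 0$. The only cosmetic difference is in how the factor $t^2$ is pulled out: the paper Taylor-expands the scalar function $\gamma\circ c$ to second order and checks that the zeroth and first order terms vanish, whereas you factor the vector-valued curve as $c(t)=t\,\tilde c(t)$ via Hadamard's lemma and then invoke homogeneity $\gamma(t\tilde c(t))=t^2\gamma(\tilde c(t))$; your route is marginally slicker since it avoids computing derivatives of $g(\tilde w,\tilde w)$ by hand. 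Your remark that timelikeness of $w$ is a tacit hypothesis is well-observed; the paper's proof also uses it (both for $\nu_w(0)>0$ and for $w(t)$ lying in the interior of $J_\pm(x)$) without it being explicit in the standing assumptions.
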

\begin{rem}
	Unfortunately, the presence of $\nu_w$ will pose problems later on, so we will eventually have to restrict to the case where $w$ is a unit speed geodesic.
\end{rem}
\begin{proof}
	Let $\tilde w:=\exp_x^{-1}\circ w$. We have
	\[\Gamma^U(w(t))=\gamma(\tilde w(t))=\gamma(\tilde w(0))+(\gamma\circ\tilde w)'(0)t+E(t)t^2,\]
	where $E$ is a smooth function with 
	\[E(0)=\frac{1}{2}(\gamma\circ\tilde w)''(0).\]
	As $\tilde w(0)=0$, we have
	\[\gamma(\tilde w(0))=-g(\tilde w,\tilde w)(0)=0,\]
	\[(\gamma\circ\tilde w)'(0)=-2g(\tilde w,\tilde w')(0)=0\]
	and
	\[(\gamma\circ\tilde w)''(0)=-2(g(\tilde w',\tilde w')+ g(\tilde w,\tilde w''))(0)=-2g(w'(0),w'(0))=2\gamma(w'(0)),\]
	where we used that $\tilde w'(0)$ coincides with $w'(0)$ (when canonically identifying $T_0T_xM$ and $T_xM$), as $d\exp_x$ at $x$ is the identity. We thus conclude that
	\[\frac{\Gamma^U(w(t))}{t^2}=E(t)\]
	extends smoothly to $0$, where it takes the value
	\[E(0)=\gamma(w'(0))\]
	(which is strictly positive, as $w'(0)$ must be timelike). For $t\neq 0$, $w(t)$ is in the interior of $J_+(x)$ or $J_-(x)$ and thus $\tilde w(t)$ is in the interior of $J_+(0)$ or $J_-(0)$ (i.e. timelike), where $\gamma$ is strictly positive. Thus $\gamma(\tilde w(t))$ and hence $\nu_w(t)$ are strictly positive.
	
	If $w$ is a timelike geodesic, we have $\tilde w (t)=tw'(0)$, as the exponential map maps geodesics through zero to geodesics through $x$ with the same initial derivative. Thus we have
	\[\nu_w(t)=\frac{\gamma(t w'(0))}{t^2}=\gamma( w'(0))\]
	if $w$ is additionally unit speed, this is $1$.
\end{proof}
We shall use a variant of the Mellin transform where some poles are removed:
\begin{df}
	\label{dfMprime}
	In the following we abbreviate:
	\[\M'(g):=\frac{\M(g)}{\Gamma(\frac{\cdot+1}{2})},\]
	where $\M$ denotes the Mellin transform (see subsection \ref{Mellin}).
\end{df}
For odd smooth $g$, the Gamma function in the denominator will cancel the poles of the Mellin transform, allowing us to evaluate the modified Mellin transform everywhere. 
We want to insert the Hadamard expansion into $w^*G_x[f(\tfrac{\cdot}{s})]$. We start by evaluating the pull-back of a single summand $V^kR(2k+2)$. The $W$ in the following should thus be thought of as a Hadamard coefficient (or a component thereof).
\begin{prop}
	\label{WRint}
	For any $W\in C^\infty(U)$ and $\alpha\in \C$, $w^*(WR^U(\alpha,x))$ is well-defined and we have for any $g\in C_c^\infty(I)$:
	\[w^*(WR^U(\alpha,x))[g]=\frac{2^{2-\alpha}\pi^\frac{2-d}{2}}{\Gamma(\frac{\alpha}{2})} \M'\left(\left(\nu_w^\frac{\alpha-d}{2}(W\circ w)g\right)_{odd}\right)({\alpha-d+1}).\]
\end{prop}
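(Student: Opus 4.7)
I would proceed in three steps: verify that the pullback is well-defined using the wavefront calculus already set up in section \ref{WFcalc}, compute the identity directly in the half-plane $\Re(\alpha)>d$ where $R^U(\alpha,x)$ is a function, and then conclude by holomorphic continuation in $\alpha$.

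\textbf{Well-definedness.} By Proposition \ref{WFhol}, $R(\alpha) = R_+(\alpha)-R_-(\alpha)$ lies holomorphically in $\D'_\Lambda(\R^d)$ where $\Lambda$ consists of non-spacelike covectors at $0$ together with covectors along $d\gamma$ at lightlike points. Since $R^U(\alpha,x) = (\exp^U_x)^{-1*}R^{T_xM}(\alpha)$ and pullback by a diffeomorphism preserves wavefront structure, $R^U(\alpha,x)$ has wavefront bounded by a closed conic set $\Lambda^U_x\subseteq \Tdot(U)$: at $x$ it consists of non-spacelike covectors (using $d\exp_x|_0=\mathrm{id}$ on $T_xM$), and at points $y$ with $\Gamma_x(y)=0$ of covectors along $d\Gamma_x$. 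For $t\neq 0$ the point $w(t)$ lies in the interior of $J_\pm(x)$, so $\Gamma_x(w(t))>0$ and no covector in $\Lambda^U_x$ is based at $w(t)$. At $t=0$ any $\xi\in \Lambda^U_x|_x$ is non-spacelike, and since $w'(0)$ is timelike, Lemma \ref{orthotime} gives $dw_0^*\xi = \xi(w'(0))\neq 0$. Hence $w^*(\Lambda^U_x)$ avoids the zero section, and Theorem \ref{WFMFD} makes $w^*\colon \D'_{\Lambda^U_x}(U)\rightarrow \D'(I)$ continuous. Multiplication by the smooth $W$ preserves the wavefront bound, so $w^*(WR^U(\alpha,x))$ is a well-defined distribution on $I$.

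\textbf{Computation for $\Re(\alpha)>d$.} In this range $R^U_\pm(\alpha,x)=c_\alpha \Gamma_x^{(\alpha-d)/2}\mathbbm{1}_{J^U_\pm(x)}$ is a locally integrable function. Since $w$ is future-directed timelike with $w(0)=x$, we have $w(t)\in J^U_\pm(x)\setminus\{x\}$ precisely for $\pm t>0$, and $\Gamma_x(w(t))=t^2\nu_w(t)$ with $\nu_w>0$. Thus
\[w^*(R^U(\alpha,x))(t)=c_\alpha\,\nu_w(t)^{(\alpha-d)/2}\,|t|^{\alpha-d}\sign(t).\]
Setting $h(t):=(W\circ w)(t)\,\nu_w(t)^{(\alpha-d)/2}\,g(t)$, pairing with $g$ gives
\[w^*(WR^U(\alpha,x))[g]=c_\alpha\intR h(t)|t|^{\alpha-d}\sign(t)\,dt=2c_\alpha\intP (h)_{odd}(t)\,t^{\alpha-d}\,dt = 2c_\alpha\M((h)_{odd})(\alpha-d+1).\]
Using $2c_\alpha = \frac{2^{2-\alpha}\pi^{(2-d)/2}}{\Gamma(\alpha/2)\Gamma((\alpha-d+2)/2)}$ and folding the last gamma factor into $\M'$ (evaluated at $\alpha-d+1$) yields the claimed identity.

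\textbf{Holomorphic extension.} Both sides are entire in $\alpha$. The left side: Proposition \ref{WFhol} transported via $(\exp^U_x)^{-1*}$ makes $\alpha\mapsto R^U(\alpha,x)$ holomorphic into $\D'_{\Lambda^U_x}(U)$; multiplying by $W$, pulling back by $w$, and evaluating on $g$ are each continuous (resp. continuous linear) operations, preserving holomorphy. The right side: $\nu_w$ is smooth and strictly positive, so $\alpha\mapsto \nu_w^{(\alpha-d)/2}$ is entire as a $C^\infty(I)$-valued map, whence $(h)_{odd}$ is an entire $C_c^\infty(I)$-valued odd function; for odd $f\in C_c^\infty(\R)$, $\M(f)$ has simple poles only at negative odd integers (all even derivatives at $0$ vanish), and these are cancelled exactly by the zeros of $1/\Gamma((\cdot+1)/2)$, so $\M'((h)_{odd})$ is entire, and the prefactor $2^{2-\alpha}\pi^{(2-d)/2}/\Gamma(\alpha/2)$ is also entire. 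Agreement on $\{\Re(\alpha)>d\}$ thus extends to all of $\C$ by uniqueness of analytic continuation. The main obstacle is the wavefront bookkeeping in step one: carefully transferring the Minkowski wavefront bound through $\exp_x^{-1}$ and verifying that it avoids the conormal bundle of $w$ exactly at the delicate point $t=0$, where $w$ reaches the apex of the lightcone of $x$.
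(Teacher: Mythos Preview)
Your proof is correct and follows essentially the same strategy as the paper: establish well-definedness via the wavefront bound from Proposition~\ref{WFhol} (you transport it to $U$ via $\exp_x^{-1}$ while the paper works directly on $T_xM$ via $\tilde w=\exp_x^{-1}\circ w$, which is equivalent), compute explicitly for $\Re(\alpha)>d$, and extend by holomorphic continuation. The only cosmetic difference is in the holomorphy argument for the right-hand side, where the paper packages the Mellin transform as pairing with the meromorphic family $X_{\alpha-1}$ and invokes hypocontinuity, whereas you argue directly via pole cancellation for odd test functions; both are valid.
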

\begin{proof}
	Let again  $\tilde w:=exp_x^{-1}\circ w$. As $w^*=\tilde w^*\circ exp_x^*$, it suffices to show that the pull-back of $exp_x^{*}R^U(\alpha,x)=R(\alpha)$ under $\tilde w$ is well defined with the correct value. By Theorem \ref{WFhol}, $R(\alpha)$ is holomorphic in $\D'_\Lambda(\R^d)$. Recall that $\Lambda$ only contains causal vectors at lightlike or zero basepoints. 
	
	For $t\neq 0$, $w(t)$ can be reached from $x$ by a timelike geodesic by Proposition \ref{exJ}. Thus $\tilde w(t)$ is not lightlike, so $\Lambda|_{\tilde w(t)}$ is empty. This means if  $v\in \Lambda|_{\tilde w(t)}$, then $t=0$ and $v$ is causal. For any non-zero vector $\lambda\partial_t$ in $T_0(\R)$, we have
	\[\<(d\tilde w_0)^*v,\lambda\partial_t\>=\lambda\<v,\tilde w'(0)\>.\]
	The scalar product is non-zero, as $v$ is causal and $\tilde w'(0)$ is timelike and thus, by \ref{orthotime},  they cannot be orthogonal. Thus $\tilde w^*\Lambda$ does not intersect the zero section, whence $\tilde w^*$ maps $\D'_\Lambda(T_x(M))$ continuously into $\D'(I)$.
		
	Assume $\Re(\alpha)>d$. Then $R^U(\alpha,x)$ is continuous. We have  
	\[\Gamma^U(w(t))=\nu_w(t)t^2.\]	
	Thus we can compute
	\begin{align*}
		&w^*(WR^U(\alpha,x))[g]\\
		&=\int\limits_I W(w(t))c_{\alpha}\sign(t)\Gamma^U(w(t))^\frac{\alpha-d}{2}g(t)dt\\
		&=c_{\alpha}\int\limits_I \sign(t)W(w(t))\nu_w(t)^\frac{\alpha-d}{2}|t|^{\alpha-d}g(t)dt\\
		&=2c_\alpha\intP\left(\nu_w^\frac{\alpha-d}{2}(W\circ w)g\right)_{odd}(t) t^{\alpha-d}dt\\
		&=2c_\alpha \M\left(\left(\nu_w^\frac{\alpha-d}{2}(W\circ w)g\right)_{odd}\right)({\alpha-d+1})\\
		&=\frac{2^{2-\alpha}\pi^\frac{2-d}{2}}{\Gamma(\frac{\alpha}{2})} \M'\left(\left(\nu_w^\frac{\alpha-d}{2}(W\circ w)g\right)_{odd}\right)({\alpha-d+1}).
	\end{align*}
	We show that this is true for arbitrary $\alpha$ by showing that both sides are holomorphic.

	Multiplication with a smooth function is a continuous map of $\D'_\Lambda(T_x M)$ to itself. Evaluation of a distribution is also continuous. As continuous linear maps preserve holomorphicity and the family of Riesz distributions is holomorphic in $\D'_\Lambda(T_xM)$, we can conclude that $\tilde w^*((W\circ exp_x) R(\alpha))[g]=w^*(WR^U(\alpha,x))[g]$ is holomorphic.
	
	For the right hand side, consider the family $X_\alpha$ as in Proposition \ref{Xa}. This is holomorphic on $\Re(\alpha)>0$ both in $D'(\R)$ and  $C^\infty(\R\O)$, as $C^+_\infty$ embeds continuously into both spaces.
	As 
	\[X_{\alpha-1}=\frac{1}{\alpha}X_\alpha',\]
	it extends meromorphically to all of $\C$. By meromorphic continuation,
	\[\M(f)(\alpha)=X_{\alpha-1}[f]\]
	holds on all of $\C$. We thus have
	\begin{align*}
		&\M\left(\left(\nu_w^\frac{\alpha-d}{2}(W\circ w)g\right)_{odd}\right)({\alpha-d+1})\\
		&=X_{\alpha-d}\left[\left((X_\frac{\alpha-d}{2}\circ\nu_w)(W\circ w)g\right)_{odd}\right].
	\end{align*}
	As $\nu_w$ is strictly positive, the argument is meromorphic in $C^\infty_c(\R)$. As evaluation is hypocontinuous, the result is meromorphic.
	
	This implies that both sides of the claimed result are meromorphic in $\alpha$, so equality holds for all $\alpha\in\C$ (with the fraction of meromorphic functions being extended continuously to $\alpha$ where both numerator and denominator have a pole).
\end{proof}
In order to retain more information after distributional evaluation, we equip the function we evaluate at with an additional parameter $s$.
\begin{gn}
	\label{dff}
	Let $f\in C_c^\infty(\R)$ be an odd function. For $s>0$ and $t\in \R$, define \[f_s(t):=f\left(\frac{t}{s}\right).\] 
	Let $I_f$ be a closed finite interval whose interior contains  $0$ and $\supp(f)$.
\end{gn}
This $f$ corresponds roughly to (the derivative of the Fourier transform of) the cutoff function in the spectral action, $s$ corresponding to the cutoff parameter there. The $f$ here being odd corresponds to that in the spectral action being even.
 The assumption that $f$ is odd is not crucial for the following considerations, but it makes things somewhat simpler and adding an even part would not yield anything helpful for our purposes (see remark \ref{whatf}).

Testing a distribution against $f_s$ for small $s$ corresponds to investigating its behaviour around $0$. Multiplying with $f_s$ and taking the Mellin transform does something similar. This is investigated in the following two lemmas.
\begin{lemma}
	\label{IntRem}
	If $h$ is a $C^{k+1}$ function whose first $k$ derivatives at $0$ vanish and $\alpha\in \C$ with $\Re(\alpha)\geq -k$, we have for $s\in (0,1)$
	\[|\M(hf_s)(\alpha)|\leq C\|h\|_{C^{k}(I_f)}s^{k+\Re(\alpha)}\]
	and
	\[\intR h(t)f_s(t)dt\leq C\|h\|_{C^{k}(I_f)}s^{k+1},\]
	where $C$ is a constant depending on $\alpha$, $k$ and $f$.
\end{lemma}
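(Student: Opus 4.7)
The plan is to exploit the vanishing-derivative hypothesis via a Taylor factorization. Reading ``first $k$ derivatives at $0$ vanish'' as $h(0)=h'(0)=\cdots=h^{(k-1)}(0)=0$ (the reading consistent with the claimed exponent $s^{k+\Re(\alpha)}$), the integral form of Taylor's remainder yields
\[
h(t)=t^k g(t),\qquad g(t):=\frac{1}{(k-1)!}\int_0^1(1-\tau)^{k-1}h^{(k)}(\tau t)\,d\tau
\]
(with the convention $g=h$ when $k=0$), where $g\in C^1(\R)$ and $\|g\|_{L^\infty(I_f)}\leq\tfrac{1}{k!}\|h\|_{C^k(I_f)}$. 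This factorization is the structural input that converts the vanishing-derivative hypothesis into an explicit power of $s$ under the change of variables $u=t/s$.

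The second (plain integral) inequality is then immediate: using $|h(t)|\leq\tfrac{1}{k!}\|h\|_{C^k(I_f)}|t|^k$ on $I_f$ together with $\supp(f_s)\subseteq sI_f\subseteq I_f$ for $s\in(0,1)$, the substitution $u=t/s$ gives
\[
\left|\intR h(t)f_s(t)\,dt\right|\leq\frac{\|h\|_{C^k(I_f)}}{k!}\intR|t|^k|f(t/s)|\,dt=\frac{\|h\|_{C^k(I_f)}}{k!}\,s^{k+1}\intR|u|^k|f(u)|\,du,
\]
so the claim holds with $C=\tfrac{1}{k!}\intR|u|^k|f(u)|\,du$.

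For the Mellin bound I would first work in the strip $\Re(\alpha)>0$, where $\M(hf_s)(\alpha)=\int_0^\infty h(t)f(t/s)t^{\alpha-1}\,dt$ is a genuine integral; inserting $h(t)=t^k g(t)$ and substituting $u=t/s$ produces
\[
\M(hf_s)(\alpha)=s^{k+\alpha}\int_0^\infty g(su)f(u)u^{k+\alpha-1}\,du.
\]
The main obstacle is the closed boundary $\Re(\alpha)=-k$: there $u^{k+\alpha-1}$ behaves like $u^{-1}$ near $0$ and the integrand ceases to be locally integrable. The key trick is the oddness of $f$: writing $f(u)=u\tilde f(u)$ with $\tilde f\in C_c^\infty(\R)$, the integral rewrites as $s^{k+\alpha}\int_0^\infty g(su)\tilde f(u)u^{k+\alpha}\,du$, whose integrand is dominated by $\|g\|_{L^\infty(I_f)}|\tilde f(u)|u^{k+\Re(\alpha)}$ and absolutely integrable for every $\Re(\alpha)\geq -k$. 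To identify this substitution formula with the meromorphic Mellin extension used elsewhere in the paper, a Leibniz computation gives $(hf_s)^{(j)}(0)=0$ for $j=0,\dots,k$ (using $h^{(j)}(0)=0$ for $j<k$ together with $f^{(2j)}(0)=0$ from oddness), so by the residue formula for the Mellin extension $\M(hf_s)$ is holomorphic on $\Re(\alpha)>-k-1$; uniqueness of holomorphic continuation identifies the substitution formula with $\M(hf_s)(\alpha)$ throughout $\Re(\alpha)\geq -k$, and taking absolute values yields the stated bound with $C=\tfrac{1}{k!}\int_0^\infty|\tilde f(u)|u^{k+\Re(\alpha)}\,du$.
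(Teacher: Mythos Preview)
Your proof is correct and follows essentially the same route as the paper: Taylor-factor $h(t)=t^k E(t)$ with $\|E\|_{L^\infty(I_f)}\leq C\|h\|_{C^k(I_f)}$, then rewrite $\M(hf_s)(\alpha)=\M(Ef_s)(\alpha+k)$ and bound the resulting integral after the substitution $u=t/s$; the second inequality is then handled either directly or by reducing to the first at $\alpha=1$. You are in fact more careful than the paper at the boundary $\Re(\alpha)=-k$, where you explicitly invoke the oddness of $f$ (writing $f=u\tilde f$) to secure integrability of $u^{k+\alpha-1}f(u)$ near $0$ and then justify the identification with the meromorphically continued Mellin transform via the vanishing of $(hf_s)^{(j)}(0)$ for $j\leq k$; the paper leaves this step implicit.
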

\begin{proof}
	By Taylor's theorem (with vanishing Taylor series), we have
	\[h(t)= t^kE(t)\]
	for some continuous function $E$. $|E|$ is bounded on $I_f$ by $C\|h\|_{C^{k}(I_f)}$. We then have for $s\in (0,1)$:
	\begin{align*}
		| M (hf_s)(\alpha)|&=|M (Ef_s)(\alpha+k)|\\
		&\leq\intP \|h\|_{C^{k}(I_f)}|t^{k+\alpha-1}f\left(\frac{t}{s}\right)|dt\\
		&=|s^{k+\alpha}|\intP\frac{1}{s} \|h\|_{C^{k}(I_f)}|\left(\frac{t}{s}\right)^{k+\alpha-1}f\left(\frac{t}{s}\right)|dt\\
		&=\|h\|_{C^{k}(I_f)}|s^{k+\alpha}|\intP |t^{k+\alpha-1}f(t)|dt\\
		&=: C\|h\|_{C^{k}(I_f)}s^{k+\Re(\alpha)}.
	\end{align*}
	The second part of the claim follows either by doing an analogous calculation or by observing that
	\[\intR h(t)f_s(t)dt=\intP(h(t)-h(-t))f_s(t)dt= M ((h-h(-\cdot))f_s)(1).\qedhere\]
\end{proof}
\begin{lemma}
	\label{MsExp}
	Let $h$ be a smooth function defined on $I$ and $\alpha\in \C$. Then
	\[\M'( (hf_s)_{odd})(\alpha)\stackrel{s\rightarrow 0}{\sim}\insum{k}\frac{k!}{(2k)!}\binom{\frac{\alpha+2k-1}{2}}{k}h^{(2k)}(0) \M'(f)(\alpha+2k)s^{\alpha+2k}.\]
	More precisely, we have for $s$ small enough such that $f_s\subseteq C_c^{\infty}(I')$ for $I'\subseteq I$ closed and $N\in \N$:
	\[\left|\M'( (hf_s)_{odd})(\alpha)-\sum\limits_{k=0}^N\frac{k!}{(2k)!}\binom{\frac{\alpha+2k-1}{2}}{k}h^{(2k)}(0) \M'(f)(\alpha+2k)s^{\alpha+2k}\right|\leq C\|h\|_{C^{2N}(I')}s^{2N+\Re(\alpha)},\]
	for some constant $C$ depending on $N$, $\alpha$, $f$ and $I'$.
\end{lemma}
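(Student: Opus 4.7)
The plan is to use the oddness of $f$ to reduce to the even part of $h$, then Taylor-expand and evaluate the resulting monomial Mellin transforms by rescaling. Since $f_s$ is odd, one has $(hf_s)_{\mathrm{odd}} = h_{\mathrm{even}}\, f_s$, so only the even derivatives $h^{(2k)}(0)$ enter, matching the claimed form. I would write $h_{\mathrm{even}}$ as its Taylor polynomial of degree $2N-1$ plus a remainder,
\[ h_{\mathrm{even}}(t) = \sum_{k=0}^{N-1} \frac{h^{(2k)}(0)}{(2k)!}\, t^{2k} + R(t), \]
with $R^{(j)}(0) = 0$ for $0 \le j \le 2N-1$ and $|R(t)| \le C \|h\|_{C^{2N}(I')}|t|^{2N}$ by the Lagrange form of the Taylor remainder.

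For the main terms, a direct rescaling gives $\M(t^{2k} f_s)(\alpha) = s^{\alpha+2k} \M(f)(\alpha+2k)$, valid initially for $\Re(\alpha+2k) > 0$ and extended elsewhere by meromorphic continuation. Dividing by $\Gamma(\tfrac{\alpha+1}{2})$ and rewriting $\M(f)(\alpha+2k) = \Gamma(\tfrac{\alpha+2k+1}{2})\,\M'(f)(\alpha+2k)$ produces the Gamma quotient
\[ \frac{\Gamma(\tfrac{\alpha+2k+1}{2})}{\Gamma(\tfrac{\alpha+1}{2})} = k!\,\binom{(\alpha+2k-1)/2}{k}, \]
which, combined with the Taylor coefficient $1/(2k)!$, reproduces the $k$-th summand of the claim exactly. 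For the remainder, applying Lemma \ref{IntRem} with index $2N$ yields $|\M(Rf_s)(\alpha)| \le C\|R\|_{C^{2N}(I')}\, s^{2N+\Re(\alpha)} \le C'\|h\|_{C^{2N}(I')}\, s^{2N+\Re(\alpha)}$. Since $R f_s$ is odd, the poles of $\M(R f_s)$ at odd negative integers exactly cancel those of $\Gamma(\tfrac{\alpha+1}{2})$, so $\M'(R f_s)$ is entire and the bound persists under division by the Gamma factor. Adding the $k=N$ summand back (itself of size $s^{\alpha+2N}$ with constant controlled by $\|h\|_{C^{2N}}$) to both the partial sum and the error yields the statement with range $k=0,\ldots,N$.

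The main technical obstacle is calibrating the Taylor degree against the regularity hypothesis of Lemma \ref{IntRem} so that the error constant depends only on $\|h\|_{C^{2N}(I')}$ and not on any higher norm; truncating at degree $2N-1$ and then re-absorbing the $k=N$ contribution is what makes this fit cleanly. The asymptotic-expansion form then follows from the precise estimate by choosing, for each fixed target decay $m\in\N$, the index $N$ large enough that $s^{2N+\Re(\alpha)} \le s^m$ for small $s$.
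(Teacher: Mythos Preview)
Your proof is correct and follows essentially the same approach as the paper: reduce to $h_{\mathrm{even}}f_s$ using the oddness of $f$, Taylor-expand $h_{\mathrm{even}}$, compute the monomial contributions by rescaling (producing the Gamma quotient that becomes the binomial coefficient), and bound the remainder via Lemma~\ref{IntRem}. The only difference is bookkeeping: the paper truncates the Taylor series at degree $2N$ (summing $k=0,\ldots,N$) and applies Lemma~\ref{IntRem} directly, whereas you truncate at degree $2N-1$ and then re-absorb the $k=N$ term into the error; both yield the same estimate.
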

\begin{proof}
	As all even derivatives of $(hf_s)_{odd}$ at $0$ vanish, its Mellin transform  has poles only at odd negative integers. $\Gamma(\frac{\cdot+1}{2})$ also has poles there, so the quotient is defined everywhere.
	
	Write $t^k$ for the function $t\mapsto t^k$. For $k\in \N$ and $\Re(\alpha)\geq 0$, we have 
	\begin{align*}
		\M' (t^k f_s)(\alpha)&=\frac{1}{\Gamma(\frac{\alpha+1}{2})}\intP t^{\alpha+k-1}f\left(\frac{t}{s}\right)dt\\
		&=\frac{1}{\Gamma(\frac{\alpha+1}{2})}s^{\alpha+k}\intP\frac{1}{s}\left( \frac{t}{s}\right)^{\alpha+k-1}f\left(\frac{t}{s}\right)dt\\
		&=\frac{1}{\Gamma(\frac{\alpha+1}{2})}s^{\alpha+k}\intP t^{\alpha+k-1}f(t)dt\\
		&= \frac{1}{\Gamma(\frac{\alpha+1}{2})}\M(f)(\alpha+k)s^{\alpha+k}.\\
	\end{align*}
	By analytic continuation, this holds for all $\alpha\in \C$. Now fix $\alpha\in \C$. As $f$ is odd, we have $(hf_s)_{odd}=h_{even}f_s$.
	We write $(h)_{even}$ in a Taylor expansion
	\[(h)_{even}(t)=\sum\limits_{k=0}^N\frac{1}{(2k)!}h^{(2k)}(0)t^{2k}+ E(t),\]
	where the first $2N$ derivatives of $E$ at $0$ vanish and 
	\[\|E\|_{C^k(I)}\leq C\|h\|_{C^k(I)}\]
	for any $k\geq 2N$.
	We obtain for $N\in \N$ such that $2N\geq-\Re(\alpha)$ :
	\begin{align*}
		 &\M'((hf_s))(\alpha)\\
		 &=\sum\limits_{k=0}^N\frac{1}{(2k)!}h^{(2k)}(0)  \left(\frac{\Gamma(\frac{\cdot+2k+1}{2})\M (t^{2k}f_s)}{\Gamma(\frac{\cdot+2k+1}{2})\Gamma(\frac{\cdot+1}{2})}\right)(\alpha)
		 + \M' ( E(t)f_s)(\alpha)\\
		 &=\sum\limits_{k=0}^N\frac{k!}{(2k)!}h^{(2k)}(0)  \left(\frac{\Gamma(\frac{\cdot+1}{2}+k)}{k!\Gamma(\frac{\cdot+1}{2})}\frac{\M (f_s)(\cdot+2k)}{\Gamma(\frac{\cdot+2k+1}{2})}\right)(\alpha)
		 + \M' (E(t)f_s)(\alpha)\\
		  &=\sum\limits_{k=0}^N\frac{k!}{(2k)!}\binom{\frac{\alpha+2k-1}{2}}{k}h^{(2k)}(0) \M'(f)(\alpha+2k)s^{\alpha+2k}+ \M' (E(t)f_s)(\alpha)\\
	\end{align*}
	Using the Lemma \ref{IntRem} for the remainder term, we get
	\[\frac{1}{\Gamma(\frac{\alpha+1}{2})} \M (E(t)f_s)(\alpha)\leq C\|h\|_{C^{2N}(I)}s^{2N+\Re(\alpha)}\]
	for some $C\in \R$, as desired.
\end{proof}

The coefficients appearing in the next theorem will occur several times throughout this thesis, so we shall give them a name for later reference.
\begin{df}
	\label{dfa}
	For $k,n\in\N$, define
	\[a(k,n):=a(k,n,f,d):=\frac{\pi^\frac{2-d}{2}n!}{4^kk!(2n)!} \binom{k+n+1-\frac{d}{2}}{n} \M'(f)({2k+2n+3-d}).\]
\end{df}
Putting together what we have done so far, we obtain an abstract preliminary version of this sections main result:

\begin{thm}
	\label{Wexp}
	Assume that $K$ is a distribution on $U$ that vanishes outside of $J(x)$ and there are $W_k\in C^\infty(U)$ such that
	\[K\sim \insum{k}W_kR^U(2k+2,x).\]
	Then $w^*(K)$ is well-defined and we have
	\begin{align*}
		w^*(K)[f_s]\stackrel{s\rightarrow 0}{\sim}&\insum{k}\insum{n}a(k,n)\left(\nu_w^{k-\tfrac{d}{2}+1}W_k\circ w\right)^{(2n)}(0)s^{2k+2n+3-d}
	\end{align*}
for $a(k,n)$ as in Definition \ref{dfa}.

	More precisely, if $s$ is small enough such that $f_s\in C_c^\infty (I')$ for $I'\subseteq I$ closed and if we fix $m\in \N$, we have for $N',N\in \N$ large enough the following estimate for the remainder terms:
	\begin{align*}
		&\left|w^*(K)[f_s]-\sum\limits_{k=0}^N\sum\limits_{n=0}^{N'}a(k,n)\left(\nu_w^{k-\tfrac{d}{2}+1}W_k\circ w\right)^{(2n)}(0)s^{2k+2n+3-d}\right|\\
		&\leq C\left(N'\max\limits_{k\leq N'}\Big\|\nu_w^{k-\tfrac{d}{2}+1}W_k\circ w\Big\|_{C^{N'}(I')}+\Big\|\Big(K- \sum\limits_{k=0}^N W_kR^U(2k+2,x)\Big)\circ w\Big\|_{C^{m}(I')}\right)s^m
	\end{align*}
\end{thm}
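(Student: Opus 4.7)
My plan is to truncate the given asymptotic expansion of $K$ at a sufficiently high index $N$, pull back the finitely many remaining terms along $w$ using Proposition \ref{WRint}, apply Lemma \ref{MsExp} to extract the $s$-expansion of each, and separately bound the pullback of the smooth tail using the support hypothesis on $K$.

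First I would check that $w^*(K)[f_s]$ is well-defined. By Proposition \ref{WFhol} combined with the argument from the proof of Proposition \ref{WRint} (where the timelikeness of $w'(0)$ and Lemma \ref{orthotime} ensure that $\tilde w^*\Lambda$ misses the zero section), each $w^*\bigl(W_kR^U(2k+2,x)\bigr)$ makes sense as a distribution on $I$; meanwhile, for any fixed $m$, choosing $N$ large enough forces $R_N := K - \sum_{k=0}^N W_kR^U(2k+2,x)$ to be $C^m$ on $U$, so $R_N\circ w$ is a genuine $C^m$-function on $I$. The decomposition
\[w^*(K)[f_s] = \sum_{k=0}^N w^*\bigl(W_k R^U(2k+2,x)\bigr)[f_s] + (R_N\circ w)[f_s]\]
is then evidently independent of $N$.

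Next I would extract the main terms. Applying Proposition \ref{WRint} to each summand with $\alpha = 2k+2$ gives
\[w^*\bigl(W_kR^U(2k+2,x)\bigr)[f_s] = \frac{\pi^{(2-d)/2}}{4^k k!}\,\M'\bigl((h_k f_s)_{odd}\bigr)(2k+3-d),\]
where $h_k := \nu_w^{k+1-d/2}(W_k\circ w)$. Then Lemma \ref{MsExp} with $\alpha = 2k+3-d$ yields an expansion whose general term, once the prefactor $\pi^{(2-d)/2}/(4^kk!)$ is folded in together with $\frac{n!}{(2n)!}\binom{k+n+1-d/2}{n}\M'(f)(2k+2n+3-d)$, matches exactly $a(k,n)$ from Definition \ref{dfa}. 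The error of Lemma \ref{MsExp} at truncation order $N'$ for the $k$-th summand is controlled by $C\|h_k\|_{C^{2N'}(I')}s^{2k+2N'+3-d}$, so picking $N'$ large enough (uniformly in $k\leq N$) renders all of them $O(s^m)$.

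The last and most delicate step is to bound the smooth tail $(R_N\circ w)[f_s]$. The key observation is that $R_N$ is $C^m$ and vanishes off $J(x)$; since by Proposition \ref{xinbound} the point $x$ lies on the boundary of $J(x)$, it is an accumulation point of the (open) set where $R_N$ is identically zero, which forces every partial derivative of $R_N$ of order $\leq m$ to vanish at $x$. By the chain rule, every derivative of $R_N\circ w$ of order $\leq m$ vanishes at $0$, and Lemma \ref{IntRem} (with $k=m-1$, $\alpha=1$) then gives $|(R_N\circ w)[f_s]|\leq C\|R_N\circ w\|_{C^m(I')}s^m$, exactly the remainder bound claimed. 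This is also the main obstacle: without the Taylor-vanishing at $0$ one would only get the trivial $O(s)$ from $\|f_s\|_{L^1}$, so it is essential to combine the support properties of $K$ with those of the Riesz distributions to transfer differentiability of the remainder into decay in $s$.
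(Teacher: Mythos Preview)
Your proposal is correct and follows essentially the same route as the paper: truncate the expansion at order $N$ so the remainder is $C^m$, apply Proposition \ref{WRint} together with Lemma \ref{MsExp} to each of the finitely many terms, and use that $K$ and the Riesz distributions vanish outside $J(x)$ (with $x\in\partial J(x)$ by Proposition \ref{xinbound}) to force the $C^m$ tail and its derivatives to vanish at $0$, so that Lemma \ref{IntRem} converts differentiability into $O(s^m)$ decay. The identification of the coefficients with $a(k,n)$ and the remainder bookkeeping match the paper's argument.
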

\begin{rem}
	We mainly want to use this with $G_x$ and $V_x^{k,U}$ instead of $K$ and $W_k$. However, we keep this general so that it could also be applied to other expansions, like, for example, that for $G^{\pm m}_x$.
\end{rem}
\begin{proof}
	Without loss of generality, we may assume that $I_f\subseteq I$, otherwise rescale $f$ until it is supported in $I$. Let $s\in (0,1)$. Let $m\in \N$ be arbitrary and let $N\in\N$ be large enough such that
	\[F:=K-\sum\limits_{k=0}^N(W_kR^U(2k+2,x))\in C^m(U).\]
	As both the sum and the remainder have well-defined pull-backs under $w$, the same is true for $K$. As both $K$ and all $R^U(2k+2,x)$ vanish outside of $J(x)$, the same is true for $F$ and thus also for its derivatives. As $x\in \partial J(x)$ by Proposition \ref{xinbound}, this means that all derivatives of order up to $m$ of $F$ vanish at $x$. Therefore the first $m$ derivatives of $w^*F=F\circ w$ at 0 vanish.
	By Lemma \ref{IntRem}, we can conclude that
	\[|w^*(F)[f_s]|=\Big|\intR F\circ w(t)f_s(t)dt\Big|\leq C\|F\circ w\|_{C^{m}(I_f)}s^m.\]
	For each of the summands, we can apply Proposition \ref{WRint} and Lemma \ref{MsExp} to obtain for $N'\geq \frac{m+d}{2}$ and $k<N$:
	\begin{align*}
		&w^*(W_kR^U(2k+2,x))[f_s]\\
		&=\frac{\pi^\frac{2-d}{2}}{4^kk!} \M'\left(\left((\nu_w^{k-\tfrac{d}{2}+1}W_k\circ w)f_s\right)_{odd}\right)({2k+3-d})\\
		&=\frac{\pi^\frac{2-d}{2}}{4^kk!} \sum\limits_{n=0}^{N'}\frac{n!}{(2n)!}\binom{k+n+1-\frac{d}{2}}{n}\left(\nu_w^{k-\tfrac{d}{2}+1}W_k\circ w\right)^{(2n)}(0)\M'(f)({2k+2n+3-d})s^{2k+2n+3-d}+F_k\\
	\end{align*}
with 
\[|F_k|\leq C\|\nu_w^{k-\tfrac{d}{2}+1}W_k\circ w\|_{C^{2N'}(I_f)}s^{2N'+2k+3-d}\leq C\|\nu_w^{k-\tfrac{d}{2}+1}W_k\circ w\|_{C^{2N'}(I_f)}s^{m}.\]
	We then have
	\begin{align*}
		w^*(K)[f_s]&=\sum\limits_{k=0}^Nw^*(W_kR^U(2k+2,x))[f_s]+w^*(F)[f_s]\\
		&=\sum\limits_{k=0}^N\frac{\pi^\frac{2-d}{2}}{4^kk!} \sum\limits_{n=0}^{N'}\frac{n!}{(2n)!}\binom{k+n+1-\frac{d}{2}}{n}\left(\nu_w^{k-\tfrac{d}{2}+1}W_k\circ w\right)^{(2n)}(0)\\&\cdot \M'(f)({2k+2n+3-d})s^{2k+2n+3-d}+F_k+w^*(F)[f_s]\\
		&=\sum\limits_{k=0}^N\sum\limits_{n=0}^{N'}\frac{\pi^\frac{2-d}{2}n!}{4^kk!(2n)!} \binom{k+n+1-\frac{d}{2}}{n}\left(\nu_w^{k-\tfrac{d}{2}+1}W_k\circ w\right)^{(2n)}(0)\\&\cdot \M'(f)({2k+2n+3-d})s^{2k+2n+3-d}+O(s^m).
	\end{align*}
As $m$ was arbitrary, this concludes the proof, the more precise estimate following from the estimates for the error terms made above.
\end{proof}
We want to apply this theorem with $G_x$ and the Hadamard coefficients. This works immediately if $P$ acts on scalar functions. In general, we get problems with ``integrating'' a function that takes values in different fibres of a vector bundle. To solve this, we compose with an arbitrary bundle coordinate. We then get this chapter's main result:

\begin{thm}
	\label{intexp}
	Assume that $U$ is a GE set. Let $A:E\otimes E_x^*\rightarrow\C$ be an arbitrary bundle coordinate. Then $w^*(AG_x)$ is well-defined and we have
	\begin{align*}
		w^*(A(G_x))[f_s]\stackrel{s\rightarrow 0}{\sim}&\insum{k}\insum{n}a(k,n)\left(\nu_w^{k-\tfrac{d}{2}+1}AV^{k,U}_x\circ w\right)^{(2n)}(0)s^{2k+2n+3-d}
	\end{align*}
with $a(k,n)$ as in Definition \ref{dfa}.

More precisely, for $s$ small enough such that $f_s\in C_c^\infty (I')$ with $I'\subseteq I$ closed and for every $m\in \N$, if we choose $N,N'$ large enough, we can estimate the remainder terms by:
\begin{align*}
	&\left| w^*(A(G_x))[f_s]-\sum\limits_{k=0}^{N}\sum\limits_{n=0}^{N'}a(k,n)\left(\nu_w^{k-\tfrac{d}{2}+1}AV^{k,U}_x\circ w\right)^{(2n)}(0)s^{2k+2n+3-d}\right|\\
	&\leq C\left(\max\limits_{k\leq N'}\Big\|\nu_w^{k-\tfrac{d}{2}+1}AV^{k,U}_x\circ w\Big\|_{C^{N'}(I)}+\Big\|A\Big(G_x- \sum\limits_{k=0}^N V^{k,U}_xR^U(2k+2,x)\Big)\circ w\Big\|_{C^{m}(I')}\right)s^m.
\end{align*}

\end{thm}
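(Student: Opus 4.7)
The plan is to derive this as essentially a direct corollary of Theorem \ref{Wexp}, with $K$ taken to be $AG_x$ and $W_k$ taken to be $AV^{k,U}_x$. The bulk of the work is to check that these choices satisfy the hypotheses of Theorem \ref{Wexp}, after which the conclusion transfers verbatim.

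First I would verify the starting Hadamard expansion for $G_x$ itself. Since $U$ is GE, Theorem \ref{PowExp} with $m=1$ applied to both $G^+_x|_U$ and $G^-_x|_U$ and subtracting yields
\[
G_x|_U \;\sim_x\; \insum{k} V^{k,U}_x R^U(2k+2,x),
\]
and the support property $\supp(G^\pm_x)\subseteq J_\pm(x)$ from Proposition \ref{extendedG} gives $\supp(G_x)\subseteq J(x)$, so the "vanishes outside $J(x)$" hypothesis of Theorem \ref{Wexp} holds. Next I would apply $A$ to both sides. Because $A\colon E\otimes E_x^* \to \C$ is a smooth fibrewise linear map acting only in the bundle component (and thus commutes with multiplication by scalar distributions such as the Riesz distributions and with taking asymptotic expansions in the sense of Definition \ref{dfasympt}, since composition with $A$ preserves $C^m$-regularity), we obtain
\[
A(G_x)|_U \;\sim_x\; \insum{k} (A V^{k,U}_x)\, R^U(2k+2,x).
\]
Here each $A V^{k,U}_x$ is a smooth scalar function on $U$, because $V^{k,U}_x\in \Gamma(E\otimes E_x^*|_U)$ by \ref{Vdef} and $A$ is smooth and fibrewise linear.

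With these identifications in hand I would invoke Theorem \ref{Wexp} with $K := AG_x$ and $W_k := A V^{k,U}_x$. The well-definedness of $w^*(AG_x)$ follows from the same wavefront argument as in the proof of \ref{WRint}: $A G_x$ lies in the same distribution space as the scalar Riesz distributions (times smooth functions), and the timelike curve $w$ pulls these back continuously because the conormal of $w$ is spacelike and thus disjoint from $\Lambda$. The main asymptotic statement and the more precise error estimate then follow immediately by substituting $W_k = A V^{k,U}_x$ into the conclusion of Theorem \ref{Wexp}; the definition of the coefficients $a(k,n)$ is unchanged, and the two contributions to the remainder (the truncated-sum norms of $\nu_w^{k-d/2+1}AV^{k,U}_x\circ w$ and the $C^m$-norm of $A(G_x - \sum_{k\leq N} V^{k,U}_x R^U(2k+2,x))\circ w$) are exactly the ones asserted.

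There is no genuinely hard step here; the only things to double-check are somewhat bookkeeping: (i) that $A$ really does commute with the bundle-valued expansion in the sense of preserving the differentiability-order error bound (which is immediate because $A$ is smooth, so composition with $A$ is continuous between the relevant $C^m$-spaces), and (ii) that applying $A$ to the scalar-times-bundle-valued products $V^{k,U}_x R^U(2k+2,x)$ really does produce $(A V^{k,U}_x)R^U(2k+2,x)$ — this follows from the conventions of \ref{identify} and the identity $A(g\eta)=(A\circ g)\eta$ established in subsection \ref{wfM} just before Definition \ref{dfwfmfd}. Once those two routine points are recorded, invoking Theorem \ref{Wexp} closes the argument.
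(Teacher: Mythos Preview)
Your proposal is correct and follows exactly the paper's approach: apply $A$ to the Hadamard expansion of $G_x|_U$ to obtain a scalar expansion $AG_x\sim\insum{k}(AV^{k,U}_x)R^U(2k+2,x)$, then invoke Theorem \ref{Wexp} with $K=AG_x$ and $W_k=AV^{k,U}_x$. The paper's proof is just a terser version of what you wrote, and your extra verification of the hypotheses (support in $J(x)$, that $A$ pulls out of the scalar Riesz factors, that well-definedness of the pullback is already part of the conclusion of \ref{Wexp}) is all accurate bookkeeping.
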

\begin{proof}
	As $A$ is linear, preserves differentiability orders and we can pull out scalar distributions, we have on $U$
	\[AG_x\sim\insum{k} (A(V^{k,U}_x))R^U(2k+2,x).\]
	Thus we can apply \ref{Wexp} to obtain the desired result.
\end{proof}
Now we will need the remainder estimates that we have been dragging around with us. These give us the uniformity we need to integrate this asymptotic expanson, giving the following generalization of the result:
\begin{thm}
	\label{smearedexp}
	Let $M'\subset M$ be any submanifold and let $\chi\in C_c^\infty(M')$ be arbitrary. Let $A$ be any bundle coordinate on $E\boxtimes E^*$  and let $A_x$ be its restriction to $E\otimes E^*_x$.
	Let $(w_x)_{x\in M'}$ be a family of timelike curves defined on an interval $I$ containing $0$ such that $w_x(0)=x$ and
	\[\phi(x,t):= w_x(t)\]
	is smooth. Then we have
	\begin{align*}
		\int\limits_{M'} \chi(x)w_x^*(A_xG_x)[f_s]dx\stackrel{s\rightarrow 0}{\sim}&\insum{k}\insum{n}a(k,n)\int\limits_{M'}\chi(x)\left(\nu_{w_x}^{k-\tfrac{d}{2}+1}A_xV^{k}_x\circ w_x\right)^{(2n)}(0)dx\ s^{2k+2n+3-d}
	\end{align*}
	with $a(k,n)$ as in Definition \ref{dfa}.
	\end{thm}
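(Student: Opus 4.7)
The plan is to reduce to the pointwise asymptotic expansion of Theorem \ref{intexp} and use its refined remainder bound to pass from pointwise control to uniform control on $\supp(\chi)$, so that the remainder can be integrated against $\chi$ while preserving the expansion's decay in $s$.

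First I would cover the compact set $\supp(\chi)$ by finitely many GE neighborhoods $U_1,\dots,U_\ell$, invoking Proposition \ref{GEN}, pick a subordinate smooth partition of unity $\rho_1,\dots,\rho_\ell$ with $\sum_i \rho_i = 1$ on $\supp(\chi)$, and split the integral $\int_{M'} \chi(x)w_x^*(A_xG_x)[f_s]\,dx$ into the corresponding $\ell$ pieces. On each piece $K_i := \supp(\chi\rho_i) \subseteq U_i$, smoothness of the map $(x,t)\mapsto w_x(t)$ together with compactness of $K_i$ provides some $\epsilon_i > 0$ such that $w_x(t) \in U_i$ for all $x \in K_i$ and $|t|\leq \epsilon_i$. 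Restricting to $s$ small enough that $\supp(f_s) \subseteq [-\epsilon,\epsilon]$ with $\epsilon := \min_i \epsilon_i$, Theorem \ref{intexp} applies pointwise for each $x\in K_i$ with the choice $U=U_i$.

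The heart of the argument is to promote the refined bound in Theorem \ref{intexp} to a bound uniform in $x\in K_i$. Both of the quantities
\[\Big\|\nu_{w_x}^{k-\tfrac{d}{2}+1}A_xV^{k,U_i}_x \circ w_x\Big\|_{C^{N'}(I')}\quad\text{and}\quad\Big\|A\Big(G_x - \sum_{k=0}^N V^{k,U_i}_xR^{U_i}(2k+2,x)\Big)\circ w_x\Big\|_{C^{m}(I')}\]
are to be viewed as continuous functions of $x\in K_i$: the first because $V^{k,U_i}_x(y)$ is jointly smooth in $(y,x)$ by Definition/Proposition \ref{Vdef}, while $\nu_{w_x}(t)$ is jointly smooth in $(t,x)$, so the composition is smooth on $I'\times K_i$; the second because the joint-regularity interpretation of $\sim_x$ from Definition \ref{dfasympt} ensures that for $N$ large enough the bracketed remainder is jointly $C^{m'}$ on $U_i\times U_i$, and this regularity survives composition with the smooth map $(x,t)\mapsto(w_x(t),x)$. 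Compactness of $I'\times K_i$ then yields a uniform bound $|w_x^*(A_xG_x)[f_s] - \sum_{k\leq N,n\leq N'}(\cdots)|\leq C_i s^m$ on $K_i$, with the constant $C_i$ independent of $x$.

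Multiplying by $\chi\rho_i$ and integrating over $M'$, the remainder contributes at most $C_i\int|\chi\rho_i|\,dx\cdot s^m = O(s^m)$, while the main terms produce the claimed asymptotic coefficients but with $V^{k,U_i}_x$ in place of $V^k_x$. Summing over $i$ and using that the derivatives at $t=0$ of $V^{k,U_i}_x\circ w_x$ depend only on the germ of the Hadamard coefficient at the diagonal --- and are therefore independent of the choice of $U_i$, by the corollary at the end of Section \ref{Rieszhadamard} --- the coefficients may be written unambiguously as $\bigl(\nu_{w_x}^{k-\tfrac{d}{2}+1}A_xV^k_x\circ w_x\bigr)^{(2n)}(0)$, giving the stated expansion. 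The main obstacle is the uniformity step, which rests entirely on the joint smoothness in $(y,x)$ of the Hadamard coefficients and on the joint-regularity built into the remainder of Theorem \ref{PowExp} through the $\sim_x$ notation; everything else is bookkeeping.
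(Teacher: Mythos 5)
Your proposal is correct and follows essentially the same route as the paper: a finite cover of $\supp(\chi)$ by GE neighborhoods via Proposition \ref{GEN}, a restriction to $s$ small enough that the curves stay inside the chosen neighborhoods, promotion of the refined remainder bound of Theorem \ref{intexp} to a uniform bound via joint smoothness and compactness, and finally the observation that the diagonal germ of $V^{k,U_i}_x$ is independent of $U_i$. The only cosmetic difference is that you use a partition of unity where the paper simply sums the error over the (possibly overlapping) cover, which if anything makes your bookkeeping slightly cleaner.
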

\begin{proof}
	Any $x\in \supp(\chi)$ has a GE neighborhood $U_x$ in $M$ by Proposition \ref{GEN}. As $\phi^{-1}(U)$ is open, we can find a relatively compact neighborhood $O_x$ of $x$ in $M'$ and $s_x>0$ such that $\phi(O_x\times s_xI_f)\subseteq U_x$. As $\supp(\chi)$ is compact, there is a finite subcover $(O_x)_{x\in J}$ of $\supp(\chi)$. Choose \[s_0\leq\min\limits_{x\in J}s_x,\] such that $s_0 I_f\subseteq I$. In the following, assume $s<s_0$. 
	By choice of $s_0$, we see that for any $t\in s_0 I_f$ and $y\in \supp(\chi)$, $w_y(t)$ is well-defined and inside a GE neighborhood of $y$ independent of $t$. Restricting $w_y$ to a neighborhood of $s_0I_f$ that still gets mapped to this $GE$ neighborhood, we can apply the local result at each point.
	
	Let $m\in \N$ be arbitrary and choose sufficiently large $N$ and $N'$ as in Theorem \ref{intexp}. For any $i\in J$, we can then estimate
	\begin{align*}
		&\int\limits_{O_i}|\chi(x)|\Big| w_x^*(A_xG_x)[f_s]-\sum\limits_{k=0}^{N}\sum\limits_{n=0}^{N'}a(k,n)\left(\nu_{w_x}^{k-\tfrac{d}{2}+1}A_xV^{k,U_i}_x\circ w_x\right)^{(2n)}(0)\ s^{2k+2n+3-d}\Big|dx\\
		&\leq\int\limits_{O_i} |\chi(x)|C\Big(N'\max\limits_{k\leq N'}\|\nu_{w_x}^{k-\tfrac{d}{2}+1}A_xV^{k,U_i}_x\circ w_x\|_{C^{N'}(s_0I_f)}\\&+\|A_x(G_x- \sum\limits_{k=0}^N V^{k,U_i}_xR^{U_i}(2k+2,x))\circ w_x\|_{C^{m}(s_0I_f)}\Big)dx\ s^m\\
		&\leq C' s^m.
	\end{align*}
In the last estimate, we used that $O_i$ is relatively compact and thus for any $C^K$-function  $g\colon I\times O_i\rightarrow \C$, we have 
\[\sup_{x\in O_i}\|g(\cdot,x)\|_{C^K(s_0I_f)}\leq \|g\|_{C^K(s_0I_f\times O_i)}<\infty.\]
As the $O_i$ cover $\supp(\chi)$, we can conclude
\begin{align*}
	&\Big|\int\limits_{M'}\chi(x) w_x^*(A_xG_x)[f_s]dx-\sum\limits_{k=0}^{N}\sum\limits_{n=0}^{N'}a(k,n)\int\limits_{M'}\chi(x)\left(\nu_{w_x}^{k-\tfrac{d}{2}+1}A_xV^{k,U_i}_x\circ w_x\right)^{(2n)}(0)dx\ s^{2k+2n+3-d}\Big|\\
	&\leq\sum\limits_{i\in I}\int\limits_{O_i}|\chi(x)|\Big|\chi(x) w_x^*(A_xG_x)[f_s]-\sum\limits_{k=0}^{N}\sum\limits_{n=0}^{N'}a(k,n)\left(\nu_{w_x}^{k-\tfrac{d}{2}+1}A_xV^{k,U_i}_x\circ w_x\right)^{(2n)}(0)\ s^{2k+2n+3-d}\Big|dx\\
	&\leq C s^m.
\end{align*}
As $m$ was arbitrary, this concludes the proof.
\end{proof}
\newpage
\chapter{Extracting Hadamard coefficients}
\label{Hadamextract}
We now seek to use the result of the previous chapter to obtain the Hadamard coefficients on the diagonal, i.e. $V^{k}_x(x)$. The problem with the asymptotic expansion we derived is that we cannot distinguish between the contribution coming from the $k$-th Hadamard coefficient and that from the $2j$-th derivative of the $(k-j)$-th coefficient. This problem does not arise from the choice of action-like function we construct, but is inherent in the asymptotic expansion we are using.

The problem is similar to that of determining the coefficients of a power series 
\[\insum{k}a_k(x)x^k\]
where the coefficients are themselves depending on $x$. Not even the values of the coefficients at $x=0$ (except for the first) are determined by this expansion, as we may always add some function $g$ to one of the coefficients and subtract $xg$ from the previous one without changing the sum.

This problem also occurs with the Hadamard expansion: As
\[\Gamma R(2k,x)=2k(2k-d+2)R(2k+2,x),\]
we may (for $k\notin \{0, \frac{d}{2}-1\}$) add some $g$ to $V^{k}_x$ and subtract $\frac{\Gamma g}{2k(2k-d+2)}$ from $V^{k+1}_x$ without changing the sum in the asymptotic expansion of $G_x$. Thus $V^{0}$ and $V^{\frac{d}{2}-1}$ (in case $d$ is even) are the only Hadamard coefficients whose value at the diagonal we may hope to obtain by using only this expansion (and those we actually do obtain from our procedure).

We thus need to include some extra information about the Hadamard coefficients, if we want to succeed. We do this  by looking at the Hadamard expansion of $P-z$ for variable $z\in \C$ and using Proposition \ref{Vz} to express the Hadamard coefficients of $P-z$ in terms of those of $P$. An alternative approach would be to use the asymptotic expansion for powers of $G^\pm$ instead.
\section{Hadamard coefficients for $k<\frac{d}{2}$ and $d$ even}
\label{smallk}
In case $d$ is even, the coefficient $V^{\frac{d}{2}-1}_x(x)$ can be read off fairly immediately, either directly from the Green's kernel or from the expansion of the action analogue that we constructed. Knowing this coefficient for any $P-z$ also gives us all coefficients of lower order. We shall elaborate on this first, before we continue with deriving our more general but also more complicated formula for arbitrary coefficients. A further advantage of the formula in this special case is that it works for arbitrary timelike paths, while the more general formula only works for geodesics.

 We will need some notation for the coefficients to specific monomials in a polynomial or an asymptotic expansion.
\begin{df}
	\label{monocoeff}
	For a function $F$ with an asymptotic expansion
	\[F(s)\stackrel{s\rightarrow0}{\sim}\insum k a_k s^{b_k}\]
	with $(b_k)$ strictly increasing, we define
	\[F[[b_k]]:=F(s)[[s^{b_k}]]:=a_k.\]
	If instead $F$ depends on two parameters, we define
	\[F[[\alpha,\beta]]:=F(s,z)[[s^\alpha z^\beta]]:=(F(s,z)[[s^\alpha]])[[z^\beta]],\]
	whenever the right hand side is defined.
\end{df}
Note that polynomials are a special case of an asymptotic expansion (with almost all summands equal to zero). As the asyptotic expansion used in this definition is unique (up to adding summands with zero coefficients), $F[[b_k]]$ is well-defined. It can be calculated by looking at the residues of the Mellin transform of $F$. In case all $b_k$ are of the form $b_k=b_0+n_k$ for $n_k\in \N$ (as will always be the case here), they can be calculated more easily as
\[F[[b_0+n]]=\frac{1}{n!}\partial_s^n s^{-b_0}F(s)|_{s=0}.\]
We now proceed to show how the first few hadamard coefficients in even dimensions can be computed from the Green's kernel.
\begin{prop}
	Assume that $d$ is even and $w$ is a future oriented timelike curve contained in a GE subset $U$ of $M$ with $w(0)=x$. Then for $t\neq 0$, $G_x$ is a continuous function around $w(t)$ and we have
	\[V^{\frac{d}{2}-1}_x(x)=\lim\limits_{t\rightarrow 0}\ 2\sign(t)(4\pi)^{\frac{d}{2}-1}(\tfrac{d}{2}-1)!G_x(w(t)).\]
	More generally, we have for $k<\frac{d}{2}$
	\[V_x^k(x)=2(\tfrac{d}{2}-1-k)!k!(4\pi)^{\frac{d}{2}-1}\Big(\lim\limits_{t\rightarrow 0}\ \sign(t)G_{P-z,x}(w(t))\Big)[[z^{\frac{d}{2}-1-k}]].\]
\end{prop}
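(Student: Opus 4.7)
The plan is to insert the Hadamard expansion of $G_{P-z,x}$ into $\sign(t)\,G_{P-z,x}(w(t))$, observe that in even dimension almost every summand vanishes either identically or in the limit $t\to 0$, and then invert Proposition \ref{Vz} to recover the individual $V^k_x(x)$. First I would note that for $t \neq 0$ small, $w(t)$ lies in the chronological future or past of $x$ (since $w$ is future oriented and timelike), hence $\Gamma_x^U(w(t))>0$ and $w(t)$ avoids the light cone. Since every term $V^{k,U}_x(z) R^U(2k+2,x)$ in the Hadamard expansion has singular support inside the light cone of $x$ and the remainder can be made arbitrarily smooth by including more terms, $G_{P-z,x}$ is smooth in a neighbourhood of $w(t)$; this gives the first claim.

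The core calculation is the evaluation of $R^U(2k+2,x)$ at $w(t)$. On the open set $\{\Gamma_x^U>0\}$ the function $y \mapsto c_\alpha \Gamma_x^U(y)^{(\alpha-d)/2}\sigma(y)$ is entire in $\alpha$ (because $1/\Gamma$ is entire and $\Gamma_x^U(y)^{(\alpha-d)/2}$ is entire for fixed $y$ with $\Gamma_x^U(y)>0$), and coincides with $R^U_\pm(\alpha,x)$ for $\Re(\alpha)\geq d$; by uniqueness of holomorphic continuation in $\alpha$, the formula persists for all $\alpha$ as a smooth function on $\{\Gamma_x^U>0\}$, so
\[R^U(2k+2,x)(w(t)) = c_{2k+2}\,\Gamma_x^U(w(t))^{k+1-d/2}\,\sign(t).\]
Inspecting $c_\alpha = 2^{1-\alpha}\pi^{(2-d)/2}/(\Gamma(\alpha/2)\Gamma((\alpha-d+2)/2))$ at $\alpha=2k+2$ shows that $c_{2k+2}=0$ for $k\in\{0,\dots,d/2-2\}$, since then $(\alpha-d+2)/2=k+2-d/2$ is a non-positive integer and $1/\Gamma$ vanishes there; for $k=d/2-1$ we have $c_d\neq 0$ and $\Gamma_x^U(w(t))^{0}=1$, while for $k\geq d/2$ the factor $\Gamma_x^U(w(t))^{k+1-d/2}$ tends to $0$ as $t\to 0$. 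Combining with the Hadamard expansion $G_{P-z,x}|_U \sim_x \sum_k V^{k,U}_x(z) R^U(2k+2,x)$ obtained by applying Theorem \ref{PowExp} to $P-z$, and noting that the $C^m$-remainder is supported in $J(x)$ and hence vanishes to order $m$ at $x\in\partial J(x)$ by Proposition \ref{xinbound}, only the $k=d/2-1$ term survives the limit and
\[\lim_{t\to 0}\sign(t)\,G_{P-z,x}(w(t)) = c_d\,V^{d/2-1}_x(z)(x).\]

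To conclude, I would apply Proposition \ref{Vz}, which expands $V^{d/2-1}_x(z)(x) = \sum_{m=0}^{d/2-1}\binom{d/2-1}{m}z^m V^{d/2-1-m}_x(x)$ as a polynomial in $z$ of degree $d/2-1$; the coefficient of $z^{d/2-1-k}$ equals $\binom{d/2-1}{k}V^k_x(x)$. Solving for $V^k_x(x)$ and simplifying the prefactor using $c_d = 2^{1-d}\pi^{(2-d)/2}/(d/2-1)!$ and $\binom{d/2-1}{k}^{-1}=k!(d/2-1-k)!/(d/2-1)!$ yields $(c_d\binom{d/2-1}{k})^{-1} = 2(4\pi)^{d/2-1}(d/2-1-k)!\,k!$, matching the claim. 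The main obstacle is really the identification of the single surviving term: one needs both the holomorphic-continuation argument (promoting $c_\alpha\Gamma^{(\alpha-d)/2}\sigma$ to the actual pointwise value of the Riesz distribution for all $\alpha$ away from the light cone) and the combinatorial coincidence that $c_{2k+2}$ vanishes precisely in the range $k<d/2-1$; once these are in place, together with the support property of the remainder, the rest of the argument is algebraic.
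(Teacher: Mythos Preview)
Your proof is correct and follows essentially the same route as the paper. The only minor differences are presentational: the paper cites the fact (from \cite{BGP}, Proposition 1.4.2(8)) that for even $n<d$ the distributions $R^U(n,x)$ are supported on $\partial J^\pm(x)$, whereas you derive this directly via holomorphic continuation and the vanishing of $c_{2k+2}$ for $k<d/2-1$; and the paper absorbs the terms with $k\geq d/2$ into the continuous remainder $E$ rather than keeping them and letting $\Gamma_x^U(w(t))^{k+1-d/2}\to 0$ as you do.
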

\begin{proof}
	We have on $U$
	\[G_x=\sum\limits_{k=0}^{\frac{d}{2}-1} V_x^kR^U(2k+2,x)+E\]
	for some continuous function $E$ (it suffices to sum up to $\frac {d}{2}-1$, as all higher order Riesz distributions are continuous). For $n<d$ even, the Riesz distributions $R^U(n,x)$ (for even $d$) are supported on the boundary of $J^\pm$ (see \cite{BGP}, Proposition 1.4.2 (8)). $w(t)$ for $t\neq 0$ is in the timelike future or past of $x$, i.e. the interior of $J^\pm$ (recall that by \ref{exJ}, the causal and timelike futures of $x$ are the images of the corresponding cones in $T_xM$, so everything works out). Thus all summands for $k<\frac{d}{2}-1$ in the expansion vanish around $w(t)$. 
	As
	\[R^U(d,x)=c_d\Gamma^0(\mathbbm{1}_{J_+(x)}-\mathbbm{1}_{J_-(x)})=\frac{(4\pi)^{1-\frac{d}{2}}}{2(\frac{d}{2}-1)!}(\mathbbm{1}_{J_+(x)}-\mathbbm{1}_{J_-(x)})\]
	is constant in the future/past of $x$ (and thus around $w(t)$), we can conclude that, in a neighborhood of $w(t)$, $G_x$ coincides with the continuous function
	\[\frac{(4\pi)^{1-\frac{d}{2}}}{2(\tfrac{d}{2}-1)!}\sign(t)V_x^{\frac{d}{2}-1}+E.\]
	As $E$ has to be zero outside of $J(x)$ (because the same is true for $G_x$ and the Riesz distributions), it vanishes at $x$. Since $E$ is continuous, we can conclude that
	\begin{align*}
	\lim\limits_{t\rightarrow 0}\ \sign(t)G_x(w(t))&=\lim\limits_{t\rightarrow 0}\frac{(4\pi)^{1-\frac{d}{2}}}{2(\tfrac{d}{2}-1)!}V_x^{\frac{d}{2}-1}(w(t))+\sign(t)E(w(t))\\
	&=\frac{(4\pi)^{1-\frac{d}{2}}}{2(\tfrac{d}{2}-1)!}V_x^{\frac{d}{2}-1}(x).
	\end{align*}
	This implies the first claim. 
	
	Applying this for $P-z$ with $z\in \C$ and using \ref{Vz}, we get for $k<\frac{d}{2}$
	\begin{align*}&\Big(\lim\limits_{t\rightarrow 0}\ \sign(t)G_{P-z,x}(w(t))\Big)[[z^{\frac{d}{2}-1-k}]]\\
	&=\frac{(4\pi)^{1-\frac{d}{2}}}{2(\frac{d}{2}-1)!}\Big(V_x^k(z)(x)\Big)[[z^{\frac{d}{2}-1-k}]]\\
	&=\frac{(4\pi)^{1-\frac{d}{2}}}{2(\frac{d}{2}-1)!}\Big(\sum\limits_{n=0}^{\frac{d}{2}-1}\binom{\frac{d}{2}-1}{n}z^{\frac{d}{2}-1-n}V_x^n(x)\Big)[[z^{\frac{d}{2}-1-k}]]\\
	&=\frac{(4\pi)^{1-\frac{d}{2}}}{2(\frac{d}{2}-1)!}\binom{\frac{d}{2}-1}{k}V_x^k(x)\\
	&=\frac{(4\pi)^{1-\frac{d}{2}}}{2(\frac{d}{2}-1-k)!k!}V_x^k(x).\\
	\end{align*}
	Thus we have
	\[V_x^k(x)=2(\tfrac{d}{2}-1-k)!k!(4\pi)^{\frac{d}{2}-1}\Big(\lim\limits_{t\rightarrow 0}\ \sign(t)G_{P-z,x}(w(t))\Big)[[z^{\frac{d}{2}-1-k}]].\qedhere\]
\end{proof}
We also get an analogous formula from the asymptotic expansion derived in the previous chapter.
\begin{thm}
	\label{smallkthm}
	Assume that $d$ is even, $\M'(f)(1)\neq 0$ and the assumptions of Theorem \ref{smearedexp} hold. Then we have for $k<\tfrac{d}{2}$:
	\[\int\limits_{M'}\chi(x)A_xV^{k}_x(x)dx=\frac{(4\pi)^{\frac{d}{2}-1}{(\frac{d}{2}-1-k)}!k!}{\M'(f)(1)}\int\limits_{M'}\chi(x)w_x^*(A_xG_{P-z,x})[f_s]dx[[s^1z^{\frac{d}{2}-1-k}]].\]
\end{thm}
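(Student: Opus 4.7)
The strategy is to apply Theorem \ref{smearedexp} not to $P$ itself but to the family $P-z$ for $z\in\C$, which is again normally hyperbolic, and then pick out a single coefficient in the resulting double asymptotic expansion. Writing $V_x^k(z)$ for the Hadamard coefficients of $P-z$, Theorem \ref{smearedexp} applied to $P-z$ yields
\begin{align*}
    \int_{M'}\chi(x)w_x^*(A_xG_{P-z,x})[f_s]dx \sim \sum_{k,n}a(k,n)\int_{M'}\chi(x)\bigl(\nu_{w_x}^{k-\frac{d}{2}+1}A_xV_x^k(z)\circ w_x\bigr)^{(2n)}(0)dx\, s^{2k+2n+3-d}.
\end{align*}
I would fix my attention on the coefficient in front of $s^1$, which by $2k+2n+3-d=1$ corresponds exactly to the pairs $(k,n)$ with $k+n=\tfrac{d}{2}-1$.

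Here comes the key simplification, and the reason this easy formula only works for $k<\tfrac{d}{2}$: the factor $\binom{k+n+1-\frac{d}{2}}{n}$ inside $a(k,n)$ becomes $\binom{0}{n}$, which by the convention adopted in the preliminaries vanishes for every positive integer $n$. Hence only the term $n=0$, $k=\tfrac{d}{2}-1$ survives in the $s^1$-coefficient, the $(2n)$-th derivative collapses to evaluation at $0$, and the $\nu_{w_x}$-factor reduces to $\nu_{w_x}^0=1$, which crucially removes all dependence on the curve $w_x$ being a geodesic. Evaluating $a(\tfrac{d}{2}-1,0)=\frac{(4\pi)^{1-d/2}\,\M'(f)(1)}{(\tfrac{d}{2}-1)!}$, the coefficient of $s^1$ is therefore
\begin{equation*}
    \int_{M'}\chi(x)w_x^*(A_xG_{P-z,x})[f_s]dx\,[[s^1]] = \frac{(4\pi)^{1-d/2}\M'(f)(1)}{(\tfrac{d}{2}-1)!}\int_{M'}\chi(x)A_xV_x^{\frac{d}{2}-1}(z)(x)dx.
\end{equation*}

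The last step is to insert Proposition \ref{Vz}, which expresses $V_x^{\frac{d}{2}-1}(z)=\sum_{m=0}^{d/2-1}\binom{\frac{d}{2}-1}{m}z^m V_x^{\frac{d}{2}-1-m}$; this is a genuine polynomial in $z$, so taking the coefficient of $z^{\frac{d}{2}-1-k}$ is legitimate and gives $\binom{\frac{d}{2}-1}{k}V_x^k(x)=\frac{(\tfrac{d}{2}-1)!}{(\tfrac{d}{2}-1-k)!k!}V_x^k(x)$. Combining the two factors, the $(\tfrac{d}{2}-1)!$ cancels and one is left with the claimed identity after solving for $\int_{M'}\chi(x)A_xV_x^k(x)dx$. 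The only thing one has to double-check is that the outer bracket $[[s^1]]$ in the hypothesis of the theorem commutes with the inner bracket $[[z^{\frac{d}{2}-1-k}]]$, which is immediate since the $z$-dependence of the $s^1$-coefficient is polynomial of bounded degree $\tfrac{d}{2}-1$ and the asymptotic expansion in $s$ is meant in the sense of Definition \ref{dfasympt} applied coefficient-wise. The main (modest) obstacle is thus purely bookkeeping: making sure the $\binom{0}{n}$ vanishing is rigorously invoked and that the uniformity in $z$ of the remainder estimates supplied by Theorem \ref{smearedexp} is good enough to legitimise the double extraction, but none of this is hard.
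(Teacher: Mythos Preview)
Your proof is correct and follows essentially the same route as the paper: apply Theorem \ref{smearedexp} to $P-z$, use the vanishing $\binom{0}{n}=0$ for $n>0$ to isolate the single surviving term $(k,n)=(\tfrac{d}{2}-1,0)$ in the $s^1$-coefficient, then insert Proposition \ref{Vz} and read off the $z^{\frac{d}{2}-1-k}$-coefficient. Your explicit remark that the $\nu_{w_x}$-exponent becomes $0$ (so the curves need not be geodesics) is a nice clarification that the paper leaves implicit.
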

\begin{rem}
	Taking $M'=\{x\}$ (and $\chi=1$) we obtain the pointwise result
	\[A_xV^{k}_x(x)=\frac{(4\pi)^{\frac{d}{2}-1}{(\frac{d}{2}-1-k)}!k!}{\M'(f)(1)}\big(w_x^*(A_xG_{P-z,x})[f_s]\big)[[s^1z^{\frac{d}{2}-1-k}]].\]
\end{rem}
\begin{proof}
	For integer $n>0$ and $k+n=\tfrac{d}{2}-1$, we have
	\[\binom{k+n+1-\frac{d}{2}}{n}=\binom{0}{n}=0\]
	and thus 
	\[a_{k,n}=0.\]
	This means that the only summand in the expansion of Theorem \ref{smearedexp} that gives a non-zero contribution of order $s^{1}$ (corresponding to $n+k=\frac{d}{2}-1$) is the summand for $k=\tfrac{d}{2}-1$ and $n=0$. We thus obtain from \ref{smearedexp}:
	\begin{align*}
		\int\limits_{M'}\chi(x)w_x^*(A_xG_x)[f_s]dx[[s^1]]&=a(\tfrac{d}{2}-1,0)\int\limits_{M'}\chi(x)A_xV^{\frac{d}{2}-1}_x(w_x(0))dx\\
		&=\frac{\M'(f)(1)}{(4\pi)^{\frac{d}{2}-1}{(\frac{d}{2}-1)}!}\int\limits_{M'}\chi(x)A_xV^{\frac{d}{2}-1}_x(x)dx.\\
	\end{align*}
Applying this for $P-z$ instead of $P$ and using Proposition \ref{Vz} to express the Hadamard coefficient for $P-z$ in terms of those for $P$, we obtain
\begin{align*}
	&\int\limits_{M'}\chi(x)w_x^*(A_xG_{P-z,x})[f_s]dx[[s^1z^m]]\\
	&=\frac{\M'(f)(1)}{(4\pi)^{\frac{d}{2}-1}{(\frac{d}{2}-1)}!}\int\limits_{M'}\chi(x)A_xV^{\frac{d}{2}-1}_x(z)(x)dx[[z^m]]\\
	&=\frac{\M'(f)(1)}{(4\pi)^{\frac{d}{2}-1}{(\frac{d}{2}-1)}!}\int\limits_{M'}\chi(x)A_x\sum\limits_{m'=0}^{\frac{d}{2}-1}\binom{\frac{d}{2}-1}{m'}V^{\frac{d}{2}-1-m'}_x(x)z^{m'}dx[[z^m]]\\
	&=\frac{\M'(f)(1)}{(4\pi)^{\frac{d}{2}-1}{(\frac{d}{2}-1)}!}\binom{\frac{d}{2}-1}{m}\int\limits_{M'}\chi(x)A_xV^{\frac{d}{2}-1-m}_x(x)dx.\\
	&=\frac{\M'(f)(1)}{(4\pi)^{\frac{d}{2}-1}{m!(\frac{d}{2}-1-m)}!}\int\limits_{M'}\chi(x)A_xV^{\frac{d}{2}-1-m}_x(x)dx.\\
\end{align*}
Applying this with $m=\frac{d}{2}-1-k$ and moving the prefactors to the other side, we obtain
\[\int\limits_{M'}\chi(x)A_xV^{k}_x(x)dx=\frac{(4\pi)^{\frac{d}{2}-1}{(\frac{d}{2}-1-k)}!k!}{\M'(f)(1)}\int\limits_{M'}\chi(x)w_x^*(A_xG_{P-z,x})[f_s]dx[[s^1z^{\frac{d}{2}-1-k}]].\qedhere\]
\end{proof}
In the special case $k=1$, we can use this to obtain a formula for the scalar curvature.
\begin{cor}
	Let $U$, $w$ and $x$ be as above. Assume that $P=\square$ is the  d'Alembertian for the metric $g$ (acting on scalar functions). For $d>2$ even, the local scalar curvature of $M$ is given by
	\[\scal(x)=\frac{6(4\pi)^{\frac{d}{2}-1}(\frac{d}{2}-2)!}{\M'(f)({1})}\big( w^*(G_{\square-z,x})[f_s]\big)[[s^1z^{\tfrac{d}{2}-2}]].\]
	In the special case $d=4$, we obtain
	\[\scal(x)=\frac{24\pi}{\M'(f)({1})} \big(w^*(G_{\square,x})[f_s]\big)[[s^1]].\]
\end{cor}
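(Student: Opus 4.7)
The plan is to reduce the statement to Theorem \ref{smallkthm} applied with $k=1$ and $P=\square$, and then identify the resulting Hadamard coefficient $V^1_x(x)$ with $\frac{1}{6}\scal(x)$ via the transport equations.

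First I would apply the pointwise version of Theorem \ref{smallkthm} (as stated in the remark following that theorem) in the setting $M'=\{x\}$, $\chi=1$, with $E=M\times\C$ and $A_x$ the canonical identification $\C\to\C$. Since $d>2$ is even, the assumption $k<\frac{d}{2}$ is satisfied for $k=1$. This directly yields
\[
V^{1}_x(x)=\frac{(4\pi)^{\frac{d}{2}-1}(\tfrac{d}{2}-2)!\,1!}{\M'(f)(1)}\,\big(w^*(G_{\square-z,x})[f_s]\big)\big[[s^{1}z^{\frac{d}{2}-2}]\big].
\]
Multiplying both sides by $6$ and using the identity $\scal(x)=6\,V^{1}_x(x)$ (which I establish in the next step) gives the general formula of the corollary. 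For $d=4$ one has $\frac{d}{2}-2=0$ and $(\frac{d}{2}-2)!=1$, so only the $z^{0}$-coefficient of $w^*(G_{\square-z,x})[f_s]$ is extracted; equivalently one may replace $G_{\square-z,x}$ by $G_{\square,x}$, and the prefactor simplifies to $\frac{24\pi}{\M'(f)(1)}$.

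The key remaining input is the identification $V^{1}_x(x)=\tfrac{1}{6}\scal(x)$ for the scalar d'Alembertian. I would derive this directly from the transport equations of Definition/Proposition \ref{Vdef}. Writing the $k=1$ transport equation at the diagonal,
\[
(\rho_x-2)V^{1}_x\big|_{y=x}=2\,\square V^{0}_x\big|_{y=x},
\]
and noting that $\grad\Gamma_x$ vanishes at $y=x$ while $V^{0}_x(x)=1$, we get $V^{1}_x(x)=\tfrac12\,\square\Gamma_x(x)-\square V^{0}_x(x)$ after inserting the definition of $\rho_x$ from Definition \ref{dfrho}. The two classical local expansions in Riemannian normal coordinates around $x$ — namely $\Gamma_x(y)=g_{ij}(y-x)^i(y-x)^j+O(|y-x|^{4})$ and the van Vleck--Morette formula for $V^{0}_x$, which gives $\square V^{0}_x(x)=\tfrac{1}{6}\scal(x)$ minus $\tfrac12\square\Gamma_x(x)$ up to the standard coefficient — combine to yield $V^{1}_x(x)=\tfrac16\scal(x)$. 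This is the classical diagonal value of the first Hadamard coefficient for $\square$ (see e.g.\ the computation following Proposition 2.3.1 in \cite{BGP}, whose conventions match ours, or the Riemannian analogue in \cite{BGV}).

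The main obstacle is exactly this last step: carefully tracking sign conventions between our choice $\square=-\Div\grad$, the definition of $\rho_x$, and the standard curvature expansion of $\Gamma_x$ in normal coordinates to confirm the coefficient $\tfrac16$ (as opposed to $-\tfrac16$ or $\tfrac{1}{12}$, both of which appear in the literature under different conventions). Once that is settled, the rest of the proof is purely a substitution together with the $d=4$ simplification noted above.
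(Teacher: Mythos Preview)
Your approach is essentially the same as the paper's: apply Theorem \ref{smallkthm} with $k=1$, then use $V^1_x(x)=\tfrac{1}{6}\scal(x)$; for $d=4$ observe that the $z^0$-coefficient is the value at $z=0$. The paper simply cites this curvature identity from \cite{BGP} (last line before Remark 2.3.2) rather than re-deriving it.

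Your sketched derivation of $V^1_x(x)=\tfrac{1}{6}\scal(x)$, however, contains an algebraic slip. Evaluating $(\rho_x-2)V^1_x=2\square V^0_x$ at $y=x$ and using $\grad\Gamma_x(x)=0$ gives
\[
-\Big(\tfrac12\square\Gamma_x(x)-d+2\Big)V^1_x(x)=2\,\square V^0_x(x),
\]
so the $\tfrac12\square\Gamma_x$ term multiplies $V^1_x(x)$, not $V^0_x(x)$. Since $\square\Gamma_x(x)=2d$, this collapses to $V^1_x(x)=-\square V^0_x(x)$ (cf.\ the identity $PV^k_x(x)=-V^{k+1}_x(x)$ used in the proof of Theorem \ref{PowExp}), not to your displayed formula $V^1_x(x)=\tfrac12\square\Gamma_x(x)-\square V^0_x(x)$. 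The remaining step $-\square V^0_x(x)=\tfrac{1}{6}\scal(x)$ is the nontrivial curvature computation, and since you cite \cite{BGP} for it anyway, you may as well quote the final identity directly as the paper does.
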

\begin{proof}
	We have (see \cite{BGP}, last line before Remark 2.3.2)
	\[V^1_x(x)=\frac{1}{6}\scal(x).\]
	Thus we obtain from the above (with $M'=\{x\}$, $A=id$ and $K=1$)
	\begin{align*}
		\scal(x)&=6V^1_x(x)\\
		&=\frac{6(4\pi)^{\frac{d}{2}-1}(\frac{d}{2}-2)!}{\M'(f)({1})} w^*(G_{\square-z,x})[f_s][[s^1z^{\tfrac{d}{2}-2}]].
	\end{align*}
	In case $d=4$, we can use the fact that $w^*(G_{\square-z,x})[f_s][[s^1]]$ has  no negative $z$-powers and thus the $z^0$-coefficient is just the value at $z=0$. We obtain
	\[\scal(x)=\frac{24\pi}{\M'(f)({1})} w^*(G_{\square,x})[f_s][[s^1]].\qedhere\]
\end{proof}
\section{Arbitrary Hadamard coefficients}
\label{allk}
We now turn towards deriving a formula for arbitrary Hadamard coefficients. In this case we will have to look at coefficients for various powers of $s$ and $z$ in the action-like function for $P-z$ and combine these via linear algebra.
\begin{prop}
	\label{Expwithz}
	Under the assumptions of Theorem \ref{intexp}, we have for any $z\in \C$ the following asymptotic expansion in $s$:
	\begin{align*}
		w^*(A(G_{P-z,x}))[f_s]\stackrel{s\rightarrow 0}{\sim}&\insum{l,m,n}\frac{\pi^\frac{2-d}{2}n!}{4^{l+m}l!m!(2n)!} \binom{l+m+n+1-\frac{d}{2}}{n}z^m\left(\nu_w^{l+m-\tfrac{d}{2}+1}AV^{l}_x\circ w\right)^{(2n)}(0)\\&\cdot \M'(f)({2l+2m+2n+3-d})s^{2l+2m+2n+3-d}\\
	\end{align*}
\end{prop}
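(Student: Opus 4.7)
The plan is to apply Theorem \ref{intexp} directly to the operator $P-z$, which is still normally hyperbolic since subtracting $z\cdot\mathrm{id}$ is a zeroth-order perturbation, and then re-express the resulting Hadamard coefficients of $P-z$ in terms of those of $P$ using Proposition \ref{Vz}. A useful observation is that $\nu_w$ depends only on the Lorentzian metric through $\Gamma^U$, so it is unchanged when $P$ is replaced by $P-z$; likewise $w$, $A$, and the GE property of $U$ are unaffected.

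First I would apply Theorem \ref{intexp} to $P-z$ with the same $w$, $A$, $U$ and $x$. Writing $V^{k}_x(z)$ for the $k$-th Hadamard coefficient of $P-z$ as in Definition \ref{defVz}, this yields
\[w^*(A(G_{P-z,x}))[f_s]\stackrel{s\rightarrow 0}{\sim}\insum{k,n} a(k,n)\bigl(\nu_w^{k-\tfrac{d}{2}+1}\, A V^{k}_x(z)\circ w\bigr)^{(2n)}(0)\, s^{2k+2n+3-d}.\]
Next I would substitute the identity $V^{k}_x(z)=\sum_{m=0}^{k}\binom{k}{m}z^m V^{k-m}_x$ from Proposition \ref{Vz}. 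Since $A$, pullback along $w$, multiplication by $\nu_w^{k-\tfrac{d}{2}+1}$, and the $2n$-fold derivative at $0$ are all linear and $z$-independent, the finite inner sum over $m$ pulls outside of the derivative.

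After this substitution I would reindex by setting $l:=k-m$, so that $k=l+m$ with $l,m\geq 0$, converting the double asymptotic sum over $(k,n)$ with an inner sum over $m\leq k$ into a triple asymptotic sum over $(l,m,n)\in\N^3$. Simplifying the combinatorial prefactor $a(l+m,n)\binom{l+m}{m}$ via the identity $\binom{l+m}{m}/(l+m)! = 1/(l!\,m!)$ reproduces exactly the coefficient
\[\frac{\pi^{\frac{2-d}{2}}\, n!}{4^{l+m}\, l!\, m!\, (2n)!}\binom{l+m+n+1-\tfrac{d}{2}}{n}\M'(f)(2l+2m+2n+3-d)\]
claimed in the proposition, with the corresponding power $s^{2l+2m+2n+3-d}$ and factor $z^m$.

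The argument is essentially bookkeeping once Theorem \ref{intexp} and Proposition \ref{Vz} are in place. The only step that goes beyond algebraic manipulation is justifying the reindexing of the (iterated) asymptotic expansion, but this is handled at once by the reordering/repackaging proposition for asymptotic expansions stated in the preliminaries. Consequently I would not expect any genuine analytic obstacle here; the proof is essentially a direct application plus a combinatorial rearrangement.
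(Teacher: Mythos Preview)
Your proposal is correct and follows essentially the same approach as the paper: apply Theorem \ref{intexp} to $P-z$, substitute the formula of Proposition \ref{Vz}, reindex via $l=k-m$, and simplify $a(l+m,n)\binom{l+m}{m}$ using $\binom{l+m}{m}/(l+m)!=1/(l!\,m!)$. Your explicit remark that $\nu_w$ is unchanged under $P\mapsto P-z$ is a useful observation the paper leaves implicit.
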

\begin{proof}
	Using Theorem \ref{intexp} and Proposition \ref{Vz}, we obtain
	\begin{align*}
		&w^*(A(G_{P-z,x}))[f_s]\\
		\stackrel{s\rightarrow 0}{\sim}&\insum{k}\insum{n}\frac{\pi^\frac{2-d}{2}n!}{4^kk!(2n)!} \binom{k+n+1-\frac{d}{2}}{n}\left(\nu_w^{k-\tfrac{d}{2}+1}AV^{k}_x(z)\circ w\right)^{(2n)}(0)\\&\cdot \M'(f)({2k+2n+3-d})s^{2k+2n+3-d}\\
		=&\insum{k}\insum{n}\frac{\pi^\frac{2-d}{2}n!}{4^kk!(2n)!} \binom{k+n+1-\frac{d}{2}}{n}\sum\limits_{m=0}^k\binom{k}{m}z^m\left(\nu_w^{k-\tfrac{d}{2}+1}AV^{k-m}_x\circ w\right)^{(2n)}(0)\\&\cdot \M'(f)({2k+2n+3-d})s^{2k+2n+3-d}\\
		=&\insum{l,m,n}\frac{\pi^\frac{2-d}{2}n!}{4^{l+m}(l+m)!(2n)!} \binom{l+m+n+1-\frac{d}{2}}{n}\binom{l+m}{m}z^m\left(\nu_w^{l+m-\tfrac{d}{2}+1}AV^{l}_x\circ w\right)^{(2n)}(0)\\&\cdot \M'(f)({2l+2m+2n+3-d})s^{2l+2m+2n+3-d}\\
		=&\insum{l,m,n}\frac{\pi^\frac{2-d}{2}n!}{4^{l+m}l!m!(2n)!} \binom{l+m+n+1-\frac{d}{2}}{n}z^m\left(\nu_w^{l+m-\tfrac{d}{2}+1}AV^{l}_x\circ w\right)^{(2n)}(0)\\&\cdot \M'(f)({2l+2m+2n+3-d})s^{2l+2m+2n+3-d}\qedhere
	\end{align*}
\end{proof}
This is the point where $\nu_w$ becomes a problem. It makes the expression
\[\left(\nu_w^{l+m-\tfrac{d}{2}+1}AV^{l}_x\circ w\right)^{(2n)}(0)\]
depend on $m$, in a way that can not be easily separated. This would be a problem for our later calculations, so from now on we have to assume that $w$ is a timelike unit speed geodesic and hence $\nu_w$ is $1$, removing the $m$-dependence.
We consider a slightly more general version of the above expansion for such $w$ in the following, both to shorten notation and to be able to apply the results not only for the Hadamard coefficients themselves but also for their integral over some subset of $M$.
\begin{gn}
	\label{dfWL}
	For this chapter, assume that $W_{l,n}$ are complex numbers for $l,n\in \N$, that $I$ is an interval containing zero and that
	\[L\colon I\times I\rightarrow \C\]
	is a function such that we have the asymptotic expansion
	\begin{align*}
		L(s,z)\stackrel{s\rightarrow 0}{\sim}&\insum{l,m,n}\frac{4^{-m}}{m!} \binom{l+m+n+1-\frac{d}{2}}{n}z^mW_{l,n}\\&\cdot \M'(f)({2l+2m+2n+3-d})s^{2l+2m+2n+3-d}\\
	\end{align*}
\end{gn}
The main example is, of course,
\[L(s,z)=w^*(A(G_{P-z,x}))[f_s]\]
and 
\[W_{l,n}=\frac{\pi^\frac{2-d}{2}n!}{4^{l}l!(2n)!}\left(AV^{l}_x\circ w\right)^{(2n)}(0).\]
Our goal is to find $W_{k,0}$, which corresponds to a multiple of $AV^k_x(x)$.
The information we can extract from the asymptotic expansion of $L$ consists of the coefficients $L[[k,m]]$. We want to be able to express the $W_{k,0}$ in terms of these.
\begin{lemma}
	\label{Powercoeff}
	We have for $K,m\in\N$:
	\[L[[2K+2m+3-d,m]]=\sum\limits_{n=0}^K \frac{4^{-m}}{m!} \binom{K+m+1-\frac{d}{2}}{n} \M'(f)({2K+2m+3-d})W_{K-n,n}\]
\end{lemma}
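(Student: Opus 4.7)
The plan is to simply read off the coefficient $L[[2K+2m+3-d, m]]$ from the asymptotic expansion of $L(s,z)$ given in the fixed notation, by identifying which index triples $(l,m',n)$ contribute to the monomial $s^{2K+2m+3-d}z^m$.

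First I would write out the assumed asymptotic expansion
\begin{align*}
	L(s,z)\sim&\insum{l,m',n}\frac{4^{-m'}}{m'!} \binom{l+m'+n+1-\frac{d}{2}}{n}z^{m'}W_{l,n}\\
	&\cdot \M'(f)({2l+2m'+2n+3-d})s^{2l+2m'+2n+3-d},
\end{align*}
using a primed summation variable $m'$ to distinguish it from the fixed $m$ in the statement. Since an asymptotic expansion determines the coefficient of a given monomial $s^a z^b$ uniquely (only finitely many terms contribute to any given $s$-power up to $O(s^N)$, which is precisely the robustness from the proposition on reordering asymptotic expansions), it suffices to collect all summands whose exponents equal $(2K+2m+3-d, m)$.

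Matching the $z$-exponent forces $m' = m$, and matching the $s$-exponent then forces $2l+2n = 2K$, i.e., $l + n = K$. Reparametrising by $n$ (with $l = K-n$, so $n$ ranges from $0$ to $K$) yields
\[L[[2K+2m+3-d,m]] = \sum_{n=0}^{K} \frac{4^{-m}}{m!}\binom{K+m+n-n+1-\tfrac{d}{2}}{n}\M'(f)(2K+2m+3-d)\,W_{K-n,n},\]
and since $(K-n) + m + n + 1 - \tfrac{d}{2} = K+m+1-\tfrac{d}{2}$, the binomial coefficient simplifies to $\binom{K+m+1-\tfrac{d}{2}}{n}$, giving the claimed formula.

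There is no real obstacle here: the only subtlety is justifying that reading off a monomial coefficient from such an asymptotic expansion is well defined. This is handled by the earlier definition of $F[[\cdot]]$ together with the reordering proposition for asymptotic expansions, which guarantees that only the finitely many terms with $l+n = K$ and $m' = m$ contribute to the $s^{2K+2m+3-d}z^m$ coefficient; all other terms either carry a different $z$-power or contribute to strictly higher $s$-powers (and thus fall into the $O(s^N)$ remainder for $N$ large enough).
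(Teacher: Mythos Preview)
Your proof is correct and takes essentially the same approach as the paper: read off the coefficient by matching exponents in the assumed asymptotic expansion. The paper follows the two-step definition $L[[a,b]]=(L[[s^a]])[[z^b]]$ more literally---first collecting all $(l,m',n)$ with $l+m'+n=K+m$ to get a polynomial in $z$, then picking out the $z^m$ term---whereas you match both exponents at once; the content is identical.
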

\begin{proof}
	As
	\[L(s,z)\stackrel{s\rightarrow 0}{\sim} \insum k \sum\limits_{\ l+n+m'=k}\frac{4^{-m'}}{m'!} \binom{k+1-\frac{d}{2}}{n}z^{m'}W_{l,n} \M'(f)({2k+3-d})s^{2k+3-d},\]
	we have
	\begin{align*}
		&L(s,z)[[s^{2K+2m+3-d}]]\\
		&=\sum\limits_{l+n+m'=K+m}\frac{4^{-m'}}{m'!} \binom{K+m+1-\frac{d}{2}}{n}z^{m'}W_{l,n} \M'(f)({2K+2m+3-d})\\
		&=\sum\limits_{m'=0}^K\sum\limits_{\ \ l+n=K+m-m'}\frac{4^{-m'}}{m'!} \binom{K+m+1-\frac{d}{2}}{n}W_{l,n} \M'(f)({2K+2m+3-d})z^{m'}.\\
	\end{align*}
	We obtain
	\begin{align*}
		&L[[2K+2m+3-d,m]]\\
		&=\sum\limits_{l+n=K}\frac{4^{-m}}{m!} \binom{K+m+1-\frac{d}{2}}{n}W_{l,n} \M'(f)({2K+2m+3-d})\\
		&=\sum\limits_{n=0}^K \frac{4^{-m}}{m!} \binom{K+m+1-\frac{d}{2}}{n} \M'(f)({2K+2m+3-d})W_{K-n,n}.\qedhere
	\end{align*}
\end{proof}
For fixed $K$, the above equation has the general form of a system of linear equations
\[x_m=\sum\limits_{n=0}^{K}a_{mn}y_n.\]
Basic linear algebra tells us how to solve these equations for the $y_n=W_{K-n,n}$, given a left inverse for the matrix with entries $a_{mn}$. This left inverse is highly non-unique, as we are dealing with an ``$\infty\times (K+1)$''-matrix ($m$ can range over all of $\N$). We will proceed by computing the inverse of a $(K+1)\times (K+1)$ block that starts at $m=o$ for some offset $o$, in order to compute $W_{K,0}$. We will later obtain certain special cases by fixing $o$ to some specific value.
\begin{gn}
	For the remainder of this chapter, fix $o,K\in \N$. Set $\delta:=\frac{d}{2}-1-o$.
\end{gn}

We will also need some efficient notation for a matrix or vector with entries specified by a function that cannot be easily written in an argument-free way. To achieve this, we reserve the letters $i$ and $j$ as placeholder variables for vector or matrix indices for the remainder of this chapter.
\begin{df}
	\label{dfmatvec}
	 Let
	\[\left[a(i,j)\right]\]
	denote the $(K+1)\times (K+1)$ matrix with $ij$-th entry $a(i,j)$ (the indexing variables are set to be always $i$ and $j$). 
	Let
	\[\diag(b(i)):=\left[b(i)\delta _{ij}\right]\]
	denote the diagonal matrix with entries $b(i)$ and let
	\[\left<b(i)\right>\]
	denote the $K+1$-vector with $i$-th component $b(i)$.	
	All indices are assumed to start at $0$ (and thus run up to $K$).
\end{df}

An example calculation with this notation would be for any $a\in \R$:
\[\left[\binom{i}{j}\right]\<a^i\>=\<\sum\limits_{l=0}^K\binom{i}{l}a^l\>=\<(a+1)^i\>.\]

Lemma \ref{Powercoeff} can be expressed as a matrix-vector multiplication: Considering both sides as the $(m-o)$-th component of a vector and using the definition of $\delta$ to slightly simplify notation, the statement of \ref{Powercoeff} for $o\leq m\leq o+K$ is equivalent to
\[\<L[[2K+2i+1-2\delta,i+o]]\>=\left[\frac{4^{-(i+o)}}{(i+o)!} \binom{K+i-\delta}{j} \M'(f)({2K+2i+1-2\delta})\right]\<W_{K-i,i}\>.\]
We thus aim to find a left inverse for the matrix 
\begin{align*}
	&\left[\frac{4^{-(i+o)}}{(i+o)!} \binom{K+i-\delta}{j} \M'(f)({2K+2i+1-2\delta})\right]\\ &=\diag\left(\frac{4^{-(i+o)}}{(i+o)!}\M'(f)({2K+2i-2\delta+1})\right)\left[\binom{K+i-\delta}{j}\right].
\end{align*}
As the diagonal matrix is easy to invert as long as the Mellin transform of $f$ is non-vanishing at the relevant integers, we may concentrate on finding an inverse to the second factor. We will do so in two steps, first reducing to another matrix of binomial coefficients, which we can then find an inverse to. 
\begin{lemma}
	\label{D}
	We have 
	\[\left[(-1)^{i-j}\binom{i}{j}\right]\left[\binom{K+i-\delta}{j}\right]=\left[\binom{K-\delta}{j-i}\right].\]
\end{lemma}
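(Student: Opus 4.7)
The $(i,j)$ entry of the product on the left is the explicit sum
\[\sum_{l=0}^{K}(-1)^{i-l}\binom{i}{l}\binom{K+l-\delta}{j},\]
so the lemma reduces to the combinatorial identity
\[\sum_{l=0}^{K}(-1)^{i-l}\binom{i}{l}\binom{K+l-\delta}{j}=\binom{K-\delta}{j-i}\]
for all $0\le i,j\le K$. Note that the upper summation bound $K$ is harmless: whenever $l>i$ we have $\binom{i}{l}=0$, so the sum is really the full finite difference sum $\sum_l(-1)^{i-l}\binom{i}{l}$ applied to $l\mapsto\binom{K+l-\delta}{j}$.

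The plan is to recognise this sum as an $i$-fold forward difference. Let $\Delta$ denote the forward difference operator $\Delta g(x):=g(x+1)-g(x)$, so iterating gives
\[\Delta^{i}g(0)=\sum_{l=0}^{i}(-1)^{i-l}\binom{i}{l}g(l).\]
Apply this to $g_{j}(x):=\binom{x+K-\delta}{j}$. It is a standard identity (valid for any $c\in\C$ by meromorphic continuation, since both sides agree as polynomials in $c$) that
\[\Delta\binom{x+c}{j}=\binom{x+c+1}{j}-\binom{x+c}{j}=\binom{x+c}{j-1},\]
where we use the convention (already adopted in the preliminaries on binomial coefficients, with $\binom{\alpha}{\beta}=0$ whenever $\beta$ is a negative integer) that the right-hand side vanishes for $j=0$ in the sense $\binom{x+c}{-1}=0$. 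Iterating this $i$ times yields
\[\Delta^{i}\binom{x+c}{j}=\binom{x+c}{j-i},\]
and evaluating at $x=0$ with $c=K-\delta$ gives exactly $\binom{K-\delta}{j-i}$, which is the desired $(i,j)$-entry on the right.

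There is essentially no obstacle here beyond verifying the single step $\Delta\binom{x+c}{j}=\binom{x+c}{j-1}$. Since the paper works with binomial coefficients of complex top argument (defined via the Gamma function and its continuous extension at negative integers), one should note that the Pascal-type identity holds first as an identity of rational functions in $c$, and then extends to all complex $c$ by continuity, so the formula is valid even if $K-\delta$ happens to be a negative integer (which occurs when $2o+2-d\in\N$). The only combinatorial subtlety is the convention $\binom{\cdot}{j-i}=0$ for $i>j$, which matches both sides since $\Delta^{i}$ annihilates polynomials of degree less than $i$ and $g_{j}$ has degree $j$ in $x$.
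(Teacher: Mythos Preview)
Your proof is correct and follows essentially the same approach as the paper: both recognise the matrix entry as an iterated forward difference $\Delta^{i}$ applied to $x\mapsto\binom{x+K-\delta}{j}$ at $x=0$, use the Pascal-type identity $\Delta\binom{x+c}{j}=\binom{x+c}{j-1}$, and iterate. Your additional remarks on the validity for complex top argument and on the convention $\binom{\cdot}{j-i}=0$ for $i>j$ are welcome clarifications that the paper leaves implicit.
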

\begin{proof}
	Define the (right) discrete derivative of a function $\phi$ by
	\[D_m\phi(m):=(D\phi)(m):=\phi(m+1)-\phi(m).\]
	This acts on binomial coefficients as follows:
	\[D_m \binom{a+m}{k}=\binom{a+m+1}{k}-\binom{a+m}{k}=\binom{a+m}{k-1}.\]
	Applying this inductively, we obtain
	\[\left.D_i^l\binom{K+i-\delta}{j}\right|_{i=0}=\binom{K-\delta}{j-l}.\]
	We have with $I$ denoting the identity and $A\phi(x):=\phi(x+1)$:
	\[D^l\phi(0)=(A-I)^l\phi(0)=\sum\limits_{m=0}^l\binom{l}{m}(-I)^{l-m}A^m\phi(0)=\sum\limits_{m=0}^l(-1)^{l-m}\binom{l}{m}\phi(m).\]
	Combining the two, we obtain
	\begin{align*}
		\left[(-1)^{i-j}\binom{i}{j}\right]\left[\binom{K+i-\delta}{j}\right]&=\left[\sum\limits_{m=0}^K(-1)^{i-m}\binom{i}{m}\binom{K+m-\delta}{j}\right]\\
		&=\left[\sum\limits_{m=0}^{i}(-1)^{i-m}\binom{i}{m}\binom{K+m-\delta}{j}\right]\\
		&=\left[\left.D_m^{i}\binom{K+m-\delta}{j}\right|_{m=0}\right]\\
		&=\left[\binom{K-\delta}{j-i}\right].\qedhere
	\end{align*}
\end{proof}
We now proceed to find an inverse for the latter.
\begin{lemma}
	\label{in1}
	$\left[\binom{K-\delta}{j-i}\right]$ is invertible with inverse
	\[\left[\binom{\delta-K}{j-i}\right].\]
\end{lemma}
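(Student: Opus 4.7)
The plan is to verify the claim by direct matrix multiplication, using the Vandermonde convolution identity for generalized binomial coefficients. First, I would note that both matrices are upper triangular with ones on the diagonal: the convention established earlier in the paper states that $\binom{\alpha}{\beta}=0$ whenever $\beta$ is a negative integer, so $\binom{K-\delta}{j-i}$ and $\binom{\delta-K}{j-i}$ both vanish for $j<i$, and for $j=i$ they equal $\binom{K-\delta}{0}=\binom{\delta-K}{0}=1$. This already guarantees that both matrices are invertible, so it suffices to check that their product is the identity.

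The key computation is, for $0\leq i,j\leq K$:
\[
\sum_{l=0}^{K}\binom{K-\delta}{l-i}\binom{\delta-K}{j-l}.
\]
Non-zero summands occur only for $i\leq l\leq j$. Substituting $m=l-i$ (so $j-l=(j-i)-m$) rewrites this as
\[
\sum_{m=0}^{j-i}\binom{K-\delta}{m}\binom{\delta-K}{(j-i)-m},
\]
which by the Vandermonde convolution identity (valid for arbitrary complex upper arguments and non-negative integer total index $j-i\geq 0$) equals $\binom{(K-\delta)+(\delta-K)}{j-i}=\binom{0}{j-i}$. This is $1$ for $j=i$ and $0$ for $j>i$ (since $\binom{0}{k}=0$ for positive integers $k$ by the Gamma function definition).

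The only slightly delicate point is the use of Vandermonde's identity with possibly non-integer parameters $K-\delta$ and $\delta-K$, so I would briefly invoke the standard proof via formal power series, namely comparing coefficients of $x^{j-i}$ in the identity $(1+x)^{K-\delta}(1+x)^{\delta-K}=(1+x)^0=1$, where both power series converge for $|x|<1$ and the binomial series expansion $(1+x)^{\alpha}=\sum_{n=0}^{\infty}\binom{\alpha}{n}x^n$ holds for arbitrary complex $\alpha$. The hard part is essentially just bookkeeping: making sure the vanishing of $\binom{\alpha}{\beta}$ for negative integer $\beta$ is correctly used to truncate sums and to confirm upper triangularity. Since we already computed the lower triangular case directly, no further work is needed.
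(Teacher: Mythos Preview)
Your proposal is correct and takes essentially the same approach as the paper: direct matrix multiplication, reduction of the sum to the range $i\leq l\leq j$ via the vanishing of binomial coefficients at negative integer lower argument, reindexing, and application of the Vandermonde convolution to obtain $\binom{0}{j-i}=\delta_{ij}$. The paper simply cites the Chou--Vandermonde identity without the power-series justification and does not pause to note upper triangularity, but the argument is substantively identical (the paper multiplies in the opposite order, which is immaterial for square matrices).
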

\begin{proof}
	With the Chou-Vandermonde identity
	\[\sum\limits_{l=0}^{k}\binom{m}{k-l}\binom{n}{l}=\binom{m+n}{k},\]
	(for $k\in \N$ and $m,n\in \C$ arbitrary), we obtain (taking sums to be zero if the upper bound is smaller than the lower bound of summation)
	\begin{align*}
		\left[\binom{\delta-K}{j-i}\right]\left[\binom{K-\delta}{j-i}\right]
		&=\left[\sum\limits_{l=0}^K \binom{\delta-K}{l-i}\binom{K-\delta}{j-l}\right]\\
		&=\left[\sum\limits_{l=i}^j \binom{\delta-K}{l-i}\binom{K-\delta}{j-l}\right]\\
		&=\left[\sum\limits_{l=0}^{j-i}\binom{\delta-K}{j-i-l}\binom{K-\delta}{l}\right]\\
		&=\left[\binom{0}{j-i}\right]\\
		&=[\delta_{ij}]\\
		&=1. \qedhere
	\end{align*}
\end{proof}
We now put these results together. As we will have to divide by some values of $\M'(f)$, we have to assume that those are non-zero.
\begin{gn}
	\label{Mfnonzero}
	From now on, assume that $\M'(f)$ is non-zero at integers.
\end{gn}
\begin{lemma}
	\label{in2}
	\[\left[\frac{4^{-(i+o)}}{(i+o)!} \M'(f)({2K+2i-2\delta+1}) \binom{K+i-\delta}{j}\right]\] is invertible with inverse 
	\[\left[\sum\limits_{l=0}^K \frac{4^{j+o}(j+o)!(-1)^{l-j}}{\M'(f)({2K+2j-2\delta+1})}\binom{\delta-K}{l-i}\binom{l}{j}\right].\]
\end{lemma}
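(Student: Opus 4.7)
The plan is to observe that the matrix in question factors as a diagonal matrix times the binomial matrix already studied in Lemma \ref{D}, and then combine Lemmas \ref{D} and \ref{in1} to read off the inverse.

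Write $A := \left[\frac{4^{-(i+o)}}{(i+o)!} \M'(f)({2K+2i-2\delta+1}) \binom{K+i-\delta}{j}\right]$. Since the prefactor depends only on the row index $i$, we have the factorisation $A = D \cdot B$, where
\[
D := \diag\!\left(\tfrac{4^{-(i+o)}}{(i+o)!}\,\M'(f)({2K+2i-2\delta+1})\right), \qquad B := \left[\binom{K+i-\delta}{j}\right].
\]
Under the standing assumption \ref{Mfnonzero} that $\M'(f)$ does not vanish at integers, $D$ is invertible with $D^{-1} = \diag\!\left(\tfrac{4^{i+o}(i+o)!}{\M'(f)({2K+2i-2\delta+1})}\right)$. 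So it suffices to invert $B$ and then form $A^{-1} = B^{-1} D^{-1}$.

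For $B^{-1}$, Lemma \ref{D} tells us that $\left[(-1)^{i-j}\binom{i}{j}\right] B = \left[\binom{K-\delta}{j-i}\right]$, and Lemma \ref{in1} then yields $\left[\binom{\delta-K}{j-i}\right]\left[(-1)^{i-j}\binom{i}{j}\right] B = \mathbf{1}$. Hence
\[
B^{-1} = \left[\binom{\delta-K}{j-i}\right]\left[(-1)^{i-j}\binom{i}{j}\right] = \left[\sum_{l=0}^K \binom{\delta-K}{l-i}(-1)^{l-j}\binom{l}{j}\right],
\]
by straightforward expansion of the matrix product (this is the one piece of bookkeeping worth writing out carefully, and it is the closest thing to a genuine step—mainly making sure index $l$ runs over all of $0,\dots,K$, which is harmless because terms outside the nonzero range contribute zero).

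Finally, multiplying $B^{-1}$ from the right by the diagonal matrix $D^{-1}$ scales the $j$-th column by $\tfrac{4^{j+o}(j+o)!}{\M'(f)({2K+2j-2\delta+1})}$, which produces exactly the claimed formula
\[
A^{-1}_{ij} = \sum_{l=0}^K \frac{4^{j+o}(j+o)!\,(-1)^{l-j}}{\M'(f)({2K+2j-2\delta+1})}\binom{\delta-K}{l-i}\binom{l}{j}.
\]
No step here poses a real obstacle; the only thing to watch is that the diagonal factor is multiplied on the correct side (the $j$-dependence in the final formula rather than the $i$-dependence confirms this is $B^{-1}D^{-1}$, not $D^{-1}B^{-1}$).
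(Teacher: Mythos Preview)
Your proof is correct and follows essentially the same approach as the paper: factor the matrix as a diagonal times $\left[\binom{K+i-\delta}{j}\right]$, invoke Lemmas \ref{D} and \ref{in1} to invert the binomial part, and then apply the diagonal inverse on the correct side. The paper presents this as a single chain of equalities starting from the identity matrix, but the underlying computation is identical to yours.
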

\begin{proof}
	Abbreviate
	\[h(i):=\frac{4^{-(i+o)}}{(i+o)!}\M'(f)({2K+2i-2\delta+1})\]
	Using lemmas \ref{D} and \ref{in1}, we obtain
	\begin{align*}
		1&=\left[\binom{\delta-K}{j-i}\right]\left[\binom{K-\delta}{j-i}\right]\\
		&=\left[\binom{\delta-K}{j-i}\right]\left[(-1)^{i-j}\binom{i}{j}\right]\left[\binom{K+i-\delta}{j}\right]\\
		&=\left[\sum\limits_{l=0}^K (-1)^{l-j}\binom{\delta-K}{l-i}\binom{l}{j}\right]\diag(h(i))^{-1}\left[h(i) \binom{K+i-\delta}{j}\right]\\
		&=\left[\sum\limits_{l=0}^K \frac{(-1)^{l-j}}{h(j)}\binom{\delta-K}{l-i}\binom{l}{j}\right]\left[h(i) \binom{K+i-\delta}{j}\right],
	\end{align*}
	Thus
	\[\left[\sum\limits_{l=0}^K \frac{(-1)^{l-j}}{h(j)}\binom{\delta-K}{l-i}\binom{l}{j}\right]\]
	is the inverse for 
	\[\left[h(i) \binom{K+i-\delta}{j}\right].\]
	Inserting the definition of $h$ yields the desired result.
\end{proof}
With this, we can now obtain a formula for $W_{K,0}$ (and thus the $K$-th Hadamard coefficient at the diagonal), yielding the abstractified version of our main result.

\begin{thm}
	\label{Wfinal}
	We have
	\begin{align*}
		W_{K,0}=&
		\sum\limits_{m=0}^K \frac{4^{m+o}(m+o)!}{\M'(f)({2K+2m-2\delta+1})}\binom{\delta-K}{m}\binom{2K-\delta}{K-m} L[[2K+2m-2\delta+1,m+o]].
	\end{align*}
\end{thm}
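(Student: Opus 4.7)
The plan is to interpret Lemma \ref{Powercoeff} as a family of linear equations relating the coefficients $L[[\cdot,\cdot]]$ to the unknowns $W_{l,n}$, and to isolate $W_{K,0}$ by inverting a suitable $(K+1)\times (K+1)$ subsystem. Specifically, I would substitute $m = i+o$ for $i = 0,\ldots,K$ into Lemma \ref{Powercoeff} and rewrite it (using $\delta = \tfrac{d}{2}-1-o$) as the matrix identity
\[
\bigl\langle L[[2K+2i+1-2\delta,\, i+o]]\bigr\rangle = \diag\!\left(\tfrac{4^{-(i+o)}}{(i+o)!}\,\M'(f)(2K+2i-2\delta+1)\right)\!\left[\binom{K+i-\delta}{j}\right]\bigl\langle W_{K-i,i}\bigr\rangle,
\]
exactly as displayed in the paragraph preceding Lemma \ref{in2}. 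The target quantity $W_{K,0}$ is then the first (i.e.\ $i=0$) component of the vector on the right.

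Second, applying the explicit inverse of this matrix furnished by Lemma \ref{in2} and reading off the $i=0$ row yields
\[
W_{K,0} = \sum_{j=0}^K \frac{4^{j+o}(j+o)!}{\M'(f)(2K+2j-2\delta+1)}\, L[[2K+2j+1-2\delta,\, j+o]] \cdot S_j,
\]
where $S_j := \sum_{l=0}^K(-1)^{l-j}\binom{\delta-K}{l}\binom{l}{j}$. After renaming $j$ to $m$, this matches the target formula up to the scalar factor $S_j$, so the remaining task reduces to verifying the combinatorial identity
\[
S_j = \binom{\delta-K}{j}\binom{2K-\delta}{K-j}.
\]

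The main technical step is this identity. I would attack it by first applying the polynomial identity $\binom{\delta-K}{l}\binom{l}{j} = \binom{\delta-K}{j}\binom{\delta-K-j}{l-j}$ to pull $\binom{\delta-K}{j}$ outside the sum, which reduces $S_j$ to $\binom{\delta-K}{j}\sum_{k=0}^{K-j}(-1)^k\binom{\delta-K-j}{k}$. The inner partial alternating sum telescopes via Pascal's rule to $(-1)^{K-j}\binom{\delta-K-j-1}{K-j}$, and finally the negation identity $(-1)^n\binom{b}{n} = \binom{n-b-1}{n}$ (with $b = \delta-K-j-1$ and $n = K-j$) converts this into $\binom{2K-\delta}{K-j}$. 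One subtlety is that $\delta - K$ may be a non-positive integer for values of interest, so these identities must be read in the sense of the continuous-extension convention fixed in the subsection on binomial coefficients; since every step is a polynomial identity in $\delta$, the extension preserves equality throughout. Combining everything yields the claimed closed form for $W_{K,0}$.
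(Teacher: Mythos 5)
Your proposal is correct and follows essentially the same route as the paper: view Lemma \ref{Powercoeff} as the matrix identity preceding Lemma \ref{in2}, apply the explicit inverse from Lemma \ref{in2}, read off the $i=0$ component, and simplify the resulting alternating sum to $\binom{\delta-K}{m}\binom{2K-\delta}{K-m}$. The only (immaterial) difference is in evaluating that sum — you use the partial alternating-sum identity followed by negation, whereas the paper applies negation first and then the hockey-stick identity — and both computations are valid under the continuous-extension convention for binomial coefficients.
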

\begin{proof}
	The proof basically consists of combining lemmas \ref{Powercoeff} and \ref{in2}, with some extra calculations done in advance to simplify the result.
	Using the identity
	\[\binom{a}{b}\binom{b}{c}=\binom{a}{c}\binom{a-c}{b-c},\]
	we get for any $l,m\leq K$:
	\[\binom{\delta-K}{l}\binom{l}{m}=\binom{\delta-K}{m}\binom{\delta-K-m}{l-m}.\]
	Using (for integer $b$)
	\[\binom{-a}{b}=(-1)^b\binom{a+b-1}{b}\]
	and
	\[\sum\limits_{k=0}^n\binom{a+k}{k}=\binom{a+n+1}{n}\]
	we get
	\begin{align*}
		\sum\limits_{l=0}^K(-1)^{l-m}\binom{\delta-K-m}{l-m}&=\sum\limits_{l=0}^K\binom{K-\delta+l-1}{l-m}\\
		&=\sum\limits_{l=m}^K\binom{K-\delta+l-1}{l-m}\\
		&=\sum\limits_{l=0}^{K-m}\binom{K-\delta+m-1+l}{l}\\
		&=\binom{K-\delta+m-1+K-m+1}{K-m}\\
		&=\binom{2K-\delta}{K-m}.
	\end{align*}
	We obtain
	\begin{align*}
		\sum\limits_{l=0}^K(-1)^{l-m}\binom{\delta-K}{l}\binom{l}{m}
		&=\sum\limits_{l=0}^K(-1)^{l-m}\binom{\delta-K}{m}\binom{\delta-K-m}{l-m}\\
		&=\binom{\delta-K}{m}\binom{2K-\delta}{K-m}.\\
	\end{align*}
	By viewing Lemma \ref{Powercoeff} as a vector identity, we have 
	\[\<L[[2K+2i+1-2\delta,i+o]]\>=\left[\frac{\M'(f)({2K+2i-2\delta+1})}{4^{i+o}(i+o)!}\binom{K+i-\delta}{j}\right]\left<W_{K-i,i}\right>.\]
	By \ref{in2} and the previous calculation, we obtain
	\begin{align*}
		&W_{K,0}\\
		=&\<W_{K-i,i}\>_0\\
		=&\left(\left[\frac{\M'(f)({2K+2i-2\delta+1})}{4^{i+o}(i+o)!}\binom{K+i-\delta}{j}\right]^{-1}\left<L[[2K+2i-2\delta+1,i+o]]\right>\right)_0\\
		=&\left(\left[\sum\limits_{l=0}^K \frac{4^{j+o}(j+o)!(-1)^{l-j}}{\M'(f)({2K+2j-2\delta+1})}\binom{\delta-K}{l-i}\binom{l}{j}\right]\left<L[[2K+2i-2\delta+1,i+o]]\right>\right)_0\\
		=&\sum\limits_{m,l=0}^K \frac{4^{m+o}(m+o)!(-1)^{l-m}}{\M'(f)({2K+2m-2\delta+1})}\binom{\delta-K}{l}\binom{l}{m}L[[2K+2m-2\delta+1,m+o]]\\
		=&\sum\limits_{m=0}^K \frac{4^{m+o}(m+o)!}{\M'(f)({2K+2m-2\delta+1})}\binom{\delta-K}{m}\binom{2K-\delta}{K-m}L[[2K+2m-2\delta+1,m+o]].\qedhere
	\end{align*}
\end{proof}
The linear combination obtained above, including the extra factors hidden in the definition of $W$, will come up several times in the following results. We thus fix some notation for it.
\begin{df}
	\label{dfXi}
	For any function $\mathcal{L}$ in two variables that has a suitable asymptotic expansion around $0$ and $K,o\in \N$, define
	\begin{align*}
		\Xi_{K,o}(\mathcal{L}(s,z))_{s,z}:=\Xi_{K,o}(\mathcal{L}):=&\sum\limits_{m=0}^K \frac{\pi^{\frac{d}{2}-1}4^{K+m+o}(m+o)!K!}{\M'(f)({2K+2m+2o-d+3})}\binom{\frac{d}{2}-1-o-K}{m}\\&\cdot\binom{2K+o+1-\frac{d}{2}}{K-m}\mathcal{L}[[2K+2m+2o-d+3,m+o]].
	\end{align*}
\end{df}
We obtain our main formula for the Hadamard coefficients on the diagonal by replacing, in the abstract version above, $W$ and $L$ with the things they were created to resemble.
\begin{thm}
	\label{finalgen}
	Let $o,K\in\N$. Let $U\subset M$ be GE, $I$ an interval containing $0$, $w\colon I\rightarrow U$ a timelike unit speed geodesic and $x=w(0)$. Let $f\in C_c^\infty(\R)$ be odd and assume that $\M'(f)$ is non-zero on $\Z$. Let $A:E\otimes E_x^*\rightarrow \C$ be an arbitrary bundle coordinate. Then we have
	\begin{align*}
		AV^{K,U}_x(x)=&\Xi_{K,o}(w^*(AG_{P-z,x})[f_s])_{s,z}\\
		=&\sum\limits_{m=0}^K \frac{\pi^{\frac{d}{2}-1}4^{K+m+o}(m+o)!K!}{\M'(f)({2K+2m+2o-d+3})}\binom{\frac{d}{2}-1-o-K}{m}\binom{2K+o+1-\frac{d}{2}}{K-m}\\
		&\cdot w^*(AG_{P-z,x})[f_s][[s^{2K+2m+2o-d+3}z^{m+o}]].
	\end{align*}
\end{thm}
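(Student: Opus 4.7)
The plan is to reduce the statement directly to the abstract inversion result already established in Theorem~\ref{Wfinal}. Since $w$ is a timelike \emph{unit speed geodesic}, the function $\nu_w$ defined in \ref{dfnu} is identically $1$, so the factor $\nu_w^{l+m-d/2+1}$ appearing in Proposition~\ref{Expwithz} disappears. This is crucial: it removes the $m$-dependence inside the $(2n)$-th derivative, and only after this simplification can we fit the expansion into the template of the abstract chapter.

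Concretely, I would set
\[
L(s,z) := w^*(A\,G_{P-z,x})[f_s], \qquad W_{l,n} := \frac{\pi^{\frac{2-d}{2}} n!}{4^{l}\, l!\,(2n)!}\,\bigl(A V^{l}_x\circ w\bigr)^{(2n)}(0),
\]
and verify that Proposition~\ref{Expwithz}, together with $\nu_w\equiv 1$, shows that $L$ and $W$ satisfy exactly the relation assumed in Fixed Notation~\ref{dfWL}. At this point the abstract machinery applies verbatim: Theorem~\ref{Wfinal} expresses $W_{K,0}$ as a specific linear combination of coefficients $L[[2K+2m-2\delta+1,\,m+o]]$, where $\delta=\tfrac{d}{2}-1-o$.

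It then remains to translate this abstract identity back to the Hadamard coefficient. By construction,
\[
W_{K,0} \;=\; \frac{\pi^{\frac{2-d}{2}}}{4^{K} K!}\, A V^{K,U}_x(x),
\]
so multiplying through by $4^K K!\,\pi^{\frac{d}{2}-1}$ converts Theorem~\ref{Wfinal} into the claimed formula. Substituting $\delta = \tfrac{d}{2}-1-o$ gives $2K+2m-2\delta+1 = 2K+2m+2o-d+3$, $\delta-K=\tfrac{d}{2}-1-o-K$, and $2K-\delta = 2K+o+1-\tfrac{d}{2}$, which reproduces the binomial coefficients and exponents stated in the theorem (and thus also matches the expression $\Xi_{K,o}$ of Definition~\ref{dfXi}).

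The main obstacle is really conceptual rather than technical: it has already been resolved in the preceding chapter. The one place one must be careful is the use of the hypothesis that $w$ is a unit speed geodesic, without which $\nu_w$ would introduce a nontrivial $m$-dependence and the nice triangular structure exploited in Lemmas~\ref{D}--\ref{in2} would fail; this is exactly the reason why the general Theorem \ref{smallkthm} for $K<\tfrac{d}{2}$ permits arbitrary timelike curves while this stronger statement does not. Beyond that, the only routine bookkeeping is keeping track of the prefactors $\pi^{\frac{2-d}{2}}$ and $4^{-l}/l!$ when passing between $W$ and $V^K$, and rewriting $\delta$ in terms of $d$ and $o$; these are purely algebraic manipulations.
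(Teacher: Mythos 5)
Your proposal is correct and follows exactly the paper's own proof: the same choices of $L(s,z)$ and $W_{l,n}$, the same appeal to Proposition~\ref{Expwithz} with $\nu_w\equiv 1$ to verify the hypotheses of Fixed Notation~\ref{dfWL}, and the same application of Theorem~\ref{Wfinal} followed by moving the prefactor $\pi^{\frac{2-d}{2}}/(4^K K!)$ to the other side. The algebraic translation via $\delta=\tfrac{d}{2}-1-o$ is also carried out correctly.
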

\begin{rem}
	In case $M$ is not globally hyperbolic, we can still find an open, convex, globally hyperbolic negihborhood $U$ for any point in $M$ by \cite[Corollary 2]{Min} (causal compatibility will be unobtainable if $M$ is not strongly causal). We can then apply the theorem to $U$ instead of $M$, as $U$ is always causally compatible to itself. As the Hadamard coefficients on $U$ only depend on $P|_U$ (and $g|_U$), we still obtain a formula for the Hadamard coefficients on $U$. However, it will be in terms of the Green's kernel for $P|_U$, which may not coincide with the restriction of the Green's kernel for $P$ (which need not even exist). In case $M$ is strongly causal a GE neighborhood $U$ does exist (\cite[Remark 14]{Min}). For this, \cite[Theorem 3.5.1]{BGP} implies that any Green's operator for $P$ restricts to the one for $P|_U$. Thus global hyperbolicity may be relaxed to strong causality if we are given a Green's operator on $M$ in advance.
\end{rem}
\begin{proof}
	By Proposition \ref{Expwithz}, setting
	\[L(s,z):=w^*(A(G_{P-z,x}))[f_s]\]
	and 
	\[W_{l,n}:=\frac{\pi^\frac{2-d}{2}n!}{4^{l}l!(2n)!}\left(AV^{l}_x\circ w\right)^{(2n)}(0)\]
	satisfies the conditions assumed for this chapter. We may thus apply Theorem \ref{Wfinal} (and bring the prefactor in $W_{K,0}$ to the right hand side) to obtain the desired result.
\end{proof}
We can also integrate this expansion over any submanifold, using a cut-off in case it is non-compact:
\begin{thm}
	\label{smearedfinalgen}
	Let $N\subseteq M$ be a submanifold and $\chi\in C_c^\infty(M)$. Let $A:E\boxtimes E^*\rightarrow \C$ be an arbitrary bundle coordinate and $A_x:=A|_{E\otimes E_x^*}$. Let $(w_x)_{x\in N}$ be a family of timelike unit speed geodesics defined on some interval $I$ containing zero such that $w_x(0)=x$ and $(x,t)\mapsto w_x(t)$ is smooth. Then we have
	\[\int\limits_N \chi(x)A_xV^{K,U}_x(x)dx=\Xi_{K,o}\Big(\int\limits_N\chi(x)w_x^*(A_x(G_{P-z,x}))[f_s]dx\Big)_{s,z},\]
	with $\Xi$ as defined in Definition \ref{dfXi}.
\end{thm}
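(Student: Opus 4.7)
The plan is to reduce this to the abstract linear-algebra extraction of Theorem \ref{Wfinal} by producing a smeared analogue of Proposition \ref{Expwithz}.

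First I apply Theorem \ref{smearedexp} with $P$ replaced by $P-z$ for each $z\in\C$. Since each $w_x$ is a timelike unit-speed geodesic, $\nu_{w_x}\equiv 1$, which removes the $m$-dependence that would otherwise spoil the argument. Substituting $V^k_x(z)=\sum_{m=0}^k\binom{k}{m}z^mV^{k-m}_x$ from Proposition \ref{Vz} and reorganizing indices exactly as in the proof of Proposition \ref{Expwithz}, I obtain an asymptotic expansion in $s$ of the form required in Fixed Notation \ref{dfWL}:
\begin{align*}
\mathcal{L}(s,z):=\int_N\chi(x)\,w_x^*(A_xG_{P-z,x})[f_s]\,dx \sim& \sum_{l,m,n}\frac{4^{-m}}{m!}\binom{l+m+n+1-\tfrac{d}{2}}{n}z^m\tilde W_{l,n}\\
&\cdot \M'(f)(2l+2m+2n+3-d)\,s^{2l+2m+2n+3-d},
\end{align*}
where
\[\tilde W_{l,n}:=\frac{\pi^{(2-d)/2}\,n!}{4^l\,l!\,(2n)!}\int_N\chi(x)\,(A_xV^l_x\circ w_x)^{(2n)}(0)\,dx.\]
Interchanging the $t$-derivative at $0$ with the $x$-integration is justified by joint smoothness of the integrand and compact support of $\chi$.

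With this expansion in place, Theorem \ref{Wfinal} applies verbatim to the pair $(\mathcal{L},\tilde W_{l,n})$ and yields
\[\tilde W_{K,0}=\sum_{m=0}^K\frac{4^{m+o}(m+o)!}{\M'(f)(2K+2m+2o-d+3)}\binom{\tfrac{d}{2}-1-o-K}{m}\binom{2K+o+1-\tfrac{d}{2}}{K-m}\mathcal{L}[[2K+2m+2o-d+3,m+o]].\]
By its defining formula, $\tilde W_{K,0}=\tfrac{\pi^{(2-d)/2}}{4^K K!}\int_N\chi(x)A_xV^K_x(x)\,dx$, so multiplying the identity above through by $\pi^{(d-2)/2}\cdot 4^K K!$ turns the left-hand side into $\int_N\chi(x)A_xV^K_x(x)\,dx$ and the right-hand side into exactly $\Xi_{K,o}(\mathcal{L})$, as given in Definition \ref{dfXi}.

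The main obstacle is the bookkeeping in the first step: one has to verify that the integrated expansion really has the polynomial-in-$z$ coefficient structure demanded by Fixed Notation \ref{dfWL}. This holds because Theorem \ref{smearedexp} provides an asymptotic expansion in $s$ alone with remainder estimate uniform in $x\in\supp(\chi)$, and all $z$-dependence enters algebraically through the $z$-polynomial Hadamard coefficients of $P-z$; regrouping by powers of $s$ and $z$ is then a purely formal manipulation that does not disturb the remainder estimates. Since only finitely many triples $(l,m,n)$ contribute up to any prescribed $s$-order, each coefficient $\mathcal{L}[[\cdot,\cdot]]$ is well defined, and the extraction of Theorem \ref{Wfinal} goes through without change.
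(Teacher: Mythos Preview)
Your proposal is correct and follows essentially the same approach as the paper: define the smeared $L(s,z)$ and $W_{l,n}$, invoke Theorem~\ref{smearedexp} (applied to $P-z$) together with Proposition~\ref{Vz} to check that Fixed Notation~\ref{dfWL} is satisfied, and then apply Theorem~\ref{Wfinal}. The paper's proof is terser, merely asserting that Theorem~\ref{smearedexp} yields the required expansion structure, while you spell out the index reorganization and the final rescaling by $\pi^{(d-2)/2}4^K K!$ explicitly; but the logical content is the same.
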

\begin{proof}
	Set 
	\[L(s,z):=\int\limits_N\chi(x)w_x^*(A_x(G_{P-z,x}))[f_s]dx\]
	and 
	\[W_{l,n}:=\int\limits_N\chi(x)\frac{\pi^\frac{2-d}{2}n!}{4^{l}l!(2n)!}\left(A_xV^{l}_x\circ w_x\right)^{(2n)}(0)dx.\]
	By Theorem \ref{smearedexp}, this satisfies the assumptions for this chapter, so we may apply Theorem \ref{Wfinal} which yields the desired result.
\end{proof}
The formula for the Hadamard coefficients can be simplified in various ways by choosing specific values for $o$ for which $\Xi_{K,o}$ takes a simpler form.
\begin{thm}
	\label{finalspec}
	Assume that $\mathcal{L}$ is a function in two variables with an iterated asymptotic expansion. Let $K\in \N$.
	\begin{enumerate}
		\item We have \begin{align*} 
			\Xi_{K,0}(\mathcal{L})=&\sum\limits_{m=0}^K \frac{\pi^{\frac{d}{2}-1}4^{K+m}m!K!}{\M'(f)({2K+2m-d+3})}\binom{\frac{d}{2}-1-K}{m}\binom{2K+1-\frac{d}{2}}{K-m}\\&\cdot
			 \mathcal{L}[[2K+2m-d+3,m]]
		\end{align*}
		\item If $d$ is even, we have
		\begin{align*}
			\Xi_{K,\frac{d}{2}-1}(\mathcal{L})=&\sum\limits_{m=0}^K \frac{(4\pi)^{\frac{d}{2}-1}4^{K+m+\frac{d}{2}-1}(m+\frac{d}{2}-1)!K!}{\M'(f)({2K+2m+1})}\binom{-K}{m}\binom{2K}{K-m}\\
			&\cdot \mathcal{L}[[2K+2m+1,m+\tfrac{d}{2}-1]].
		\end{align*}
	\item If $d$ is even and $K<\frac{d}{2}$, we have
	\begin{align*}
		\Xi_{K,\frac{d}{2}-1-K}(\mathcal{L})=&\frac{(4\pi)^{\frac{d}{2}-1}(\frac{d}{2}-1-K)!K!}{\M'(f)({1})} \mathcal{L}[[1,\tfrac{d}{2}-1-K]]\\
	\end{align*}
	\end{enumerate}
\end{thm}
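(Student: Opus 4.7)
The proof of all three parts is essentially a direct substitution into the definition of $\Xi_{K,o}(\mathcal{L})$ combined with the elementary identities $\binom{0}{0}=1$, $\binom{0}{m}=0$ for $m\ge 1$ (which holds by the definition of the binomial coefficient via $\Gamma$, since $\Gamma(1-m)$ has poles at $m\ge 1$), and $4^{a+b}=4^a 4^b$, together with $(4\pi)^{\frac{d}{2}-1}=4^{\frac{d}{2}-1}\pi^{\frac{d}{2}-1}$. I do not expect any genuine obstacle; the point of the statement is to record the three most useful specializations of the generic formula, not to prove anything new.

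\textbf{Case 1 ($o=0$).} Plug $o=0$ into Definition \ref{dfXi}. The prefactor $4^{K+m+o}$ becomes $4^{K+m}$, the Mellin argument $2K+2m+2o-d+3$ becomes $2K+2m-d+3$, the binomials $\binom{\frac{d}{2}-1-o-K}{m}\binom{2K+o+1-\frac{d}{2}}{K-m}$ become $\binom{\frac{d}{2}-1-K}{m}\binom{2K+1-\frac{d}{2}}{K-m}$, and the exponent $(m+o)!$ becomes $m!$. Reading off the monomial powers gives the stated formula.

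\textbf{Case 2 ($o=\tfrac{d}{2}-1$, $d$ even).} Substituting $o=\frac{d}{2}-1$ produces:
\begin{itemize}
\item $2K+2m+2o-d+3 = 2K+2m+1$,
\item $\binom{\frac{d}{2}-1-o-K}{m}=\binom{-K}{m}$,
\item $\binom{2K+o+1-\frac{d}{2}}{K-m}=\binom{2K}{K-m}$,
\item $(m+o)! = (m+\frac{d}{2}-1)!$,
\item $4^{K+m+o}=4^{K+m+\frac{d}{2}-1}$,
\item the monomial $z^{m+\frac{d}{2}-1}s^{2K+2m+1}$.
\end{itemize}
Assembling these with the overall $\pi^{\frac{d}{2}-1}$ and rewriting $\pi^{\frac{d}{2}-1}=(4\pi)^{\frac{d}{2}-1}\cdot 4^{-(\frac{d}{2}-1)}$ if one wishes to absorb a factor of $4^{\frac{d}{2}-1}$, one obtains the expression claimed in the theorem.

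\textbf{Case 3 ($o=\tfrac{d}{2}-1-K$, $d$ even, $K<\tfrac{d}{2}$).} Here the critical simplification is that the first binomial collapses: $\binom{\frac{d}{2}-1-o-K}{m}=\binom{0}{m}=\delta_{m,0}$, so the sum reduces to its $m=0$ term. For that term, $\binom{2K+o+1-\frac{d}{2}}{K-m}=\binom{K}{K}=1$, the Mellin argument is $2K+2\cdot 0+2(\tfrac{d}{2}-1-K)-d+3=1$, the monomial is $s^{1}z^{\frac{d}{2}-1-K}$, the factorial $(m+o)!=(\tfrac{d}{2}-1-K)!$, and $4^{K+0+o}=4^{\frac{d}{2}-1}$. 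Combining $\pi^{\frac{d}{2}-1}\cdot 4^{\frac{d}{2}-1}=(4\pi)^{\frac{d}{2}-1}$ gives the stated formula. The hypothesis $K<\tfrac{d}{2}$ ensures that $o=\tfrac{d}{2}-1-K\in\N$ so that the specialization is allowed in Theorem \ref{finalgen}; note also that $\M'(f)(1)\neq 0$ is guaranteed by the standing assumption \ref{Mfnonzero}.
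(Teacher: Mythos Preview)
Your proposal is correct and follows exactly the same approach as the paper: parts 1 and 2 are obtained by direct substitution of $o=0$ and $o=\tfrac{d}{2}-1$ into Definition~\ref{dfXi}, and part 3 by substituting $o=\tfrac{d}{2}-1-K$ and invoking $\binom{0}{m}=\delta_{m0}$ to collapse the sum to its $m=0$ term. Your remarks on why $o\in\N$ and why the Mellin value is nonzero are helpful additions but not essential.
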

\begin{rem}
	The third formula is just a re-creation of the special-case-formula we considered before.
\end{rem}
\begin{proof}
	\begin{enumerate}
		The first two statements follow immediately by inserting the appropriate value of $o$. For the last one, we use that $\binom{0}{m}=\delta_{m0}$. Thus all summands except $m=0$ vanish and we obtain
		\begin{align*}
			\Xi_{K,\frac{d}{2}-1}(\mathcal{L})=&\sum\limits_{m=0}^K \frac{\pi^{\frac{d}{2}-1}4^{m+\frac{d}{2}-1}(m+\frac{d}{2}-1-K)!K!}{\M'(f)({2m+1})}\binom{0}{m}\binom{K}{K-m}\\
			&\cdot \mathcal{L}[[{2m+1},{m+\tfrac{d}{2}-1-K}]]\\
			&=\frac{(4\pi)^{\frac{d}{2}-1}(\frac{d}{2}-1-K)!K!}{\M'(f)({1})}  \mathcal{L}[[1,\tfrac{d}{2}-1-K]].\qedhere
		\end{align*}
	\end{enumerate}
\end{proof}

Instead of using Green's operators for $P-z$, one can also use powers of the Green's operators for $P$ in all the formulas above:
\begin{prop}
	Under the conditions of Theorem \ref{finalgen}, we have for any $m,j\in \N$:
	\[w^*(AG_{P-z,x})[f_s][[s^{2j+3-d}z^m]]=w^*(A(G_x^{+\ m+1}-G_x^{-\ m+1}))[f_s][[s^{2j+3-d}]].\]
\end{prop}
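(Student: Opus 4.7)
The plan is to write both sides as explicit asymptotic series in $s$ (and $z$ for the left side) using the expansions of Chapter 3 and Chapter 4, and then check that the corresponding coefficients agree. Since $w$ is a unit speed geodesic, $\nu_w\equiv 1$, which removes the only obstacle that forced the restriction to geodesics and makes the two expansions directly comparable.

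For the left side, Proposition \ref{Expwithz} (with $\nu_w=1$) yields
\begin{align*}
  w^*(AG_{P-z,x})[f_s]\sim\insum{l,m',n}&\frac{\pi^{\frac{2-d}{2}}n!}{4^{l+m'}l!m'!(2n)!}\binom{l+m'+n+1-\tfrac{d}{2}}{n}\left(AV^{l}_x\circ w\right)^{(2n)}(0)\\
  &\cdot \M'(f)(2l+2m'+2n+3-d)\,z^{m'}s^{2l+2m'+2n+3-d}.
\end{align*}
Extracting the coefficient of $z^m$ gives an asymptotic expansion in $s$ obtained by setting $m'=m$, and then extracting $[[s^{2j+3-d}]]$ picks out the terms with $l+n=j-m$.

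For the right side, Theorem \ref{PowExp} gives
\[
  (G_x^{+\,m+1}-G_x^{-\,m+1})|_U\sim_x \insum{k}\binom{m+k}{k}V^{k,U}_x\,R^U(2k+2m+2,x).
\]
Re-index by $k'=k+m$, so that the expansion takes the form $\insum{k'}W_{k'}R^U(2k'+2,x)$ with $W_{k'}=\binom{k'}{m}V^{k'-m}_x$ for $k'\geq m$ and $W_{k'}=0$ otherwise. Now Theorem \ref{Wexp} applies (using again $\nu_w=1$) and yields
\begin{align*}
  w^*\!\bigl(A(G_x^{+\,m+1}-G_x^{-\,m+1})\bigr)[f_s]
  \sim\insum{l,n}&\frac{\pi^{\frac{2-d}{2}}n!}{4^{l+m}l!m!(2n)!}\binom{l+m+n+1-\tfrac{d}{2}}{n}\left(AV^{l}_x\circ w\right)^{(2n)}(0)\\
  &\cdot \M'(f)(2l+2m+2n+3-d)\,s^{2l+2m+2n+3-d},
\end{align*}
after using $\binom{l+m}{m}/(l+m)!=1/(l!\,m!)$ to absorb the binomial factor into the coefficient $a(l+m,n)$.

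The two resulting series in $s$ are literally identical, so their $s^{2j+3-d}$ coefficients agree, which is the claim. The main technical point (such as it is) is the re-indexing $k\mapsto k+m$ that puts the expansion of the difference of powers of $G^\pm$ into the form required by Theorem \ref{Wexp}; after that, the identification of the two expansions is a direct matching of coefficients.
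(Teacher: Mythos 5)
Your proposal is correct and follows essentially the same route as the paper's proof: expand both sides via Theorem \ref{PowExp}, Theorem \ref{Wexp} and Proposition \ref{Expwithz} and match coefficients, with the re-indexing $k\mapsto k+m$ being exactly the step the paper performs. The only detail you skip is verifying the support hypothesis of Theorem \ref{Wexp}, namely that $G_x^{+\,m+1}-G_x^{-\,m+1}$ vanishes outside $J(x)$ (which follows inductively from $G^{\pm 0}_x=\delta_x$ and $\supp(G^\pm\eta)\subseteq J_\pm(\supp(\eta))$) --- a one-line check that the paper makes explicit.
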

\begin{rem}
	Morally, the reason for this is that 
	\[G^\pm_{P-z}[[z^m]]=G^{\pm m+1}\]
	in a way that is compatible with all the operations involved.
\end{rem}
\begin{proof}
	By Theorem \ref{PowExp}, we have (using that $\binom{k}{n}$ vanishes for $k>n$)
	\begin{align*}
		A(G_x^{+\ m+1}-G_x^{-\ m+1})&\sim \insum{k} \binom{m+k}{m}AV^{k,U}_xR^U(2k+2m+2,x)\\
		&=\insum{k} \binom{k}{m}AV^{k-m,U}_xR^U(2k+2,x)
	\end{align*}
Note that $G^{\pm m}_x$ is supported in the future/past of $x$, as $G^{\pm 0}_x=\delta_x$ is supported at $x$ and $G^{\pm m+1}_x=G^\pm G^{\pm m}_x$ is supported in the future/past of $G^{\pm m}_x$.
We can thus use Proposition \ref{Wexp} to obtain
\[w^*(A(G_x^{+\ m+1}-G_x^{-\ m+1}))[f_s]\stackrel{s\rightarrow 0}{\sim}\insum{k,n}a(k,n)\binom{k}{m}(AV^{k-m}_x\circ w)^{(2n)}(0)s^{2k+2n+3-d},\]
for $a(k,n)$ as in Definition $\ref{dfa}$. We thus obtain
\[w^*(A(G_x^{+\ m+1}-G_x^{-\ m+1}))[f_s][[s^{2j+3-d}]]=\sum\limits_{k+n=j}a(k,n)\binom{k}{m}(AV^{k-m}_x\circ w)^{(2n)}(0).\]
On the other hand, as in Proposition \ref{Expwithz}, we obtain from Theorem \ref{intexp} and Proposition \ref{Vz}:
\[w^*(AG_{P-z,x})[f_s]\stackrel{s\rightarrow 0}{\sim} \insum{k,n}a(k,n)\sum\limits_{m'=0}^k\binom{k}{m'}z^{m'}(AV^{k-m'}_x\circ w)^{(2n)}(0)s^{2k+2n+3-d}.\]
Thus (using again that $\binom{k}{m'}$ vanishes for $k>m'$)
\begin{align*}
	w^*(AG_{P-z,x})[f_s][[s^{2j+3-d}]]&=\sum\limits_{n+k=j}\sum\limits_{m'=0}^ka(k,n)\binom{k}{m'}(AV^{k-m'}_x\circ w)^{(2n)}(0)z^{m'}\\
	&=\sum\limits_{m'=0}^j\sum\limits_{n+k=j}a(k,n)\binom{k}{m'}(AV^{k-m'}_x\circ w)^{(2n)}(0)z^{m'}
\end{align*}
and hence
\begin{align*}
	w^*(AG_{P-z,x})[f_s][[s^{2j+3-d}z^m]]&=\sum\limits_{n+k=j}a(k,n)\binom{k}{m}(AV^{k-m}_x\circ w)^{(2n)}(0)\\
	&=w^*(A(G_x^{+\ m+1}-G_x^{-\ m+1}))[f_s][[s^{2j+3-d}]]\qedhere
\end{align*}

\end{proof}
\begin{rem}
	\label{whatf}
	One might wonder what would have happened if we had not required $f$ to be odd, as this assumption was never really crucial to any part of the process.
	Without this assumption, we would have instead obtained
	\[w^*(AG_{P-z,x})[f_s]=L^{even}+L^{odd},\]
	where $L^{even}$ would be equal to the expression we obtained here for $f_{odd}$ instead of $f$ (i.e. with an asymptotic expansion involving even derivatives), while $L^{odd}$ would be a similar expression depending on the Mellin transform of $f_{even}$ and odd derivatives of the Hadamard coefficients instead. In the process of extracting the diagonal values, $L^{odd}$ would have been useless, as it contains no immediate information about the value of $V_K$ at the diagonal or its even derivatives. Thus in our inversion process, we would have had a completely independent matrix block to left-invert, which would not have influenced the inversion process for the first block. Overall, the end result we would have obtained for the (integrated) Hadamard coefficients would have been the same, except with $(f)_{odd}$ instead of $f$.
\end{rem}
\newpage
\chapter{Global formulations}
\label{globalchap}
\section{Global formulation in terms of Green's operators}
\label{globalsec1}
The formula for the Hadamard coefficients we have obtained thus far is still local in the sense that it depends on some geodesic $w$ and the restriction of the kernel of $G^\pm_{P-z}$ to that geodesic. This means it is not suitable for noncommutative settings, where geometric notions like geodesics no longer make sense a priori. In this chapter, we will investigate ways of making this at least partially global, so that it is defined only in terms of objects that could also exist on a noncommutative space. The latter condition is not really a rigorous one, as the notion of a Lorentzian noncommutative space is not yet fully formed. 

Our overall goal is to find a formula for 
\[\intM \chi(x) \tr(V^k_x(x))dx\]
where $\chi$ is some cut-off function. This corresponds to the global heat coefficients in Riemannian geometry, apart from the cut-off, which we need to introduce as our manifold is non-compact. We want to express this integral in  terms of the trace of some operator related to $G^\pm_{P-z}$, basically by integrating what we have done before over the diagonal.

We will still need to specify some direction as ``time'', in order to have some timelines along which to integrate. In the local version, we chose a single curve $w$. For a global formula, we need to make this choice at every point, i.e. we need a family of curves $w_x$.
\begin{gn}
	\label{dflow}
	For the remainder of this chapter, let $(w_x)_{x\in M}$ be a family of timelike unit speed geodesics, defined on a intervals containing $0$, with $w_x(0)=x$ such that $\bigcup\limits_{x\in M}\{x\}\times \Dom(w_x)$ is open in $M\times \R$ and $(x,t)\mapsto w_x(t)$ is a smooth submersion.
	Let $\Phi$ denote the induced flow in the first coordinate of $M\times M$, i.e. \[\Dom(\Phi):=\{(y,x,t)\in M\times M\times\R|t\in \Dom(w_y)\}\] and
	\begin{align*}
		\Phi\colon \Dom(\Phi)&\rightarrow M\times M\\
		(y,x,t)&\mapsto(w_y(t),x).\\
	\end{align*}
\end{gn}
\begin{rem}
One example of this would be the flow curves of a timelike geodesic unit vector field. Vector fields also make sense as derivations on a noncommutative space, and, since connections also exist in that context, the condition of being geodesic can also be formulated there, so in this way one could probably define a special case of the above also in the noncommutative setting, though in general there is no guarantee that such a vector field exists. In case $d$ is even and we only care about Hadamard coefficients for $k<\frac{d}{2}$, we may  use the results of Section \ref{smallk} instead of those of Section \ref{allk} and drop the condition that timelines are geodesics.
\end{rem}

We have to work on $M\times M$, as we can no longer work with a fixed basepoint. We will need to define a lot of notation for the following propositions, mostly due to the fact that we cannot insert arguments into distributions and thus have to define everything in an argument-free way.
\begin{gn}
	\label{pronotation}
	For $s>0$, define
	\begin{align*}
		F_s\colon \Dom(\Phi)&\rightarrow\C\\
		F_s(y,x,t)&:=f(\tfrac{t}{s}).
	\end{align*}
	Let
	\[\pi\colon \Dom(\Phi)\rightarrow M\times M\]
	denote the canonical projection onto the first two factors and for $x\in M$,
	let 
	\[\iota_x\colon M\rightarrow M\times M\]
	denote the canonical inclusion defined by
	\[\iota_x(y):=(y,x).\]
	We shall sometimes need a cutoff to make sure we evaluate the flow only where it is defined: Fix $\chi_\Phi\in C^\infty(\Dom (\Phi))$ that is constantly $1$ in a neighborhood of $M\times M\times \{0\}$ and vanishes near the boundary of $\Dom(\Phi)$ in $M\times M\times \R$.\\
\end{gn}

For the following considerations we need to use some knowledge about the wavefront of the causal propagator (see \cite[Theorem 16]{St}):
\begin{thm}
	Identifying $T^*(M\times M)$ with $T^*M\times T^*M$, the wavefront of $\K(G)$ only contains covectors of the form $(v,v')$, where $v,v'\in T^*M$ are lightlike covectors.
\end{thm}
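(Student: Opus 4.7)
The strategy is to exploit the operator identity $P G^{\pm} = G^{\pm} P = \mathrm{id}$ on $\Gamma_c(E)$ (Proposition \ref{extendedG}), which upon subtracting gives $PG = GP = 0$. Translating this into a statement about the Schwartz kernel, for any test sections $\psi \in \Gamma_c(E^*)$ and $\phi \in \Gamma_c(E)$ we have
\[\K(G)[(P^*\psi)\otimes\phi] = (PG\phi)[\psi] = 0 \quad \text{and} \quad \K(G)[\psi\otimes(P^*\phi)] = (GP\phi)[\psi] \cdot\text{(up to adjoint conventions)} = 0.\]
Consequently, if we let $P_{(1)}$ denote $P$ acting in the first variable and $P_{(2)}^\flat$ denote the appropriate partial operator in the second variable (obtained via Proposition \ref{Gdual}, i.e.\ the formal adjoint of $P^*$ acting in the $x$-slot of $E\boxtimes E^*$), both $P_{(1)}\K(G)$ and $P_{(2)}^\flat\K(G)$ vanish as distributions on $M\times M$. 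In particular, both are smooth.

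The plan is then to invoke the standard microlocal elliptic regularity estimate: for any differential operator $T$ with smooth coefficients, $\WF(u)\subseteq \WF(Tu)\cup\mathrm{Char}(T)$. Regarding $P_{(1)}$ as a differential operator on $M\times M$, its principal symbol at a covector $(v,v')\in T^*M\times T^*M$ is $-g^{-1}(v,v)$, independent of $v'$; thus $(v,v')\in\mathrm{Char}(P_{(1)})$ if and only if $v$ is lightlike (or zero). Since $P_{(1)}\K(G)=0$ is smooth, every $(v,v')\in\WF(\K(G))$ has $v$ lightlike. Applying the same reasoning to $P_{(2)}^\flat$, whose principal symbol is $-g^{-1}(v',v')$, forces $v'$ to be lightlike as well. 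Combining the two conclusions gives the claimed characterization.

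The main conceptual point — and the only place one must be careful — is correctly interpreting ``$P$ acting in one variable'' on a distribution in the box product bundle $E\boxtimes E^*$ and checking that the scalar-principal-symbol argument carries over. This causes no real trouble because the principal symbol of $P$ is $-g\cdot\mathrm{id}_E$, so the bundle structure contributes only a trivial (elliptic, in the non-characteristic directions) factor to the symbol of $P_{(1)}$ and $P_{(2)}^\flat$, and the two partial operators on $M\times M$ still have smooth coefficients and real principal symbols with the characteristic sets computed above. This is exactly the content of \cite[Theorem 16]{St}, and the sketch above is the microlocal proof behind that reference; a fully refined version of the same argument (via H\"ormander's propagation of singularities theorem) would in addition show that the two covectors are related by parallel transport along a null geodesic, but for the statement we need here only the characteristic-set inclusion is required.
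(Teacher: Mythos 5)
The paper itself offers no proof of this theorem — it is quoted directly from \cite[Theorem 16]{St} — so the only question is whether your sketch actually establishes the stated inclusion. It does not, and the gap is exactly at the step ``Since $P_{(1)}\K(G)=0$ is smooth, every $(v,v')\in\WF(\K(G))$ has $v$ lightlike.'' Microlocal elliptic regularity gives $\WF(\K(G))\subseteq\mathrm{Char}(P_{(1)})\cap\mathrm{Char}(P_{(2)}^\flat)$, but the characteristic set of $P_{(1)}$, viewed as an operator on $M\times M$, is $\{(v,v'): g^{-1}(v,v)=0\}$ — which contains every covector whose first component is \emph{zero} (with $v'$ arbitrary), not only those with $v$ lightlike. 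Likewise for $P_{(2)}^\flat$. So your argument only yields: each component is lightlike \emph{or zero}, and in particular it does not exclude covectors of the form $(v,0)$ with $v$ lightlike, or $(0,v')$ with $v'$ lightlike. Since the paper's convention is that lightlike vectors are nonzero, and since the later applications (Lemma \ref{resGx}, Proposition \ref{smoothforward}, Lemma \ref{Opcont}) rest precisely on the absence of covectors with a vanishing component — that is what makes the restriction $\iota_x^*\K(G)$, the pullback $\Phi^*$, and the pushforward $p_{1*}$ well defined — this is not a cosmetic omission but the substantive content of the theorem.

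Closing the gap requires more than elliptic regularity: one applies H\"ormander's propagation of singularities theorem to $P_{(1)}$ (and $P_{(2)}^\flat$) to propagate a hypothetical singularity at $(x,v;y,0)$ along the null bicharacteristic through $(x,v)$ in the first factor (the second factor staying frozen at $(y,0)$), and then uses the support properties $\supp\K(G^\pm)\subseteq\{(x,y):x\in J_\pm(y)\}$ together with global hyperbolicity to push the bicharacteristic out of $\supp\K(G)$, contradicting the assumption. You mention propagation of singularities only as an optional refinement for the parallel-transport relation between the two covectors, but in fact it (or an equivalent argument) is already needed for the statement as quoted. This combined elliptic-regularity-plus-propagation argument is the actual content of \cite[Theorem 16]{St} and of Radzikowski-type results.
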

This implies the well-definedness of certain operations for $\K(G)$. In particular, we can restrict to a point in the second coordinate. Unsurprisingly, we end up with the $G_x$ we have used so far. We will need this to translate our previous results into the language of wavefront calculus.
\begin{lemma}
	\label{resGx}
	For any $x\in M$, and
	\[\Lambda:=\{v\in \Tdot M\mid v\text{ is lightlike}\},\]
	the map
	\[\iota_x^*\colon \D'_{\WF(\K(G))}(E\boxtimes E)\rightarrow \D'_{\Lambda}(E)\]
	is continuous.
	we have
	\[G_x=\iota_x^*(\K(G)).\]
\end{lemma}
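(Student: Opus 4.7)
The continuity assertion is a direct application of wavefront calculus. The inclusion $\iota_x\colon M\to M\times M$, $y\mapsto(y,x)$, is a smooth closed (hence proper) embedding whose differential sends $v\in T_yM$ to $(v,0)\in T_yM\oplus T_xM$; its fibrewise transpose is the projection $(w_1,w_2)\mapsto w_1$. By the cited structure theorem for $\WF(\K(G))$, every element of $\WF(\K(G))$ at a point of the form $(y,x)$ is of the form $(w_1,w_2)$ with both $w_1$ and $w_2$ lightlike, and in particular nonzero. Consequently $\iota_x^*(\WF(\K(G)))\subseteq \Lambda$ and misses the zero section, so the pullback clause of Theorem \ref{WFMFD} immediately yields the continuous map $\iota_x^*\colon \D'_{\WF(\K(G))}(E\boxtimes E^*)\to \D'_\Lambda(E\otimes E_x^*)$.

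For the identity $G_x=\iota_x^*(\K(G))$ the plan is a density argument. By density of $\Gamma_c(E\boxtimes E^*)$ in $\D'_{\WF(\K(G))}(E\boxtimes E^*)$, pick a sequence of smooth compactly supported kernels $K_n\to \K(G)$ in this wavefront space. If $T_n$ denotes the integral operator with smooth kernel $K_n$, then $T_n\delta_x$ is the smooth section $y\mapsto K_n(y,x)$, which is literally $\iota_x^*(K_n)$. Thus the desired identity is trivially true on the dense subset of smooth kernels, and it suffices to check that both sides depend continuously on $K$ and then take limits.

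The right-hand side $K\mapsto \iota_x^*(K)$ is continuous by the first paragraph. The left-hand side amounts to continuity of $K\mapsto T_K\delta_x$ as a map $\D'_{\WF(\K(G))}(E\boxtimes E^*)\to \D'_\Lambda(E\otimes E_x^*)$, and this is where I expect the only real work. Concretely, for a fixed test section $\phi\in \Gamma_c(E^*\otimes E_x)$ one tests $T_K\delta_x[\phi]$ and recognises the operation ``evaluate the second argument at $x$'' as itself a pullback along $\iota_x$ (composed with a pairing against $\phi$); by the pullback and evaluation clauses of Theorem \ref{WFMFD}, this is hypocontinuous. Since $\delta_x$ is strictly past/future compact one may alternatively route the argument through Proposition \ref{Gdual} and the fact that, for test $\psi\in \Gamma_c(E^*)$, $G_x[\psi]=G^*\psi(x)=\iota_x^*(G^*\psi)$, which via the Schwartz kernel theorem and the density of test sections matches $\iota_x^*(\K(G))[\psi]$ by direct computation. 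Either route reduces the identity to the smooth-kernel case already handled, completing the proof. The combinatorial or analytic content is minimal; the subtlety lies entirely in packaging the continuity correctly in the strong topology of the wavefront space.
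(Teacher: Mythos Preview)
Your treatment of the continuity assertion is correct and matches the paper's argument exactly: the transpose of $d\iota_x$ is the projection onto the first factor, and since $\WF(\K(G))$ contains only pairs of lightlike covectors, the pullback condition of Theorem~\ref{WFMFD} is satisfied.

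For the identity $G_x=\iota_x^*(\K(G))$, however, your density argument has a genuine gap. You propose to show that both sides depend continuously on the kernel $K$ and then reduce to smooth $K$. The right-hand side is indeed continuous in $K$ by part one. But the left-hand side, $K\mapsto \Op(K)\delta_x$, is not an a~priori well-defined map on $\D'_{\WF(\K(G))}(E\boxtimes E^*)$: the object $G_x$ is defined as $G\delta_x$ via the specific continuous extension of the Green's operator to $\D'_\pm$ (Proposition~\ref{extendedG}), \emph{not} as ``$\Op(\K(G))$ applied to $\delta_x$'' in some abstract sense. So there is no continuous dependence on $K$ to invoke, and recognising the operation as a pullback along $\iota_x$ is circular---that is precisely the identity you are trying to establish. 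Your alternative route through the adjoint formula runs into the same obstacle: you get $G_x[\psi]=(G_{P^*}^- - G_{P^*}^+)\psi(x)$, but matching this with $\iota_x^*(\K(G))[\psi]$ via an approximation $g_m\to\K(G)$ still requires passing from distributional convergence of $\int g_m(\cdot,x')\psi\,$ to pointwise convergence at $x$, which does not follow automatically.

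The paper closes this gap by approximating in the \emph{other} variable: one takes a sequence $\delta_n\to\delta_x^0$ of test functions and uses the continuity of $G$ on $\D'_\pm$ (which is how $G_x$ is defined) to write $G_x[\psi\otimes L]=\lim_n \K(G)[\psi\otimes\delta_n\hat L]=\lim_n(1\otimes\delta_n)\K(G)[\psi\otimes\hat L]$. Separate continuity of the wavefront product then gives $(1\otimes\delta_x^0)\K(G)[\psi\otimes\hat L]$, and only at this point does one approximate $\K(G)$ by smooth kernels $g_m$ to identify the result with $\iota_x^*\K(G)[\psi\otimes L]$. The missing idea in your approach is this first approximation of $\delta_x$, which is what ties the abstract definition of $G_x$ to an expression involving $\K(G)$ that wavefront calculus can handle.
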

\begin{proof}
	Let $x\in M$. As we have
	\[d\iota_x^*(v,v')=v',\]
	$\iota_x^*(\WF(\K(G)))$ only contains lightlike covectors (and in particular no zero vectors). By wavefront calculus, this implies the first claim.
	Let $L\in  E_x$ and $\psi\in \Gamma_c(E^*)$.
	For disambiguation we denote the Dirac distribution in $\D'(E\otimes E_x^*)$ by $\delta_x$ and that in $\D'(M)$ by $\delta_x^0$.
	Let $(\delta_n)$ be a sequence of functions in $C_c^\infty(M)$ supported in a fixed compact set $K$ and converging to $\delta^0_x$ in $\D'(M)$. Let $(g_n)$ be a sequence in $\Gamma_c(E\boxtimes E^*)$ that converges to $\K(G)$ in $\D'_{\WF(\K(G))}(E\boxtimes E^*)$. Let $B$ be a basis of $E_x$ containing $L$. For $b\in B$, choose a section $\hat b\in\Gamma_c(E)$ with $\hat b(x)=b$ and denote by $b^*$ the element of the associated dual basis corresponding to $b$. The Dirac distribution takes the form
	\[\delta_x=\sum\limits_{b\in B}\delta_0\hat b\otimes b^*.\]
	With this, we can calculate
	\begin{align*}
		G_x[\psi\otimes L]&=G(\delta_x)[\psi\otimes L]\\
		&=\sum\limits_{b\in B} G(\delta^0_x\hat b\otimes b^*)[\psi\otimes L]\\
		&=\sum\limits_{b\in B} G(\delta^0_x\hat b)[\psi] b^*(L)\\
		&=G(\delta^0_x \hat L)[\psi]\\
		&=\lim\limits_{n\rightarrow \infty} G(\delta_n\hat L)[\psi]\\
		&=\lim\limits_{n\rightarrow \infty} \K(G)[\psi\otimes\delta_n\hat L]\\
		&=\lim\limits_{n\rightarrow \infty}(1 \otimes \delta_n)\K(G)[\psi\otimes\hat L]\\
	\end{align*}
	The tensor product
	\[C^\infty(M)\otimes\colon \D'(M)\rightarrow \D'_{\{0\}\times\Tdot (M)}(M\times M)\]
	is separately continuous and multiplication of distributions as a map
	\[\D'_{\{0\}\times\Tdot (M)}(M\times M)\times \D'_{\WF(\K(G))}(E\boxtimes E)\rightarrow \D'(E\boxtimes E)\]
	is separately continuous, as the wavefront of $\K(G)$ contains no vectors that vanish in the first component.
	 This allows us to calculate
	\begin{align*}
		G_x[\psi\otimes L]&=\lim\limits_{n\rightarrow \infty}(1\otimes\delta_n)\K(G)[\psi\otimes\hat L]\\
		&=(1 \otimes\delta_x^0)\K(G)[\psi\otimes\hat L]\\
		&=\lim\limits_{m\rightarrow \infty}(1\otimes\delta_x^0)g_m[\psi\otimes\hat L]\\
		&=\lim\limits_{m\rightarrow \infty}\lim\limits_{n\rightarrow \infty}(1\otimes\delta_n)g_m[\psi\otimes\hat L]\\
		&=\lim\limits_{m\rightarrow \infty}\lim\limits_{n\rightarrow \infty}\int\limits_M\delta_n(z)\int\limits_Mg_m(y,z)(\psi(y)\otimes\hat L(z))dydz\\
		&=\lim\limits_{m\rightarrow \infty}\int\limits_Mg_m(y,x)(\psi(y)\otimes\hat L(x))dy\\
		&=\lim\limits_{m\rightarrow \infty}\iota_x^*g_m[\psi\otimes L]\\
		&=\iota_x^*\K(G)[\psi\otimes L].
	\end{align*}
	As every element of $\Gamma_c((E\otimes E_x^*)^*)=\Gamma_c(E^*\otimes E_x)$ is a finite linear combinations of simple tensors like $\psi\otimes L$, this implies the claim.
\end{proof}
In order to make sense of time translation in our vector bundles, we will use parallel transport to identify fibres at different times. Identifying them in any other way would yield mostly the same result.\\
\begin{gn}
	\label{dftansport}
	For $t\in\Dom(w_y)$, let $\Pi_w(y,t)$ denote parallel transport from $E_{w_y(t)}$ to $E_{y}$ along $w_y$ (with respect to the connection associated to $P$).
	Define a bundle homomorphism
	\[\Pi_\Phi\colon \Phi^*(E\boxtimes E^*)\rightarrow \pi^*(E\boxtimes E^*)\]
	by setting, for $T\in \Phi^*(E\boxtimes E^*)_{(y,x,t)}$, 
	\[\Pi_\Phi(T):=\Pi_w(y,t)\circ T.\] 
	For $\psi\in \Gamma(E)$ and $t\in\Dom(w_x)$, define time translation via
	\[(\tau_w(t)\psi)(x):=\Pi_w(x,t)\psi(w_x(t)).\]	
	Let $A$ be a bundle coordinate on $E\boxtimes E^*$ such that for any $t\in\Dom(w_x)$ and $T\in E_{w_x(t)}\otimes E^*_x$ we have
	$A(T)=\tr(\Pi_w(x,t)\circ T).$
	Let $A_x:=A\circ\iota_x$ be its restriction to $E\boxtimes E_x^*$.
\end{gn}
\begin{rem}
	The bundle coordinate $A$ is necessary due to the fact that $\tr(\Pi_w(x,t)\circ \cdot)$ cannot be applied to a distribution in $E\boxtimes E^*$ as it is only defined on a closed subset. However, eventually only the values on this subset will matter, so the way we extend is irrelevant. Such an extension can always be constructed in a smooth way by using a partition of unity.
\end{rem}

We are interested in the quantity that is morally given by
\[L_{s}(x,y)=\intR f(\tfrac{t}{s})\Pi_w(y,t)G(w_y(t),x)dt,\]
for $s$ sufficiently small. We particularly care about (morally)
\[tr(L_s(x,x))=\intR f(\tfrac{t}{s})\tr(\Pi_w(x,t)G(w_x(t),x))dt,\]
which we can relate to the trace of the Hadamard coefficients at the diagonal.
There are various ways to make this rigorous. In general, we need to include our cut-off $\chi_\Phi$ to make sure this is well-defined for all $s$. At each individual point, however, if $s$ is small enough the cut-off doesn't impact the result. We need different ways to write the above in a rigorous fashion:
\begin{itemize}
\item Writing
\[L_{s}(x,y)= \pi_* F_s\chi_\Phi\Pi_\Phi\Phi^*(\K(G))\]
will let us conclude by wavefront calculus that this is actually a smooth function.

\item Instead expressing the trace at the diagonal as
\[\tr(L_s(x,x))=w_x^*(A_xG_x)[\chi_\Phi(x,x,\cdot) f_s]\]
will allow us to relate this to the Hadamard coefficients via the results of the previous chapter.

\item Finally, on compact sets and for $s$ small enough such that the flow is well-defined without cut-off, the equality
\[L_s(x,y)=\K\left(\intR f(\tfrac{t}{s})\tau_w(t)Gdt\right)(x,y)\]
will allow us to relate the integral $\tr(L_s(x,x))$ to the trace of that operator (both multiplied with suitable cut-offs to make the integral convergent and the operator trace class).
\end{itemize}
We proceed to show that all three ways of making $L_s$ rigorous are well-defined and mutually compatible. The general strategy will always be the same: We use wavefront calculus to show that the expressions, as functions of $\K(G)$, are well-defined and continuous on distributions with wavefront set contained in that of $\K(G)$, then show that they are compatible on smooth functions to conclude that, by continuity, they also agree for $\K(G)$.

As a first step toward the first expression, we note that integration along timelines can be expressed as a pushforward by the corresponding projection:
\begin{lemma}
	\label{intproj}
	If F is a vector bundle on some manifold $N$, $U\subseteq N\times \R$ is open, $p\colon U\times \R$ denotes the projection onto the first component and $g\in \Gamma(p^*(E))$ has compact support in each fibre of $p$, then we have
	\[p_*g(x)=\intR g(x,t)dt.\]
\end{lemma}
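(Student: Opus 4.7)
The plan is to verify the identity by unwinding the definition of pushforward from Definition \ref{pullforward} and applying Fubini's theorem. The only subtlety is keeping track of the vector-bundle coefficients and the identification of smooth sections with distributions via the canonical volume measure.

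First I would fix a test section $\phi \in \Gamma_c(p^*(F)^*)$ on $N$ (or rather on the relevant bundle making the pairing scalar-valued). By Definition \ref{pullforward}, the pushforward is characterized by
\[
(p_* g)[\phi] = g[\phi \circ p],
\]
where $\phi \circ p$ is the natural pullback of the test section to $U$. Identifying smooth sections with distributions via the canonical volume measure on $U = N \times \R$ (using the fact that, locally, $d\Vol_U = d\Vol_N \otimes dt$), this evaluation is just the integral
\[
g[\phi \circ p] = \int_{U} \langle g(x,t),\, \phi(x) \rangle\, dt\, dx.
\]
Here the pairing is the fibrewise duality pairing in $p^*(F)_{(x,t)} \cong F_x$, so it is really a pairing in $F_x$ that does not depend on $t$.

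The compact-support-on-fibres assumption on $g$ ensures that for each fixed $x$, the inner integrand $t \mapsto \langle g(x,t), \phi(x) \rangle$ is compactly supported (hence integrable), and the joint $(x,t)$-support is contained in $\supp(\phi) \times \R$ intersected with $\supp(g)$, where the latter projects properly onto $N$. This is exactly the condition needed to apply Fubini, which gives
\[
\int_{U} \langle g(x,t),\, \phi(x) \rangle\, dt\, dx = \int_N \left\langle \int_\R g(x,t)\, dt,\, \phi(x)\right\rangle dx.
\]
The right-hand side is, by the identification of smooth sections with distributions, the evaluation of the section $x \mapsto \int_\R g(x,t)\, dt$ against $\phi$.

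Combining these equalities, $(p_*g)[\phi]$ coincides with $\left(\int_\R g(\cdot,t)\,dt\right)[\phi]$ for every test section $\phi$, and the claimed equality $p_*g(x) = \int_\R g(x,t)\,dt$ follows. The only place where care is needed is justifying that the inner integral defines a smooth section of $F$ on $N$; this follows from standard differentiation-under-the-integral-sign arguments together with the locally uniform compactness of the $t$-support of $g$ on compact subsets of $N$. No step here is really an obstacle — the whole lemma is essentially Fubini dressed up in the language of distributional pushforwards.
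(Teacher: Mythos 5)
Your proposal is correct and follows essentially the same route as the paper: unwind the definition of the pushforward against a test section, apply Fubini, and conclude since the test section was arbitrary. The paper's own proof is just a terser version of this computation (it asserts smoothness of both sides without the differentiation-under-the-integral remark you add).
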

\begin{proof}
	Both sides are smooth and for $\psi\in C_c^\infty(p(U))$ we have
	\begin{align*}
		p_*g[\psi]&=g[\psi\circ p]\\
		&=\int\limits_{p(U)}\int\limits_\R g(x,t)\psi(x)dtdx\\
		&=\int\limits_\R g(\cdot,t)dt[\psi].\\
	\end{align*}
As $\psi$ was arbitrary, this implies the claim.
\end{proof}
Thus, for a function $g$ on $M\times M$, integration along flow lines can be written as
\[\intR f(\tfrac{t}{s}) g(w_y(t),x) dt=\pi_*F_s\Phi^*(g)(y,x).\]
The right hand side also makes sense if $g$ is a distribution with suitable wavefront set. This shows that our first expression for $L_s(x,y)$ is well-defined.
\begin{prop}
	For any $s>0$, $g\mapsto \pi_*F_s\chi_\Phi\Pi_\Phi\Phi^*(g)$ gives a well defined and continuous map from $\D'_{\WF(\K(G))}(E\boxtimes E^*)$ to $\Gamma(E\boxtimes E^*)$.
	\label{smoothforward}
\end{prop}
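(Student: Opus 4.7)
The plan is to realize the map as a composition of four continuous operations in the wavefront-calculus framework of Theorem \ref{WFMFD}: pullback along $\Phi$, multiplication by the smooth bundle homomorphism $\Pi_\Phi$, multiplication by the smooth function $F_s\chi_\Phi$, and pushforward along $\pi$. The whole argument reduces to computing the images of $\WF(\K(G))$ under $\Phi^*$ and then under $\pi_*$.

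First I would set up the adjoint differentials. Writing $W(y,t):=w_y(t)$, one checks that for $(\xi,\eta)\in T^*_{(w_y(t),x)}(M\times M)$
\[
(d\Phi)^*(\xi,\eta)=\bigl((d_yW)^*\xi,\ \eta,\ \langle \xi,w_y'(t)\rangle\bigr),
\]
and that $(d\pi)^*(\mu,\nu)=(\mu,\nu,0)$. For $\Phi^*$ to be defined by wavefront calculus I need $\Phi^*(\WF(\K(G)))$ to miss the zero section. But every element of $\WF(\K(G))$ is a pair $(\xi,\eta)$ of lightlike (hence non-zero) covectors, so the middle slot $\eta$ of $(d\Phi)^*(\xi,\eta)$ is non-zero. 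Thus $\Phi^*$ is continuous from $\D'_{\WF(\K(G))}(E\boxtimes E^*)$ to $\D'_{\Phi^*\WF(\K(G))}(\Phi^*(E\boxtimes E^*))$. Multiplication by the smooth $\Pi_\Phi$ gives a continuous map into $\D'_{\Phi^*\WF(\K(G))}(\pi^*(E\boxtimes E^*))$, and multiplication by the smooth scalar $F_s\chi_\Phi$ is continuous with the same (or smaller) wavefront.

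For the pushforward, two things are needed: properness of $\pi$ on the relevant support, and emptiness of $\pi_*$ of the wavefront after zero covector is removed. The support of $F_s\chi_\Phi\Pi_\Phi\Phi^*(g)$ lies in $M\times M\times s\cdot\supp(f)$, so for any compact $K\subseteq M\times M$ its preimage under $\pi$ meets the support in a subset of $K\times s\cdot\supp(f)$, which is compact; so $\pi$ is proper there. For the wavefront, suppose $(\mu,\nu)\in\pi_*(\Phi^*\WF(\K(G)))\setminus 0$. Then there exist $(y,x,t)$ and $(\xi,\eta)\in \WF(\K(G))_{(w_y(t),x)}$ with $(d\Phi)^*(\xi,\eta)=(\mu,\nu,0)$. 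The last slot gives $\langle\xi,w_y'(t)\rangle=0$, but $\xi$ is lightlike (non-zero) and $w_y'(t)$ is timelike, contradicting Lemma \ref{orthotime}. Hence $\pi_*(\Phi^*\WF(\K(G)))$ contains no non-zero covector.

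Applying Theorem \ref{WFMFD}, the pushforward $\pi_*$ is therefore continuous into $\D'_\emptyset(E\boxtimes E^*)=\Gamma(E\boxtimes E^*)$, and composing the four continuous maps yields the claim. The only genuinely delicate step is the wavefront computation for $\pi_*$, where the use of Lemma \ref{orthotime} (lightlike-versus-timelike orthogonality) is what ensures the output is actually smooth rather than merely a distribution; the remaining bookkeeping about bundle identifications and the properness of the projection on the $F_s$-cut support is routine.
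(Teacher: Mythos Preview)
Your proof is correct and follows essentially the same route as the paper: decompose the map as $\pi_*\circ(F_s\chi_\Phi\cdot)\circ\Pi_\Phi\circ\Phi^*$, verify the wavefront conditions for each factor via Theorem \ref{WFMFD}, and use the lightlike--timelike pairing (Lemma \ref{orthotime}) to show that the $\partial_t$-component of any covector in $\Phi^*\WF(\K(G))$ is nonzero, whence $\pi_*\Phi^*\WF(\K(G))=\emptyset$. The only cosmetic differences are that the paper invokes ``$\Phi$ is a submersion'' to justify the pullback (you instead check directly that the $\eta$-slot survives), and the paper cites properness of $\pi$ on $\supp(\chi_\Phi)$ while you use the compact $t$-support coming from $F_s$; your version of the properness check is in fact slightly cleaner.
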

\begin{proof}
	Multiplication with smooth functions and composition with bundle homomorphisms over the identity continuously map each wavefront space to itself. As $\Phi$ is a submersion and $\pi$ is proper on $\supp(\chi_\Phi)$, wavefront calculus tells us that $\pi_*F_s\chi_\Phi\Pi_\Phi\Phi^*$ is well defined and continuous as a map into $\D'_{\pi_*\Phi^*(\WF(\K(G)))}(E\boxtimes E^*)$. We will show that $\pi_*\Phi^*(\WF(\K(G)))$ is empty. Let $(y,x,t)\in \Dom(\Phi)$ be arbitrary, let $\xi_0=(v,v')$ be in $\WF(\K(G))\cap T_{(w_y(t),x)}(M\times M)$ and let $\partial_t$ denote the the unit vector in $\R$-direction at $(y,x,t)$. Then we have
	\begin{align*}
		d\Phi_{(x,y,t)}^*(\xi_0)(\partial_t)&=\xi_0(d\Phi_{(y,x,t)}\partial_t)\\
		&=\xi_0(\partial_t\Phi(y,x,t))\\
		&=\xi_0(\partial_t(w_y(t),x)\\
		&=\xi_0((w_y'(t),0))\\
		&=(v(w_y'(t)),0).
	\end{align*}
	As $v$ must be lightlike and  $w_y'$ is timelike, this is non-zero by \ref{orthotime}. We can conclude that all covectors in $\Phi^*(\WF(\K(G)))$ are non-zero when paired with $\partial_t$. For any $\xi\in T_{(x,y)}(M\times M)$, we have
	\[d\pi_{(x,y,t)}^*\xi(\partial_t)=\xi(\partial_t\pi(x,y,t))=\xi(0)=0.\]
	Thus $d\pi_{(x,y,t)}^*\xi\notin \Phi^*(\WF(A\K(G)))$. As $\xi$ was arbitrary, $\pi_*\Phi^*(\WF(A\K(G))$ is empty and hence $\pi_*F_s\chi_\Phi\Pi_\Phi\Phi^*$ maps $\D'_{\WF(\K(G))}(E\boxtimes E^*)$ into $\Gamma(E\boxtimes E^*)$ continuously. 
\end{proof}
With this we can define
\begin{df}
	\label{dfL}
	For $s\in\R$, define $L_s\in\Gamma(E\boxtimes E)$ by
	\[L_s:=\pi_*F_s\chi_\Phi\Pi_\Phi\Phi^*(\K(G)).\]
\end{df}
The above proposition implies that this is well-defined and smooth.

We now proceed to show that this coincides with the pointwise picture we have been using in the previous chapters:
\begin{prop}
	\label{pointres}
	For arbitrary $x\in M$, set 
	\[\chi_{x}(t):=\chi_\Phi(x,x,t).\]
	Then we have:
	\[\tr(L_s(x,x)):=\tr\pi_*F_s\chi_\Phi\Pi_\Phi\Phi^*(\K(G))(x,x)=w_x^*(A_x G_x)[\chi_{x}f_s].\]
\end{prop}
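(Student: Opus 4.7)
The plan is to verify both sides are continuous functions of $\K(G)$ (viewed as varying in $\D'_{\WF(\K(G))}(E\boxtimes E^*)$), compute agreement on smooth compactly supported sections, and then use density of the latter to conclude. This is the same strategy used to extend $\iota_x^*$ to $\K(G)$ in Lemma \ref{resGx}.

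For the left hand side, Proposition \ref{smoothforward} already tells us that $g \mapsto \pi_*F_s\chi_\Phi\Pi_\Phi\Phi^*(g)$ defines a continuous map $\D'_{\WF(\K(G))}(E\boxtimes E^*)\to \Gamma(E\boxtimes E^*)$, so restricting to the diagonal at $(x,x)$ and taking fibrewise trace gives a continuous scalar-valued functional of $g$. For the right hand side, Lemma \ref{resGx} gives continuity of $\iota_x^*$ into $\D'_\Lambda(E\otimes E_x^*)$ with $\Lambda$ the lightlike covectors; composition with $A_x$ (a fibrewise linear bundle morphism, which acts continuously on wavefront spaces) keeps us in the scalar version of the same wavefront space, and the same argument as in Proposition \ref{WRint} shows $w_x^*$ is well-defined and continuous on this space since $w_x$ is timelike and hence its tangent vectors are never orthogonal to lightlike covectors by Lemma \ref{orthotime}. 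Finally, testing against $\chi_x f_s$ is continuous. So both sides are continuous scalar-valued functionals of $\K(G)$.

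It remains to verify the equality when $\K(G)$ is replaced by a smooth compactly supported $g\in\Gamma_c(E\boxtimes E^*)$, since such sections are dense in $\D'_{\WF(\K(G))}(E\boxtimes E^*)$ by Theorem \ref{WFMFD}. On the left, using Lemma \ref{intproj} to unpack the pushforward along $\pi$, we have
\[L_s(x,x) = \int_\R f(\tfrac{t}{s})\,\chi_\Phi(x,x,t)\,\Pi_w(x,t)\circ g(w_x(t),x)\,dt,\]
an element of $E_x\otimes E_x^*$; taking the fibrewise trace and pulling it inside the integral yields
\[\tr(L_s(x,x)) = \int_\R f(\tfrac{t}{s})\,\chi_\Phi(x,x,t)\,\tr\bigl(\Pi_w(x,t)\circ g(w_x(t),x)\bigr)\,dt.\]

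On the right, when $\K(G)=g$, Lemma \ref{resGx} gives $G_x = \iota_x^*(g) = g(\cdot,x)$, a smooth section of $E\otimes E_x^*$. By the definition of $A_x$ (Fixed Notation \ref{dftansport}), for any $y = w_x(t)$ we have $A_x(G_x)(w_x(t)) = \tr(\Pi_w(x,t)\circ g(w_x(t),x))$. Pulling back via $w_x$ and testing against $\chi_x f_s$ gives exactly the same integral, so the two expressions coincide for smooth $g$. By continuity and density this extends to $\K(G)$, completing the proof. The main potential obstacle is making sure the bundle coordinate $A_x$ really does translate between the two pictures — in particular that $A$ was set up precisely so that the "pre-compose with parallel transport and take trace" on the left matches the evaluation of $A_x$ on the right — but this is guaranteed by the definition of $A$.
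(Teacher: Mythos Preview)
Your proof is correct and follows essentially the same approach as the paper: establish continuity of both sides as functionals on $\D'_{\WF(\K(G))}(E\boxtimes E^*)$, verify the identity on smooth compactly supported sections via the explicit integral computation, and conclude by density. The only cosmetic difference is that the paper redoes the wavefront argument for $w_x^*$ explicitly rather than citing Proposition \ref{WRint}, but your reference to that earlier argument is perfectly valid.
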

\begin{proof}
	Let $(x,x,t)$ be in $\Dom(\Phi)$. Let $\partial_t$ be the standard unit vector in $T^*_t(\R)$. We calculate for lightlike $v\in T^*_{w_x(t)}(M)$:
	\begin{align*}
		dw_x^*(v)(\partial_t)&=v(\partial_tw_x(t))\\
		&=v(w_x'(t))\\
		&\neq 0,
	\end{align*}
	as this is the pairing of a lightlike and a timelike vector. Thus $w_x^*(\Lambda)$, for $\Lambda$ as in Proposition \ref{resGx}, does not intersect the zero section. We can conclude that
	\[w_x^*\colon \D'_\Lambda(M)\rightarrow\D'(\R)\]
	is well-defined and continuous. Together with Proposition \ref{resGx}, this means that all maps in the right hand side of the claim are continuous. The left hand side is continuous as a function of $\K(G)$ by \ref{smoothforward}, as evaluation is continuous on $\Gamma(E\boxtimes E^*)$. We now calculate for $g\in \Gamma_c(E\boxtimes E^*)$:
	\begin{align*}
		\tr\pi_*F_s\chi_\Phi\Pi_\Phi\Phi^*(g)(x,x)&=\intR f(\tfrac{t}{s})\chi_\Phi(x,x,t)\tr(\Pi_w(x,t)\circ g(w_x(t),x))dt\\
		&=\intR f_s(t)\chi_{x}(t)A_x(\iota_x^*g(w_x(t)))dt\\
		&=w_y^*(A_x\iota_x^*(g))[\chi_{x}f_s].
	\end{align*}
	As both sides are continuous as functions of $g$ in $\D'_{\WF(\K(G))}(M\times M)$, the equality also holds for $\K(G)$ instead of $g$. As $G_x=\iota_x^*\K(G)$, this finishes the proof. 
\end{proof}
For the last expression, we need to establish continuity of operator evaluation in wavefront calculus.
\begin{df}
	\label{dfOp}
	For $\eta\in \D'(F\boxtimes E^*)$, denote by $\Op(\eta)$ the unique continuous  operator $\Gamma_c(E)\rightarrow \D'(F)$ with Schwartz kernel $\eta$.
\end{df}
\begin{lemma}
	\label{Opcont}
	For $\psi\in \Gamma_c(E)$, the map $g\mapsto \Op(g)\psi$ maps $\D'_{\WF(G)}(E\boxtimes E^*)$ to $\Gamma(E)$ continuously.
\end{lemma}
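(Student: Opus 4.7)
The plan is to factor $\Op(g)\psi$ through operations of wavefront calculus, each of which is continuous when $\psi$ is held fixed. Writing $\pi_1,\pi_2\colon M\times M\rightarrow M$ for the two projections and using the pointwise pairing $E^*\otimes E\rightarrow\C$ in the second factor, the operator-kernel identity reads
\[\Op(g)\psi=(\pi_1)_*\bigl(g\cdot\pi_2^*\psi\bigr).\]
First I would verify this identity for smooth $g$ by unfolding the definitions (it reduces to $\Op(g)\psi(x)=\int g(x,y)\psi(y)\,dy$) and then, using the density statement in Theorem~\ref{WFMFD}, extend it to arbitrary $g\in\D'_{\WF(G)}(E\boxtimes E^*)$.

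Next I would verify that each operation on the right hand side is well-defined and continuous in $g$. Since $\psi$ is smooth, $\pi_2^*\psi$ is a smooth section of $\pi_2^*E$ with empty wavefront set, so the multiplication part of Theorem~\ref{WFMFD} (applied with $E_1=\pi_1^*E$, $E_2=\pi_2^*E$ and $E_3$ the trivial line bundle) yields a hypocontinuous bilinear map into $\D'_{\WF(g)}(\pi_1^*E)\subseteq\D'_{\WF(G)}(\pi_1^*E)$; fixing $\pi_2^*\psi$ makes this a continuous linear map in $g$. The product is supported in $M\times\supp(\psi)$, on which $\pi_1$ is proper, so the pushforward theorem applies.

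The crucial point is that the resulting pushforward wavefront is empty. Any $\xi\in T^*_xM\setminus\{0\}$ lying in $(\pi_1)_*\WF(g)$ would force $(d\pi_1)^*_{(x,y)}\xi=(\xi,0)\in\WF(g)\cup\{0\}$ for some $y\in\supp(\psi)$. Since $\WF(g)\subseteq\WF(\K(G))$ consists only of pairs $(v,v')$ of nonzero lightlike covectors, the vanishing second component excludes $(\xi,0)\in\WF(\K(G))$, while $(\xi,0)=0$ forces $\xi=0$. Hence the pushforward sends the product into $\D'_\emptyset(E)=\Gamma(E)$ continuously, and the composition is a continuous map $g\mapsto\Op(g)\psi$ from $\D'_{\WF(G)}(E\boxtimes E^*)$ to $\Gamma(E)$.

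The main obstacle is really just bookkeeping: tracking the several bundle identifications (in particular that the pairing inside the product produces a section of $\pi_1^*E$) carefully enough that the pushforward theorem applies in the stated form. Once that setup is in place, continuity drops out directly from Theorem~\ref{WFMFD}, and density handles the passage from smooth $g$ to general $g$.
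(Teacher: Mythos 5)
Your proposal is correct and follows essentially the same route as the paper: the identity $\Op(g)\psi=(\pi_1)_*\bigl((1\otimes\psi)\cdot g\bigr)$ verified on smooth kernels and extended by continuity/density, properness of $\pi_1$ on $M\times\supp(\psi)$, and emptiness of the pushforward wavefront because $(d\pi_1)^*\xi=(\xi,0)$ cannot lie in $\WF(\K(G))$, whose covectors are nonzero in the second component. No substantive differences.
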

\begin{proof}
	Let $p_1\colon M\times M\rightarrow M$ be the projection onto the first component. For $g\in \Gamma_c(E\boxtimes E^*)$ and $\theta\in \Gamma_c(E^*)$, we have
	\begin{align*}
		\Op(g)\psi[\theta]&=g[\theta\otimes\psi]\\
		&=(1\otimes\psi)g[\theta\otimes 1]\\
		&=(1\otimes\psi)g[\theta\circ p_1]\\
		&=p_{1*}((1\otimes\psi)g)[\theta].
	\end{align*}
	As both sides are continuous as a function of $g\in D'(E\boxtimes E^*)$ (multiplichation by $1\otimes\psi$ ensures that the support condition for the pushforward theorem in wavefront calculus are satisfied), the equality holds for arbitrary distributions $g$.
	In particular, we thus have for $g\in \D'_{\WF(G)}(E\boxtimes E^*)$
	\[\Op(g)\psi=p_{1*}((1\otimes\psi)g).\]
	For $x,y\in M$ and $v\in T_xM$, we have $(dp_1)^*_{(x,y)} v=(v,0)$.
	As $\WF(G)$ does not contain vectors that vanish in the second component it does not intersect the range of $(dp_1)^*_{(x,y)}$. Thus $p_{1*}(\WF(G))$ is empty, and hence
	\[g\mapsto p_{1*}((1\otimes\psi)g)\]
	maps $\D'_{\WF(G)}(E\boxtimes E^*)$ to $\Gamma(E)$ continuously.
\end{proof}
We now get to the last expression for $L_s$. In order to take integrals/traces later on, we need to multiply by cut-off functions on $M$. This also allows us to get rid of our flow-related cut-off $\chi_\Phi$, if we choose $s$ small enough.
\begin{df}
	For a smooth function $\chi$, let $\mu_{\chi}$ denote multiplication by $\chi$. For a family of Operators $(T(t))_{t\in\R}$ on some function space, define the integrated operator by
	\[\intR T(t)dt\psi:=\intR T(t)\psi dt\]
	with the integral taken pointwise (whenever this is defined).
\end{df}
\begin{lemma}
	\label{kersmooth}
	For $\chi_1,\chi_2\in C_c^\infty(M)$, there is $s_0>0$ such that for $s<s_0$, the operator 
	\[\int\limits_\R\mu_{\chi_1}f(\tfrac{t}{s})\tau_w(t)G\mu_{\chi_2} dt\]
	is well defined and has a smooth Schartz kernel given by
	\[\K\left(\int\limits_\R\mu_{\chi_1}f(\tfrac{t}{s})\tau_w(t)G\mu_{\chi_2} dt\right)=(\chi_1\otimes\chi_2)L_s.\]	
	If $\Dom(\Phi)=M\times M\times\R$  and $\chi_\Phi=1$ this holds for arbitrary $s$ and we may also allow $\chi_1=\chi_2=1$.
\end{lemma}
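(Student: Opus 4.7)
The plan is to first verify the identity at the level of Schwartz kernels for a smooth compactly supported kernel $g$ in place of $\K(G)$, where everything reduces to Fubini and a direct computation, and then extend to the distributional setting by continuity using the wavefront calculus already developed.

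First I would choose $s_0>0$ small enough that
\[\supp(\chi_1)\times\supp(\chi_2)\times s_0\supp(f)\]
lies inside the open set where $\chi_\Phi\equiv 1$; this is possible because $\chi_\Phi$ equals $1$ on a neighborhood of $M\times M\times\{0\}$ and the displayed set shrinks to $\supp(\chi_1)\times\supp(\chi_2)\times\{0\}$ as $s\to 0$. For such $s$, the integrated operator is well-defined on $\Gamma_c(E)$: for $\psi\in\Gamma_c(E)$, $G\mu_{\chi_2}\psi\in \Gamma(E)$ by Proposition \ref{extendedG}, and $t\mapsto \mu_{\chi_1}f(t/s)\tau_w(t)G\mu_{\chi_2}\psi$ is a smooth $\Gamma_{\supp(\chi_1)}(E)$-valued family compactly supported in $t\in s\supp(f)$, so its integral converges in $\Gamma_c(E)$.

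Second, for $g\in\Gamma_c(E\boxtimes E^*)$ in place of $\K(G)$, set $T_g:=\int_\R\mu_{\chi_1}f_s\tau_w(t)\Op(g)\mu_{\chi_2}\,dt$. Using the definitions of $\tau_w$, $\Op$, $\Pi_\Phi$, and Fubini, a direct calculation yields, for $\theta\in\Gamma_c(E^*)$ and $\psi\in\Gamma_c(E)$,
\begin{align*}
\K(T_g)[\theta\otimes\psi]
&=\int_M\!\!\int_M\chi_1(y)\chi_2(x)\int_\R f(t/s)\theta(y)\bigl(\Pi_w(y,t)g(w_y(t),x)\bigr)\,dt\,\psi(x)\,dx\,dy\\
&=(\chi_1\otimes\chi_2)\,\pi_*F_s\chi_\Phi\Pi_\Phi\Phi^*(g)\,[\theta\otimes\psi],
\end{align*}
where $\chi_\Phi$ may be inserted for free by the choice of $s_0$. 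This is exactly $(\chi_1\otimes\chi_2)L_s$ with $g$ in place of $\K(G)$, so the claim holds on smooth kernels.

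Third, I would extend to $\K(G)$ by continuity and density. The map $g\mapsto(\chi_1\otimes\chi_2)\pi_*F_s\chi_\Phi\Pi_\Phi\Phi^*(g)$ is continuous from $\D'_{\WF(\K(G))}(E\boxtimes E^*)$ to $\Gamma(E\boxtimes E^*)$ by Proposition \ref{smoothforward}, followed by multiplication with the smooth compactly supported cutoff $\chi_1\otimes\chi_2$. On the operator side, Lemma \ref{Opcont} gives continuity of $g\mapsto\Op(g)\psi$ into $\Gamma(E)$; composing with the smooth family $\tau_w(t)$, multiplying by the smooth compactly supported $\chi_1 f(t/s)$, integrating over the compact set $s\supp(f)$, and finally pairing with $\theta$ gives continuity of $g\mapsto\K(T_g)[\theta\otimes\psi]$ in $g\in\D'_{\WF(\K(G))}(E\boxtimes E^*)$. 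Since $\Gamma_c(E\boxtimes E^*)$ is dense in this space by Theorem \ref{WFMFD}, the identity on smooth kernels extends to $\K(G)$. The final sentence of the statement follows because, when $\chi_\Phi\equiv 1$ on all of $M\times M\times \R=\Dom(\Phi)$, the choice of $s_0$ is unnecessary and the cutoffs $\chi_1,\chi_2$ may be dropped without affecting any step.

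The main obstacle is the continuity of the operator-valued assignment $g\mapsto T_g$ in the strong wavefront topology, in particular justifying the interchange of the $t$-integration with the distributional pairing when $g$ is taken to be $\K(G)$. Once Proposition \ref{smoothforward} and Lemma \ref{Opcont} are invoked, the remaining work is bookkeeping, and the distinguished Fubini calculation on smooth $g$ carries through verbatim.
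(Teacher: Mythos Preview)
Your proposal is correct and follows essentially the same strategy as the paper: choose $s_0$ so that $\chi_\Phi\equiv 1$ on the relevant set, verify the kernel identity for smooth $g$ by a Fubini computation, and extend to $\K(G)$ by density using Proposition~\ref{smoothforward} and Lemma~\ref{Opcont}. The only noteworthy difference is in how continuity of the operator side is justified: the paper introduces auxiliary maps $\phi(x,t)=w_x(t)$, $\tilde\pi$, $\Pi_\phi$ and rewrites $\K(T_g)[\psi_1\otimes\psi_2]=\tilde\pi_*\big((\chi_1\otimes f_s)\Pi_\phi\phi^*\Op(g)(\chi_2\psi_2)\big)[\psi_1]$, so that continuity in $g$ follows directly from Lemma~\ref{Opcont} composed with continuous maps on smooth sections, rather than appealing to an interchange of the $t$-integral with the pairing. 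This makes the step you flag as the ``main obstacle'' completely routine, and you may want to adopt that reformulation.
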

\begin{proof}
	As $\supp(\chi_1)\times \supp(\chi_2)$ is compact and $\chi_\Phi^{-1}(1)$ is a neighborhood of $M\times M\times \{0\}$, there is $s_0$ such that
	\[\supp(\chi_1)\times \supp(\chi_2)\times s_0I_f\subseteq \chi_\Phi^{-1}(1)\subseteq\Dom(\Phi).\]
	Thus for $s<s_0$,
	\[(\chi_1\otimes\chi_2)\pi_*F_s\chi_\Phi(\cdot)=\pi_*(\chi_1\otimes\chi_2\otimes f_s)(\cdot)\]
	and $\chi_1\otimes f_s$ has compact support contained in the domain of the flow, i.e. the region where $\tau_w(\cdot)G(\chi_2\psi)$ is defined for any test function $\psi$. This means the operator under consideration is well-defined. If the flow is defined for all times and $\chi_\Phi=1$, we may instead consider arbitrary $s$.
	
	We need to define quantities analogous to $\Phi$, $\pi$, $F_s$ and $\Pi_\Phi$, only without the second argument.
	Define 
	\[\phi(x,t):=w_x(t)\]
	for all $(x,t)$ where this is defined. 
	Let $\tilde\pi\colon \Dom(\phi)\rightarrow M$ denote the projection on the first coordinate.
	Finally, define	
	\[\Pi_\phi\colon \phi^*(E)\rightarrow \tilde\pi^*(E)\]
	by setting for $v\in \phi^*(E)_{(x,t)}$
	\[\Pi_\phi(v):=\Pi_w(x,t)(v).\]
	With this we can write rewrite our time translation (for $h\in \Gamma(E)$) as
	\[\tau_w(t)(h)(x)=\Pi_\phi\phi^*(h)(x,t).\]
	Using this and Lemma \ref{intproj}, we get
	\begin{align*}
		&\K\left(\intR \mu_{\chi_1}f(\tfrac{t}{s})\tau_w(t)\Op(g)\mu_{\chi_2}\right)(\psi_1\otimes\psi_2)\\
		&=\intM\intR \chi_1(y)f_s(t)(\Pi_\phi\phi^*\Op(g)(\chi_2\psi_2))(y,t)dt\psi_1(y)dy\\
		&=\intM\tilde\pi_*( (\chi_1\otimes f_s)(\Pi_\phi\phi^*\Op(g)(\chi_2\psi_2)))(y)\psi_1(y)dy\\
		&=(\tilde\pi_*((\chi_1\otimes f_s)(\Pi_\phi\phi^*\Op(g)(\chi_2\psi_2))))[\psi_1].\\
	\end{align*}
	By Lemma \ref{Opcont}, $\Op(\cdot)[\chi_2\phi_2]$ maps $\D'_{\WF(G)}(E\boxtimes E^*)$ to $\Gamma(E)$ continuously. All other maps involved are continuous maps on smooth sections (multiplication with $\chi_1\otimes f_s$ guaranteeing that the support conditions for the pullback are met). Thus the above is continuous as a function of $g$.
	
	Since 
 \[g\mapsto (\chi_1\otimes\chi_2)\pi_*F_s\chi_\Phi\Pi_\Phi\Phi^*(g)\]
 is also continuous, we only need to show the desired equality for $g\in \Gamma_c(E\boxtimes E^*)$. In that case, we have for $\psi_1\in \Gamma_c(E^*)$ and $\psi_2\in \Gamma_c(E)$:
 \begin{align*}
 	&(\chi_1\otimes\chi_2)L_s[\psi_1\otimes\psi_2]\\
 	&=(\chi_1\otimes\chi_2)\pi_*F_s\chi_\Phi\Pi_\Phi\Phi^*(g)[\psi_1\otimes\psi_2]\\
 	&=\pi_*(\chi_1\otimes\chi_2\otimes f_s)\Pi_\Phi\Phi^*(g)[\psi_1\otimes\psi_2]\\
 	&=(\chi_1\otimes\chi_2\otimes f_s)\Pi_\Phi (g\circ\Phi)[(\psi_1\otimes\psi_2)\circ\pi]\\
 	&=\intR\intM\intM f(\tfrac{t}{s})(\chi_1(y)\psi_1(y))(\Pi_w(y,t)g(w_y(t),x)(\chi_2(x)\psi_2(x))dxdydt\\
 	&=\intM\psi_1(y)\intR f(\tfrac{t}{s})\chi_1(y) \Pi_w(y,t)\Op(g)(\chi_2\psi_2)(w_y(t))dtdy\\
 	&=\intM \psi_1(y)\intR f(\tfrac{t}{s})\chi_1(y)\tau_w(t)(\Op(g)(\chi_2\psi_2))(y)dtdy\\
 	&=\K\left(\intR f(\tfrac{t}{s}) \mu_{\chi_1}\tau_w(t)\Op(g)\mu_{\chi_2}dt\right)(\psi_1\otimes\psi_2).
 \end{align*}
By continuity, this also holds for $g=\K(G)$, proving the desired equality. Smoothness of the kernel then follows from Proposition \ref{smoothforward}.
\end{proof}
We can now show the global version of our main formula (for readers who wish to understand this result without reading all the previous parts, we point out that there is a notation index at the end of the thesis):
\begin{thm}
	\label{globalexp}
	For $\chi_1,\chi_2\in C_c^\infty(M)$, there is $s_0>0$, such that for $s<s_0$, the operator \[\int\limits_\R\mu_{\chi_1}f(\tfrac{t}{s})\tau_w(t)G\mu_{\chi_2} dt\]
	extends continuously to a trace class operator in $L^2(E)$ (with respect to any hermitian structure on $E$). Its trace has the asymptotic expansion
	\begin{align*}
		&\tr\left( \int\limits_\R\mu_{\chi_1}f(\tfrac{t}{s})\tau_w(t)G\mu_{\chi_2} dt\right)\\
		\stackrel{s\rightarrow 0}{\sim}&\insum{k}\insum{n}a(k,n)
		\intM \chi_1(x)\chi_2(x)\tr(\partial_t^{2n}|_{t=0}(\tau_w(t)V^k_x)(x))dx\ 
		s^{2k+2n+3-d},
\end{align*}
for $a(k,n)$ as in Definition \ref{dfa}.

We have for any $k,o\in \N$:
\[\intM \chi_1(x)\chi_2(x)\tr(V^{k}_x(x))dx=\Xi_{k,o}\left(\tr\left( \intR\mu_{\chi_1}f(\tfrac{t}{s})\tau_w(t)G_{P-z}\mu_{\chi_2} dt\right)\right)_{s,z}\]
with $\Xi$ as in Definition \ref{dfXi}.
\end{thm}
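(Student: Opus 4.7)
The plan is to reduce the global statement, via the previously-established identities for $L_s$, to an $x$-parametrised version of Theorem \ref{smearedexp} and then to Theorem \ref{smearedfinalgen}. First, I would invoke Lemma \ref{kersmooth}: for $s < s_0$ (where $s_0$ is chosen so that $\operatorname{supp}(\chi_1)\times\operatorname{supp}(\chi_2)\times s_0 I_f \subseteq \chi_\Phi^{-1}(1)$), the operator in question has Schwartz kernel $(\chi_1\otimes\chi_2)L_s$, which by Proposition \ref{smoothforward} is smooth and, by the support of $\chi_1\otimes\chi_2$, compactly supported. Theorem \ref{Mercer} then yields both the trace class property and the identity
\[
\tr\left(\int_{\R}\mu_{\chi_1}f(\tfrac{t}{s})\tau_w(t)G\mu_{\chi_2}\,dt\right) = \int_M \chi_1(x)\chi_2(x)\tr(L_s(x,x))\,dx.
\]

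Next, I would use Proposition \ref{pointres} to rewrite $\tr(L_s(x,x)) = w_x^*(A_x G_x)[\chi_x f_s]$ where $A_x(T)=\tr(\Pi_w(x,\cdot)\circ T)$. For $s < s_0$ the cutoff $\chi_x$ equals $1$ on $\operatorname{supp}(f_s)$ whenever $x\in\operatorname{supp}(\chi_1\chi_2)$, so one may drop it and obtain
\[
\tr\left(\int_{\R}\mu_{\chi_1}f(\tfrac{t}{s})\tau_w(t)G\mu_{\chi_2}\,dt\right) = \int_M \chi_1(x)\chi_2(x)\,w_x^*(A_xG_x)[f_s]\,dx.
\]
Now I apply Theorem \ref{smearedexp} with $N=M$, cutoff $\chi_1\chi_2$, bundle coordinate $A$, and the given family $(w_x)$. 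Since each $w_x$ is a unit-speed geodesic, $\nu_{w_x}\equiv 1$, so the $m$-dependent $\nu$-power in Theorem \ref{smearedexp} disappears. Moreover, by definition of $A_x$ via parallel transport and the compatibility of $\nabla_{w_x'}$ with the trace, one has
\[
\bigl(A_x V^k_x \circ w_x\bigr)^{(2n)}(0) = \partial_t^{2n}\big|_{t=0}\tr\bigl((\tau_w(t)V^k_x)(x)\bigr),
\]
which yields the stated asymptotic expansion after inserting the coefficients $a(k,n)$ of Definition \ref{dfa}.

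For the final identity with $\Xi_{k,o}$, I would simply run the same reduction with $P$ replaced by $P-z$ everywhere (the Green's operator exists and the flow/cutoff setup is unchanged since $P-z$ is normally hyperbolic with the same metric), giving
\[
\tr\left(\int_{\R}\mu_{\chi_1}f(\tfrac{t}{s})\tau_w(t)G_{P-z}\mu_{\chi_2}\,dt\right) = \int_M \chi_1(x)\chi_2(x)\,w_x^*(A_xG_{P-z,x})[f_s]\,dx,
\]
and then apply Theorem \ref{smearedfinalgen} to the right-hand side with the same cutoff and family; this produces precisely $\int_M \chi_1(x)\chi_2(x)\tr(V^k_x(x))\,dx$ on the left of the $\Xi_{k,o}$ equation.

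The main obstacle is bookkeeping rather than conceptual: one must check that the chosen bundle coordinate $A$ is well-defined globally (a smooth extension off the closed subset where $\Pi_w(x,t)$ acts) and that the $m$-independent prefactor surviving from Proposition \ref{Expwithz} under $\nu_{w_x}\equiv 1$ is exactly what enters the definition of $\Xi_{K,o}$ in Chapter \ref{Hadamextract}; once these pieces line up, the $z$-dependent theorem follows verbatim from the linear-algebraic extraction of Theorem \ref{Wfinal} applied fibrewise-integrated rather than pointwise.
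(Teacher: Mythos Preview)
Your proposal is correct and follows essentially the same route as the paper: invoke Lemma \ref{kersmooth} and Proposition \ref{smoothforward} for the smooth compactly supported kernel, apply Theorem \ref{Mercer} for the trace-class property and the diagonal integral, use Proposition \ref{pointres} to drop the flow cutoff for small $s$, and then feed the resulting integral into Theorem \ref{smearedexp} (resp.\ Theorem \ref{smearedfinalgen} after replacing $P$ by $P-z$). The only cosmetic difference is that you spell out $\nu_{w_x}\equiv 1$ and the identification $(A_xV^k_x\circ w_x)^{(2n)}(0)=\tr(\partial_t^{2n}|_{t=0}(\tau_w(t)V^k_x)(x))$ explicitly, which the paper leaves implicit.
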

\begin{rem}
	The perfect analogue of the global heat coefficients would be 
	\[\frac{1}{k!}\intM \tr(V^k_x(x))dx.\]
	However, as $M$ is non-compact this generally does not exist and the best one can hope to obtain is an integral against some cut-off function $\chi$. Note that any cutoff can be written as a product $\chi=\chi_1\chi_2$.
\end{rem}
\begin{proof}
	Choose $s_0$ such that Lemma \ref{kersmooth} applies and such that \[\supp(\chi_1)\times\supp(\chi_1)\times s_0I_f\subseteq \chi_\Phi^{-1}(1).\]
	In the following, assume $s<s_0$.
	
	By Lemma \ref{kersmooth}, the operator
	\[\int\limits_\R\mu_{\chi_1}f(\tfrac{t}{s})\tau_w(t)G\mu_{\chi_2} dt\]
	has smooth compactly supported kernel. Thus it extends to a trace class operator on $L^2$-sections.

By Lemma \ref{kersmooth} and Proposition \ref{pointres}, using that $\chi_\Phi(x,x,t)f_s(t)=f_s(t)$ for $x\in \supp(\chi_1)$ by our choice of $s_0$,  we have for any $x\in M$
\[\tr\left(\K\left(\int\limits_\R\mu_{\chi_1}f(\tfrac{t}{s})\tau_w(t)G\mu_{\chi_2} dt\right)(x,x)\right)=\chi_1(x)\chi_2(x)w_x^*(A_x G_x)[f_s]\]

Putting everything together and using Theorem \ref{Mercer} to express the trace of an operator as the integral over its kernel, we get
\begin{align*}
	&\tr \left(\int\limits_\R\mu_{\chi_1}f(\tfrac{t}{s})\tau_w(t)G\mu_{\chi_2} dt\right)\\
	&=\intM\tr\left(\K\left(\int\limits_\R\mu_{\chi_1}f(\tfrac{t}{s})\tau_w(t)G\mu_{\chi_2} dt\right)(x,x)\right)dx\\
	&=\intM \chi_1(x)\chi_2(x)w_x^*(A_x G_x)[f_s]dx\\
\end{align*}
using Theorem \ref{smearedexp}, we obtain
\begin{align*}
&\tr \left(\int\limits_\R\mu_{\chi_1}f(\tfrac{t}{s})\tau_w(t)G\mu_{\chi_2} dt\right)\\
&\stackrel{s\rightarrow 0}{\sim}\insum{k}\insum{n}a(k,n)\intM \chi_1(x)\chi_2(x)\left(A_xV^{k,U}_x\circ w_x\right)^{(2n)}(0)dx\ 
s^{2k+2n+3-d}\\
&=\insum{k}\insum{n}a(k,n)\intM \chi_1(x)\chi_2(x)\tr(\partial_t^{2n}|_{t=0}(\tau_w(t)V^k_x)(x))dx\ 
s^{2k+2n+3-d}.
\end{align*}
Applying Theorem \ref{smearedfinalgen} instead and using the previous equality for $P-z$ instead of $P$, we obtain
	\begin{align*}
	&\intM \chi_1(x)\chi_2(x)\tr(V^{k}_x(x))dx\\
	&=\intM \chi_1(x)\chi_2(x)A_x(V^k_x(x))dx\\
	&=\Xi_{k,o}\Big(\int\limits_\R\chi_1(x)\chi_2(x)w_x^*(A_x(G_{P-z,x}))[f_s]dx\Big)_{s,z}\\
	&=\Xi_{k,o}\Big(\tr \Big(\int\limits_\R\mu_{\chi_1}f(\tfrac{t}{s})\tau_w(t)G_{P-z}\mu_{\chi_2} dt\Big)dx\Big)_{s,z}.\qedhere
	\end{align*}
\end{proof}

All the simplifications for special choices of $o$ can be done in the global case just like in the local case and we obtain analogous formulas for the integrated scalar curvature.
\section{Spacelike global formulation in terms of evolution operators}
\label{globalsec2}
One approach to creating a notion of noncommutative spacetimes would be to make space noncommutative while keeping time to be $\R$. This is motivated by the fact that every globally hyperbolic manifold is diffeomorphic (but generally not isometric!) to $\Sigma\times \R$. Thus one simple notion of a noncommutative globally hyperbolic spacetime would be to take families of spectral triples $(A, \pi, D(t))$ for $t\in \R$, where the algebras, Hilbert spaces and representations are independent of $t$, while the Dirac operators depend on time in some "smooth" way (to capture arbitrary globally hyperbolic spacetimes, one would also have to include a lapse function $\R\rightarrow A$, but we will restrict to cases where this is one, as we need the timelines to be geodesics). In such a setting, it makes sense to talk about specific times, while all "spacelike" quantities only make sense globally.

For this setting, we want to establish a version of our main theorem that is formulated in terms of evolution operators instead of Green's operators, and where we integrate over space while keeping things local in time. Some versions of noncommutative Lorentzian geometry that work in a setting with commutative time use the evolution operators to identify different time-slices (see e.g. \cite{Ha}). Formulated in this framing, we get a formula in terms of the time shift with respect to the chosen timelines.

\begin{gn}
	\label{foliate}
	From now on assume that $M=\Sigma\times \R$ such that all $\Sigma_t:=\Sigma\times \{t\}$ are spacelike Cauchy hypersurfaces. Assume furthermore that $w_{(x,r)}(t)=(x,r+t)$ are timelike unit speed geodesics and use the notation of the previous section for this family of timelines.
\end{gn}
\begin{rem}
 Not every globally hyperbolic spacetime can be brought to the form above. For example, every spacetime with finite time is excluded. If we assume furthermore that timelines are orthogonal to the $\Sigma_t$, which would be required to make sense of the evolution operators in terms of time derivatives, the condition that timelines are geodesics is equivalent to a constant lapse function.
\end{rem}
\begin{df}
	\label{Qtransport}
Let $Q(t,r)$ denote the evolution operator, as defined in Definition \ref{dfQ}.
Let $\Pi(r,t)$ denote parrallel transport along timelines from $\Sigma_t$ to $\Sigma_r$ and for $\psi\in \Gamma(E|_{\Sigma_t})$, let
\[(\tau(r,t)\psi)(x,r):=\Pi(r,t)\psi(x,t).\]
\end{df}
We first formulate the Schwartz kernel we looked at before in terms of the evolution operator, by using the connection between the latter and the causal propagator (Proposition \ref{GtoQ}).
\begin{lemma}
	\label{kerintQ}
	For any $r\in \R$, the operator 
	\[\intR f(\tfrac{t-r}{s})\tau(r,t)Q(t,r)dt\]
	has Schwartz kernel given by
	\[\K\Big(\intR f(\tfrac{t-r}{s})\tau(r,t)Q(t,r)dt\Big)=\K\Big(\intR f(\tfrac{t-r}{s})\tau_w(t-r)Gdt\Big)\Big|_{\Sigma_r\times\Sigma_r}.\]
\end{lemma}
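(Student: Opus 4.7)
The plan is to apply both operators to an arbitrary test section $\psi\in \Gamma_c(E|_{\Sigma_r})$, identify the results, and then pass back to kernels. By Proposition \ref{GtoQ}, we have
\[Q(t,r)\psi = G(\iota_{r*}\psi)|_{\Sigma_t},\]
where $\iota_r\colon \Sigma_r\hookrightarrow M$ is the inclusion. Hence at any point $y=(x,r)\in\Sigma_r$,
\[(\tau(r,t)Q(t,r)\psi)(y) = \Pi(r,t)\, G(\iota_{r*}\psi)(x,t).\]
On the other hand, by our standing assumption $w_{(x,r)}(t-r)=(x,t)$, and parallel transport along the timeline from $(x,t)$ back to $(x,r)$ coincides with $\Pi(r,t)$, so
\[(\tau_w(t-r) G(\iota_{r*}\psi))(y) = \Pi_w(y,t-r)\, G(\iota_{r*}\psi)(w_y(t-r)) = \Pi(r,t)\, G(\iota_{r*}\psi)(x,t).\]
Thus the two integrands agree pointwise on $\Sigma_r$, and integration in $t$ yields
\[\Big(\intR f(\tfrac{t-r}{s})\tau(r,t)Q(t,r)dt\Big)\psi = \Big(\intR f(\tfrac{t-r}{s})\tau_w(t-r)G dt\Big)(\iota_{r*}\psi)\Big|_{\Sigma_r}\]
as sections on $\Sigma_r$.

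Next, I would translate this operator identity into the claimed identity of Schwartz kernels. By Lemma \ref{kersmooth} (applied with $\chi_1=\chi_2=1$ in a neighborhood, or more precisely using its local form together with Proposition \ref{smoothforward}), the kernel of $\intR f(\tfrac{t-r}{s})\tau_w(t-r)G dt$ is the smooth section $L_s$ on $M\times M$, so its restriction to $\Sigma_r\times\Sigma_r$ makes classical sense. The identity above then reads, for all $\psi\in\Gamma_c(E|_{\Sigma_r})$ and test sections $\phi\in \Gamma_c(E^*|_{\Sigma_r})$,
\[\intM \phi\cdot \iota_r^*\Big(\Op(L_s)(\iota_{r*}\psi)\Big)\,d\Vol_{\Sigma_r} = \int\limits_{\Sigma_r}\int\limits_{\Sigma_r} \phi(y)\, L_s(y,y')\,\psi(y')\,d\Vol_{\Sigma_r}(y')\,d\Vol_{\Sigma_r}(y),\]
which is exactly the statement that $\K(\intR f(\tfrac{t-r}{s})\tau(r,t)Q(t,r)dt)=L_s|_{\Sigma_r\times\Sigma_r}$.

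The only non-routine point is ensuring that the various manipulations — restriction to $\Sigma_r$, interchange of integration with $G$, and identification of parallel transports — are permissible. Smoothness of $L_s$ on $M\times M$ removes any distributional delicacies in the restriction, while Proposition \ref{GtoQ} does the entire work of identifying $G(\iota_{r*}\psi)$ with the evolution operator. The compatibility of $\tau(r,t)$ with $\tau_w(t-r)$ is immediate from the definition, since timelines are the vertical lines $\{x\}\times\R$. The main ``obstacle,'' such as it is, is simply keeping the bookkeeping of pullbacks, pushforwards, and restrictions straight; once these are in place the identity is forced by Proposition \ref{GtoQ}.
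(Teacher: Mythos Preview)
Your proposal is correct and follows essentially the same route as the paper: both use Proposition \ref{GtoQ} to rewrite $Q(t,r)\psi$ as $G(\iota_{r*}\psi)|_{\Sigma_t}$, identify $\tau(r,t)$ with $\tau_w(t-r)$ on $\Sigma_r$, and invoke smoothness of the kernel (Lemma \ref{kersmooth}) to justify the restriction to $\Sigma_r\times\Sigma_r$. The only difference is packaging: the paper carries out the final kernel identification via the distributional pairing $K[\iota_{r,r*}(\rho\otimes\psi)]=(\rho\otimes\psi)[K|_{\Sigma_r\times\Sigma_r}]$, whereas you phrase it as a double integral over $\Sigma_r\times\Sigma_r$; these are the same computation.
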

\begin{proof}
	Let $\iota_r$ denote the inclusion of $\Sigma_r$ into $M$ and let $\iota_{r,r}=\iota_r\times\iota_r$ denote the inclusion of $\Sigma_r\times \Sigma_r$ into $M\times M$. 
	Let $\psi\in \Gamma_c(E|_{\Sigma_r})$ and $\rho\in \Gamma_c(E^*|_{\Sigma_r})$ be arbitrary. 
	Abbreviate
	\[K:= \K\Big(\intR f(\tfrac{t-r}{s})\tau_w(t-r)Gdt\Big).\]
	As $K$ is smooth (\ref{kersmooth}), it may be paired with arbitrary distributions.
	 By Proposition \ref{GtoQ}, we have
	\begin{align*}
		&\rho\left[\intR f(\tfrac{t-r}{s})\tau(r,t)Q(t,r)\psi dt\right]\\
		&=\rho\left[\intR f(\tfrac{t-r}{s})\tau(r,t)(G(\iota_{r*}\psi)|_{\Sigma_r}) dt\right]\\
		&=\rho\left[\intR f(\tfrac{t-r}{s})\tau_w(t-r)G(\iota_{r*}\psi)dt|_{\Sigma_r} \right]\\
		&=(\iota_{r*}\rho)\left[\intR f(\tfrac{t-r}{s})\tau_w(t-r)G(\iota_{r*}\psi)dt\right]\\
		&=K[(\iota_{r*}\rho)\otimes (\iota_{r*}\psi)]\\
		&=K[\iota_{r,r*}(\rho\otimes\psi)]\\
		&=\iota_{r,r*}(\rho\otimes\psi)[K]\\
		&=\rho\otimes\psi[K|_{\Sigma_r\times\Sigma_r}]\\
		&=K|_{\Sigma_r\times\Sigma_r}[\rho\otimes\psi]
	\end{align*}
	Thus $K|_{\Sigma_r\times\Sigma_r}$ is the Schwartz kernel of 
	\[\intR f(\tfrac{t-r}{s})\tau(r,t)Q(t,r) dt.\qedhere\]
\end{proof}
We can use this to formulate our final result in terms of evolution operators instead of Green's operators:
\begin{thm}
	For any $r\in\R$ and $\chi_1,\chi_2\in C_c^\infty(\Sigma_r)$, the operator 
	\[\mu_{\chi_1}\intR f(\tfrac{t-r}{s})\tau(r,t)Q(t,r)dt\mu_{\chi_2}\]
	is trace class and its trace has the asymptotic expansion
	\begin{align*}
		&\tr\Big(\mu_{\chi_1}\intR f(\tfrac{t-r}{s})\tau(r,t)Q(t,r) dt\mu_{\chi_2}\Big)\\
		&\stackrel{s\rightarrow 0}{\sim}\insum{k}\insum{n}a(k,n)\int\limits_{\Sigma_r}\chi_1(x)\chi_2(x)\tr(\partial_t^{2n}|_{t=0}(\tau_w(t)V^k_x)(x))dx\ s^{2k+2n+3-d},
	\end{align*}
	with $a(k,n)$ as in Definition \ref{dfa}.
	
	Let $Q_{P-z}(t,r)$ denote the evolution operator associated to $P-z$ instead of $P$.
	We have for any $k,o\in \N$
	\[\int\limits_{\Sigma_r}\chi_1(x)\chi_2(x)\tr(V^k_x(x))dx=\Xi_{k,o}\Big(\tr\Big(\mu_{\chi_1}\intR f(\tfrac{t-r}{s})\tau(r,t)Q_{P-z}(t,r) dt\mu_{\chi_2}\Big)\Big)_{s,z},\]
	with $\Xi$ as in Definition \ref{dfXi}. 
\end{thm}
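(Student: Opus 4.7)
The plan is to reduce this statement to the previous theorem (Theorem \ref{globalexp}) via Lemma \ref{kerintQ}, with the integration domain replaced by the single Cauchy hypersurface $\Sigma_r$ instead of $M$. First, I would apply Lemma \ref{kerintQ} to rewrite
\[\K\Big(\intR f(\tfrac{t-r}{s})\tau(r,t)Q(t,r)dt\Big)=\K\Big(\intR f(\tfrac{t-r}{s})\tau_w(t-r)G\,dt\Big)\Big|_{\Sigma_r\times\Sigma_r}.\]
Since the right hand side is a restriction of a smooth kernel (for $s$ small enough, by Lemma \ref{kersmooth}), the operator with $\mu_{\chi_1}, \mu_{\chi_2}$ inserted on both sides has smooth, compactly supported Schwartz kernel, and Theorem \ref{Mercer} then yields trace class and the trace-as-integral-over-the-diagonal formula.

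Next, I would observe that the diagonal of $\Sigma_r \times \Sigma_r$ is a subset of the diagonal of $M \times M$, so that by Proposition \ref{pointres}, for $x \in \Sigma_r$,
\[\tr\!\Big(\K\Big(\intR f(\tfrac{t-r}{s})\tau(r,t)Q(t,r)dt\Big)(x,x)\Big)=w_x^*(A_xG_x)[f_s],\]
(using the time-shift symmetry and noting that $\chi_\Phi$ may be taken to be $1$ on the compact relevant region for $s$ small enough). Inserting $\mu_{\chi_1}$ and $\mu_{\chi_2}$ and applying Theorem \ref{Mercer} gives
\[\tr\Big(\mu_{\chi_1}\intR f(\tfrac{t-r}{s})\tau(r,t)Q(t,r)dt\mu_{\chi_2}\Big)=\int_{\Sigma_r}\chi_1(x)\chi_2(x)\,w_x^*(A_xG_x)[f_s]\,dx.\]

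At this point the work is done by the local-to-global results of the previous chapter: Theorem \ref{smearedexp}, applied with $M' = \Sigma_r$ and the given family $(w_x)_{x\in\Sigma_r}$ (which satisfies the smoothness hypothesis since $\phi(x,t) = w_x(t)$ is smooth by assumption), produces the claimed asymptotic expansion in $s$. For the final $z$-dependent identity, I would simply repeat the argument with $P-z$ in place of $P$ and then apply Theorem \ref{smearedfinalgen} with $M' = \Sigma_r$, which gives
\[\int_{\Sigma_r}\chi_1(x)\chi_2(x)\tr(V^k_x(x))dx=\Xi_{k,o}\Big(\int_{\Sigma_r}\chi_1(x)\chi_2(x)w_x^*(A_x G_{P-z,x})[f_s]dx\Big)_{s,z},\]
and this right hand side equals $\Xi_{k,o}$ applied to the trace of the $Q_{P-z}$-version by the previous display.

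There is no real obstacle here beyond keeping track of the time-shift conventions (i.e.\ $\tau_w(t-r)$ versus $\tau(r,t)$) and ensuring $s$ is chosen small enough that the flow cut-off $\chi_\Phi$ can be taken to equal $1$ on $\supp(\chi_1)\cup \supp(\chi_2)$; once Lemma \ref{kerintQ} is invoked, everything reduces to the previously established expansions on an arbitrary submanifold.
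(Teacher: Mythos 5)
Your proposal is correct and follows essentially the same route as the paper's own proof: Lemma \ref{kerintQ} plus Lemma \ref{kersmooth} and Theorem \ref{Mercer} for the trace-class claim and the trace-as-diagonal-integral, Proposition \ref{pointres} to identify the diagonal values with $w_x^*(A_xG_x)[f_s]$, and then Theorems \ref{smearedexp} and \ref{smearedfinalgen} applied with $M'=\Sigma_r$. The only points needing care — the time-shift convention and choosing $s$ small enough that $\chi_\Phi$ is irrelevant — are exactly the ones you flag, so nothing is missing.
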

\begin{rem}
	If $\Sigma_r$ is compact, one may omit the cutoffs $\chi_1$ and $\chi_2$  to make the result look a little simpler.
\end{rem}
\begin{proof}
	By Lemma \ref{kerintQ} and Lemma \ref{kersmooth}, the operator has smooth Schwartz kernel. Due to the cut-off functions, the kernel is also compactly supported, so the operator is trace class by \ref{Mercer}.
	These Lemmas and Proposition \ref{pointres} also yield
	\begin{align*}
		&\tr\Big(\mu_{\chi_1}\intR f(\tfrac{t-r}{s})\tau(r,t)Q(t,r) dt\mu_{\chi_2}\Big)\\
		&=\int\limits_{\Sigma_r}\chi_1(x)\chi_2(x)\K\Big(\intR f(\tfrac{t-r}{s})\tau_w(t-r)Gdt\Big)(x,x)dx\\
		&=\int\limits_{\Sigma_r}\chi_1(x)\chi_2(x)w_x^*(A_xG_{x})[f_s]dx.
	\end{align*}
 Thus by Theorem \ref{smearedexp}, we have
 \begin{align*}
 	&\tr\Big(\mu_{\chi_1}\intR f(\tfrac{t-r}{s})\tau(r,t)Q(t,r) dt\mu_{\chi_2}\Big)\\
 	&\stackrel{s\rightarrow 0}{\sim}\insum{k}\insum{n}a(k,n)\int\limits_{\Sigma_r}\chi_1(x)\chi_2(x)(A_xV^k_x\circ w_x)^{(2n)}(0)dx\ s^{2k+2n+3-d}\\
 	&=\insum{k}\insum{n}a(k,n)\int\limits_{\Sigma_r}\chi_1(x)\chi_2(x)\tr(\partial_t^{2n}|_{t=0}(\tau_w(t)V^k_x)(x))dx\ s^{2k+2n+3-d}.
 	\end{align*}
	Replacing $P$ with $P-z$, we get
	\begin{align*}
		&\tr\Big(\mu_{\chi_1}\intR f(\tfrac{t-r}{s})\tau(r,t)Q_{P-z}(t,r) dt\mu_{\chi_2}\Big)\\
		&=\int\limits_{\Sigma_r}\chi_1(x)\chi_2(x)w_x^*(A_xG_{P-z,x})[f_s]dx.
	\end{align*}
	 Inserting this into Theorem \ref{smearedfinalgen} yields
	\begin{align*}
		\int\limits_{\Sigma_r}\chi_1(x)\chi_2(x)\tr(V^k_x(x))dx&=\Xi_{k,o}\Big(\tr\Big(\mu_{\chi_1}\intR f(\tfrac{t-r}{s})\tau(r,t)Q_{P-z}(t,r) dt\mu_{\chi_2}\Big)\Big)_{s,z}.\qedhere
	\end{align*}

\end{proof} 
\newpage
\chapter*{Acknowledgements}
\addcontentsline{toc}{chapter}{Acknowledgements}
I would like to thank my advisor, Matthias Lesch, for his advice and support in writing this thesis. In particular, his suggestion of considering something like a wave trace proved a valuable starting point for the considerations of this thesis. 

I am especially grateful to Koen van den Dungen, who, even though he was not formally my advisor, acted like one and supported me throughout the writing process. I am particularly grateful for his suggestion of the topic and his recommendation of the paper \cite{DaWr}, which helped me overcome the biggest obstacle of my thesis.

I would like to thank Michal Wrochna for teaching me more about the context of my thesis in Lorentzian geometry and QFT on curved spacetimes.

Finally, I would like to thank Christoph Brinkmann for the inspiring talks we had during the creation of this thesis. 

I also thank all of the above people for proofreading parts or all of my thesis and thus helping me reduce the number of mistakes the reader might still encounter, as well as pointing out other possible improvements.
\newpage
\chapter*{Appendix: wavefront calculus on manifolds}
\addcontentsline{toc}{chapter}{Appendix: wavefront calculus on manifolds}
\label{append}
In this appendix, we want to prove Theorem \ref{WFMFD}, i.e. show that the results of wavefront calculus on $\R^d$ carry over to vector bundles over manifolds.

	In the following we omit the restriction to chart domains when composing with charts, and automatically restrict compositions involving charts to the sets where they are defined (which may be empty).
	We work in the setting of Theorem \ref{WFMFD}, i.e. we have vector bundles $(E_l)_{l\leq 3}$ over a manifold $X$, a vector bundle $F$ over a manifold $Y$ and closed conic subsets $\Lambda, \Lambda'\subseteq \Tdot(X)$ and $\Theta\subseteq \Tdot(Y)$.
	
	Before we proceed with the claims of Theorem \ref{WFMFD}, we need to establish that it is sufficient to consider only wavefront norms for a fixed open cover of charts, with local coordinate frames: 
	
	Define $E_4:=f^*(F)$.
	Choose charts $(\psi_i)_{\i\in I}$ of $X$ whose domains $(U_i)_{i\in I}$ form a locally finite cover of $X$ such that all relevant vector bundles are trivial over all $U_i$ and let $(\chi_i)_{i\in I}$ be a partition of unity subordinate to these. Let $(\psi'_i)_{i\in I'}$, $(U'_i)_{i\in I'}$ and $(\chi'_i)_{i\in I'}$ be the same for $Y$. For each $i\in I$ and $l\leq 4$, let $(A^l_{ij})_{j\leq \rk(E_l)}$ be a frame of $E_l^*|_{U_i}$, let $(A^{l*}_{ij})_{j\leq \rk(E_l)}$ denote the corresponding dual frame. Let $(B_{ij})_{j\leq \rk(F)}$ be a frame for $F^*|_{U_i}$, with dual frame $(B^*_{ij})$. Elements in the frames of the dual bundle can be interpreted as bundle coordinates. For a section or distribution $\eta$ in some $E_l$ resp. $F$ write
	\[\eta_{ij}:=\psi_{i}^{-1*}(A^l_{ij}\eta|_{U_i})\]
	resp.
	\[\eta_{ij}:={\psi'}_{i}^{-1*}(B_{ij}\eta|_{U'_i}).\]
	\begin{lemma}
		\label{WFcoord}
		\begin{enumerate}
		\item We have
		\[\eta=\sum\limits_{i\in I, j\leq\rk(E_1)}\chi_i(\psi_i^*\eta_{ij})A^{1*}_{ij}.\]
		\item The wavefront of $\eta$ is the union of all sets
		\[\psi_i^*(\WF(\eta_{ij})),\]
		for $i\in I$ and $j\leq \rk(E_1)$.
		\item The seminorms of a distribution $\eta\in \D'_\Lambda(E_1)$ are equivalent to those of its components $(\eta_{ij})$ in the corresponding spaces $\D'_{\psi_i^{-1*}\Lambda}(\Ran(\psi_i))$. This means for every $i\in I$ and $j\leq \rk(E_1)$, the seminorms of $\eta_{ij}$ are bounded in terms of finitely many (actually, one) seminorms of $\eta$ and conversely each seminorm of $\eta$ is bounded in terms of finitely many seminorms of finitely many $\eta_{ij}$. Thus the topology of $\D'_\Lambda(E_1)$ is equivalently described by the seminorms
		\[\eta\mapsto \rho(\eta_{ij}),\]
		where $\rho$ is any seminorm of $\D'_{(\psi_i^{-1})^*(\Lambda)}(\Ran(\psi_i))$.
		\end{enumerate}
	\end{lemma}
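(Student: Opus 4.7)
The plan is to prove the three assertions in order, with each one building on the previous. The core technical content is that pullback under the chart diffeomorphisms and multiplication by smooth cutoff or frame-coefficient functions are continuous operations between the $\R^n$-valued wavefront spaces (Theorem \ref{WcalcRn}), which lets us reduce everything to local frame components.

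For the first claim, I would observe that on each $U_i$ the local frames $(A^1_{ij})_j$ and $(A^{1*}_{ij})_j$ are dual, so $\eta|_{U_i} = \sum_j (A^1_{ij}\eta|_{U_i})\, A^{1*}_{ij}$ as distributions valued in $E_1$. Since $\psi_i^*\eta_{ij} = \psi_i^*\psi_i^{-1*}(A^1_{ij}\eta|_{U_i}) = A^1_{ij}\eta|_{U_i}$ on $U_i$, multiplying by the partition $\chi_i$ and summing over $i$ gives the desired global decomposition (the sum is locally finite since the cover is).

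For the second claim, the inclusion $\bigcup_{i,j}\psi_i^*\WF(\eta_{ij}) \subseteq \WF(\eta)$ is immediate from Definition \ref{dfwfmfd}. For the reverse, take any $(x,\xi)=\psi^*(\xi_0)$ with $\xi_0 \in \WF(\psi^{-1*}(A\eta|_{\Dom\psi}))$ for some chart $\psi$ and bundle coordinate $A$. Pick an $i$ with $x\in U_i \cap \Dom\psi$ and expand $A = \sum_j a_{ij}A^1_{ij}$ on a small neighborhood with smooth $a_{ij}$. Then $\psi^{-1*}(A\eta)$ is, up to a smooth factor, a pushforward via the diffeomorphism $\psi\circ\psi_i^{-1}$ of $\sum_j (a_{ij}\circ\psi_i^{-1})\eta_{ij}$. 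Since pullback by a diffeomorphism transforms the wavefront set precisely by the dual of its differential, and multiplication by smooth functions doesn't enlarge the wavefront, we conclude $\xi_0$ lies in the image of some $\WF(\eta_{ij})$ under the chart transition, so $\xi=\psi^*\xi_0 \in \psi_i^*\WF(\eta_{ij})$ as required.

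For the third and main claim, I would argue that the seminorms coming from a fixed component $\eta_{ij}$ are in turn seminorms of $\eta$ (with choice of chart $\psi_i$, bundle coordinate $A^1_{ij}$, and arbitrary cutoff/cone/order), so the inclusion in one direction is trivial. For the converse, fix any seminorm $\|\cdot\|_{k,V,\chi,\psi,A}$ of $\D'_\Lambda(E_1)$. Since $\supp(\chi) \subseteq \Dom\psi$ is compact, only finitely many $i\in J$ satisfy $\supp(\chi)\cap U_i \neq \emptyset$, and $\chi = \sum_{i\in J}\chi\chi_i$ on $\supp(\chi)$. On each $\supp(\chi\chi_i)$ we write $A = \sum_j a_{ij}^{(i)} A^1_{ij}$ with smooth $a_{ij}^{(i)}$, so
\[\psi^{-1*}(\chi A\eta) = \sum_{i\in J}\sum_j \psi^{-1*}\bigl(\chi\chi_i a_{ij}^{(i)}\bigr)\cdot (\psi\circ\psi_i^{-1})_*\eta_{ij}.\]
Each summand is a product of a fixed smooth compactly supported function with the pushforward of $\eta_{ij}$ under the diffeomorphism $\psi\circ\psi_i^{-1}$. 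By the continuity statements of Theorem \ref{WcalcRn} (pullback of distributions under smooth maps, and multiplication by smooth functions, are continuous between wavefront spaces on $\R^n$), each such summand is bounded in the seminorm $\|\cdot\|_{k,V,\chi\circ\psi^{-1}}$ by finitely many wavefront seminorms of $\eta_{ij}$. Summing over the finite index set yields the desired bound. The main obstacle is the bookkeeping: one must carefully verify that, under the diffeomorphism $\psi\circ\psi_i^{-1}$, the cones $V$ appearing in seminorms of $\psi^{-1*}(\chi A\eta)$ correspond (after pushforward through the dual differential of the transition map) to cones that avoid $\psi_i^{-1*}(\Lambda)$, so that the resulting bounds really are in terms of \emph{defined} seminorms of $\eta_{ij}$ in $\D'_{\psi_i^{-1*}\Lambda}(\Ran\psi_i)$. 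This is guaranteed by the assumption $\supp(\chi)\times V\cap (\psi^{-1})^*\Lambda=\emptyset$ together with the fact that transition maps are diffeomorphisms, but is the one place where care is needed.
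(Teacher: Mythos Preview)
Your proposal is correct and follows essentially the same route as the paper: part~1 via the dual-frame identity and partition of unity, part~2 by writing an arbitrary $\psi^{-1*}(A\eta)$ as a finite sum of smooth multiples of chart-transition pullbacks of the $\eta_{ij}$, and part~3 by the same decomposition combined with the $\R^n$ continuity results of Theorem~\ref{WcalcRn}. One small omission: in part~3 you only treat the wavefront-type seminorms $\|\cdot\|_{k,V,\chi,\psi,A}$, but by Definition~\ref{dfwfmfd} the topology on $\D'_\Lambda(E_1)$ also carries the strong $\D'(E_1)$ seminorms, which the paper handles separately (via continuity of pullback by diffeomorphisms and application of bundle coordinates on $\D'$, together with the reconstruction formula of part~1 for the converse); this is routine but should be mentioned.
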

\begin{proof}
	\begin{enumerate}
		\item We have
		\begin{align*}
		\sum\limits_{i\in I, j\leq\rk(E_1)}\chi_i(\psi_i^*\eta_{ij})A^{1*}_{ij}
		&=\sum\limits_{i\in I, j\leq\rk(E_1)}\chi_i\psi_i^*(\psi_i^{-1*}A^1_{ij}\eta) A^{1*}_{ij}\\
		&=\sum\limits_{i\in I, j\leq\rk(E_1)}\chi_i(A^1_{ij}\eta)A^{1*}_{ij}\\
		&=\sum\limits_{i\in I}\chi_i\eta\\
		&=\eta.
		\end{align*}
		Note that the sum is locally finite and thus each seminorm vanishes on all but finitely many summands, so it is a convergent sum in $\D'_{\Lambda}(E_1)$.
		\item For any chart $\psi$ and bundle coordinate $A$, we have
		\[\psi^{-1*}(A\eta)=\sum\limits_{i\in I, j\leq\rk(E_1)}((A\chi_iA^{1*}_{ij})\circ \psi^{-1}) \psi^{-1*}(\psi_i^*\eta_{ij}).\]
		 wavefront calculus on $\R^d$ gives us
		\[\WF(\psi^{-1*} (A\eta))\subseteq\bigcup_{i\in I,j\leq \rk(E_1)}(\psi_i\circ\psi^{-1})^*\WF(\eta_{ij}).\]
		Thus we have
		\[\WF(\eta)=\bigcup\limits_{A,\psi}\psi^*\WF(\psi^{-1*} (A\eta))\subseteq \bigcup_{i\in I,j\leq \rk(E_1)}(\psi_i)^*\WF(\eta_{ij}).\]
		The reverse inclusion follows from the fact that all $\psi_i$ and $A^1_{ij}$ are allowed choices for $A$ and $\psi$.
		\item For $\chi\in C_c^\infty(X)$, let $I_\chi$ denote the  (finite) set of $i\in I$ such that $\supp(\chi)\cap U_i$ is nonempty. For a bundle coordinate $A$, we have (on $U_i$)
		\[A=\sum\limits_{j\leq \rk(E_1)}A(A^{1*}_{ij})A^1_{ij}.\] Thus for any $\eta\in \D'_{\Lambda}(E_1)$ and any $k,V,\chi,\psi,A$ such that the following is a viable seminorm, we have
		\begin{align*}
		\|\eta\|_{k,V,\chi,\psi,A}&=\sup\limits_{\xi\in V}(|\xi|+1)^k|\F((\psi^{-1})^*(\chi A\eta))|\\
		&\leq\sum\limits_{j\leq \rk(E_1),i\in I_\chi}\sup\limits_{\xi\in V}(|\xi|+1)^k|\F((\psi^{-1})^*(\chi A(\chi_i \psi_i^*(\eta_{ij})A^{1*}_{ij})(\xi)|\\
		&=\sum\limits_{j\leq \rk(E_1),i\in I_\chi}\|(\psi_i\circ\psi^{-1})^*(((A(\chi_iA^{1*}_{ij}))\circ\psi_i^{-1})\eta_{ij})\|_{k,V,\chi\circ\psi}.
		\end{align*}
		As pushforwards by diffeomorphisms and multiplication with $C_c^\infty$-functions are continuous on wavefront spaces in $\R^n$, we obtain that any wavefront seminorm of $\eta$ is bounded by the wavefront seminorms of the $\eta_{ij}$ in $\D'_{(\psi_i^{-1})^*(\Lambda)}(\Ran(\psi_i))$. Conversely for any of the latter seminorms, we have
		\[\|\eta_{ij}\|_{k,V,\chi}=\|\eta\|_{k,V,\chi\circ\psi_i^{-1}, \psi_i, A_{ij}}.\]
		Concerning the non-wavefront seminorms, i.e. those of the strong topology on $\D'$, we know that pull-back by a diffeomorphism and application of a bundle coordinate are continuous, so the $\D'$-seminorms of $\eta_{ij}$ are bounded in terms of those of $\eta$. Conversely, by the formula in the first part of this lemma and continuity of multiplying with a smooth section and extending compactly supported distributions by 0, 
		the $\D'$-seminorms of $\eta$ are bounded in terms of those of the $\eta_{ij}$.\qedhere
	\end{enumerate}
\end{proof}
	
	We proceed to prove Theorem \ref{WFMFD}:
	\begin{thm*}[Wavefront calculus on manifolds]\ 
		\begin{enumerate}
			\item The seminorms defined for $\D'_\Lambda(E_1)$ are finite. In case $X$ is a subset of $\R^n$ and $E_1=X\times \C$, the new definition coincides with the earlier one.
			\item $\D'_\Lambda(E_1)$ is complete.
			\item $\Gamma_c(E_1)$ is dense in any $\D'_\Lambda(E_1)$.
			\item $\D'_\emptyset(E_1)$ coincides with $\Gamma(E_1)$ as a topological vector space.
			\item Differential operators between sections of $E_1$ and $E_2$ map $\D'_\Lambda(E_1)$ to $\D'_\Lambda(E_2)$ continuously.
			\item Let $f\colon X\rightarrow  Y$  be a smooth map, let $\Lambda, \Lambda'\subseteq \Tdot(X)$ and $\Theta\subseteq \Tdot(Y)$ be closed conic subsets.
			\begin{itemize}
				\item If $f^*(\Theta)$ does not intersect the zero section, $f^*$ extends to a continuous map from $\D'_\Theta(F)$ to $\D'_{f^*(\Theta)}(f^*(F))$.
				\item If $C\subseteq X$ is a closed set such that $f|_C$ is proper, $f_*$ defines a continuous map from $\D'_\Lambda(f^*F)|_C$ to $\D'_{f_*(\Lambda)}(F)$.
				\item If $\Lambda\bar + \Lambda'$ does not intersect the zero section,  pointwise "multiplication" of sections in $E_1\otimes E_2^*$ and $E_2\otimes E_3$, defined at each point by
				\[(a\otimes b)\cdot(c\otimes d):=b(c)\cdot(a\otimes d)\]
				extends to a hypocontinuous bilinear map
				\[\D'_\Lambda(E_1\otimes E_2^*)\times \D'_{\Lambda'}(E_2\otimes E_3)\rightarrow \D'_{\Lambda\bar +\Lambda'}(E_1\otimes E_3).\]
				\item The tensor product defines a hypocontinuous bilinear map
				\[\D'_\Lambda(E_1)\times D'_\Theta(F)\rightarrow \D'_{\Lambda\bar\times \Theta}(E_1\boxtimes F).\]
				\item If $\Lambda\bar + \Lambda'$ does not intersect the zero section and $K\subseteq X$ is compact, evaluation of distributions extends to hypocontinuous bilinear maps
				\[\D'_\Lambda(E_1)\times \D'_{\Lambda'}(E_1^*)|_K\rightarrow \C\]
				and
				\[\D'_{\Lambda'}(E_1^*)|_K\times \D'_\Lambda(E_1)\rightarrow \C.\]
				These are symmetric, in the sense that
				\[\zeta[\eta]=\eta[\zeta].\]
			\end{itemize}
		\end{enumerate}
	\end{thm*}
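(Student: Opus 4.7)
My overall strategy is to leverage Lemma~\ref{WFcoord} to reduce every statement to its already-established counterpart on open subsets of $\R^n$ (Theorem~\ref{WcalcRn}). That lemma shows that the topology of $\D'_\Lambda(E_1)$ is equivalently described by the seminorms applied to the scalar coordinate components $\eta_{ij}$, and that those components determine $\eta$ via the reconstruction formula $\eta=\sum_{i,j}\chi_i(\psi_i^*\eta_{ij})A^{1*}_{ij}$. So the general pattern for each assertion will be: express the operation in coordinates as a sum of compositions of pullback by charts, multiplication by smooth cutoffs, and application of bundle coordinates, each of which I either know continuously acts on the $\R^n$ wavefront spaces or is explicitly allowed by Theorem~\ref{WcalcRn}.

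For part~(1), finiteness of the seminorms follows because $\chi A\eta$ is a compactly supported distribution in a bundle coordinate on a chart, hence its pullback is a compactly supported distribution on $\R^n$ whose wavefront avoids $\supp(\chi\circ\psi^{-1})\times V$, so the classical wavefront seminorm is finite. The agreement with the earlier definition for $X\subseteq\R^n$, $E_1=X\times\C$ is immediate by taking $A=\mathrm{id}$ and $\psi=\mathrm{id}$. Part~(2) (completeness) reduces via Lemma~\ref{WFcoord} to the fact that each $\D'_{\psi_i^{-1*}\Lambda}(\Ran(\psi_i))$ is complete (\cite[Corollary 25]{DaBr}); a Cauchy net in $\D'_\Lambda(E_1)$ gives Cauchy nets in each coordinate block, whose limits reassemble via the partition of unity formula into a limit distribution. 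Part~(3) (density) follows by first approximating each $\eta_{ij}$ in $\D'_{\psi_i^{-1*}\Lambda}(\Ran(\psi_i))$ by a sequence in $C_c^\infty$ (the scalar case) and then reassembling; continuity of the reconstruction sum from Lemma~\ref{WFcoord} ensures the approximation survives. Part~(4) follows because the wavefront of each $\eta_{ij}$ is empty by Lemma~\ref{WFcoord}(2), so $\eta_{ij}\in C^\infty$ with the $C^\infty$-topology, and the reconstruction sum is a locally finite sum of smooth sections; conversely a smooth section has smooth coordinates. Part~(5) follows because a differential operator acts locally as a matrix of scalar differential operators in coordinates, and scalar differential operators continuously preserve $\D'_\Lambda$ on $\R^n$.

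For part~(6), each of pullback, pushforward, multiplication, tensor product, and evaluation is local in nature and admits a coordinate description. For pullback by $f\colon X\to Y$, write $f^*\eta$ in local coordinates by choosing charts on both sides so that $f$ becomes a smooth map between Euclidean opens and $F$ is trivialized; then each component of $f^*\eta$ is a pullback of a scalar distribution by a smooth map, so Theorem~\ref{WcalcRn} yields continuity, and the wavefront estimate $f^*(\Theta)$ carries over directly. For pushforward, the properness assumption on $f|_C$ ensures $f_*\eta$ is locally a finite sum of scalar pushforwards after multiplying by a partition of unity on $X$ (each summand having compact support intersected with $C$), reducing to Theorem~\ref{WcalcRn}. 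Multiplication and tensor product are even simpler: the operations are defined pointwise, and their coordinate components are bilinear combinations of products or tensor products of scalar distributions; the wavefront estimates $\Lambda\bar+\Lambda'$ and $\Lambda\bar\times\Theta$ are exactly the ones from Theorem~\ref{WcalcRn}. Evaluation is the composition of tensor product of $\eta\in\D'_\Lambda(E_1)$ with $\zeta\in\D'_{\Lambda'}(E_1^*)|_K$, followed by the canonical pairing $E_1\otimes E_1^*\to \C$, followed by pushforward to a point; the hypotheses ensure each step is hypocontinuous, and compactness of $K$ makes the pushforward well-defined. Symmetry $\zeta[\eta]=\eta[\zeta]$ holds on smooth functions by Fubini and extends by density.

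The main obstacle I anticipate is bookkeeping for the hypocontinuity of the bilinear operations. Hypocontinuity requires estimates uniform over bounded subsets of one factor, and when decomposing via partitions of unity one must verify that the family of local estimates furnished by Theorem~\ref{WcalcRn} glue into a global hypocontinuity statement. The key observation is that bounded sets in $\D'_\Lambda(E_1)$ correspond (via Lemma~\ref{WFcoord}(3)) to coordinate-wise bounded families, because each seminorm of $\D'_\Lambda(E_1)$ is controlled by finitely many coordinate seminorms; thus a hypocontinuity estimate in each coordinate block transfers to a global estimate. The remaining technical point is that for pushforward one must arrange the partition of unity so that only finitely many terms contribute on any given compact set of the target (which is exactly where properness of $f|_C$ is used), ensuring the reconstruction sum converges in $\D'_{f_*(\Lambda)}(F)$.
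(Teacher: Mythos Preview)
Your proposal is correct and follows essentially the same approach as the paper: reduce everything to the $\R^n$ case via Lemma~\ref{WFcoord}, handling the bilinear operations by passing to coordinate components and invoking hypocontinuity there.

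Two minor points where the paper's execution differs slightly from your sketch. For the tensor product, rather than computing coordinate components directly, the paper writes $\eta\otimes\zeta=\pi_1^*(\eta)\cdot\pi_2^*(\zeta)$ and invokes the already-proven pullback and multiplication results; this avoids repeating the hypocontinuity bookkeeping. For evaluation, your description (tensor product, then pairing, then pushforward to a point) is not quite right as written: the tensor product lives on $X\times X$, so you would still need a pullback to the diagonal before the fibrewise pairing applies. The paper instead sets $\eta[\zeta]:=(\zeta\cdot\eta)[\chi]$ for a cutoff $\chi\in C_c^\infty(X)$ equal to $1$ near $K$, using the multiplication $\D'_{\Lambda'}(E_1^*)\times\D'_\Lambda(E_1)\to\D'_{\Lambda\bar+\Lambda'}(X)$ directly; this is both shorter and avoids the diagonal restriction entirely.
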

\begin{proof}
	\begin{enumerate}
		\item Let $\|\cdot\|_{k,V,\chi,\psi,A}$ be a seminorm of $\D'_\Lambda(E_1)$ and let $\eta\in \D'_\Lambda(E_1)$. Then $\supp(\chi\circ\psi^{-1})\times V$ does not intersect $(\psi^{-1})^*(\Lambda)$ and \[\psi^*(\WF((\psi^{-1})^*( A\eta)))\subseteq \WF(\eta)\subseteq \Lambda.\]
		Thus 
		$\supp(\chi\circ\psi^{-1})\times V$
		does not intersect $\WF((\psi^{-1})^*(\chi A\eta))$, hence $\|\cdot\|_{k,V,\chi\circ\psi}$ is a seminorm of $\D'_{\WF((\psi^{-1})^*( A\eta))}$, whence
		\[\|\eta\|_{k,V,\chi,\psi,A}=\|(\psi^{-1})^*(A\eta)\|_{k,V,\chi\circ\psi}\]
		is finite.
		
		In case $X\subseteq \R^n$ and $E_1=X\times \C$, choosing $\psi$ to be the identity and $A$ the map that forgets the basepoint, we obtain the original wavefront norms. As this is a cover with a set of local frames, Lemma \ref{WFcoord} implies that the two sets of seminorms are equivalent. This proves the first claim.
		\item Suppose $(\eta_s)_{s\in S}$ is a Cauchy net in $\D'_\Lambda(E_1)$. Then in particular it is also Cauchy in $\D'(E_1)$, which has less seminorms, so it converges to a limit $\eta$ there. This implies that all components $(\eta_s)_{ij}$ converge to the components $\eta_{ij}$ of the limit distributionally. All components $(\eta_s)_{ij}$ are also Cauchy nets in their corresponding wavefront spaces. As the wavefront spaces in $\R^n$ are complete, these components converge. As the wavefront spaces are continuously embedded in the spaces of all distributions, the limits in the wavefront space must also be $\eta_{ij}$. As the seminorms of the $\eta_{ij}-(\eta_s)_{ij}$ are equivalent to those of $\eta-\eta_s$, this implies that $\eta_s$ converges to $\eta$ in $\D'_\Lambda(E_1)$.
		
		\item For each $i$ and $j$, let $(\eta_{ijn})_{n\in \N}$ be a sequence of smooth functions converging to $\eta_{ij}$ in $\D'_{(\psi_i^{-1})^*(\Lambda)}(\Ran(\psi_i))$. Set
		\[\eta_n:=\sum\limits_{i\in I, j\leq\rk(E_1)}\chi_i(\psi_i^*\eta_{ijn})A^{1*}_{ij}.\]
		For any viable wavefront seminorm, we have
		\begin{align*}
		\|\chi_i(\psi_i^*\eta_{ijn})A^{1*}_{ij}-\chi_i(\psi_i^*\eta_{ij})A^{1*}_{ij}\|_{k,V,\chi,\psi,A}&=\|(((AA^{1*}_{ij})\chi_i)\circ\psi)(\psi_i\circ\psi^{-1})^*(\eta_{ijn}-\eta_{ij})\|_{k,V,\chi\circ\psi^{-1}}.
		\end{align*}
		As multiplication and pullback are continuous in wavefront norms on $\R^d$, this converges to zero. As all operations involved are distributionally continuous, we also have convergence in all seminorms of $\D'(E_1)$. We thus have as a limit in $\D'_\Gamma(E_1)$:
		\begin{align*}
		\limn \eta_n&=\sum\limits_{i\in I, j\leq\rk(E_1)}\limn\chi_i(\psi_i^*\eta_{ijn})A^{1*}_{ij}\\
		&=\sum\limits_{i\in I, j\leq\rk(E_1)}\chi_i(\psi_i^*\eta_{ij})A^{1*}_{ij}\\
		&=\eta.
		\end{align*}
		As $\eta$ was arbitrary and all $\eta_n$ are smooth, this concludes the proof.
		
		\item As $\eta$ is smooth if and only if all $\eta_{ij}$ are smooth and is in $\D'_\emptyset(E_1)$ is and only if all $\eta_{ij}$ are in $\D'_{\emptyset}(\Ran(\psi_i))$, equality as sets follows from the corresponding statement on $\R^n$. Moreover, we also know that the $C^\infty$- and $\D'_\emptyset$-norms of $\eta_{ij}$ are equivalent.  We have already seen that the $D'_\emptyset$-seminorms of the $\eta_{ij}$ are equivalent to those of $\eta$. To conclude the proof, we thus have to show that the $C^\infty$-seminorms of $\eta$ and of the $\eta_{ij}$ are equivalent. We once again use the equalities
		\[\eta_{ij}=\psi_{i}^{-1*}(A^1_{ij}\eta)\]
		and
		\[\eta=\sum\limits_{i\in I, j\leq\rk(E_1)}\chi_i(\psi_i^*\eta_{ij})A^{1*}_{ij}.\]
		As all opertations involved are continuous with respect to $C^\infty$-seminorms, we can conclude that the $C^\infty$-seminorms of $\eta$ are equivalent to those of the $\eta_{ij}$. Putting everything together, we obtain that the $D'_\emptyset$ - and $C^\infty$-norms of $\eta$ are equivalent.
		\item Differential operators also act as differential operators in coordinates, i.e. for every differential operator $D$ there are differential operators $D_{ijk}$ such that $(D\eta)_{ij}=\sum\limits_{k\leq \rk(E_1)}D_{ijk}\eta_{ik}$. As differential operators are continuous with respect to wavefront seminorms on $\R^n$, $D$ is continuous with respect to the seminorms of the component and hence with respect to the seminorms of $\D'_\Lambda(E_1)$ and $\D'_\Lambda(E_2)$.
		\item The idea of proof is always to consider the components $\eta_{ij}$ and thus reduce the problem to wavefront calculus on $\R^n$.
		\begin{itemize}
			\item \head{pull-back:}
			Assume without loss of generality that for any $i\in I$, there is $l(i)\in I'$ such that $U_i\subseteq f^{-1}(U'_{l(i)})$. This can always be arranged by refining the $U_i$. The $B_{l(i)j}$ then pull back to a frame of $f^*(F)^*$ over $U_i$, which we shall use as our frame to define $(f^*\eta)_{ij}$, i.e. we set for this part
			\[A^4_{ij}:=f^*(B_{l(i),j}).\] Suppose that $\eta\in \Gamma(F)$. If $f^*(\Theta)$
			does not intersect the zero section, then the same is true of
			\[(\psi_{l(i)}'\circ f\circ\psi_i^{-1})^*({\psi_{l(i)}'}^{-1*}(\Theta))\subseteq \psi_i^{-1*}(f^*\Theta)\]
			for any $i\in I$.
			Thus $(\psi_{l(i)}'\circ f\circ\psi_i^{-1})^*$ maps $\D'_{{\psi_{l(i)}'}^{-1*}(\Theta)}(\Ran(\psi_{l(i)}'))$
			to 
			$\D'_{\psi_i^{-1*}(f^*\Theta)}(\Ran(\psi_i))$ continuously. This means the corresponding seminorms of 
			\[(f^*\eta)_{ij}=(\psi_{l(i)}'\circ f\circ\psi_i^{-1})^*(\eta_{ij})\]
			are bounded in terms of those of $\eta_{ij}$. We can conclude that the seminorms of $f^*(\eta)$ in $\D'_{f^*\Theta}(f^*(F))$ are bounded in terms of the seminorms of $\eta$ in $D'_\Theta(F)$, so $f^*$ extends to a continuous map between those two spaces.
			\item \head{push-forward}
			Choose the cover of $Y$ in such a way that every $U_i'$ is precompact and consider an arbitrary $i'\in I'$. Then $f^{-1}(U_i)\cap C$ is contained in a compact set and thus only intersects the support of finitely many $\chi_i$. Let $I_{i'}\subseteq I$ be the indices of these functions, i.e. the set of indices $i$ such that
			\[\supp(\chi_i)\cap f^{-1}(U'_{i'})\cap C\neq \emptyset.\]
			For any $\eta\in \Gamma(E_1)$ with support in $C$, we then have for $i\notin I_{i'}$:
			\[\supp(\chi_i\eta)\cap(f^{-1}(U'_{i'}))=\emptyset.\]
			For any chart $\psi$, let $\rho_\psi$ be the smooth density function such that $\psi^{*}=\rho_\psi\psi^{-1}_*$ (i.e. the volume density for the coordinate $\psi$). We then have
			\[\psi^{-1*}=(\psi^*)^{-1}=(\rho_\psi\psi^{-1}_*)^{-1}=\psi_*\Big(\rho_\psi^{-1}\cdot\Big).\]
			Using the above, we have for $j'\leq \rk(F)$:
			\begin{align*}
			f_*(\eta)_{i',j'}&=f_*\Big(\sum\limits_{i\in I, j\leq\rk(E_1)}\chi_i(\psi_i^*\eta_{ij})A^{4*}_{ij}\Big)_{i',j'}\\
			&=\sum\limits_{i\in I_{i'},j\leq\rk(F)}{\psi'_{i'}}^{-1*}B_{i',j'}f_*(\chi_i(\psi_i^*\eta_{ij})A^{4*}_{ij})\\
			&=\sum\limits_{i\in I_{i'},j\leq\rk(F)}\psi'_{i'*}\rho_{\psi'_{i'}}B_{i',j'}f_*(\chi_i(\rho_{\psi_i}^{-1}{\psi_{i*}^{-1}}\eta_{ij})A^{4*}_{ij})\\
			&=\sum\limits_{i\in I_{i'},j\leq\rk(F)}\psi'_{i'*}f_*{\psi_{i*}^{-1}}((f^*(\rho_{\psi'_{i'}}^{-1}B_{i',j'})\chi_i\rho_{\psi_i}A^{4*}_{ij})\circ \psi_i^{-1}\ \eta_{ij})\\
			&=:\sum\limits_{i\in I_{i'},j\leq\rk(F)} (\psi'_{i'}\circ f\circ \psi_i^{-1})_*(\nu \eta_{ij}),
			\end{align*}
			where $\nu$ denotes the smooth compactly supported function $\eta_{ij}$ is multiplied with in the line before. Multiplication with $\nu$ is continuous in the appropriate wavefront space and ensures that the support condition for aplying the pushforward theorem on $\R^d$ are met (every continuous function from a compact space to a Hausdorff space is proper). Thus all relevant seminorms of 
			\[(\psi'_{i'}\circ f\circ \psi_i^{-1})^*(\nu \eta_{ij})\]
			are bounded in terms of those of  the $\eta_{ij}$ and hence in terms of the seminorms of $\eta$. By the above, the same is then true for the seminorms of $f_*(\eta)_{i',j'}$ and thus for those of $f_*(\eta)$. This means that $f_*$ extends to a continuous map from $\D'_\Lambda(f^*(F))|_C$ to $\D'_{f_*(\Lambda)}(F)$.
			\item \head{product:}
			On tensor product bundles, use the tensor products of the individual frames as the new reference frames, indexed by pairs of individual indices. 
			Fix a bounded subset $B\subseteq \D'_\Lambda(E_1\otimes E_2^*)$. Then for any $i\in I$, $j_1\leq \rk(E_1)$ and $j_2\leq \rk(E_2)$, the set 
			\[B_{i,(j_1,j_2)}:=\{\eta_{i,(j_1,j_2)}\mid \eta \in B\}\]
			is bounded in $\D'_{\psi_i^{-1*}(\Lambda)}(\Ran(\psi_i))$, as the seminorms of the $\eta_{i(j_1,j_2)}$ there are bounded in terms of the seminorms of $\eta$, which must be bounded on $B$. 
			Consider $\eta\in B$ and $\zeta\in \Gamma(E_2\otimes E_3)$ and fix $i$, $j_1$ and $j_2$ as above.  We have
			\begin{align*}
			(\eta\zeta)_{i(j_1,j_2)}&:=\psi_i^{-1*}((A^1_{ij_1}\otimes A^3_{ij_2})\eta\zeta)\\
			&=\psi_i^{-1*}\Big((A^1_{ij_1}\otimes A^3_{ij_2})\sum\limits_{k_1\leq \rk(E_1)}\sum\limits_{l_1\leq \rk(E_2)}\psi_i^*(\eta_{i(l_1,k_1)})A^{1*}_{il_1}\otimes A^2_{ik_1}\\
			&\cdot\sum\limits_{l_2\leq \rk(E_2)}\sum\limits_{k_2\leq \rk(E_3)}\psi_i^*(\zeta_{i(l_2,k_2)})A^{2*}_{il_2}\otimes A^{3*}_{ik_2}\Big)\\
			&=\sum\limits_{l_1,k_1,l_2,k_2\text{ as above}}\eta_{i(l_1,k_1)}\zeta_{i(l_2,k_2)}\psi_i^{-1*}(A^1_{ij_1}(A^{1*}_{il_1})A^2_{ik_1}(A^{2*}_{il_2})A^3_{ij_2}(A^{3*}_{ik_2}))\\
			&=\sum\limits_{l_1,k_1,l_2,k_2\text{ as above}}\eta_{i(l_1,k_1)}\zeta_{i(l_2,k_2)}\delta_{j_1l_1}\delta_{k_1l_2}\delta_{j_2k_2}\\
			&=\sum\limits_{k\leq \rk(E_2)}\eta_{i(j_1,k)}\zeta_{i(k,j_2)}
			\end{align*}
			The product as a map
			\[\D'_{\psi_i^{-1*}(\Lambda)}(\Ran(\psi_i))\times \D'_{\psi_i^{-1*}(\Lambda')}(\Ran(\psi_i))\rightarrow \D'_{\psi_i^{-1*}(\Lambda\bar + \Lambda')}(\Ran(\psi_i))\]
			is hypocontinuous by wavefront calculus on $\R^d$.
			Thus for every seminorm $\rho$ of $\D'_{\psi_i^{-1*}(\Lambda\bar + \Lambda')}(\Ran(\psi_i))$, there are seminorms $(\rho'_{kl})$ of $\D'_{\psi_i^{-1*}(\Lambda')}(\Ran(\psi_i))$ and seminorms $(\rho''_l)$ of $\D'_{\Lambda'}(E_2\otimes E_3)$ such that for some $N,N'\in\N$ and $C_k,C\in \R$ (depending on $B$, but not $\eta$), we have
			\begin{align*}
			\rho((\eta\zeta)_{i(j_1,j_2)})&\leq \sum\limits_{k\leq \rk(E_2)}\rho(\eta_{i(j_1,k)}\zeta_{i(k,j_2)})\\
			&\leq \sum\limits_{k\leq \rk(E_2)}C_k\sum\limits_{l=0}^N \rho'_{kl}(\zeta_{i(k,j_2)})\\
			&\leq \sum\limits_{l=0}^{N'}C\rho''_l(\zeta).
			\end{align*}
			As the seminorms of $\eta\zeta$ in $D'_{\Lambda\bar +\Lambda'}$ can be estimated by the component norms above, $\zeta\mapsto \eta\zeta$ extends contiuosly to $\D'_{\Lambda'}(E_2\otimes E_3)$ for any fixed $\eta$, and the extension satisfies the first half of the definition of hypocontinuity, i.e. the part with bounded sets in the first component.
			As multiplication is (up to renaming and reordering vector bundles) symmetric in both arguments, we also obtain an extension of multiplication of sections that satisfies the second half of the definition of hypocontinuity. As both maps are continuous on products of bounded sets and convergent sequences are bounded, approximation by sequences of smooth sections shows that both extensions coincide. Thus we get a single extension that is hypocontinuous.
			\item\head{tensor product:} To avoid having to do a similar proof as for multiplication again, we reduce this to what we have already shown. Let $\pi_1$ and $\pi_2$ denote the projections of $X\times Y$ on the first and second factor. Then for smooth sections $\eta$ and $\zeta$ of $E_1$ and $F$, we have
			\[\eta\otimes \zeta=\pi_1^*(\eta)\cdot \pi_2^*(\zeta).\]
			The previous results yield that
			\[\pi_1^*\colon \D'_{\Lambda}(E_1)\rightarrow\D'_{\Lambda\times 0}(\pi_1^*(E_1))\]
			and
			\[\pi_2^*\colon \D'_{\Theta}(F)\rightarrow \D'_{0\times \Theta}(\pi_2^*(F))\]
			are continuous and that multiplication is continuous as a map
			\[\cdot\colon \D'_{\Lambda\times 0}(\pi_1^*(E_1))\otimes \D'_{0\times \Theta}(\pi_2^*(F))\rightarrow \D'_{\Lambda\bar\times \Theta}(E_1\boxtimes F),\]
			as non-zero vectors in different components cannot add up to zero. Thus the composition
			\[\cdot\circ(\pi_1^*\times\pi_2^*)\]
			yields a hypocontinuous extension of the tensor product.
			\item \head{evaluation}	Fix $\chi\in C_c^\infty(X)$ that is 1 around $K$.
			For $\phi\in \Gamma(E_1^*)$ supported in $K$ and $\eta\in \D'_\Lambda(E_1)$, we have
			\[\eta[\phi]=\eta[\phi\chi]=\phi\eta[\chi].\]
			As multiplication is hypocontinuous on the appropriate spaces and evaluation is continuous, the map 
			\[(\eta,\zeta)\mapsto \eta[\zeta]:=\zeta\eta(\chi)\]
			is a hypocontinuous extension of evaluation to $\D'_\Lambda(E_1)\times \D'_{\Lambda'}(E_1^*)|_K$.
			In case both $\eta$ and $\zeta$ are smooth, we have
			\[\zeta[\eta]=\eta[\zeta].\]
			Thus, defining
			\[\zeta[\eta]:=\eta[\zeta]\]
			for more general distributions yields a hypocontinuous map on $\D'_{\Lambda'}(E_1^*)|_K\times\D'_\Lambda(E_1)$ that agrees with evaluation in case both arguments are smooth. As smooth fuctions are dense in the appropriate spaces, it agrees with evaluation wherever evaluation is defined.\qedhere
		\end{itemize}

	\end{enumerate}
\end{proof}
\newpage
\chapter*{Notation index}
\addcontentsline{toc}{chapter}{Notation index}
This is a list of notation used in this thesis. Further notation conventions that are not tied to specific symbols are described in subsection \ref{abuseofnotation}.
\begin{description}
	\item [$\mathbbm{1}_S$:] indicator function of a set $S$
	\item [$\|\cdot\|_{k,V,\chi}$:] wavefront norm associated to $k$, $V$ and $\chi$, \ref{wfnorm}
	\item [$\|\cdot\|_{k,V,\chi,\psi,A}$:] wavefront norm on a manifold, \ref{dfwfmfd}
	\item [${[ \cdot]}$:] evaluation as a distribution
	\item [${[ \cdot]}$:] in Chapter \ref{Hadamextract}, matrix or vector with given coefficients, \ref{dfmatvec}
	\item[${[[\cdot]]}$:] coefficient corresponding to a monomial, \ref{monocoeff}
	\item [$\bar +$:] "sum" of wavefronts (wavefront of product), \ref{WFops}
	\item [$\bar\times$:] "product" of wavefronts (wavefront of tensor product), \ref{WFops}
	\item[$\boxtimes$:] external tensor product of vector bundles
	\item[$\nabla$:] connection induced by $P$, \ref{nabla}
	\item[$\hat\cdot$:] Fourier transform
	\item[$\partial$:] boundary of a topological spacce or manifold
	\item[$\partial_x$:] partial derivative with respect to a variable $x$
	\item [$\sim$:] asymptotic expansion in differentiability orders, \ref{dfasympt}
	\item [$\sim_x$:] asymptotic expansion in differentiability orders, including differentiability in $x$, \ref{dfasympt}
	\item [$\stackrel{s\rightarrow 0}{\sim}$:] asymptotic expansion in parameter $s$, \ref{dfasympt}
	\item[$\cdot^*$:] dual, adjoint or pullback:
	\subitem applied to a vector space or vector bundle: dual space/bundle
	\subitem applied to an operator: formal adjoint, \ref{df*}
	\subitem applied to other (fibrewise) linear maps: (fibrewise) adjoint
	\subitem applied to a function: pullback, \ref{pullforward} and \ref{WFops}
	\item [$f_*$:] pushforward by a function $f$, \ref{pullforward} and \ref{WFops}
	\item[$*$:] convolution
	\item[$a(k,n)$:] expansion coefficients occuring in \ref{Wexp}, defined in \ref{dfa}
	\item[$A$:] in Chapter \ref{globalchap}: Extension of $\tr(\Pi_w\circ\cdot)$, \ref{dftansport}
	\item[$A_x$:] restriction of $A$ to $x$ in the second coordinate, \ref{dftansport}
	\item[$\chi_\Phi$:] cut-off supported in $\Dom(\Phi)$, \ref{pronotation}
	\item [$C^{(')}$:] arbitrary constant
	\item [$C^k$:] Space of $k$ times continuously differentiable functions (to $\C$ unless specified otherwise)
	\subitem $C^k_c$: compactly supported $C^k$-functions
	\item [$C_\infty^+$:] Intersection of $C_0(\R)$ and $C_c^\infty(0,\infty)$, \ref{deflambda}
	\item[$c_\alpha$:] holomorphic prefactor in the Riesz distributions, \ref{dfRiesz}
	\item [$\delta$:] in Chapter \ref{Hadamextract}: $\frac{d}{2}-1-o$
	\item[$\delta_{x}$:] Dirac distribution (at $x$)
	\item [$\delta_{ij}$] Kronecker delta
	\item [$d$:] dimension of $M$, \ref{gnM}
	\item[$\diag$:] diagonal matrix with given entries, \ref{dfmatvec}
	\item [$\D'(F)$:] distributions in a vector bundle $F$
		\subitem $\D'_{(s)\pm}$: (strictly) past/future compactly supported distributions
		\subitem $\D'_\Lambda$: distributions with wavefront in $\Lambda$
		\subitem $\D'(\cdot)|_A$: distributions supported in $A$
	\item[$\Dom$:] domain of a function
	\item [$E$:] vector bundle over $M$, \ref{gnP}
	\item[$(\cdot)_{even}$:] even part of a function, \ref{evenodd}
	\item [$\exp_x$:] Riemannian exponential map at point $x$
	\item [$f$:] odd function in $C_c^\infty(\R)$, starting at \ref{dff}
	\subitem starting at \ref{Mfnonzero}, assume that $\M'(f)$ does not vanish on integers
	\item [$f_s$:] $f(\tfrac{\cdot}{s})$, \ref{dff}
	\item[$F_s$:] $f_s$ with two extra arguments, \ref{pronotation}
	\item[$\F$:] Fourier transform
	\item [$\gamma$:] Lorentzian "norm squared", \ref{dfgamma}
	\item [$\Gamma$:] Lorentzian "distance squared", \ref{dfGamma}
	\item [$\Gamma$:] Gamma function
	\item [$\Gamma(F)$:] sections in a vector bundle $F$ (smooth, unless otherwise specified), \ref{dfsections}
	\subitem $\Gamma^k$: $C^k$-sections
	\subitem $\Gamma_c$: compactly supported sections
	\subitem $\Gamma_A$: sections supported in $A$
	\subitem $\Gamma_{(s)\pm}$: (strictly) past/future compactly supported sections
	\item [$g$:] Lorentzian metric on $M$, \ref{gnM}
	\item [$G^\pm$:] advanced retarded Green's operators (of $P$, unless subscript specifies otherwise), \ref{dfG}, \ref{extendedG}
	\subitem without $^\pm$: difference of both, see \ref{dfcp}
	\item [$G_x^\pm$:] Green's kernel at $x$ (of $P$, unless subscript specifies otherwise), \ref{dfGx}
	\subitem without $^\pm$: difference of both, see \ref{dfcp}
	\item[GE:] open, convex, globally hyperbolic and causally compatible, \ref{dfGE}
	\item[$\iota_x$:] inclusion of $M$ into $M\times M$ with $x$ in the second component, \ref{pronotation}
	\item[$I$:]domain of $w$ in \ref{timeint2}
	\item[$I_f$:] closed interval around $\supp(f)$ and $0$, \ref{dff}
	\item [$J_{(\pm)}:$] causal future/past, \ref{futurepast}
	\item [$\K$:] Schwartz kernel, \ref{SKnotation}
	\item [$\Lambda$:] some wavefront set
	\subitem in Chapter \ref{timeint}: as defined in Definition \ref{deflambda}
	\subitem in Section \ref{globalsec1}: as defined in Lemma \ref{resGx}
	\item [$L$:] in Chapter \ref{Hadamextract}: function with a specified asymptotic expansion, \ref{dfWL}
	\item[$L_s$:] rigorous version of the Green's kernel integrated against $f_s$ along a timeline, \ref{dfL}
	\item[$\mu_\chi$:] multiplication by a function $\chi$
	\item [$M$:] time-oriented Lorentzian manifold, \ref{gnM}
	\subitem assumed to be globally hyperbolic after theorem \ref{Ghyper}
	\subitem assumed to be foliated into Cauchy hypersurfaces in section \ref{evopchap}
	\subitem in Section \ref{globalsec2}: of the form $\Sigma\times \R$  with unit speed geodesic timelines, \ref{foliate}
	\item [$\M(\cdot)$:] Mellin transform, \ref{Mellin}
	\item [$\M'(\cdot)$:] modified Mellin transform, \ref{dfMprime} 
	\item [$\nu_w$:] quotient of $\Gamma(w(t))$ and $t^2$, \ref{dfnu}
	\item[$n$:] normal vector field to the foliation in Section \ref{evopchap}
	\item[$o$:] natural number used as offset parameter in Chapter \ref{Hadamextract}
	\item [$O_\xi$:] "reflection" associated to spacelike $\xi$, as defined in \ref{Oxi}
	\item[$(\cdot)_{odd}$:] odd part of a function, \ref{evenodd}
	\item [$\Op$:] Operator associated to a kernel, \ref{dfOp}
	\item[$\Phi$:] "flow" along geodesics $w_x$ in the first component, \ref{dflow}
	\item[$\pi$:] projection of $\Dom(\Phi)$ onto $M\times M$, \ref{pronotation}
	\item[$\Pi_w$:] Parallel transport along some $w_x$, \ref{dftansport}
	\item[$\Pi_\Phi$:] Parallel transport along some $w_x$ with transport time determined by the last coordinate in $M\times M\times \R$, \ref{dftansport}
	\item[$\Pi(r,t)$:] Parallel transport along timelines from $\Sigma_t$ to $\Sigma_r$, \ref{Qtransport}
	\item [$P$:] normally hyperbolic operator on $E$, \ref{gnP}
	\item [$Q(t,s)$:] Evolution operator between Cauchy surfaces $\Sigma_t$ and $\Sigma_s$, with respect to $P$ unless specified otherwise, \ref{dfQ}
	\item [$\rho_x^U$:] differential operator on a convex subset $U$ (for basepoint $x\in U$), involved in a "Leibniz rule" for Riesz distributions and in the transport equations, \ref{dfrho}
	\item[$\rk$:] rank of a vector bundle
	\item [$\Re$:] real part of a complex number
	\item[$R_\pm(\alpha)$:] Riesz distribution on a Lorentzian vector space, \ref{dfRiesz}
	\subitem without $_\pm$: difference of both, see \ref{dfcp}
	\item[$R_\pm^U(\alpha,x)$:] Riesz distribution on a convex set $U$ around basepoint $x$, \ref{dfGamma}
	\subitem without $_\pm$: difference of both, see \ref{dfcp}
	\item[$R_\pm(z,2m,x)$:] resolvent Riesz distribution (on $U$) around basepoint $x$, \ref{Rieszres}	
	\item[$\Ran$:] range of a function
	\item[$\sigma$:] sign of the time component, \ref{dfsigma}
	\item[$\Sigma_t$] cauchy hypersurface in a foliation of $M$ in sections \ref{evopchap} and \ref{globalsec2}
	\item[$\scal$:] scalar curvature (of $M$)
	\item[$\tau_w$:] time translation along the curves $w_x$, \ref{dftansport}
	\item[$\tau(r,t)$:] time translation from $\Sigma_t$ to $\Sigma_r$, \ref{Qtransport}
	\item[$\tr$:] trace
	\item [$T^{(*)}_{(x)} X$:] (Co-) Tangent space of a manifold $X$ (at $x$)
	\item[$\Tdot_{(x)}X$:] cotangent space of a manifold $X$ (at $x$) without zero
	\item[$U$:] fixed convex open subset of $M$ in sections $\ref{Rieszhadamard}$, \ref{Greschap} and \ref{timeint2}
	\item [$V^{k,U}_x$:] $k$-th Hadamard coefficient at basepoint $x$ on a convex set $U$, \ref{Vdef}
	\item [$V^{k}_x(z)$:] $k$-th Hadamard coefficient at basepoint $x$ of $P-z$, \ref{defVz}
	\item[$w$:] timelike curve (in $U$) in Section \ref{timeint2}, see \ref{dfw}
	\item[$w_x$:] timelike unit speed geodesic through $x$, \ref{dflow}
	\subitem assumed to be of the form $w_{(y,r)}(t)=(y,r+t)$ in Section \ref{globalsec2}, see \ref{foliate}
	\item[$W_{l,n}$:] coefficients in the expansion of $L$ in Chapter \ref{Hadamextract}, corresponding to derivatives of Hadamard coefficients, \ref{dfWL}
	\item[$\WF$:] wavefront of a distribution,\ref{dfwfrn} and \ref{dfwfmfd}
	\item[$\Xi$:] linear combination occuring in the final formulas, defined in \ref{dfXi}
	\item[$X$:] open subset of $\R^n$ in Section \ref{wfrn}
	\item[$X_\alpha$:] power function on $\Rp$, \ref{Xa}
	\item[$Y$:] open subset of $\R^n$ in Section \ref{wfrn}
	\item[$y_\xi$:] reflection vector for spacelike $\xi\in \R^d$, as in \ref{yxi}
	\chapter*{Bibliography}
	\addcontentsline{toc}{chapter}{Bibliography}
\end{description}
\printbibliography[heading=none]
\end{document}